\documentclass[11pt]{amsart}
\usepackage{xcolor}
\usepackage{amsmath}
\usepackage{amsthm}
\usepackage{lipsum}
\usepackage{amsopn}
\usepackage{amssymb}
\usepackage{fullpage}
\usepackage{enumerate}
\numberwithin{equation}{section}
\usepackage{float}
\usepackage{graphicx}

\usepackage{dsfont}
\usepackage{mathtools}
\usepackage{fancyhdr}
\usepackage{textcomp}
\usepackage{ulem}
\usepackage{fancyhdr}
\newtheorem{theorem}{Theorem}[section]

\newtheorem{corollary}[theorem]{Corollary}
\newtheorem{lemma}[theorem]{Lemma}
\newtheorem{proposition}[theorem]{Proposition}
\theoremstyle{definition}
\newtheorem{definition}[theorem]{Definition}
\newtheorem{remark}[theorem]{Remark}
\newtheorem{example}[theorem]{Example}

\DeclareMathOperator{\Arg}{Arg}

\usepackage{hyperref}
\hypersetup{
	colorlinks=true,
	linkcolor=blue,
	filecolor=magenta,      
	urlcolor=cyan,
	pdftitle={L\"owner chains via close-to-convex functions},
	pdfpagemode=FullScreen,
}

  \usepackage{float}
 \usepackage{blkarray}
\usepackage{graphicx}
\usepackage{ulem}

\usepackage{dsfont}
\usepackage{fancyhdr}
\usepackage{nicematrix}
\usepackage{bm}
\usepackage{textcomp}
\usepackage{xcolor}
\usepackage{tcolorbox}
\usepackage{tikz}
\usetikzlibrary{arrows.meta,calc}
\usetikzlibrary{shapes.misc}
\usetikzlibrary{decorations.pathreplacing}
\usetikzlibrary{decorations.markings}
\usepackage{subcaption}
\def\CB{\color{black} }

\newcommand{\mn}{\mu_{\nu}}

\allowdisplaybreaks

\makeatletter

\renewcommand{\l@section}{%
  \@tocline{1}{0pt}{0pt}{2.3pc}{}%
}

\renewcommand{\l@subsection}{%
  \@tocline{2}{0pt}{2.2pc}{3.2pc}{}%
}

\renewcommand{\l@subsubsection}{%
  \@tocline{3}{0pt}{4.4pc}{4.0pc}{}%
}

\makeatother

\begin{document}

\title{Loewner chains with multiple boundary attraction points: A construction via close-to-convex functions}
\author{Georgios Nikolaidis, Eleftherios K. Theodosiadis, Konstantinos Zarvalis}

\address{Department of Mathematics, Aristotle University of Thessaloniki, 54124, Thessaloniki, Greece}
\email{nikolaidg@math.auth.gr}
\address{Department of Mathematics, Aristotle University of Thessaloniki, 54124, Thessaloniki, Greece}
\email{etheodog@math.auth.gr}
\address{Department of Mathematics, Aristotle University of Thessaloniki, 54124, Thessaloniki, Greece}
\email{zarkonath@math.auth.gr}

\fancyhf{}
\renewcommand{\headrulewidth}{0pt}
\fancyhead[RO,LE]{\small \thepage}
\fancyhead[CE]{\footnotesize }
\fancyhead[CO]{\footnotesize } 

\fancyfoot[L,R,C]{}
\subjclass[2020]{Primary: 30C45, 35C05; Secondary: 31A15}
\date{}
\keywords{Loewner Theory, close-to-convex functions, harmonic measure, Semigroups of holomorphic self-maps}

\begin{abstract}
In this work we construct multi-slit \textit{chordal Loewner chains} with multiple attraction points on the real line. We build a model domain with the use of close-to-convex functions, in a way that allow us to prescribe the attraction points. These families of Loewner chains resemble the classical model for semigroups of holomorphic maps of the upper half-plane. Our work is to choose an appropriate set of arbitrarily many parameters and to study the geometric behavior of the model domain, as it evolves with respect to time, in order to derive the aforementioned Loewner chain. Finally, with the use of harmonic measure, we calculate the angles of convergence of each slit of the chain.
\end{abstract}

\maketitle

\tableofcontents
\addtocontents{toc}{\protect\setcounter{tocdepth}{1}}
\section{Introduction}

In its classical setting, Loewner Theory describes the dynamics of a family of \textit{slit domains}, that is a family of simply connected domains excluding a simple curve (the slit) that emanates from the boundary. To put it into perspective, if $(H_t)_{0\le t<T}$ is a  family of single-slit domains of the upper half-plane $\mathbb{H}$, then $H_t:=\mathbb{H}\setminus\gamma([0,t])$ for a continuous curve $\gamma:[0,T]\to\overline{\mathbb{H}}$, with $\gamma(0)\in\mathbb{R}$ and $\gamma((0,T))\subset\mathbb{H}$. By the Riemann Mapping Theorem, there exists a family of conformal maps $g_t=g(\cdot,t):H_t\overset{onto}{\longrightarrow}\mathbb{H}$, normalized in such a way that $g_t(z)-z\rightarrow0$, as $z\rightarrow\infty$. The preceding normalization condition is called the \textit{hydrodynamic condition}. Furthermore, an application of the Schwarz Reflection Principle, shows that $g_t$ has a Laurent expansion at infinity
 \begin{equation*}
     g_t(z)=z+\dfrac{b(t)}{z}+\cdots,
 \end{equation*}
 for all $z\in\mathbb{C}$ that lie outside a disc containing $\gamma([0,t])\cup\gamma([0,t])^*$, where $K^*$ denotes the reflection of $K$ with respect to the real axis. In particular, the coefficient $b(t)$ is positive and it is called the \textit{half-plane capacity} of $\gamma([0,t])$, denoted by $\text{hcap}(\gamma([0,t]))$.
 Then, the \textit{chordal} Loewner differential equation reads for the initial value problem
 \begin{equation}\label{LE1}
    \dfrac{\partial g}{\partial t}(z,t)= \dfrac{b'(t)}{g_t(z)-\lambda(t)}, \quad g_0(z)=z,
 \end{equation}
for all $z\in H_t$ and $0\le t <T$, where $\lambda(t)$ is a continuous real-valued function of $t$, given as $\lambda(t)=g_t^{-1}(\gamma(t))$, called the \textit{driving function}. In the literature, the half-plane capacity mostly appears as $b(t)=2t$, which is possible by means of a time reparameterization. We refer to \cite{mon} for a detailed derivation of equation (\ref{LE1}).

Slightly generalizing single-slit Loewner chains described by equation (\ref{LE1}), one can consider its multiple---finite or infinite---slit version. For instance, as we see in \cite{star}, given $n$ many disjoint Jordan curves that emanate from the real line towards the upper half-plane, the corresponding Loewner chain is produced by the driving functions $\lambda_1,\dots,\lambda_n$. Furthermore, in \cite{tec}, the authors show that if we have infinitely many slits $\Gamma_j$ parameterized in $[0,1]$ such that each curve $\Gamma_n(t)$ can be separated from the closure of $\bigcup_{j\neq n}\Gamma_{j}(t)$ at each time $t$, by open sets, then the Loewner equation is written as 
 \begin{equation}\label{tecn}
     \dfrac{\partial g}{\partial t}
	 (z,t)=\sum_{n=1}^{+\infty}\dfrac{b_n(t)}{g(z,t)-\lambda_n(t)},
 \end{equation}
	for $z\in\mathbb{H}\setminus\bigcup_{n=1}^{+\infty}\Gamma_n(t)$ and a.e $0\le t\le1$, where $\sum_{n=1}^{+\infty}b_n(t)=\partial_t\textrm{hcap}(\bigcup_{n=1}^{+\infty}\Gamma_{n}(t))$.

In the opposite direction, if we consider the initial value problem (\ref{LE1}), we then let $T_z$ be the supremum of all $t$, such that the solution to the equation is well defined and $g_t(z)\in\mathbb{H}$ for all $t\le T_z$. Then, the domain $H_t:=\{z\in\mathbb{H}: T_z>t\}$ is simply connected and $K_t:=\mathbb{H}\setminus H_t$ is compact. Moreover, $g_t$ maps $H_t$ conformally onto $\mathbb{H}$, satisfying the hydrodynamic condition. We refer to $(K_t)_{t}$, as the \textit{compact hulls} generated by (\ref{LE1}); see \cite[Chapter 4]{lawl} for details. Also, considering the unique Riemann maps from $\mathbb{H}$ onto $H_t$, satisfying the hydrodynamic condition, that is $f(\cdot,t)=g^{-1}(\cdot,t)$, then they satisfy the PDE
\begin{equation}
    \dfrac{\partial f}{\partial t}(z,t)=-f'(z,t)\frac{b(t)}{z-\lambda(t)}
\end{equation}
for all $z\in\mathbb{H}$ and $t\ge0$.
We usually call \textit{Loewner chain} the family of mappings $(f(\cdot,t))_{t\ge0}$.
For simplicity, we may refer to the family of the corresponding domains $(H_t)_{t\ge0}$ as the Loewner chain as well. To avoid repetition, we also refer to Loewner chains as Loewner flow.

It is not true, in general, that an arbitrary continuous function $\lambda(t)$ produces hulls that are curves. Several authors have studied the relation between a driving function and the corresponding hulls, showing that \text{Lip}-$\frac{1}{2}$ driving functions, with sufficiently small $\text{Lip}(\frac{1}{2})$-norm, produce quasi-slit domains (see e.g. \cite{lind}, \cite{MR} and  \cite{slei1}).

In its most general setting, Loewner Theory provides a description, through a differential equation again, for a family of simply connected domains that are not slit domains. To put it into context, if $(H_t)_{t\ge0}$ is a  decreasing family of simply connected subdomains of $\mathbb{H}$, so that $K_t:=\mathbb{H}\setminus H_t$ is compact and the corresponding Riemann maps $g_t$ are continuous with respect to $t$, then there exists some function $Q(z,t)$, analytic with respect to $z$ with $\mathrm{Im}Q<0$, so that 
\begin{equation}\label{kufarev}
    \frac{\partial g}{\partial t}(z,t)=Q(g(z,t),t).
\end{equation}
This is called the \textit{Loewner-Kufarev equation} and $Q$ is called the \textit{driving function} of the chain. With regard to the driving function of the Loewner-Kufarev equation, by \eqref{tecn} we understand that
\begin{equation*}
    Q(z,t)=\sum_{n=1}^{+\infty}\frac{b_n(t)}{z-\lambda_n(t)}.
\end{equation*}
As we mentioned above, solutions to \eqref{kufarev} are not necessarily slit functions. However, if $(H_t)_{t\ge0}$ are slit domains, with the properties defined in \cite{tec}, then the driving function is necessarily of this form. In this work, we will exclusively deal with finitely many slits, so we only treat driving functions of the above form.

\subsection*{Purpose of the work} 
The motivation of this article lies in our concern to find a Loewner chain with \textit{multiple attraction points}, in the following sense. Assume that at each time $t\ge0$, the growing hulls $(K_t)_{t\ge0}$ are $N$ many disjoint Jordan arcs emanating from some prescribed boundary points of $\mathbb{H}$ up to some interior points, the so-called \textit{tip points} of the arcs. Thus, we may write
\begin{equation*}
K_t:=\bigcup_{j=1}^{N}\Gamma_j([0,t]),    
\end{equation*}
where $\Gamma_j([0,t])$ is a Jordan arc with $\Gamma_{j}(0)\in\mathbb{R}$ and endpoint  $\Gamma_j(t)\in\mathbb{H}$.
Then, as $t\rightarrow+\infty$, the tip points of the arcs converge to some boundary points of $\mathbb{H}$. These points will serve as the attraction points of the Loewner chain, as seen in Figure \ref{fig: Orbits of the flow }. As \textit{orbits} or \textit{trajectories} of the tip points we will interchangeably mean the curves $\Gamma_j:[0,+\infty)\rightarrow\mathbb{H}$ and their traces 
\begin{equation*}
  \{\Gamma_j(t)\vcentcolon t\ge0\}=\Gamma_j([0,+\infty)).  
\end{equation*}
Now, one could argue that in order to find a Loewner chain with some desired geometric behavior as described above, we have to come up with a proper driving function and solve equation \eqref{kufarev} accordingly. Generally, the geometric behavior of the chain $(H_t)_{t\ge0}$ is related to the analytic properties of the driving function, but the exact relation is not always clear. In other words, it is complicated to a priori find the driving function such that the corresponding Loewner equation may yield a solution with the desired geometric behavior. For this reason, we opt to start with a chain presenting the geometric behavior we described above, and then work towards computing the respective driving function.

\begin{figure}[ht]
\centering
\resizebox{0.95\linewidth}{!}{%
\begin{tikzpicture}[
    x=1cm,y=1cm,
    every node/.style={font=\small}
]

\definecolor{myblue}{RGB}{70,80,180}
\definecolor{myred}{RGB}{170,90,90}
\definecolor{mygray}{RGB}{150,150,150}

\draw[gray] (0,0) -- (12.2,0);

\coordinate (k1) at (0.4,0);
\coordinate (k2) at (5.8,0);
\coordinate (k3) at (10,0);
\coordinate (k4) at (11.0,0);
\tikzset{
  midarrowright/.style={
    postaction={
      decorate,
      decoration={
        markings,
        mark=at position 0.5 with {
          \arrow{Latex[length=2mm,width=1.4mm]}
        }
      }
    }
  },
  midarrowleft/.style={
    postaction={
      decorate,
      decoration={
        markings,
        mark=at position 0.5 with {
          \arrowreversed{Latex[length=2mm,width=1.4mm]}
        }
      }
    }
  }
}
\node at (3.3,0.2) {$\cdots$};

\draw[myred, line width=0.8pt,midarrowleft]
(k1) .. controls (2.6,2.8) and (10.8,4.3) .. (k4);

\draw[myblue, line width=0.8pt,midarrowright]
(1.8,0) .. controls (2.5,2.3) and (8.1,2.5) .. (k3);

\draw[myblue, line width=0.8pt,midarrowright]
(2.9,0) .. controls (3.4,1.9) and (8.5,2.1) .. (k3);

\draw[myblue, line width=0.8pt,midarrowright]
(4.4,0) .. controls (4.6,1.3) and (8.7,1.6) .. (k3);

\draw[myred, line width=0.8pt,midarrowleft]
(k2) .. controls (6.8,0.45) and (8.3,1.0) .. (8.6,0);

\node at (6.2,0.45) {$\Gamma_N$};
\node at (2.9,-0.3) {$\Gamma_j(0)$};
\node at (4.0,2.7) {$\Gamma_1$};
\node at (8.7,-0.3) {$\Gamma_N(0)$};
\node at (11.5,-0.3) {$\Gamma_1(0)$};
\node at (8.5,1.5) {$\Gamma_j$};

\node[below] at (k1) {$\Gamma_1(\infty)$};
\node[below] at (k2) {$\Gamma_N(\infty)$};
\node[below] at (k3) {$\Gamma_j(\infty)$};

\draw (0.4,0.08) -- (0.4,-0.08);
\draw (5.8,0.08) -- (5.8,-0.08);
\draw (11.0,0.08) -- (11.0,-0.08);

\end{tikzpicture} 
}
\caption{We denote by $\Gamma_j$ the trace of each curve $\Gamma_{j}:[0,\infty]\rightarrow\overline{\mathbb{H}}$, which emanate from $\Gamma_j(0)\in\mathbb{R}$ and land at some boundary point, say $\Gamma_j(\infty)\in\mathbb{R}$. Those curves, denote the orbits of the tip points of the Loewner chain, as time evolves.}
\label{fig: Orbits of the flow }
\end{figure}

\subsection*{Semigroups of holomorphic self-maps} 
More in the context of complex dynamics, another instance of families of univalent functions, that have been extensively studied, is \textit{Semigroups of holomorphic self-maps}. Now, semigroups of holomorphic maps can be useful in providing some  fruitful ideas for our work, because as we shall see shortly, they yield one attraction point, the \textit{Denjoy--Wolff point}. Hence, according to the purpose of this work, if we are interested in one attraction point, then this could be possible by means of the theory of semigroups. For our purposes, we only view semigroups as a special case of Loewner chains, in a manner that we explain below. Suppose that the driving function is written only as a function of $\mathbb{H}$, thus $Q(z,t)=Q(z)$. Then, the solution to \eqref{kufarev} is some function $\phi_t(z)$, which by the uniqueness of the solution satisfies the \textit{semigroup property}
\[
    \phi_{t+s}=\phi_t\circ\phi_s,
    \qquad s,t\geq 0, \quad\phi_0(z)=z.
\]
Conversely, every continuous family $(\phi_t)_{t\ge0}$, satisfying the semigroup property above, is governed by an autonomous differential equation of the form
\[
    \frac{\partial}{\partial t}\phi_t(z)=G(\phi_t(z)),
    \qquad \phi_0(z)=z,
\]
where $G$ is called the \textit{infinitesimal generator} of $(\phi_t)$. For more details, we refer to \cite[Theorem 2.6]{BP} where the authors show that if $G:\mathbb{H}\to\mathbb{H}\cup\mathbb{R}$ is analytic, then it gives rise to a semigroup with Denjoy--Wolff point $\infty$. A connection between semigroups and Loewner chains is further discussed in
\cite{ABCD2010}, while explicit examples can be found in \cite{sola}. 

To demonstrate how semigroup theory ties to Loewner theory, suppose that $h\vcentcolon\mathbb{H}\to\mathbb{C}$ is a \textit{convex in the right direction} (also known as \textit{starlike at infinity} in the literature) univalent function. This means that its range $\Omega\vcentcolon=h(\mathbb{H})$ is a simply connected domain with the property $\Omega+t\subseteq\Omega$, for all $t\ge0$. The domain $\Omega$ is also said to be convex in the right direction. Then, the conjugate form
\begin{equation}\label{semigroups}
    f(z,t)=h^{-1}(h(z)+t),
\end{equation}
defines a family satisfying the semigroup property. Observe by a direct differentiation that the infinitesimal generator is given as 
\begin{equation}
    \label{infinitesimal generator}
    G(z)=\frac{1}{h'(z)}, \quad z\in\mathbb{H}.
\end{equation}For detailed information on the rich theory of semigroups and certain recent results, we refer the interested reader to \cite{BCDMGZ,KGR} and the monograph \cite{BCDM}. 

Given $z\in\mathbb{H}$, its \textit{orbit} under $(f(\cdot,t))_{t\ge0}$ is defined as the set $\Gamma_z\vcentcolon=\{f(z,t)\vcentcolon t\ge0\}$. Formula \eqref{semigroups} allows the study of the orbits to be conducted inside the domain $\Omega$, rendering their examination relatively easier because of the geometry of $\Omega$. Indeed, the conjugation in \eqref{semigroups} maps $\Gamma_z$ onto the horizontal half-line $\{h(z)+t\vcentcolon t\ge0\}$, thus yielding a ``linearization'' of the orbit. By the convexity of $\Omega$, there exists a unique prime end of $\Omega$ with impression $\infty$ (more information on prime ends follows in Section 2) and defined by crosscuts with increasing real parts. But the image of every orbit extends to $\infty$ moving towards the right as demonstrated in Figure \ref{fig:convex-right-domain}. Returning to $\mathbb{H}$, there exists a unique $\tau\in\mathbb{R}\cup\{\infty\}$ such that every orbit lands at $\tau$. In other words, $\lim_{t\to+\infty}f(z,t)=\tau$, for all $z\in\mathbb{H}$. Therefore, every family defined through a relation like \eqref{semigroups} has exactly one attraction point.

\begin{figure}[ht]
\centering
\resizebox{0.55\linewidth}{!}{%
\begin{tikzpicture}[
    x=1cm,
    y=0.51cm,
    every node/.style={font=\small}
]
\tikzset{
  midarrowright/.style={
    postaction={
      decorate,
      decoration={
        markings,
        mark=at position 0.5 with {
          \arrow{Latex[length=2mm,width=1.4mm]}
        }
      }
    }
  },
  midarrowleft/.style={
    postaction={
      decorate,
      decoration={
        markings,
        mark=at position 0.5 with {
          \arrowreversed{Latex[length=2mm,width=1.4mm]}
        }
      }
    }
  }
}
\definecolor{mygray}{RGB}{150,150,150}
\definecolor{myblack}{RGB}{40,40,40}


\draw[myblack, line width=0.6pt]
(1.8,4.2)
.. controls (3.3,3.9) and (4.4,3.2) .. (4.55,2.45)
.. controls (4.7,1.55) and (3.85,0.95) .. (1.0,0.70);

\draw[myblack, line width=0.7pt] (1.0,0.35) -- (2.35,0.35);

\draw[myblack, line width=0.6pt]
(1.0,0.00)
.. controls (2.0,-0.02) and (2.75,-0.55) .. (3.85,-0.85)
.. controls (5.3,-1.10) and (6.7,-1.25) .. (9.1,-1.3);

\draw[mygray, line width=0.7pt,midarrowright] (4.95,2.00) -- (8.4,2.00);

\fill[myblack] (4.95,2.00) circle (1.1pt);
\fill[myblack] (7.15,2.00) circle (1.1pt);

\node at (5,2.32) {$h(z)$};
\node at (7.85,2.32) {$h(z)+t$};
\node[anchor=west] at (8.5,1.5) {$h(\tau)$};

\end{tikzpicture}%
}
\caption{The domain of a convex in the right direction function \(h\). Fixing any \(z\in\mathbb H\), the point \(h(z)+t\) lies in \(\Omega\) for every \(t\ge 0\), marching to the point at infinity, whose preimage is some \(\tau\in\widehat{\mathbb R}\).}
\label{fig:convex-right-domain}
\end{figure}

\subsection*{Close-to-convex functions} 
The class of the convex in the right direction functions may be generalized to a wider class of functions, the \textit{close-to-convex} functions. This is the class of univalent functions, whose range is complementary to the union of a family of non-intersecting half-lines. For instance, a convex in the right direction domain $\Omega$ has such a property. In a general context, close-to-convex functions are not systematically studied in Loewner theory. With our construction in this work, we present a slightly more general form of \eqref{semigroups}, by building a family $(h_t)_{t\ge0}$ of close-to-convex functions or equivalently a family of domains $(\Omega_t)_{t\ge0}$ with $\Omega_t\subset\Omega_s$ for $s<t$, so that 
\begin{equation}
    \label{Loewner}
    f(z,t)=h_0^{-1}(h_t(z))
\end{equation}
for all $z\in\mathbb{H}$ and $t\ge0$. 

In an idea to construct a Loewner chain $(f(\cdot,t))_{t\ge0}$ with multiple attraction points, we will make use of this wider class of functions. This would not be possible with the use of convex on the right-hand direction functions, as this leads to necessarily one only attraction point. In addition to that, looking at the driving function \eqref{kufarev} of the Loewner differential equation, there is no obvious way to choose the parameters to ensure that the solution is a family of maps with many attracting points. Hence, we choose a very intuitive geometric approach that resembles Semigroups. 

\subsection*{Ideas for the construction of the Loewner chain} 
Even though we will construct a Loewner chain with arbitrarily many attraction points, we motivate the reader by example. We initiate our discussion with the simplest case, the one with only two attraction points, to give the idea of the construction in an intuitive way. As a paradigm, consider a domain $\Omega_0$ to be horizontal strip minus a left and a right horizontal half-line, with different heights, as in Figure \ref{fig:two_slits_h_0}. This is a domain that corresponds to the range of a  close-to-convex function, since its complement consists of horizontal half-lines. As we shall see in Section 3, $h_0:\mathbb{H}\rightarrow\Omega_0$ is of the form
$$h_0(z)=\alpha\log(z-x_1)+b\log(z-x_2)-c\log(z-x_3)$$
for some $a,b,c>0$ and $x_1<x_2<x_3$ points on the real line. Note that these points correspond to the points at infinity, though $h_0$, as shown in Figure \ref{fig:two_slits_h_0}. One can see that the endpoints $w_1,w_2$ of the half-lines are the images of some $\xi_1\in(x_1,x_2)$ and $\xi_2\in(x_3,+\infty)$, respectively. Namely, there are the zeros of $h_0'$, as we shall se later on. Now, we wish to push $w_1,w_2$ to the points at infinity. Borrowing the idea from convex in the right direction functions, we see that $w_2-t$ extends to a point at infinity, which corresponds to a boundary point through $h_0$. However, $h_0^{-1}(w_1-t)$ is not defined, because at the same time $w_1-t$ marches on the left which means that $\Omega_0-t\nsubseteq\Omega_0$. So, although $h_0^{-1}(w_2-t)$ would tend to a boundary point in $\mathbb{R}$, $h_0^{-1}(\Omega_0-t)$ is not actually defined, thus \eqref{Loewner} does not make sense!
 \begin{figure}
    \label{example1}
\centering
\begin{tikzpicture}[
    scale=1,
    every node/.style={font=\large},
    dot/.style={circle, fill=black, inner sep=1.6pt},
    boundary/.style={gray!65, line width=0.8pt},
    solidpart/.style={black, line width=0.8pt},
    dashedpart/.style={gray!55, line width=0.5pt, dashed}
]
\tikzset{
  midarrowright/.style={
    postaction={
      decorate,
      decoration={
        markings,
        mark=at position 0.5 with {
          \arrow{Latex[length=2mm,width=1.4mm]}
        }
      }
    }
  },
  midarrowleft/.style={
    postaction={
      decorate,
      decoration={
        markings,
        mark=at position 0.5 with {
          \arrowreversed{Latex[length=2mm,width=1.4mm]}
        }
      }
    }
  }
}
\def\xleft{0}
\def\xright{11}
\def\ytop{3.6}
\def\yupper{2.75}
\def\ylower{1.35}
\def\ybottom{0.2}

\def\xwtwot{4.85}   
\def\xwtwo{6.05}    

\def\xwonet{3.20}   
\def\xwone{4.8}    

\draw[boundary] (\xleft,\ytop) -- (\xright,\ytop);
\draw[boundary] (\xleft,\ybottom) -- (\xright,\ybottom);

\draw[dashedpart,midarrowleft] (0.35,\yupper) -- (\xwtwot,\yupper);
\draw  (\xwtwot,\yupper) -- (\xwtwo,\yupper);
\draw[solidpart] (\xwtwo,\yupper) -- (10.9,\yupper);

\draw[solidpart,midarrowleft]  (\xleft,\ylower) -- (\xwonet,\ylower);
\draw  (\xwonet,\ylower) -- (\xwone,\ylower);
\draw[dashedpart] (\xwone,\ylower) -- (10.9,\ylower);
\node[dot] at (\xwtwot,\yupper) {};
\node[dot] at (\xwtwo,\yupper) {};
\node[dot] at (\xwonet,\ylower) {};
\node[dot] at (\xwone,\ylower) {};

\node[left]  at (1.5,\yupper-0.35) {$h_0(x_1)$};
\node[above] at (\xwtwot,\yupper+0.05) {$w_2-t$};
\node[above] at (\xwtwo,\yupper+0.05) {$w_2$};

\node[left]  at (8,2) {\Large$\Omega_0$};
\node[above] at (\xwonet,\ylower+0.05) {$w_1-t$};
\node[above] at (0.85,\ylower-0.95) {$h_0(x_2)$};

\node[above] at (\xwone,\ylower+0.05) {$w_1$};
\node[right] at (9.8,\ylower+0.25) {$h_0(x_3)$};

\end{tikzpicture}
\caption{The range $\Omega_0$ is a strip $S$ minus two horizontal half-lines with tip points
$w_1,w_2$. Through $z\mapsto z-t$, the tip points travel to the left, so $\Omega_0-t\nsubseteq\Omega_0$.}
    \label{fig:two_slits_h_0}
\end{figure}

\begin{figure}
\centering
\begin{tikzpicture}[
    scale=1,
    every node/.style={font=\large},
    dot/.style={circle, fill=black, inner sep=1.6pt},
    boundary/.style={gray!65, line width=0.8pt},
    strongpart/.style={black, line width=1.0pt},
    thinpart/.style={black, line width=0.55pt},
    dashedpart/.style={gray!55, line width=0.5pt, dashed}
]
\tikzset{
  midarrowright/.style={
    postaction={
      decorate,
      decoration={
        markings,
        mark=at position 0.5 with {
          \arrow{Latex[length=2mm,width=1.4mm]}
        }
      }
    }
  },
  midarrowleft/.style={
    postaction={
      decorate,
      decoration={
        markings,
        mark=at position 0.5 with {
          \arrowreversed{Latex[length=2mm,width=1.4mm]}
        }
      }
    }
  }
}
\def\xleft{0}
\def\xright{11}
\def\ytop{3.6}
\def\yupper{2.75}
\def\ylower{1.35}
\def\ybottom{0.2}

\def\xwtwott{4.45}   
\def\xwtwo{6.15}     

\def\xwone{4.80}     
\def\xwonett{6.50}   

\draw[boundary] (\xleft,\ytop) -- (\xright,\ytop);
\draw[boundary] (\xleft,\ybottom) -- (\xright,\ybottom);

\draw[dashedpart,midarrowleft] (0.05,\yupper) -- (\xwtwott,\yupper);
\draw[thinpart]   (\xwtwott,\yupper) -- (\xwtwo,\yupper);
\draw[strongpart] (\xwtwo,\yupper) -- (10.9,\yupper);

\draw[strongpart] (\xleft,\ylower) -- (\xwone,\ylower);
\draw[thinpart]   (\xwone,\ylower) -- (\xwonett,\ylower);
\draw[dashedpart,midarrowright] (\xwonett,\ylower) -- (10.9,\ylower);

\node[dot] at (\xwtwott,\yupper) {};
\node[dot] at (\xwtwo,\yupper) {};
\node[dot] at (\xwone,\ylower) {};
\node[dot] at (\xwonett,\ylower) {};

\node[left]  at (1.55,\yupper-0.4) {$h_0(x_1)$};
\node[left]  at (1.55,0.7) {$h_0(x_2)$};
\node[above] at (\xwtwott,\yupper+0.05) {$w_2(t)-t$};
\node[above] at (\xwtwo,\yupper+0.05) {$w_2$};

\node[left]  at (8.6,2.13) {\Large$\Omega_t$};
\node[above] at (\xwone,\ylower+0.05) {$w_1$};
\node[above] at (\xwonett,\ylower+0.05) {$w_1(t)-\ t$};
\node[right] at (9.5,\ylower+0.25) {$h_0(x_3)$};

\end{tikzpicture}
\caption{Through an eligible process, the tip points travel to the points at infinity, in such a way that $\Omega_t\subset\Omega_s$, for $s<t$.}
\label{fig:two_slits_h}
\end{figure}
   
To overcome this, we take a closer look at the formula of $h_0$, which depends on the parameters $x_1,x_2,x_3\in\mathbb{R}$. So, the idea now is to let those parameters vary with time $t$ in such a way that the point $w_1$, which will now depend on $t$ as well, moves towards the points at infinity on the right, so fast that $w_1-t$ still extends to the right. In other words, if $h_t$ is the function above with $x_1(t),x_2(t),x_3(t)$ being the new parameters and $w_1(t),w_2(t)$ the tip points at each time $t\ge0$, we want to obtain $\mathrm{Re}w_1(t)-t\nearrow+\infty$ and $\mathrm{Re}w_1(t)-t\searrow-\infty$, as $t\rightarrow+\infty$, through this process.

In such a case, we understand that $(h_t)_{t\ge0}$ provides a family of decreasing domains $(\Omega_t)_{t\ge0}$ and therefore, the function $f(z,t)=h_0^{-1}(h_t(z)-t)$ is well defined. Recall that the tip points $w_j(t)-t$ correspond to some $\xi_j(t)\in\mathbb{R}$, which provide the driving function in equation \eqref{LE1}. Studying the orbits of the tip points $f(\xi_j(t),t)$, as $t\rightarrow+\infty$, we deduce by Figure \ref{fig:two_slits_h}, that $f(\xi_1(t),t)=h_0^{-1}(w_1(t)-t)\rightarrow x_3=x_3(0)\in\mathbb{R}$ and $f(\xi_2(t),t)=h_0^{-1}(w_2(t)-t)\rightarrow x_1=x_1(0)\in\mathbb{R}$ and hence we get two attraction points.  Note by this figure, that the initial points $x_1,x_3\in\mathbb{R}$ are the attraction points of the flow, regardless of how $x_j(t)$ are parameterized. The evolution of the family of domain $(H_t)_{t\ge0}$ is seen in Figure \ref{fig:loewner_flow_with_two_attraction_points}.

\begin{figure}[ht]
\centering
\begin{tikzpicture}[
    scale=1,
    every node/.style={font=\large},
    dot/.style={circle, fill=black, inner sep=1pt},
    boundary/.style={gray!65, line width=0.8pt},
    orbit/.style={gray!65, line width=0.8pt},
    dashedpart/.style={gray!55, line width=0.5pt, dashed}
]

\def\xleft{0.5}
\def\xright{10.7}
\def\yaxis{1.2}

\coordinate (x1)  at (1.9,\yaxis);
\coordinate (xi1) at (3.1,\yaxis);
\coordinate (x2)  at (4.8,\yaxis);
\coordinate (x3)  at (6.8,\yaxis);
\coordinate (xi2) at (8.8,\yaxis);

\coordinate (p1) at (3.9,3.45);   
\coordinate (p2) at (5.95,2.25);  

\draw[boundary] (\xleft,\yaxis) -- (\xright,\yaxis);

\draw[dashedpart]
    (x1) .. controls (2.35,1.95) and (3.10,2.85) .. (p1);

\draw[orbit]
    (p1) .. controls (4.90,4.70) and (8.65,4.95) .. (xi2);

\draw[orbit]
    (xi1) .. controls (2.75,2.75) and (4.65,2.95) .. (p2);

\draw[dashedpart]
    (p2) .. controls (6.7,1.75) and (6.80,1.35) .. (x3);

\node[dot] at (p1) {};
\node[dot] at (p2) {};

\node[below] at (x1)  {$x_1$};
\node[below] at (xi1) {$\xi_1$};
\node[below] at (x2)  {$x_2$};
\node[below] at (x3)  {$x_3$};
\node[below] at (xi2) {$\xi_2$};

\node[above left]  at ($(p1)+(-0.05,0.00)$) {$f(\xi_1(t),t)$};
\node[above right] at ($(p2)+(0.10,0.05)$) {$f(\xi_2(t),t)$};

\end{tikzpicture}
\caption{The evolution of the Loewner chain $(H_t)_{t\ge0}$ is $\mathbb{H}$ except two slits
$\{f(\xi_j(s),s):0\le s\le t\}$, emanating from $\xi_j=\xi_j(0)$, for $j=1,2$.
As $t\to+\infty$, the orbits of the tip points approach the boundary, forming
chains with two attraction points.}
\label{fig:loewner_flow_with_two_attraction_points}
\end{figure}

As we shall see later on, a possible choice of varying the parameters would be to consider $x_1,x_2$ steady with time and let $x_3(t)\searrow x_2$, for an appropriate decreasing function. Such a choice may be parameterized as $x_3(t)=x_2+(x_3-x_2)e^{-\theta t}$, for some $\theta>0$, which needs to be properly chosen as well. For this to take place, we count how fast $\mathrm{Re}w_1(t)$ and $\mathrm{Re}w_2(t)$ tend to infinity, as $t\rightarrow+\infty$ in order to take $\theta$ small enough so that $\mathrm{Re}w_1(t)-t\nearrow+\infty$ and $\mathrm{Re}w_1(t)-t\searrow-\infty$, as $t\rightarrow+\infty$.

Once this idea is established, it is relatively easy to perform the appropriate calculations, in order to take a model of two slits. However, generalizing to more than two slits becomes a long procedure, because it would require a configuration of numerous parameters which describe the location of the attraction points and more importantly, the speed of the tip points. In addition to the computational difficulty, one has to come up with a correct choice of parameterization and to understand the mechanism behind the movement of the tip points in time.

\subsection*{Organization of the article} Before ending the Introduction, we outline the main steps of this work. Our principal objective is to construct the machinery for an arbitrary number of attraction points. After providing the necessary background for all the tools that will be of use in Section \ref{sec:tools}, we commence our construction. First, in Section \ref{sec:construction}, we build the model domain of the Loewner chain using a close-to-convex function of the form (see Proposition \ref{h for multiple points})
\begin{equation*}
  h(z)=\sum_{j=1}^Nb_j^+\log(z-k_j^+)-\sum_{j=1}^Mb_j^-\log(z-k_j^-),  
\end{equation*}
for a set of parameters $b_1^+,\dots,b_N^+,b_1^-,\dots,b_M^->0$ and $k_1^+,\dots,k_N^+,k_1^-,\dots,k_M^-\in\mathbb{R}$. These parameters can be chosen almost randomly, and we only impose very loose restrictions which do not harm generality. Our first goal is to study the analytical properties of $h$ in terms of those parameters. Then, in Section \ref{sec:multiple points}, we will introduce a suitable time parameterization for the parameters $k_1^+,\dots,k_N^+,k_1^-,\dots,k_M^-$, which pose as the possible attraction points of our subsequent Loewner chain. Thus, this part is the most important step and captures the essence of our construction. Through the time parameterization, we obtain a family of univalent functions $h_t$, $t\ge0$. To continue with in Propositions \ref{prop:rates of left tips} and \ref{prop:rates of right tips}, we determine the rates at which the tip points converge to the corresponding point at infinity. Then, in view of Corollary \ref{existance of thetas}, we apply a drift $z\mapsto z-\gamma t$, for some constant $\gamma$ depending on the aforementioned rates, and thus we consider $h_t-\gamma t$.

Afterwards, Section \ref{sec:flow} contains the formal construction of the Loewner chain. We build the flow $f(\cdot,t)=h_0^{-1}(h_t(\cdot)-\gamma t)$ and we normalize it so as to satisfy the hydrodynamic condition. In this way we get the desired Loewner chain. For the sake of completeness, we explicitly compute the driving function of the Loewner chain. The whole construction is summarized in Theorem \ref{Loenwer Chain}. Finally, in Section \ref{sec:angles}, we show that all orbits of the Loewner chain launch orthogonally (Theorem \ref{thm:initial angles}) and we also calculate their angles of convergence to their respective attraction points (Theorem \ref{thm:convergence angles}). For this last part, we heavily rely on potential theory, and specifically harmonic measure.


\addtocontents{toc}{\protect\setcounter{tocdepth}{2}}
\section{Basic tools and preliminaries}\label{sec:tools}

\subsection{Theory of conformal mappings}
We initiate our discussion with some analytic properties of a special class of univalent functions, the \textit{close-to-convex} functions. A domain $D\subset\mathbb{C}$ is said to be \textit{convex}, if for any $z,w\in D$, the line segment $[z,w]$ lies in $D$. Obviously, convex domains are simply connected and without loss of generality we may assume that $0\in D$. Then, the Riemann mapping theorem guarantees the existence of a unique conformal mapping $f\in H(\mathbb{D})$ of the unit disc onto $D$, so that $f(0)=0$ and $f'(0)>0$. A univalent function $f\in H(\mathbb{D})$ is said to be \textit{convex} if it maps the unit disc onto a convex domain, thus $f(\mathbb{D})$ is convex. We denote by $\mathcal{C}$ the set of all convex functions. Detailed characterization of convex functions of the unit disc are given in \cite[Chapter 2.6]{dur}. For our purposes where we study univalent function of the upper half-plane $\mathbb{H}$, we can naturally consider convex function of $\mathbb{H}$.

\begin{definition}
A univalent function $f\in H(\mathbb{\mathbb{H}})$ is said to be \textit{convex} if it maps the upper half-plane onto a convex domain, thus $f(\mathbb{H})$ is convex. We denote by $\mathcal{C}_{\mathbb{H}}$ the set of all convex functions of $\mathbb{H}$.

\end{definition}

The next class of univalent functions is called the class of \textit{close-to-convex} functions, also referred to as \textit{linearly accessible} functions. This is the set of all univalent functions of $\mathbb{D}$, whose range is complementary to a family of straight lines.

\begin{definition}\label{close-to-convex}
 A function $f\in H(\mathbb{D})$ is said to be \textit{close-to-convex} if there exists a function $l\in\mathcal{C}$, so that 
 \begin{equation*}
    \mathrm{Re}\left(\frac{f'(z)}{l'(z)}\right)>0, \quad \text{for all }z\in\mathbb{D}.
 \end{equation*}
We use the notation $\mathcal{K}$ for the set of all close-to-convex functions of the unit disc.

\end{definition}
In the literature, the class $\mathcal{K}$ is defined primarily in the unit disc, but it is straightforward to consider functions of the upper half-plane as well. Indeed, we proceed with the following definition.
\begin{definition}\label{def:close to convex}
    
A function $f\in H(\mathbb{H})$ is said to be \textit{close-to-convex} if there exists some $l\in\mathcal{C}_{\mathbb{H}}$, so that 
\begin{equation*}
    \mathrm{Im}\left(\frac{f'(z)}{l'(z)}\right)>0, \quad\text{for all }z\in\mathbb{H}.
\end{equation*}

\end{definition}
Note that we do not assume $f$ to be a priori univalent. Nevertheless, the following theorem gives a sufficient condition for the univalence of close-to-convex functions.
\begin{theorem} \label{spirallikeinH}
     Let $f\in H(\mathbb{H})$ be close-to-convex. Then, $f$ is univalent.
\end{theorem}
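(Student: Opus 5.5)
The natural route is the Noshiro--Warschawski--Kaplan argument, transplanted to $\mathbb{H}$. Let $f\in H(\mathbb{H})$ and $l\in\mathcal{C}_{\mathbb{H}}$ with $\mathrm{Im}\,(f'/l')>0$ on $\mathbb{H}$. Put $D:=l(\mathbb{H})$, which is convex, and
\[
F:=f\circ l^{-1}\colon D\to\mathbb{C}.
\]
Since $l$ is univalent, $f$ is univalent if and only if $F$ is. By the chain rule,
\[
F'(w)=\frac{f'(z)}{l'(z)},\qquad w=l(z),
\]
so $\mathrm{Im}\,F'>0$ throughout $D$. This reduces the statement to the following claim: \emph{a holomorphic function on a convex domain whose derivative takes values in the open upper half-plane is univalent.}

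To prove the claim, take $w_1\neq w_2$ in $D$. By convexity the segment $[w_1,w_2]$ lies in $D$, so we may integrate along $w_1+s(w_2-w_1)$, $s\in[0,1]$:
\[
F(w_2)-F(w_1)=(w_2-w_1)\int_0^1 F'\bigl(w_1+s(w_2-w_1)\bigr)\,ds .
\]
The integrand is continuous and takes values in the open convex set $\mathbb{H}$. Hence its average, the integral, also lies in $\mathbb{H}$; in particular it has strictly positive imaginary part and is nonzero. Since $w_2-w_1\neq0$, we get $F(w_2)\neq F(w_1)$. So $F$ is injective, and therefore $f=F\circ l$ is univalent.

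The only step needing care is that the integral lands in the \emph{open} half-plane rather than merely its closure. This holds because
\[
\mathrm{Im}\int_0^1 F'\,ds=\int_0^1 \mathrm{Im}\,F'\,ds,
\]
and the right-hand side is the integral of a continuous, strictly positive function over $[0,1]$, hence strictly positive.

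Two remarks on the hypotheses. First, $l$ must be univalent so that $l^{-1}$ exists on $D$; this is built into the definition of $\mathcal{C}_{\mathbb{H}}$. Second, the claim does not use that $\mathbb{H}$ is a half-plane specifically, only that the set of values of $F'$ is an open convex set avoiding $0$. So the argument is literally the disc proof, with $\mathrm{Re}>0$ replaced by $\mathrm{Im}>0$.
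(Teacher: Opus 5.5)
Your proof is correct and follows essentially the same route as the paper. Both pass to the convex domain $l(\mathbb{H})$ via $f\circ l^{-1}$; the paper rotates to $g=-if\circ l^{-1}$ so that $\mathrm{Re}\,g'>0$ and then cites the Noshiro--Warschawski criterion (Theorem 2.19 in Duren), whereas you prove that criterion inline with the segment-integration argument, keeping the condition $\mathrm{Im}>0$ instead of rotating.
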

\begin{proof}
    By definition, there exists some $l\in\mathcal{C}_{\mathbb{H}}$ such that
    \begin{equation*}
        \mathrm{Im}\left(\frac{f'(z)}{l'(z)}\right)>0, \quad \text{for all }z\in\mathbb{H}.
    \end{equation*}
    By \cite[Theorem 2.19]{dur}, if $D$ is a convex domain and $g\in H(D)$ is a function, such that $\mathrm{Re}g'$ has constant sign in $D$, then it is univalent in $D$. We utilize this by considering the function $g=-if\circ l^{-1}$, which is analytic in the convex domain $l(\mathbb{H})$ and by assumption 
    $$\mathrm{Re}(g'(z))=\mathrm{Re}\left(-i\frac{f'(l^{-1}(z))}{l'(l^{-1}(z))}\right)=\mathrm{Im}\left(\frac{f'(w)}{l'(w)}\right)>0$$
    for all $z\in l(\mathbb{H})$, where $w=l^{-1}(z)\in\mathbb{H}$. Consequently, $g$ is univalent and hence, so is $f$.
\end{proof}

\subsection{Prime end theory}\label{sub:prime ends}

Throughout the present work, we will heavily rely on the prime ends of certain simply connected domains. For this reason, in this subsection, we proceed to a concise rundown of the fundamentals of prime ends and relative notions. This will also lay the groundwork for various constructions that will be required in the sequel. For a comprehensive presentation of the deep theory of prime ends, we refer the interested reader to \cite[Chapter 9]{pom}.

Let $\Omega\subsetneq\mathbb{C}$ be a simply connected domain. Let $\gamma\vcentcolon[0,1]\to\hat{\mathbb{C}}$, where $\hat{\mathbb{C}}$ denotes the extended complex plane, be a Jordan arc with $\gamma((0,1))\subset\Omega$ and $\gamma(0),\gamma(1)\in\partial_\infty\Omega$, where $\partial_\infty\Omega$ is the boundary of $\Omega$ with respect to $\hat{\mathbb{C}}$. Then, the trace $C\vcentcolon=\gamma([0,1])$ is called a \textit{crosscut} of $\Omega$. A crosscut $C$ separates $\Omega$ into two open connected components $A$ and $B$ such that $\partial A\cap\Omega=\partial B\cap\Omega=C\cap\Omega=\gamma((0,1))$. A sequence $\{C_n\}$ of crosscuts is called a \textit{null chain} of $\Omega$ whenever the following three conditions hold:
\begin{enumerate}
    \item[(i)] $C_n\cap C_m=\emptyset$ for all $n,m\in\mathbb{N}$ with $n\ne m$;
    \item[(ii)] for each $n\ge2$, the sets $C_1\cap\Omega$ and $C_{n+1}\cap\Omega$ lie in different connected components of $\Omega\setminus C_n$;
    \item[(iii)] the spherical diameter of $C_n$ (or the Euclidean if $\Omega$ is bounded) converges to $0$, as $n\to+\infty$.
\end{enumerate}
Given a null chain $\{C_n\}$, the \textit{interior part} $\mathrm{Int}C_n$ of $C_n$, $n\ge2$, is defined to be the unique connected component of $\Omega\setminus C_n$ not contanining $C_1\cap\Omega$. Under this notation, two null chains $\{C_n\}$ and $\{C_n'\}$ are said to be \textit{equivalent} if for each $n\ge2$ there exists $m\in\mathbb{N}$ so that
\begin{equation}\label{eq:equivalence relation}
    \mathrm{Int}C_m'\subseteq\mathrm{Int}C_n \quad\text{and} \quad \mathrm{Int}C_m\subseteq\mathrm{Int}C_n'.
\end{equation}
The relation described above can be readily verified that defines an equivalence relation in the set of null chains. Each equivalence class is called a \textit{prime end} of $\Omega$. From now on, the set of all prime ends of $\Omega$ will be denoted by $\partial_C\Omega$ (also known as \textit{Carath\'{e}odory boundary}). For a prime end $\xi$ of $\Omega$ with a representative null chain $\{C_n\}$, its \textit{impression} is given by the formula
\begin{equation*}
    I(\zeta)\vcentcolon=\bigcap_{n=2}^{+\infty}\overline{\mathrm{Int}C_n},
\end{equation*}
where the closure is taken with respect to $\hat{\mathbb{C}}$. Note that due to \eqref{eq:equivalence relation}, the definition of impression does not depend on the representative of $\xi$.

It turns out that prime ends constitute the cornerstone in the attempts of extending a Riemann map to the boundary. Indeed, given a Riemann mapping $f\vcentcolon\mathbb{D}\to\Omega$, Carath\'{e}odory's Theorem asserts that $f$ can be extended to a homeomorphism between $\mathbb{D}\cup\partial\mathbb{D}$ and $\Omega\cup\partial_C\Omega$. In other words, the points of the unit circle come into a one to one correspondence with the prime ends of $\Omega$. The same correspondence does not hold, in general, for the sets $\partial\mathbb{D}$ and $\partial\Omega$. Of course, Carath\'{e}odory's Theorem can be generalized for any mapping $f\vcentcolon\Omega_1\to\Omega_2$, where $\Omega_1,\Omega_2\subsetneq\mathbb{C}$ are two simply connected domains and $f$ maps $\Omega_1$ conformally onto $\Omega_2$. Indeed, this $f$ induces the homeomorphism $f\vcentcolon\Omega_1\cup\partial_C\Omega_1\to\Omega_2\cup\partial_C\Omega_2$. Note that in similar circumstances, we are going to use the same symbol for both the initial mapping and its extension to the boundary.

This extension allows us to express certain notions which are usual for the unit circle in the setting of abstract boundaries. For example, if $f$ is a Riemann mapping of a simply connected domain $\Omega$, then a set of prime ends $E\subseteq\partial_C\Omega$ is said to be \textit{Borel, connected or compact}, if the set $f^{-1}(E)\subseteq\partial\mathbb{D}$ is Borel, connected or compact, respectively, in the usual topology of the unit circle.

Finally, we need a way of distinguishing the convergence to a prime end from that to a boundary point. More specifically, let $\{z_n\}\subset\mathbb{D}$ be a sequence converging to $\zeta\in\partial\mathbb{D}$. By Carath\'{e}odory's Theorem, $\zeta$ corresponds to the prime end $f(\zeta)\in\partial_C\Omega$. Therefore, the sequence $\{f(z_n)\}\in\Omega$ clusters on the impression $I(f(\zeta))$. It is quite possible that this impression is not a singleton and that $\{f(z_n)\}$ might land on a certain point $p\in I(f(\zeta))$. In such a scenario, we will say that $\{f(z_n\}$ converges to $p$ (in the Euclidean sense), whereas $\{f(z_n)\}$ converges to $f(\zeta)$ in the Carath\'{e}odory topology of $\Omega$. The same holds for a curve in the place of a sequence. 

Before ending the section, we provide an example with some terminology which seems useful in the sequel.
\begin{example}\label{ex:prime ends}
    Let $\Omega$ be the horizontal strip $S\vcentcolon=\{w\in\mathbb{C}\vcentcolon 0<\mathrm{Im}w<3\}$ minus the horizontal half-lines $L^+\vcentcolon=\{w\in\mathbb{C}\vcentcolon \mathrm{Re}w\le0, \, \mathrm{Im}w=1\}$ and $L^-\vcentcolon=\{w\in\mathbb{C}\vcentcolon \mathrm{Re}w\ge0, \, \mathrm{Im}w=2\}$. So $\Omega=S\setminus(L^+\cup L^-)$ and we have a shape similar to that described in the Introduction; see Figure \ref{fig:two_slits_h_0}. We will call the set $L^+$ a \textit{left slit} or a \textit{left half-line} and the set $L^-$ a \textit{right slit} or a \textit{right half-line}. In addition, the points $i$ and $2i$ will be called the \textit{tip points} of the left and right slit, respectively. These notions extend naturally in case we had any other horizontal strip and we excluded any number of half-lines. Furthermore, it may be checked that we can create four pairwise non-equivalent null chains in $\Omega$ with impression $\infty$. As a result, $\Omega$ has four distinct prime ends with impression $\infty$. We denote by $\infty_1^+$ the one defined through crosscuts with imaginary parts larger than $1$ and real parts decreasing to $-\infty$, by $\infty_2^+$ the one defined by imaginary parts less than $1$ and real parts decreasing to $-\infty$, by $\infty_1^-$ the one defined by imaginary parts less than $2$ and real parts increasing to $+\infty$, and by $\infty_2^-$ the remaining one. The prime ends $\infty_1^+,\infty_2^+$ are the \textit{left prime ends of} $\Omega$ (with impression infinity), whereas $\infty_1^-,\infty_2^-$ are the \textit{right prime ends of} $\Omega$ (with impression infinity). Again, left and right prime ends extend naturally for any horizontal strip and any number of slits. Moreover, each point of $L^+$ excluding its tip point corresponds to two distinct prime ends of $\Omega$. The same remark is true for the points of $L^-$ minus its tip point. The construction dictates that the curve $\gamma\vcentcolon(0,+\infty)\to\Omega$ with $\gamma(t)=i+t$ converges to the prime end $\infty_1^-$ in the Carath\'{e}odory topology of $\Omega$, as $t\to+\infty$. Since the impression of this prime end is the singleton $\{\infty\}$, we may say that $\gamma$ converges to $\infty_1^-$ or to a right prime end, without explicitly mentioning the Carath\'{e}odory topology. Finally, due to the existence of $\gamma$, we will write that the tip point $i$ \textit{accesses} the prime end $\infty_1^-$ since there exists a (curve with trace a) horizontal half-line ``joining'' $i$ to $\infty_1^-$. Similarly, the tip point $2i$ accesses $\infty_2^+$. The definition of access may be also used for any point $w\in\Omega$. Note that each $w\in\Omega$ may access at most two prime ends of $\Omega$ with impression infinity.
\end{example}

\subsection{Harmonic measure}\label{sub:harmonic measure}

During the course of our last proof, we are going to utilize one conformal invariant, the \textit{harmonic measure}. The harmonic measure is a potential theoretic tool with impressive capabilities. An introductory presentation of its rich theory may be found in \cite{harmonic}. For the purposes of the present article, we will only review some basic facts. Let $\Omega\subsetneq\mathbb{C}$ be a domain with non-polar boundary (i.e. of positive logarithmic capacity). Let $E$ be a Borel subset of $\partial\Omega$. Then, the harmonic measure of $E$ with respect to $\Omega$ is exactly the solution of the generalized Dirichlet problem for the Laplacian in $\Omega$ with boundary function equal to $1$ on $E$ and to $0$ on $\partial\Omega\setminus E$. For the harmonic measure of $E$ with respect to $\Omega$ and for $z\in\Omega$, we use the notation $\omega(z,E,\Omega)$. It is known that for a fixed $z\in\Omega$, $\omega(z,\cdot,\Omega)$ is a Borel probability measure on $\partial\Omega$.
\begin{remark}\label{rem:Brownian}
  An important piece of information with regard to the harmonic measure is its probabilistic interpretation. Although we do not need it in this work, it serves nicely as an intuition when utilizing this machinery. In particular, the quantity $\omega(z,E,\Omega)$ represents the probability of a Brownian motion starting at $z$ to exit the domain $\Omega$ for the first time passing through $E$. This thought, combined with subsequent examples justifies intuitively our usage of harmonic measure in our study during Section \ref{sec:angles}. 
\end{remark}

As we already mentioned, the harmonic measure is conformally invariant. This allows us to use sets of prime ends. More specifically, if $\Omega\subsetneq\mathbb{C}$ is a simply connected domain, $E\subseteq\partial_C\Omega$ is Borel, and $f\vcentcolon\mathbb{D}\to\Omega$ is a Riemann mapping, then 
\begin{equation*}
    \omega(z,E,\Omega)\vcentcolon=\omega(f^{-1}(z),f^{-1}(E),\mathbb{D}), \quad \text{for all }z\in\Omega,
\end{equation*}
where $f^{-1}(E)$ is well-defined due to the extension of $f$ as a homeomorphism by Carath\'{e}odory's Theorem. In addition, the harmonic measure has a very useful monotonicity property. More specifically, let $\Omega_1\subset\Omega_2\subsetneq\mathbb{C}$ be two domains with non-polar boundaries and let $E\subset\partial\Omega_1\cap\partial\Omega_2$ (or $E\subset\partial_C\Omega_1\cap\partial_C\Omega_2$) be Borel. Then,
\begin{equation*}
    \omega(z,E,\Omega_1)\le\omega(z,E,\Omega_2), \quad\text{for all }z\in\Omega_1.
\end{equation*}
In particular, this monotonicity can be made more precise by means of the so-called \textit{Strong Markov Property}. Indeed, if $\Omega_1,\Omega_2,E$ are as above, then by \cite[p.88]{PS} we know that
\begin{equation}\label{eq:Markov}
    \omega(z,E,\Omega_2)=\omega(z,E,\Omega_1)+\int_{\partial\Omega_1\cap\Omega_2}\omega(\zeta,E,\Omega_2)\omega(z,d\zeta,\Omega_2),
\end{equation}
for all $z\in\Omega_1$. 

Next, we provide an example that will be useful in subsequent sections.
\begin{example}\label{ex:zero harmonic measure}
 Let $[a,b]\subset\mathbb{R}$ and let $z\in\mathbb{H}$. Then, by \cite[p.100]{harmonic}, 
\begin{equation}\label{eq:harmonic measure half-plane}
 \omega(z,[a,b],\mathbb{H})=\frac{1}{\pi}\Arg\left(\frac{z-b}{z-a}\right). 
 \end{equation}
As a consequence, given any curve $\gamma:[0,+\infty)\to\mathbb{H}$ whose cluster set, as $t\to+\infty$, is contained in $(-\infty,a)\cup(b,+\infty)\cup\{\infty\}$, it can be easily calculated that $\lim_{t\to+\infty}\omega(\gamma(t),[a,b],\mathbb{H})=0$. Through this easy example we may extract a very useful property via the conformal invariance of the harmonic measure. Let $\Omega\subsetneq\mathbb{C}$ be a simply connected domain and $E\subseteq\partial_C\Omega$ be Borel. If $\eta\vcentcolon[0,+\infty)\to\Omega$ is a curve which clusters in the Carath\'{e}odory topology of $\Omega$ on a set $\Lambda$ compactly contained in $\partial_C\Omega\setminus E$, then 
\begin{equation*}
    \lim_{t\to+\infty}\omega(\eta(t),E,\Omega)=0.
\end{equation*}
This can be achieved by using a conformal mapping of $\mathbb{H}$ onto $\Omega$ and realizing that by construction this conformal mapping can be chosen so that the preimage of $E$ is contained in an interval $[a,b]\subset\mathbb{R}$, while the cluster set of $\eta$ has preimage in $(-\infty,a)\cup(b,+\infty)\cup\{\infty\}$. Then, \eqref{eq:harmonic measure half-plane} yields the desired limit. Observe that in \eqref{eq:harmonic measure half-plane}, the closed interval $[a,b]$ may be replaced by the open interval $(a,b)$ (or any of the intervals $(a,b]$, $[a,b)$) and the result still holds because the singletons $\{a\}$ and $\{b\}$ have zero logarithmic capacity.
   
\end{example}

\begin{remark}\label{rem:harmonic measure angles}
    Suppose that $E\subset\partial\mathbb{D}$, where $\mathbb{D}$ is the unit disk, is a circular arc with endpoints $a,b$. Then, we know (see e.g. \cite[p.155]{carath}) that the level set
    \begin{equation*}
       D_k=\left\{z\in\mathbb{D}:\omega(z,E,\mathbb{D})=k\right\}, \quad k\in(0,1), 
    \end{equation*}
    is a circular arc (or a diameter in case $k=\frac{1}{2}$ and $E$ is a half-circle) inside $\mathbb{D}$ with endpoints $a,b$ that intersects $\partial\mathbb{D}$ with angles $k\pi$ and $(1-k)\pi$. Consider $\gamma\vcentcolon[0,+\infty)\to\mathbb{D}$ be a curve converging, as $t\to+\infty$, to $\zeta\in\partial\mathbb{D}$. We say that $\gamma$ converges to $\zeta$ \textit{by angle} $\theta\in[0,\pi]$ provided $\lim_{t\to+\infty}\arg(1-\overline{\zeta}\gamma(t))=\frac{\pi}{2}-\theta$. Let $E$ be the half-circle with endpoints $\pm\zeta$ lying to the left of $\gamma([0,+\infty))$ if we consider this trace oriented towards $\zeta$. Then, by the level set we described above, we get that $\gamma$ converges to $\zeta$ by angle $\theta$ if and only if 
    \begin{equation}\label{eq:harmonic measure angle}
        \lim_{t\to+\infty}\omega(\gamma(t),E,\mathbb{D})=\frac{\theta}{\pi}.
    \end{equation}
    Next, we want to transcend this remark to the setting of the upper half-plane. Consider a M\"{o}bius automorphism mapping the unit disk conformally onto $\mathbb{H}$ and specifically the point $\zeta$ to $x\in\mathbb{R}$ and the point $-\zeta$ to $\infty$. Let $\eta\vcentcolon[0,+\infty)\to\mathbb{H}$ denote the image of $\gamma$. Hence $\lim_{t\to+\infty}\eta(t)=x$. Since conformal mappings preserve orientations, the half-circle $E$ is mapped onto the interval $[x,+\infty)$ (or $(x,+\infty)$ depending on whether we consider $E$ to contain $\zeta$ or not, something that does not affect the harmonic measure since singletons are polar sets). So, in principle, $\eta$ converges to $x$ by angle $\theta$ if and only if
    \begin{equation*}
        \lim_{t\to+\infty}\omega(\eta(t),[x,+\infty),\mathbb{H})=\frac{\theta}{\pi},
    \end{equation*}
    in view of \eqref{eq:harmonic measure angle}. In this scenario, the convergence by angle $\theta$ yields $\lim_{t\to+\infty}\arg(\eta(t))=\pi-\theta$. Nevertheless, the usual considerations of angles in the upper half-plane is the reverse, since it makes more sense to define convergence by angle $\theta$ as $\lim_{t\to+\infty}\arg(\eta(t))=\theta$. For this reason, we will follow this convention and say that $\eta$ converges by angle $\theta$ if and only if
    \begin{equation}\label{eq:angles in half-plane}
        \lim_{t\to+\infty}\omega(\eta(t),[x,+\infty),\mathbb{H})=\frac{\pi-\theta}{\pi}.
    \end{equation}
    As a result, thinking reversely, if we want to compute the angle of convergence of $\gamma$, we just need to compute the quantity $\pi(1-\lim_{t\to+\infty}\omega(\eta(t),[x,+\infty),\mathbb{H}))$. Of course, if the actual limit does not exist, then $\eta$ does not converge by a certain angle, but has a slope equal to a continuum in $[0,\pi]$. But for the purposes of this article, we will not deal with such pathological cases.
\end{remark}

\begin{example}\label{ex:strip}
    We end the section with a final example that is going to be essential. Let $S=\{w\in\mathbb{C}\vcentcolon a<\mathrm{Im}w<b\}$ be a horizontal strip and denote by $\partial S^+$ and $\partial S^-$ the upper and lower, respectively, horizontal lines bounding $S$. Then, through a conformal mapping that sends $S$ to the upper half-plane and $\partial S^+$ to some interval $(a,b)\in\mathbb{R}$, we may use \eqref{eq:harmonic measure half-plane} and the conformal invariance of the harmonic measure (also cf. \cite[p.100]{harmonic}) to compute 
    \begin{equation}\label{eq:harmonic measure strip upper}
        \omega(z,\partial S^+,S)=\frac{\mathrm{Im}z-a}{b-a}, \quad\text{for all }z\in S.
    \end{equation}
     Since the harmonic measure is a Borel probability measure, we have $\omega(z,\partial S^-,S)=1-\omega(z,\partial S^+,S)$ which leads to
    \begin{equation}\label{eq:harmonic measure strip lower}
        \omega(z,\partial S^-,S)=\frac{b-\mathrm{Im}z}{b-a}, \quad\text{for all }z\in S.
    \end{equation}
\end{example}

\section{Construction of the model domain}\label{sec:construction}
We start off with our model domain. As we mentioned earlier, we wish to construct a chain with multiple attraction points on the boundary. Intuitively, we build it based on the following simple idea. 

In the classical setting of non-elliptic semigroups, one has domains $\Omega=h(\mathbb{H})$, for some $h$ (the Koenigs map), that are convex in the right direction. This means that for every $w\in\Omega$, the horizontal half-line $\{w+t:t\ge0\}$ lies in $\Omega$. Clearly, such a domain $\Omega$ only has one prime end with impression infinity defined by crosscuts with strictly increasing real parts. Then, for each $w\in\Omega$, $w+t$ converges to this right prime end of infinity, as $t\to+\infty$, and hence $h^{-1}(w+t)$ converges at some point $\tau\in\mathbb{R}\cup\{{\infty}\}$, as $t\to+\infty$. For instance, consider a strip minus horizontal half-lines extending to the left. Keeping this in mind, in order to construct a chain with multiple attraction points, we need multiple prime ends of infinity on the right. In this way, the various half lines $\{w+t:t\ge0\}$, $w\in\Omega$, extend to different prime ends, depending on their respective heights. As a consequence, we examine a domain with multiple prime ends of infinity on the left and multiple prime ends of infinity on the right. Such a domain is examined below.

\begin{proposition}\label{h for multiple points proof}
    Let $(k_j^+)_{j=1}^N$, $(k_j^-)_{j=1}^M$ be points on the real line in increasing order with $k_N^+<k_1^-$ and let $(b_j^+)_{j=1}^N$, $(b_j^-)_{j=1}^M$ be some positive numbers such that $\sum_{j=1}^N b_j^+>\sum_{j=1}^M b_j^-$. For the principal branch of the logarithm, consider the function
    \begin{equation}\label{h for multiple points}
        h(z)\vcentcolon=\sum_{j=1}^Nb_j^+\log(z-k_j^+)-\sum_{j=1}^Mb_j^-\log(z-k_j^-), \quad z\in\mathbb{H}.
    \end{equation}
    Then, $h$ is a close-to-convex function of the upper half-plane.
\end{proposition}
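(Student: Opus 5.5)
The plan is to verify Definition \ref{def:close to convex} directly, using an explicit convex comparison function $l$. Let $c$ be any point with $k_N^+<c<k_1^-$; such a point exists by the hypothesis $k_N^+<k_1^-$. Set $l(z)\vcentcolon=\log(z-c)$, with the principal branch. The first step is to check that $l\in\mathcal{C}_{\mathbb{H}}$. The map $z\mapsto z-c$ is a translation of $\mathbb{H}$ onto itself, and the principal logarithm maps $\mathbb{H}$ univalently onto the horizontal strip $\{0<\mathrm{Im}\,w<\pi\}$. This strip is convex, so $l$ is a convex function of $\mathbb{H}$. Also, $h$ is holomorphic in $\mathbb{H}$, since each $z-k_j^{\pm}$ stays in $\mathbb{H}$, away from the branch cut of the principal logarithm.

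The second step is to compute the quotient $h'/l'$. Since $l'(z)=1/(z-c)$, we have
\begin{equation*}
\frac{h'(z)}{l'(z)}=\sum_{j=1}^N b_j^+\frac{z-c}{z-k_j^+}-\sum_{j=1}^M b_j^-\frac{z-c}{z-k_j^-}.
\end{equation*}
For real $k$, write $\frac{z-c}{z-k}=1+\frac{k-c}{z-k}$. Using $\mathrm{Im}\frac{1}{z-k}=-\frac{\mathrm{Im}z}{|z-k|^2}$, this gives
\begin{equation*}
\mathrm{Im}\left(\frac{z-c}{z-k}\right)=\frac{(c-k)\,\mathrm{Im}z}{|z-k|^2}.
\end{equation*}
Therefore
\begin{equation*}
\mathrm{Im}\left(\frac{h'(z)}{l'(z)}\right)=\mathrm{Im}z\left(\sum_{j=1}^N\frac{b_j^+(c-k_j^+)}{|z-k_j^+|^2}+\sum_{j=1}^M\frac{b_j^-(k_j^--c)}{|z-k_j^-|^2}\right).
\end{equation*}
Every term on the right is strictly positive for $z\in\mathbb{H}$, because $b_j^{\pm}>0$, $k_j^+<c<k_j^-$ and $\mathrm{Im}z>0$. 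Hence $\mathrm{Im}(h'/l')>0$ on $\mathbb{H}$, which means $h$ is close-to-convex in the sense of Definition \ref{def:close to convex}. By Theorem \ref{spirallikeinH}, it follows that $h$ is also univalent.

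The only real choice in the argument is the point $c$. It must separate the $k_j^+$ from the $k_j^-$, so that the signs of the logarithmic coefficients are matched by the sign of $c-k$. Once $c$ is fixed this way, the computation is routine. The hypothesis $\sum_j b_j^+>\sum_j b_j^-$ is not needed for close-to-convexity: it only affects the real constant $\sum_j b_j^+-\sum_j b_j^-$ in the expansion of $h'/l'$. I expect it to matter later, for the geometry of $h(\mathbb{H})$, namely the width of the limiting strip and which slits extend to the left versus the right.
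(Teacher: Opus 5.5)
Your proof is correct. It follows the same strategy as the paper: an explicit logarithmic comparison function mapping $\mathbb{H}$ onto a strip, followed by a term-by-term sign check of $\mathrm{Im}(h'/l')$. The difference is your choice of $l$, which is simpler and in fact better.

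The paper takes two points $\tau<k_j^+<s<k_i^-$ and $l(z)=\log\frac{s-z}{\tau-z}$, so that $(s-\tau)\,h'(z)/l'(z)=h'(z)(z-s)(z-\tau)$. The extra factor $(z-\tau)$ produces a linear term $\bigl(\sum_j b_j^+-\sum_j b_j^-\bigr)z$. As a result, the quantity $\frac{s-\tau}{\mathrm{Im}z}\mathrm{Im}(h'/l')$ tends to $\sum_j b_j^+-\sum_j b_j^-$ as $|z|\to\infty$. With that $l$, the hypothesis $\sum_j b_j^+>\sum_j b_j^-$ (or at least $\ge$) therefore cannot be dropped.

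Your $l(z)=\log(z-c)$ is, up to an additive constant, the limit of the paper's $l$ as $\tau\to-\infty$ with $s=c$. With it, the non-decaying part of $h'/l'$ is the real constant $\sum_j b_j^+-\sum_j b_j^-$, which contributes nothing to the imaginary part. So your argument gives close-to-convexity, and hence univalence via Theorem \ref{spirallikeinH}, for all positive $b_j^{\pm}$.

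Your remark about where the hypothesis genuinely matters is also right. In the paper it is used in the boundary analysis after the proposition: to place the last root of $h'$ in $(k_M^-,+\infty)$, and to get $\mathrm{Re}\,h(x)\to+\infty$ as $x\to\pm\infty$. That is what determines that the end of $h(\mathbb{H})$ corresponding to $\infty$ lies on the right, and hence which slits extend left and which extend right.
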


\begin{proof}
    Taking into account that $k_j^+<k_i^-$, for any pair of indices $i,j$, we consider two points $s,\tau\in\mathbb{R}$ satisfying $\tau<k_j^+<s<k_i^-$, for all $i,j$. Evidently, 
    \begin{equation}\label{eq:close to convex 1}
        (k_j^+-\tau)(k_j^+-s)<0 \quad \text{and} \quad (k_j^--\tau)(k_j^--s)>0,
    \end{equation}
    for any choice of indices $i,j$. Define the function $l(z)\vcentcolon=\log(\frac{s-z}{\tau-z})$ which maps the upper half-plane conformally onto a strip. Hence $l\in\mathcal{C}_{\mathbb{H}}$, since $l(\mathbb{H})$ is convex. Our objective is to show that the imaginary part of $\frac{h'}{l'}$ has constant sign in $\mathbb{H}$, which in turn will imply the desired result. Differentiating $h$ and $l$, we get that

    \begin{align}\label{eq:close to convex 2}
    \notag    &\frac{h'(z)}{l'(z)}(s-\tau)=\sum_{j=1}^Nb_j^+\frac{(z-\tau)(z-s)}{z-k_j^+}-\sum_{j=1}^Mb_j^-\frac{(z-\tau)(z-s)}{z-k_j^-}\\
     \notag     &=\sum_{j=1}^Nb_j^+\frac{(z-k_j^++k_j^+-\tau)(z-k_j^++k_j^+-s)}{z-k_j^+}-\sum_{j=1}^Mb_j^-\frac{(z-k_j^-+k_j^--\tau)(z-k_j^-+k_j^--s)}{z-k_j^-}\\
     \notag    &=\sum_{j=1}^Nb_j^+(z-k_j^+)+\sum_{j=1}^Nb_j^+(2k_j^+-\tau-s)+\sum_{j=1}^Nb_j^+\frac{(k_j^+-\tau)(k_j^+-s)}{z-k_j^+}\\
         &\hspace{30mm} -\sum_{j=1}^Mb_j^-(z-k_j^-)-\sum_{j=1}^Mb_j^-(2k_j^--\tau-s)-\sum_{j=1}^Mb_j^-\frac{(k_j^--\tau)(k_j^--s)}{z-k_j^-}.
    \end{align}
Equating the imaginary parts in \eqref{eq:close to convex 2}, we get 
\begin{equation*}
   \frac{s-\tau}{\mathrm{Im}z}\mathrm{Im}\left(\frac{h'(z)}{l'(z)}\right)=\sum_{j=1}^Nb_j^+-\sum_{j=1}^Mb_j^--\sum_{j=1}^Nb_j^+\frac{(k_j^+-\tau)(k_j^+-s)}{|z-k_j^+|^2}+\sum_{j=1}^Nb_j^-\frac{(k_j^--\tau)(k_j^--s)}{|z-k_j^-|^2}.
\end{equation*}
Given that $s-\tau>0$, our hypothesis on the sets $(b_j^+)_{j=1}^N, (b_j^-)_{j=1}^M$ along with relation \eqref{eq:close to convex 1} yield that $\frac{h'}{l'}$ has positive imaginary part in $\mathbb{H}$. In view of Definition \ref{def:close to convex}, we see that $h$ is close-to-convex. 
\end{proof}

Since $h$ is univalent and continuous up to the boundary (except for the points $k_j^+,k_i^-$), we can describe its image by looking at its boundary behaviour. To achieve this, it is necessary to inspect the location of the roots of $h'$. It can be seen directly that $h'$ is a rational function with numerator the polynomial
\begin{equation}\label{polynomial}
   P(z)=\sum_{j=1}^Nb_j^+\prod_{i\neq j}(z-k_i^+)\prod_{i=1}^M(z-k_i^-)-\sum_{j=1}^Mb_j^-\prod_{i\neq j}(z-k_i^-)\prod_{i=1}^N(z-k_i^+).
\end{equation}
This is a polynomial of degree $N+M-1$, meaning it has exactly this many roots counting multiplicity. Observe that each interval $(k_j^+,k_{j+1}^+)$, $1\le j\le N-1$, and $(k_j^-,k_{j+1}^-)$, $1\le j\le M-1$, contains at least one root of $P$, and hence of $h'$. This is because $P$ alternates sign at the endpoints of each of the above intervals. This means that there exist $N+M-2$ many roots $\rho_j^+\in(k_j^+,k_{j+1}^+)$ and $\rho_j^-\in(k_j^-,k_{j+1}^-)$ of odd multiplicity. Therefore, if even one of them is multiple, then we would have at least $N+M$ roots counting multiplicity, exceeding the degree of $P$. Hence, all of the above $N+M-2$ many distinct roots are simple. Due to the continuity, we understand that there are no other roots in these intervals. Indeed, since $P$ alternates sign at the endpoints, if there exists another root $\rho_j'$, it is either a multiple root, or there exists at least another $\rho_j''$, violating again the fact that $P$ has degree $N+M-1$. For the same reason, observe that the interval $(k_N^+,k_1^-)$ cannot contain a root either. Finally, since $P$ has real coefficients, the remaining root lies in one of the intervals $(-\infty,k_1^+)$, $(k_M^-,+\infty)$. Assuming that this root lies in $(-\infty,k_1^+)$, we deduce that $P(x)>0$ for all $x>k_M^-$ and as a result $h$ is increasing in $(k_M^-,+\infty)$. However, we have that $\lim_{x\rightarrow (k_M^-)^+}h(x)=+\infty$ and because $\sum_{j=1}^N b_j^+>\sum_{j=1}^M b_j^-$, we have that 
\begin{equation*}
  \lim_{x\rightarrow+\infty}h(x)=\lim_{x\rightarrow+\infty}\log\left(\frac{\prod_{j=1}^N(x-k_j^+)^{b_j^+}}{\prod_{j=1}^M(x-k_j^-)^{b_j^-}}\right)=+\infty,  
\end{equation*}
contradicting the monotonicity of $h$. Thus, the remaining root of $P$, say $\rho_M^-$, lies in $(k_M^-,+\infty)$.

We, now, work on the boundary values of $h$. Firstly, note that the imaginary part of $h(x)$ is constant in each of the aforementioned intervals, with 
\begin{equation}
    \label{imaginary part of h in the positives}
    \mathrm{Im}(h(x))=\sum_{j=\nu+1}^{N}b_j^+\pi-\sum_{j=1}^{M}
    b_j^-\pi, \quad x\in(k_{\nu}^+,k_{\nu+1}^+),\quad 1\le\nu\le N,
\end{equation}
\begin{equation}
    \label{imaginary part of h in the negatives}
    \mathrm{Im}(h(x))=-\sum_{j=\mu+1}^M
    b_j^-\pi, \quad x\in(k_{\mu}^-,k_{\mu+1}^-),\quad 1\le\mu\le N,\quad \mathrm{Im}(h(x))=0,\quad x>k_M^-,
\end{equation}
\begin{equation}
    \label{imaginary part of h in between}
    \mathrm{Im}(h(x))=\sum_{j=1}^Nb_j^+\pi-\sum_{j=1}^M
    b_j^-\pi, \quad x<k_1^+, \quad \mathrm{Im}(h(x))=-\sum_{j=1}^M
    b_j^-\pi, \quad x\in(k_N^+,k_1^-).
\end{equation}
In addition, we easily see that $\lim_{x\rightarrow k_j^+}\mathrm{Re}(h(x))=-\infty$, $\lim_{x\rightarrow k_i^-}\mathrm{Re}(h(x))=+\infty$, for all indices $i,j$. Moreover, $\mathrm{Re}(h(x))\rightarrow+\infty$, as $x\rightarrow\pm\infty$.. Combining everything together, we deduce that $h$ maps the upper half-plane onto a strip minus $N-1$ half-lines extending to the point at infinity from the left and $M$ half-lines extending to the point at infinity from the right. Now, since the zeroes of $h'$ are exactly the points $\rho_{\nu}^+,\rho_{\mu}^-$, each half-line has its tip point at $h(\rho_{\nu}^+)$ and  $h(\rho_{\mu}^-)$, for $\nu=1,\dots,N-1$ and $j=1,\dots,M$ as we see in Figure \ref{fig:image of h}. For the sake of simplicity, we sometimes use the notation as
$b^+=\sum_{j=1}^Nb_j^+$ and 
    $b^-=\sum_{i=1}^M b_i^-$.

\begin{figure}[ht]
\centering
\resizebox{0.95\linewidth}{!}{%
\begin{tikzpicture}[
    x=1cm,y=1cm,
    every node/.style={font=\normalsize}
]

\definecolor{myblue}{RGB}{70,80,180}
\definecolor{myred}{RGB}{160,70,70}
\definecolor{mygray}{RGB}{185,185,185}

\def\ytop{7.25}
\def\ybottom{0.35}

\def\yA{6.25} 
\def\yB{5.10} 
\def\yC{3.95} 
\def\yD{2.80} 
\def\yE{2.20} 
\def\yF{1.70} 

\draw[black, line width=0.45pt] (0.5,\ytop) -- (13.9,\ytop);
\draw[black, line width=0.45pt] (0.5,\ybottom) -- (13.9,\ybottom);

\node[font=\large] at (7.15,7.68)
{$\mathbb R+i\left(b^+-b^-\right)\pi$};

\node[font=\large] at (8.20,0.70)
{$\mathbb R-ib^-\pi$};

\foreach \Y in {\yA,\yB,\yC,\yD,\yE,\yF}{
    \draw[mygray, dotted, line width=0.6pt] (0.65,\Y) -- (13.65,\Y);
}
\draw[mygray, dotted, line width=0.6pt] (0.65,6.65) -- (10.10,6.65);
\draw[mygray, dotted, line width=0.6pt] (0.65,5.70) -- (9.65,5.70);
\draw[mygray, dotted, line width=0.6pt] (0.65,\yC) -- (8.85,\yC);
\draw[mygray, dotted, line width=0.6pt] (0.65,\yE) -- (10.35,\yE);

\draw[myblue, line width=0.8pt] (0.5,\yA) -- (5.75,\yA);
\fill[black] (5.75,\yA) circle (1.15pt);

\draw[myblue, line width=0.8pt] (0.5,\yB) -- (6.10,\yB);
\fill[black] (6.10,\yB) circle (1.15pt);

\draw[myblue, line width=0.8pt] (0.5,\yD) -- (6.85,\yD);
\fill[black] (6.85,\yD) circle (1.15pt);

\draw[myblue, line width=0.8pt] (0.5,\yF) -- (4.75,\yF);
\fill[black] (4.75,\yF) circle (1.15pt);

\draw[myred, line width=0.8pt] (10.10,6.65) -- (13.90,6.65);
\fill[black] (10.10,6.65) circle (1.15pt);

\draw[myred, line width=0.8pt] (9.65,5.70) -- (13.90,5.70);
\fill[black] (9.65,5.70) circle (1.15pt);

\draw[myred, line width=0.8pt] (8.85,\yC) -- (13.90,\yC);
\fill[black] (8.85,\yC) circle (1.15pt);

\draw[myred, line width=0.8pt] (10.35,\yE) -- (13.90,\yE);
\fill[black] (10.35,\yE) circle (1.15pt);

\node[anchor=west] at (0.60,\yA+0.35) {$h(k_1^+)$};
\node[anchor=west] at (0.60,\yB+0.35) {$h(k_2^+)$};

\node[font=\large] at (3.20,\yD+0.45)
{$\mathbb R+i\left(\sum_{j=\nu+1}^{N} b_j^{+}-\sum_{i=1}^{M} b_i^{-}\right)\pi$};

\node[anchor=west] at (0.60,\ybottom+0.45) {$h(k_N^+)$};

\node at (6.20,\yA+0.43) {$h(\rho_1^+)$};
\node at (6.50,\yB+0.43) {$h(\rho_2^+)$};
\node at (7.20,\yD+0.43) {$h(\rho_\nu^+)$};
\node at (5.15,\yF+0.43) {$h(\rho_{N-1}^+)$};

\node at (10.45,6.95) {$h(\rho_M^-)$};
\node[font=\large] at (11.95,6.9) {$\mathbb R$};
\node at (13.20,6.9) {$h(\infty)$};

\node at (10.35,6.10) {$h(\rho_{M-1}^-)$};
\node at (13.15,6.10) {$h(k_M^-)$};

\node at (9.25,\yC+0.43) {$h(\rho_\mu^-)$};
\node[font=\large] at (11.75,\yC+0.38)
{$\mathbb R-i\sum_{j=\mu+1}^{M} b_j^{-}\pi$};

\node at (10.75,\yE+0.43) {$h(\rho_1^-)$};
\node[anchor=east] at (13.75,\ybottom+0.45) {$h(k_1^-)$};

\end{tikzpicture}%
}
\caption{The image of \(h\). Each point at infinity is denoted as $h(k_{\nu}^+)$ on the left and $h(k_{\mu}^-)$ on the right since the points $k_{\nu}^+,k_{\mu}^-$ correspond to the above prime ends.}
\label{fig:image of h}
\end{figure}

It is, now, evident that the point at infinity is the impression of $N+M+1$ prime ends of $h(\mathbb{H})$ which correspond through $h$ to the points $k_{\nu}^+$, $k_{\mu}^-$ and the point at infinity, as shown in Figure \ref{fig:image of h}. 

Note that each point $w$ can access at most two such prime ends of $\Omega$; see Example \ref{ex:prime ends}. On the other hand, we understand that a point $w$ accesses no prime ends with infinity as its impression, if there exist $\rho_{\nu}^+$ and $\rho_{{\mu}}^-$ so that $\mathrm{Im}w=\mathrm{Im}h(\rho_{\nu}^+)=\mathrm{Im}h(\rho_{\mu}^-)$. In other words, there exist two horizontal half-lines in $\partial\Omega$ with the same height and $w$ lies between them. Note that the height of each half-line depends exclusively on the choice of the positive numbers $b_j^+$, $b_i^-$, for $j=1,\dots N$ and $i=1,\dots, M$. Other than that, the heights of the half-lines depend uniquely on the numbers $b_j^+,b_i^-$, in the sense that there exists only one combination that corresponds to prescribed heights.

Keeping all the above information in mind, we make the following convention: the parameters $b_j^+$ and $b_i^-$ are chosen is such a way that every point of $\Omega$ can access at least one prime end with impression infinity. Geometrically, this means that each tip point of the half-lines can be extended to the point at infinity via a horizontal orbit. Note, also, that a prime end might be potentially accessed by more than one tip point, or even by none. 

As for the construction of the Loewner chain, it is based on the following idea.  Since each tip point can access a prime end, we wish to discover a way to move each tip point to the corresponding prime end it accesses. For example, applying the mapping $z\mapsto z+t$, $t\ge0$, the tip points $h(\rho_{\nu}^+)$ are progressively translated to the point at infinity to the right (recall that non-elliptic semigroups are written as a conjugation $h^{-1}(t+h(z))$. But in this case, the points $h(\rho_{\mu}^-)$ would be also moved to the point at infinity to the right, preventing the creation of a Loewner chain. For this reason, we need to let the points $k_j^+,k_i^-$ vary in such a way that the tip points $h(\rho_{\mu}^-)$ move to the point at infinity to the left in such a rate that overrules the push to the right, due to the translation $z\mapsto z+t$.

In the sequel, our aim is to define a proper variation for the parameters, so that the idea above can indeed be applied. Apparently, in order to consider the best possible way to `move' the parameters, we should be able to understand the dynamics of $h$, with respect to these parameters. Thus, for $\mathbf{k}\vcentcolon=(k_1^+,\dots,k_N^+,k_1^-,\dots,k_M^-)\in\mathbb{R}^{N+M}$, we are going to write $h=h_{\mathbf{k}}$ for the function defined via the formula in \eqref{h for multiple points}. This signifies that the roots of $h'=h'_{\mathbf{k}}$ are also functions of $\mathbf{k}$, and thus $\rho_{\nu}^+=\rho_{\nu}^+(\mathbf{k})$, $\rho_{\mu}^-=\rho_{\mu}^-(\mathbf{k})$. Next, recall that the tip points are the points $h_{\mathbf{k}}(\rho_{\nu}^+(\mathbf{k})),h_{\mathbf{k}}(\rho_{\mu}^-(\mathbf{k}))$, and that we want them to access their corresponding prime ends as $\mathbf{k}$ changes. Even though the roots cannot be found explicitly, we still need to understand how they behave with respect to $\mathbf{k}$. Towards this goal, we present certain properties of the roots. The following theorem can be proven relatively easily using the Implicit Function Theorem.
 \begin{theorem}[{\cite[Theorem 1.18]{Volklein}}]\label{continuity of the roots}
 For every index $\nu\in\{1,\dots,N-1\}$ and $\mu\in\{1,\dots,M\}$, the roots $\rho_{\nu}^+$ and $\rho_{\mu}^-$ are continuously differentiable functions of $\mathbf{k}$. In fact, the roots are of the same differentiability class, as the coefficients of the polynomial.
 \end{theorem}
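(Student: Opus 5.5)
The plan is to apply the Implicit Function Theorem to the numerator polynomial $P$ from \eqref{polynomial}, regarded as a function of both $z$ and the parameter vector $\mathbf{k}$. Write $F(x,\mathbf{k}):=P_{\mathbf{k}}(x)$ for $x\in\mathbb{R}$ and $\mathbf{k}$ in the open set
\[
U=\{\mathbf{k}\in\mathbb{R}^{N+M}: k_1^+<\dots<k_N^+<k_1^-<\dots<k_M^-\}.
\]
Expanding the products shows that the coefficients of $P$ are polynomials in the entries of $\mathbf{k}$ and in the $b_j^{\pm}$. Hence $F$ is a polynomial, and in particular $C^\infty$, jointly in $(x,\mathbf{k})$. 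More generally, if the coefficients are only of class $C^r$ in $\mathbf{k}$, then $F$ is $C^r$ jointly.

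Fix $\mathbf{k}_0\in U$ and one of the roots, say $\rho=\rho_\nu^+(\mathbf{k}_0)$ or $\rho_\mu^-(\mathbf{k}_0)$. The discussion preceding the theorem shows that every such root is simple: the $N+M-2$ roots in the intervals $(k_j^\pm,k_{j+1}^\pm)$ are simple by the degree count, and $\rho_M^-$ is then the single remaining root of a degree $N+M-1$ polynomial. Note that the leading coefficient is $b^+-b^->0$, so the degree does not drop. Consequently
\[
\partial_x F(\rho,\mathbf{k}_0)=P_{\mathbf{k}_0}'(\rho)\neq0.
\]
The Implicit Function Theorem now gives a neighbourhood $V$ of $\mathbf{k}_0$, an interval $I\ni\rho$, and a unique $C^r$ function $\tilde\rho\colon V\to I$ with $F(\tilde\rho(\mathbf{k}),\mathbf{k})=0$. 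Differentiating the identity yields $\partial\tilde\rho/\partial k=-\partial_kF/\partial_xF$.

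The step requiring care is identifying $\tilde\rho$ with the labelled root $\rho_\nu^+(\mathbf{k})$, which is defined as the unique root lying in $(k_\nu^+,k_{\nu+1}^+)$; the analogous statements hold for $\rho_\mu^-$, with $(k_M^-,\infty)$ for $\mu=M$. Since $\rho$ lies strictly inside its interval and the endpoints depend continuously on $\mathbf{k}$, after shrinking $V$ and $I$ the interval $I$ is contained in $(k_\nu^+,k_{\nu+1}^+)$ for all $\mathbf{k}\in V$. Thus $\tilde\rho(\mathbf{k})$ is a root of $P_{\mathbf{k}}$ in that interval. By the uniqueness established before the theorem, $\tilde\rho(\mathbf{k})=\rho_\nu^+(\mathbf{k})$. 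Local $C^r$ regularity at every $\mathbf{k}_0\in U$ then gives global $C^r$ regularity on $U$.

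Finally, the roots of $h'_{\mathbf{k}}$ coincide with those of $P_{\mathbf{k}}$, because the denominator $\prod(z-k_j^+)\prod(z-k_i^-)$ does not vanish at any root. The statement is therefore established for $\rho_\nu^+$ and $\rho_\mu^-$.
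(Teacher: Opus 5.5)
Your proof is correct and is the Implicit Function Theorem argument the paper has in mind; the paper only cites V\"olklein and remarks that the result follows easily from that theorem. Simplicity of each root gives $\partial_x P_{\mathbf{k}}(\rho)\neq 0$, and the uniqueness of the root in each interval identifies the implicit solution with the labelled root $\rho_\nu^+$ or $\rho_\mu^-$. The paper leaves implicit the details you spell out: the leading coefficient $b^+-b^->0$ keeps the degree fixed, and the neighbourhood is shrunk to handle the labelling.
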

 Consider the configuration of Proposition \ref{h for multiple points proof}. We then have the following result on the convergence of the roots.
 \begin{proposition}
     \label{prop:convergence of the roots}
     Suppose that $k_j^+\rightarrow k_N^+$, $2\le j\le N-1$ and $k_j^-\rightarrow k_N^+$, $1\le j\le M$. Then, $\rho_{\nu}^+(\mathbf{k}),\rho_{\mu}^-(\mathbf{k})\rightarrow k_N^+$, for all $\nu=2,\dots,N-1$ and $\mu=1,\dots,M-1$. In addition, 
     \begin{enumerate}
         \item[\textup{(1)}] if $\sum_{j=2}^Nb_j^+<\sum_{j=1}^{M}b_j^-$, then $\rho_{1}^+(\mathbf{k})\rightarrow k_N^+$ and $\lim_{\mathbf{k}}\rho_M^-(\mathbf{k})>k_N^+$
         \item[\textup{(2)}] if $\sum_{j=2}^Nb_j^+>\sum_{j=1}^{M}b_j^-$, then $\lim_{\mathbf{k}}\rho_1^+(\mathbf{k})<k_N^+$ and $\rho_{1}^+(\mathbf{k})\rightarrow k_N^+$. 
     \end{enumerate}
 \end{proposition}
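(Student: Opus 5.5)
The plan is to avoid tracking the roots one at a time. Instead, pass to the limit in the polynomial $P=P_{\mathbf{k}}$ of \eqref{polynomial} and use the fact that the roots of a polynomial depend continuously on its coefficients (as a multiset), provided the degree does not drop. Write $B^+=\sum_{j=2}^N b_j^+$ and $c=B^+-b^-$, so that $b_1^++c=b^+-b^->0$ by hypothesis.

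\emph{Step 1: the limit polynomial.} Recall that
\[
P_{\mathbf{k}}(z)=h_{\mathbf{k}}'(z)\prod_{j=1}^N(z-k_j^+)\prod_{i=1}^M(z-k_i^-),
\]
and that its leading coefficient is $b^+-b^-\neq0$, independent of $\mathbf{k}$. Each coefficient of $P_{\mathbf{k}}$ is a polynomial in the entries of $\mathbf{k}$. In the limit $k_j^+\to k_N^+$ ($2\le j\le N-1$) and $k_i^-\to k_N^+$ ($1\le i\le M$), with $k_1^+$ and $k_N^+$ fixed, the polynomials $P_{\mathbf{k}}$ therefore converge coefficientwise to
\[
P_\infty(z)=(z-k_N^+)^{N+M-2}\bigl(b_1^+(z-k_N^+)+c\,(z-k_1^+)\bigr).
\]
This is exactly the formal limit of $h'_{\mathbf{k}}(z)=\frac{b_1^+}{z-k_1^+}+\frac{c}{z-k_N^+}+o(1)$ multiplied by $(z-k_1^+)(z-k_N^+)^{N+M-1}$. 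Its roots are $k_N^+$, with multiplicity $N+M-2$, together with the single extra root
\[
z^*=\frac{b_1^+k_N^+ + c\,k_1^+}{b_1^++c}=k_N^+ + \frac{c\,(k_1^+-k_N^+)}{b_1^++c}.
\]
Since $k_1^+<k_N^+$ and $b_1^++c>0$, we get $z^*>k_N^+$ when $c<0$ and $z^*\in(k_1^+,k_N^+)$ when $c>0$. By continuity of roots (for instance via Hurwitz's theorem or Rouch\'e on small discs), the $N+M-1$ roots of $P_{\mathbf{k}}$ converge, as a multiset, to this multiset.

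\emph{Step 2: assigning the roots.} The roots $\rho_\nu^+$ for $\nu\ge2$ and $\rho_\mu^-$ for $\mu\le M-1$ are trapped in intervals whose two endpoints both tend to $k_N^+$, so they converge to $k_N^+$ directly. This accounts for $N+M-3$ roots. The two remaining roots are $\rho_1^+\in(k_1^+,k_2^+)$ and $\rho_M^-\in(k_M^-,+\infty)$. Exactly one of them accumulates at $k_N^+$ and the other at $z^*$ (when $z^*\neq k_N^+$, i.e. $c\neq0$).

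In case (1), $c<0$, so $z^*>k_N^+$. Since $\rho_1^+<k_2^+\to k_N^+$, every limit point of $\rho_1^+$ is $\le k_N^+$, so $\rho_1^+$ cannot approach $z^*$. Hence $\rho_1^+\to k_N^+$ and $\rho_M^-\to z^*>k_N^+$.

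In case (2), $c>0$, so $z^*<k_N^+$. Since $\rho_M^->k_M^-\to k_N^+$, $\rho_M^-$ cannot approach $z^*$. Hence $\rho_M^-\to k_N^+$ and $\rho_1^+\to z^*<k_N^+$. The second conclusion in (2) should therefore read $\rho_M^-(\mathbf{k})\to k_N^+$; that appears to be the intended statement.

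\emph{Main obstacle.} The delicate point is the multiset argument in Step 2. Convergence of roots as a multiset does not by itself say which labelled root goes where, so we need a counting argument on small discs. Around $z^*$, Rouch\'e gives exactly one root for $\mathbf{k}$ close to the limit. Around $k_N^+$, it gives exactly $N+M-2$ roots. The interval constraints from Section~\ref{sec:construction} then force the labelling. Theorem~\ref{continuity of the roots} handles differentiability along the way, but it is not needed for the limits themselves.
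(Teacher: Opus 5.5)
Your proposal is correct and follows the paper's own proof. Like the paper, you pass to the limit polynomial $(z-k_N^+)^{N+M-2}\bigl[b_1^+(z-k_N^+)+c\,(z-k_1^+)\bigr]$, locate its extra root relative to $k_N^+$ through the sign of $c$, and use the confinements $\rho_1^+<k_2^+$ and $\rho_M^->k_M^-$ to decide which of the two remaining roots converges to it. Your multiset/Rouch\'e counting properly justifies the step the paper dispatches with ``by continuity'' (Theorem~\ref{continuity of the roots} does not apply at the degenerate limit, where the root $k_N^+$ is multiple). You are also right that the second conclusion in (2) should read $\rho_M^-(\mathbf{k})\to k_N^+$, as the paper's proof itself indicates.
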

\begin{proof}
    The first part of the statement follows at once, since for every $\mathbf{k}$, each root $\rho_{\nu}^+$ lies in the interval $(k_{\nu}^+,k_{\nu+1}^+)$, for all $\nu=2,\dots,N-1$, while $\rho_{\mu}^-\in(k_{\mu}^-,k_{\mu+1}^-)$, for all $\mu=1,\dots,M-1$. Hence, all that remains is to show the convergence of the remaining roots. Writing $P=P_{\mathbf{k}}$ for the polynomial in \eqref{polynomial} and setting $\mathbf{k_0}=(k_1^+,k_N^+,\dots k_N^+)$, we infer that 

    \begin{align}\label{eq:convergence of roots 1}
    \notag    P_{\mathbf{k_0}}(z)&=b_1^+(z-k_N^+)^{N+M-1}+\left(\sum_{j=2}^{N}b_j^+-\sum_{j=1}^Mb_j^-\right)(z-k_1^+)(z-k_N^+)^{N+M-2}\\
    \notag   &=(z-k_N^+)^{N+M-2}\left[b_1^+(z-k_N^+)+\left(\sum_{j=2}^{N}b_j^+-\sum_{j=1}^Mb_j^-\right)(z-k_1^+)\right]\\
         &=(z-k_N^+)^{N+M-2}\left[\left(\sum_{j=1}^Nb_j^+-\sum_{j=1}^Mb_j^-\right)z-b_1^+k_N^+-\left(\sum_{j=2}^Nb_j^+-\sum_{j=1}^Mb_j^-\right)k_1^+\right]
    \end{align}
    Therefore, the roots of $P_{\mathbf{k}_0}$ are $k_N^+$ (of multiplicity $N+M-2$, of which the multiplicity $N+M-3$ occurs due to the limits of the roots $\rho_\nu^+(\mathbf{k})$, $\nu\in\{2,\dots,N-1\}$, and $\rho_\mu^-(\mathbf{k})$, $\mu\in\{1,\dots,M-1\}$), and one more which can be computed through the bracket in \eqref{eq:convergence of roots 1}. So there are two distinct cases. Either the roots $\rho_1^+(\mathbf{k})$ converge to $k_N^+$ and the roots $\rho_{\mu}^-$ converge to the root defined by the bracket in \eqref{eq:convergence of roots 1}, or vice versa. By continuity, $\rho_1^+(\mathbf{k_0})=\lim_{\mathbf{k}}\rho_1^+(\mathbf{k})\le k_N^+$, whereas $\rho_M^-(\mathbf{k_0})=\lim_{\mathbf{k}}\rho_M^-(\mathbf{k})\ge k_N^+$. It is then easy to see that $\rho_1^+(\mathbf{k_0})<k_N^+$ (and thus $\rho_\mu^-(\mathbf{k_0})=k_N^+$) if and only if
    \begin{equation}\label{eq:convergence of roots 2}
     b_1^+k_N^++\left(\sum_{j=2}^{N}b_j^+-\sum_{j=1}^Mb_j^-\right)k_1^+<\left(\sum_{j=1}^Nb_j^+-\sum_{j=1}^Mb_j^-\right)k_N^+,  
    \end{equation}
    where we have also used the initial assumption $\sum_{j=1}^Nb_j^+>\sum_{j=1}^Mb_j^-$. Through algebraic considerations and recalling that $k_1^+<k_N^+$, we see that \eqref{eq:convergence of roots 2} is equivalent to 
    \begin{equation*}
    \sum_{j=2}^{N}b_j^+-\sum_{j=1}^Mb_j^->0,   
    \end{equation*}
    which yields the desired outcome.    
\end{proof}

 In what follows, we will always assume that the first case of the preceding proposition holds. Geometrically, this implies that the $M$-th right half-line is the highest of all the half-lines, as we see in Figure \ref{fig:image of h}. In addition, we have the following proposition regarding the monotonicity of the roots.

 \begin{proposition}\label{prop:roots with respect to kj}
     For each $\nu\in\{1,\dots,N-1\}$, we have
     \begin{equation*}
         \frac{\partial\rho_{\nu}^+}{\partial k_j^+}>0,\quad j=1,\dots,N-1\quad\text{and}\quad \frac{\partial\rho_{\nu}^+}{\partial k_j^-}<0,\quad j=1,\dots,M. 
     \end{equation*}
    Likewise, for each $\mu\in\{1,\dots,M\}$, we have 
     \begin{equation*}
     \frac{\partial\rho_{\mu}^-}{\partial k_j^+}<0,\quad j=1,\dots,N-1\quad\text{and}\quad \frac{\partial\rho_{\mu}^-}{\partial k_j^-}>0,\quad j=1,\dots,M.    
     \end{equation*}
\end{proposition}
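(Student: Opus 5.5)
Each root $\rho$ of $h_{\mathbf{k}}'$ is a simple zero (see the root-counting discussion after Proposition \ref{h for multiple points proof}). Hence the Implicit Function Theorem (Theorem \ref{continuity of the roots}) applied to $F(\rho,\mathbf{k}):=h'_{\mathbf{k}}(\rho)=0$ gives
\begin{equation*}
\frac{\partial\rho}{\partial k}=-\frac{\partial_k h'_{\mathbf{k}}(\rho)}{h''_{\mathbf{k}}(\rho)}.
\end{equation*}
We use $h'$ rather than $P$ because the formula is cleaner. The proof then reduces to two sign computations.

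The numerator is easy. Since $h'(x)=\sum_j b_j^+/(x-k_j^+)-\sum_j b_j^-/(x-k_j^-)$, we get
\begin{equation*}
\partial_{k_j^+}h'(\rho)=\frac{b_j^+}{(\rho-k_j^+)^2}>0,\qquad \partial_{k_j^-}h'(\rho)=-\frac{b_j^-}{(\rho-k_j^-)^2}<0.
\end{equation*}
Both are finite because $\rho$ never coincides with a $k$. So everything depends on the sign of $h''$ at each root.

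For the denominator, I would use the real behaviour of $h$ on the interval containing the root. On $(k_\nu^+,k_{\nu+1}^+)$, $\mathrm{Re}\,h(x)\to-\infty$ at both endpoints. Moreover $\rho_\nu^+$ is the only critical point there, and it is simple. Therefore $h'$ changes sign from $+$ to $-$ at $\rho_\nu^+$, which gives $h''(\rho_\nu^+)<0$: the root is a strict maximum, matching the tip of a left slit. The same reasoning applies to $(k_\mu^-,k_{\mu+1}^-)$ for $\mu\le M-1$, and to $(k_M^-,+\infty)$, using $\mathrm{Re}\,h\to+\infty$ at $k_M^-$ and at $+\infty$ (the latter from $b^+>b^-$). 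This yields $h''(\rho_\mu^-)>0$, a strict minimum.

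Combining the signs:
\begin{itemize}
\item $\partial\rho_\nu^+/\partial k_j^+=-(+)/(-)>0$ and $\partial\rho_\nu^+/\partial k_j^-=-(-)/(-)<0$;
\item $\partial\rho_\mu^-/\partial k_j^+=-(+)/(+)<0$ and $\partial\rho_\mu^-/\partial k_j^-=-(-)/(+)>0$.
\end{itemize}
This is exactly the claim.

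The only delicate step is justifying $h''(\rho)\neq 0$ together with the exact sign. Simplicity of the roots rules out $h''=0$, since $P$ and $h'$ share zeros and multiplicities off the poles. The sign then follows from the boundary limits of $\mathrm{Re}\,h$, so no real obstacle remains. The restriction $j\le N-1$ in the statement is immaterial: the computation works equally for $k_N^+$.
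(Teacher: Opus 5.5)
Your proof is correct and follows essentially the same route as the paper. Both differentiate $h'_{\mathbf{k}}(\rho(\mathbf{k}))=0$ implicitly, compute the explicit sign of $\partial_{k_j^\pm}h'_{\mathbf{k}}$, and obtain the sign of $h''_{\mathbf{k}}$ at the simple root from the endpoint behaviour on the interval containing it. The only differences are cosmetic: you read that sign from the limits of $\mathrm{Re}\,h$ rather than of $h'$, and you treat the unbounded interval $(k_M^-,+\infty)$ explicitly, which the paper leaves to the reader.
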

\begin{proof}
     Fix $\nu\in\{1,\dots,N-1\}$ and an index $j=1,\dots,N-1$. Recall that for any $\mathbf{k}$, $h_{\mathbf{k}}'(\rho_{\nu}^+(\mathbf{k}))=0$. Hence, differentiating with respect to $k_j^+$ and using the chain rule, we obtain
     \begin{equation}\label{eq:chain rule}
       h''_{\mathbf{k}}(\rho_{\nu}^+(\mathbf{k}))\frac{\partial\rho_{\nu}^+}{\partial k_j^+}+\frac{\partial h'_{\mathbf{k}}}{\partial k_j^+}(\rho_{\nu}^+(\mathbf{k}))=0.    
     \end{equation}
   Due to the facts that $\rho_{\nu}^+$ is the unique and simple root of $h'_{\mathbf{k}}$ in the interval $(k_{\nu}^+,k_{\nu+1}^+)$ while $\lim_{x\rightarrow (k_{\nu}^+)^+}h'_{\mathbf{k}}(x)=+\infty=-\lim_{x\rightarrow (k_{\nu+1}^+)^-}h'_{\mathbf{k}}(x)$, we comprehend that $h''_{\mathbf{k}}(\rho_{\nu}^+(\mathbf{k}))<0$. On the other, by \eqref{h for multiple points}, we see that
   \begin{equation*}
       h'_{\mathbf{k}}(z)=\sum_{\nu=1}^N\frac{b_\nu^+}{z-k_\nu^+}-\sum_{\mu=1}^M\frac{b_\mu^-}{z-k_\mu^-}.
   \end{equation*}
   Differentiating with respect to $k_j^+$ we find at once that
   \begin{equation*}
       \frac{\partial h'_{\mathbf{k}}}{\partial k_j^+}(x)=\frac{b_j^+}{(x-k_j^+)^2}>0, \quad \text{for all }x\in\mathbb{R}\setminus\{k_j^+\}.
   \end{equation*}
   Applying all the aforementioned information on \eqref{eq:chain rule}, we get $\frac{\partial\rho_{\nu}^+}{\partial k_j^+}>0$. In similar fashion, for any index $j=1,\dots, M$, we get that $\frac{\partial\rho_{\nu}^+}{\partial k_j^-}<0$. The second part of the statement is deduced by arguments akin to those above, albeit this time $h''_{\mathbf{k}}(\rho_{\mu}^-(\mathbf{k}))>0$. We omit the details for the sake of avoiding repetition.
 \end{proof}
By Proposition \ref{prop:roots with respect to kj} and the chain rule, we directly deduce the following corollary.
\begin{corollary}
    Suppose that the parameters $k^+_j$ and $k_i^-$ are differentiable functions of $t\in[0,+\infty)$, so that $(k^+_j)'(t)>0$ and $(k^-_j)'(t)<0$,for all $t\in[0,+\infty)$. Then, for any $t\in[0,+\infty)$, $(\rho_{\nu}^+)'(t)>0$, for every $\nu=1,\dots,N-1$ and $(\rho_{\mu}^-)'(t)<0$, for every $\mu=1,\dots,M$.
\end{corollary}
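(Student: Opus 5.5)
The plan is a direct application of the chain rule along the curve $t\mapsto\mathbf{k}(t)=(k_1^+(t),\dots,k_N^+(t),k_1^-(t),\dots,k_M^-(t))$. First, by Theorem \ref{continuity of the roots}, each root $\rho_\nu^+$ and $\rho_\mu^-$ is a continuously differentiable function of $\mathbf{k}$ on the open set of admissible parameters. Throughout the evolution we keep the ordering $k_1^+<\dots<k_N^+<k_1^-<\dots<k_M^-$, which holds for every $t$ in the regime under consideration. Hence the composition $t\mapsto\rho_\nu^+(\mathbf{k}(t))$ is differentiable, and
\begin{equation*}
(\rho_{\nu}^+)'(t)=\sum_{j=1}^{N}\frac{\partial\rho_{\nu}^+}{\partial k_j^+}(\mathbf{k}(t))\,(k_j^+)'(t)+\sum_{j=1}^{M}\frac{\partial\rho_{\nu}^+}{\partial k_j^-}(\mathbf{k}(t))\,(k_j^-)'(t),
\end{equation*}
with the analogous formula for $\rho_\mu^-$.

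Second, I would sign each term using Proposition \ref{prop:roots with respect to kj}. For $\rho_\nu^+$, every $\partial\rho_\nu^+/\partial k_j^+$ is positive and multiplies $(k_j^+)'>0$, while every $\partial\rho_\nu^+/\partial k_j^-$ is negative and multiplies $(k_j^-)'<0$. So all terms are positive and $(\rho_\nu^+)'(t)>0$. For $\rho_\mu^-$ the signs reverse and every term is negative, giving $(\rho_\mu^-)'(t)<0$.

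The one point needing care is the index range. Proposition \ref{prop:roots with respect to kj} states the signs only for $j\le N-1$ among the $k_j^+$, yet the chain rule also produces a $\partial/\partial k_N^+$ term. I would handle it by rerunning the proof of that proposition for $j=N$: the identity
\begin{equation*}
\frac{\partial h'_{\mathbf{k}}}{\partial k_N^+}(x)=\frac{b_N^+}{(x-k_N^+)^2}>0
\end{equation*}
together with the signs of $h''_{\mathbf{k}}$ at the roots gives the same signs, so the restriction is inessential. Alternatively, one could hold $k_N^+$ fixed in time, since a zero derivative makes that term vanish, but this would need the hypothesis to be weakened to $(k_N^+)'\ge0$. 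I would adopt the first fix. The main obstacle is therefore only this bookkeeping, together with ensuring that the ordering of the parameters is preserved so that Theorem \ref{continuity of the roots} applies for all $t$.
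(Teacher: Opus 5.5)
Your proof is correct and is the paper's argument: the chain rule along $t\mapsto\mathbf{k}(t)$ combined with the signs in Proposition \ref{prop:roots with respect to kj}, so that every term in the sum has the same sign. Your extra check of the $\partial/\partial k_N^+$ term is also valid: the proposition's statement omits that index, but its proof applies verbatim via $\partial h'_{\mathbf{k}}/\partial k_N^+=b_N^+/(x-k_N^+)^2>0$, and the paper passes over this point silently.
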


\section{Variation of the parameters}\label{sec:multiple points}
We are ready to configure our model in its full generality. As described in Section \ref{sec:construction}, we make an initial choice of points of $N+M$ points $k_1^+<\dots<k_N^+<k_1^-<\dots k_M^-$. We also make an arbitrary choice of positive numbers $(b_j^+)_{j=1}^N$ and $(b_j^-)_{j=1}^M$, so that $\sum_{j=1}^Nb_j^+>\sum_{j=1}^Mb_j^-$. 
\subsection{Setup of the parameters and related quantities}
Mimicking the model for two attraction points, as we described in the introductory section, we establish the process so that $k_j^+\rightarrow k_N^+$, for $j=2,\dots,N-1$ and $k_j^-\rightarrow k_N^+$, for $j=1,\dots,M$. To do so, we pick some positive exponents 
\begin{equation}\label{eq:thetas}
   \theta_2^+\le\dots\le\theta_{N-1}^+\quad \text{and} \quad \theta_1^-\ge\dots\ge\theta_M^-
\end{equation}
and we define the functions 
\begin{equation}\label{k(t)}
   k_1^+(t)\equiv k_1^+,\quad k_j^+(t)\vcentcolon=k_N^++(k_j^+-k_N^+)e^{-\theta_j^+t}\quad\text{and}\quad k_j^-(t)\vcentcolon=k_N^++(k_j^--k_N^+)e^{-\theta_j^-t},
\end{equation}
      for $t\ge0$. Thus, for each $t\ge0$, we obtain a vector 
      \begin{equation*}
       \mathbf{k}(t)\vcentcolon=(k_1^+,k_2^+(t),\dots,k_{N-1}^+(t),k_N^+,k_1^-(t),\dots,k_M^-(t))\in\mathbb{R}^{N+M}.   
      \end{equation*}
 The ordering of the exponents is essential because we have to maintain the initial ordering of the $k_j^{\pm}$, that is 
      \begin{equation}\label{orderings of k}
          k_1^+(t)<\dots<k_{N-1}^+(t)<k_N^+<k_1^-(t)<\dots<k_M^-(t),
      \end{equation}
for all $t\ge0$. Note that $k_j^+$ increases to $k_N^+$, while $k_j^-$ decreases to $k_N^+$, justifying the $\pm$ superscripts. Under these notations and recalling \eqref{h for multiple points}, we write 
    \begin{equation}
           \label{h_t(N,M)}
           h_t(z)\vcentcolon=h(z,t)\vcentcolon=h_{\mathbf{k}(t)}(z)=\sum_{j=1}^Nb_j^+\log(z-k_j^+(t))-\sum_{j=1}^Mb_j^-\log(z-k_j^-(t)).
     \end{equation}
Furthemore, following the previously established procedure, the roots of the z-derivative of $h$ are some functions $\rho_j^+(t)\in(k_j^+(t),k_{j+1}^+(t))$, for $1\le j\le N-1$, and $\rho_j^-(t)\in(k_j^-(t),k_{j+1}^-(t))$,  for $1\le j\le M-1$, whereas $\rho_M^-(t)>k_M^-(t)$. In other terms,
       \begin{equation}
           \label{root+-}
           h'(\rho_j^+(t),t)=0,\ 2\le j\le N-1 \quad\text{and}\quad h'(\rho_j^-(t),t)=0,\ 1\le j\le M,
       \end{equation}
for all $t\ge0$. Note that $\rho_j^{\pm}$ are continuous functions of $t$. Later on, our intention will be to measure the rates by which the tip points $h(\rho_j^{\pm}(t),t)$ diverge to infinity. For the sake of simplifying that process, we define the following quantities:
     \begin{definition}\label{rates of the tips}
         Given the above choices of parameters, the numbers
         \begin{equation}
         \label{convergence of the left tips}
          r(\rho_j^+):=\lim_{t\rightarrow+\infty}\frac{1}{t}\mathrm{Re}h(\rho_j^+(t),t), \quad \ 1\le j\le N-1,
         \end{equation}
         and 
          \begin{equation}
           \label{convergence of the right tips}r(\rho_j^-):=\lim_{t\rightarrow+\infty}\frac{1}{t}\mathrm{Re}h(\rho_j^-(t),t), \quad\ 1\le j\le M,    
          \end{equation}
         will be called \textit{rates} of the tip points.
     \end{definition}
   Recall that from Proposition \ref{prop:convergence of the roots} $\lim_{t\to+\infty}\rho_M^-(t)>k_N^+$. Therefore, $h(\rho_M^-(t),t)$ accumulates, as $t\to+\infty$, to an interior point $\Omega_0=h_0(\mathbb{H})$ which leads to $r(\rho_M^-)=0$. In what follows, we compute the rest of the rates. This is the crucial step for our model to work. Indeed, our intention is to find an appropriate factor $\gamma$ so that
   \begin{equation}
       \label{limits of the tips+}
       \lim_{t\rightarrow+\infty}(\mathrm{Re}h(\rho_j^+(t),t)-\gamma t)=+\infty, \quad 1\le j\le N-1
   \end{equation}
   and
 \begin{equation}
       \label{limits of the tips-}
       \lim_{t\rightarrow+\infty}(\mathrm{Re}h(\rho_j^-(t),t)-\gamma t)=-\infty, \quad 1\le j\le M.
   \end{equation}
In order to determine the rates of the tip points, we apparently need to determine the limiting behavior of the terms $|\rho_i^{\pm}(t)-k_j^{\pm}(t)|$, as $t\rightarrow+\infty$, for all indices of our configuration. Obviously, since $\rho_j^{\pm}$ are the roots of $h'$, no useful computational information can be deduced, to calculate these limits directly. Therefore, we need to stratagem accordingly. For instance, we might be able to determine the limiting behavior of the ratios of the form $\frac{\rho_j^{\pm}(t)-k_N^+}{\rho_j^{\pm}(t)-k_j^{\pm}(t)}$. Indeed, if for example, we fix some $\nu\in\{1,\dots,N-1\}$ , then looking at (\ref{root+-}) we get that
\begin{equation} \label{sums}
    b _1^+\frac{\rho_{\nu}^+(t)-k_N^+}{\rho_{\nu}^+(t)-k_1^+(t)}+\dots+b_{N-1}^+\frac{\rho_{\nu}^+(t)-k_N^+}{\rho_{\nu}^+(t)-k_{N-1}^+(t)}+b_N^+=b_1^-\frac{\rho_{\nu}^+(t)-k_N^+}{\rho_{\nu}^+(t)-k_1^-(t)}+\dots+b_M^-\frac{\rho_{\nu}^+(t)-k_N^+}{\rho_{\nu}^+(t)-k_M^-(t)}
\end{equation}
which provides more realistic expectations for calculating the rates of the tip points. Hence, it would be useful to put these quantities under some definitions.

It turns out that the rates themselves are hard to calculate. To bypass this obstacle, we will follow a slight detour. We are going to examine the behavior of the following quantities:
\begin{definition}\label{ratios +-}
    For each $\nu\in\{1,\dots N-1\}$, we define
    \begin{equation*}
           l_{j,\nu}^+(t):=\frac{\rho_{\nu}^+(t)-k_N^+}{\rho_{\nu}^+(t)-k_j^+(t)},\quad1\le j\le N,\quad\text{and}\quad l_{j,\nu}^-(t):=\frac{\rho_{\nu}^+(t)-k_N^+}{\rho_{\nu}^+(t)-k_j^-(t)},\quad1\le j\le M, 
    \end{equation*}
    for $t\ge0$. In an analogous manner, for each $\mu\in\{1,\dots M-1\}$ we define
    \begin{equation*}
         a_{j,\nu}^+(t):=\frac{\rho_{\mu}^-(t)-k_N^+}{\rho_{\mu}^-(t)-k_j^+(t)},\quad1\le j\le N,\quad\text{and}\quad \alpha_{j,\mu}^-(t):=\frac{\rho_{\mu}^-(t)-k_N^+}{\rho_{\mu}^-(t)-k_j^-(t)},\quad1\le j\le M,  
    \end{equation*}
    for $t\ge0$. Note that we do not define the latter quantities for $\mu=M$, since the limiting behavior of $\rho_M^-(t)$, as $t\to+\infty$, is already dealt with.
\end{definition}
  
   Under the notation of the previous definition, equation $(\ref{sums})$ becomes
   \begin{equation}
       \label{sums for l}
       \sum_{j=1}^{N-1}b_j^+l_{j,\nu}^+(t)+b_N^+= \sum_{j=1}^{M}b_j^-l_{j,\nu}^-(t), \quad t\ge0.
   \end{equation}
   For a fixed $\mu\in\{1,\dots,M-1\}$, following similar steps, but this time multiplying by $\rho_\mu^-(t)-k_N^+$, we get
   \begin{equation}
       \label{sums for a}
       \sum_{j=1}^{N-1}b_j^+a_{j,\mu}^+(t)+b_N^+= \sum_{j=1}^{M}b_j^-a_{j,\mu}^-(t), \quad t\ge0.
   \end{equation}
Before moving further, we write down the orderings of the quantities in Definition \ref{ratios +-}. Taking into account the ordering of $k_{j}^{\pm}(t)$ and $\rho_j^{\pm}(t)$, we have that for any $\nu\in\{1,\dots,N-2\}$,
   
  \begin{align}\label{eq:orderings of l_n}
  \notag    &0<l_{M,\nu}^-(t)<\dots<l_{1,\nu}^-(t)<1<l_{N-1,\nu}^+(t)<\dots<l_{\nu+1,\nu}^+(t),\\
    &l_{\nu,\nu}^+(t)<\dots<l_{2,\nu}^+(t)<l_{1,\nu}^+(t)<0,
  \end{align}
for all $t\ge0$. In the special case when $\nu=N-1$, we have
\begin{equation}
\label{orderings of lN}
      0<l_{M,N-1}^-(t)<\dots<l_{1,N-1}^-(t)<1 \quad\text{and}\quad l_{N-1,N-1}^+(t)<\dots<l_{1,N-1}^+(t)<0,
\end{equation}
for all $t\ge0$. On the other side, for any $\mu\in\{1,\dots,M-1\}$, we get
   \begin{align}\label{orderings of a_m}
  \notag    0<a_{1,\mu}^+(t)<\dots<&a_{N-1,\mu}^+(t)<1<a_{1,\mu}^-(t)<\dots<a_{\mu,\mu}^-(t),\\
    &a_{\mu+1,\mu}^-(t)<\dots<a_{M,\mu}^-(t)<0,
  \end{align}
  for all $t\ge0$.

The functions $l_{j,\nu}^{\pm}$ and $\alpha_{j,\mu}^{\pm}$ play the most important role in our work and hence, we need to understand their behaviour.
A first lemma concerns the continuity and boundedness of these functions. 
\begin{lemma}\label{finiteness}
    Each function introduced in Definition \ref{ratios +-} is continuous, and all of its limit points, as $t\rightarrow+\infty$, are finite. 
\end{lemma}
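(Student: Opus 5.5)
Continuity is the easy part. By Theorem \ref{continuity of the roots}, the roots $\rho_\nu^+$ and $\rho_\mu^-$ depend continuously on $\mathbf{k}$. By \eqref{k(t)}, $\mathbf{k}(t)$ is continuous in $t$, so each $\rho_j^\pm(t)$ is continuous. Each function in Definition \ref{ratios +-} is a quotient of continuous functions. Its denominator $\rho_\nu^+(t)-k_j^\pm(t)$ (respectively $\rho_\mu^-(t)-k_j^\pm(t)$) never vanishes, because the roots lie in the open intervals $(k_\nu^+(t),k_{\nu+1}^+(t))$, respectively $(k_\mu^-(t),k_{\mu+1}^-(t))$, and these never contain any $k_j^\pm(t)$ by \eqref{orderings of k}. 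Hence all the functions are continuous on $[0,+\infty)$.

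For the finiteness of limit points I will use the orderings \eqref{eq:orderings of l_n}, \eqref{orderings of lN} and \eqref{orderings of a_m} together with the balance identities \eqref{sums for l} and \eqref{sums for a}. Fix $\nu$. The negative quantities $l_{j,\nu}^+$ with $j\le\nu$ and the quantities $l_{j,\nu}^-\in(0,1)$ are treated separately from the positive $l_{j,\nu}^+>1$ with $j\ge\nu+1$.
\begin{itemize}
\item The $l_{j,\nu}^-$ all lie in $(0,1)$, so they are bounded.
\item Rearranging \eqref{sums for l}, and recalling that $l_{j,\nu}^+=l_{j,\nu}^+(t)$ depends on $t$, gives
\begin{equation*}
\sum_{j=\nu+1}^{N-1}b_j^+l_{j,\nu}^+ +b_N^+ + \sum_{j=1}^{\nu}b_j^+l_{j,\nu}^+=\sum_{j=1}^Mb_j^-l_{j,\nu}^- < b^- .
\end{equation*}
\item The first sum is positive. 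Hence $\sum_{j\le\nu}b_j^+|l_{j,\nu}^+|$ is not bounded by the right-hand side alone; its bound must come from the balance between the two sides.
\end{itemize}
The structural point is that the ordering gives $|l_{1,\nu}^+|<\dots<|l_{\nu,\nu}^+|$ and $l_{\nu+1,\nu}^+>\dots>l_{N-1,\nu}^+>1$. So it suffices to bound the two extremes, $l_{\nu,\nu}^+$ and $l_{\nu+1,\nu}^+$, which correspond to $\rho_\nu^+$ approaching $k_\nu^+(t)$ or $k_{\nu+1}^+(t)$ faster than it approaches $k_N^+$.

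I would argue by contradiction along a sequence $t_n\to\infty$ with $|l_{\nu,\nu}^+(t_n)|\to\infty$ or $l_{\nu+1,\nu}^+(t_n)\to\infty$. Dividing \eqref{sums for l} by the largest modulus term shows that, after passing to a subsequence, the normalized ratios converge. The relation then forces a nontrivial linear identity among the extreme terms. A more robust route is to work directly with $h'(\rho_\nu^+(t),t)=0$. Write $d=\rho_\nu^+(t)-k_\nu^+(t)$. If $d$ is much smaller than the distances from $\rho_\nu^+(t)$ to all the other $k$'s, then the term $b_\nu^+/d$ dominates every other term of $h'$, which contradicts the vanishing of $h'$. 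The domination holds because the other distances are comparable to $\rho_\nu^+(t)-k_N^+$ or larger. The intervals between consecutive $k$'s shrink at different exponential rates, so some care is needed. I will group the $k$'s into clusters according to the scale of their distance from $\rho_\nu^+$. The term with the smallest denominator then has no partner of opposite sign at the same scale, except the neighbour $k_{\nu+1}^+$. That neighbour contributes with the opposite sign, so the cancellation $b_\nu^+/d\approx b_{\nu+1}^+/d'$ pins $\rho_\nu^+$ proportionally inside its interval. This proportionality gives $d\gtrsim |k_{\nu+1}^+(t)-k_\nu^+(t)|$. Finally, $|k_{\nu+1}^+-k_\nu^+|/|\rho_\nu^+-k_N^+|$ is bounded below, since $\rho_\nu^+<k_{\nu+1}^+<k_N^+$ and the exponents are ordered as in \eqref{eq:thetas}. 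The case of the $a_{j,\mu}^\pm$ is symmetric, using \eqref{sums for a}.

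The main obstacle is the scale-separation argument in the last step, in particular when several $\theta$'s coincide and the two neighbouring $k$'s approach $k_N^+$ at the same rate as $\rho_\nu^+$. I expect to handle it by a compactness argument. Rescale the configuration by the factor $\rho_\nu^+(t)-k_N^+$ so that all the rescaled points lie in a bounded window. In the limit the rescaled root is a simple root of a limiting rational function that still has alternating poles, and this keeps it bounded away from the neighbouring poles.
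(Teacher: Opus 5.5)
Your continuity argument is the paper's. For finiteness your route is sound but different. The paper never estimates where $\rho_\nu^+(t)$ sits inside its interval. It argues on limit points, and uses \eqref{sums for l} to see that a positive $l^+_{i,\nu}$ ($i\ge\nu+1$) can blow up only together with a negative $l^+_{i',\nu}$ ($i'\le\nu$). Then \eqref{connections of l's} turns this into $(k_N^+-k_{i'}^+(t))/(k_N^+-k_i^+(t))\to1$, which \eqref{eq:thetas} forbids, since that ratio is at least $(k_N^+-k_{i'}^+(0))/(k_N^+-k_i^+(0))>1$. You instead read off from $h'(\rho_\nu^+(t),t)=0$ that the root stays proportionally away from $k_\nu^+(t)$. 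You then use the same exponential-ordering fact in the form ``consecutive $k$'s do not collapse relative to their distance to $k_N^+$''. Both rest on the same mechanism. Yours gives explicit bounds, uniform in $t\ge0$, with constants depending only on the $b_j^\pm$ and the initial points. The paper's is softer, but works directly with \eqref{connections of l's}, the identity that also drives Lemma \ref{non zero-one l's} and the rate computations.

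The step you flag as the main obstacle needs no scale clusters and no compactness. The two-term balance $b_\nu^+/d\approx b_{\nu+1}^+/d'$ is not true in general, since other poles may sit at the same scales, and it is not needed. On $(k_\nu^+(t),k_{\nu+1}^+(t))$ every term of $h'(\cdot,t)$ is positive except those coming from $k_j^+(t)$, $j\ge\nu+1$. All of these lie at distance at least $d'=k_{\nu+1}^+(t)-\rho_\nu^+(t)$, so
\[
\frac{b_\nu^+}{d}\le\sum_{j=\nu+1}^{N}\frac{b_j^+}{k_j^+(t)-\rho_\nu^+(t)}\le\frac{1}{d'}\sum_{j=\nu+1}^{N}b_j^+ .
\]
Hence $d\ge c\,(k_{\nu+1}^+(t)-k_\nu^+(t))$ with $c=b_\nu^+/\sum_{j\ge\nu}b_j^+$, whatever coincidences occur among the $\theta$'s. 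This also removes the circularity in your remark that the other distances are comparable to $k_N^+-\rho_\nu^+(t)$, which is just the lemma for the other indices.

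In the final step the relevant inequality is $\rho_\nu^+>k_\nu^+$, not $\rho_\nu^+<k_{\nu+1}^+$. It gives $k_N^+-\rho_\nu^+(t)<k_N^+-k_\nu^+(t)$. Since $\theta_{\nu+1}^+\ge\theta_\nu^+$, also $(k_{\nu+1}^+(t)-k_\nu^+(t))/(k_N^+-k_\nu^+(t))\ge1-(k_N^+-k_{\nu+1}^+(0))/(k_N^+-k_\nu^+(0))>0$. This bounds $l^+_{\nu,\nu}$, and hence every $l^+_{j,\nu}$ with $j\le\nu$ by \eqref{eq:orderings of l_n}. You should state explicitly that the positive ones then follow from \eqref{sums for l}.

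For the $a$'s, ``symmetric'' needs care. The clean extreme is $a^-_{\mu+1,\mu}$, where only the $k_j^-(t)$ with $j\le\mu$ oppose the neighbour term. Set $e=\rho_\mu^-(t)-k_\mu^-(t)$ and $e'=k_{\mu+1}^-(t)-\rho_\mu^-(t)$. This gives $b^-_{\mu+1}/e'\le\sum_{j\le\mu}b_j^-/e$, and then $\theta_\mu^-\ge\theta_{\mu+1}^-$ together with \eqref{sums for a} finish. At $a^-_{\mu,\mu}$ this fails, because the neighbour term is opposed by the whole $k^+$ cluster on the same side.
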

\begin{proof}
    Returning to \eqref{polynomial}, the numbers $\rho_j^{\pm}$ are roots of a polynomial whose coefficients depend continuously on the functions $k_j^{\pm}$. Theorem \ref{continuity of the roots} asserts that the roots of a polynomial depend continuously on its coefficients. As a result, the roots $\rho_j^{\pm}$ are continuous functions of $t$ and, by extension, the same applies to the functions $l_j^{\pm}$, $a_j^{\pm}$.

    Fix, now, some $\nu\in\{1,\dots,N-2\}$. By $(\ref{eq:orderings of l_n})$, we have that for any index $j=1,\dots,M$, every limit point of $l_{j,\nu}^-$ is finite. Assume, on the other hand, that for some index $i\ge\nu+1$, a limit number of $l_{i,\nu}^+(t)$ is $+\infty$. Thus, there exists a sequence $(t_n)_{n\ge1}$, such that $l_{i,\nu}^+(t_n)\rightarrow+\infty$. Looking at $(\ref{sums for l})$, the right-hand side is evidently bounded. Thus, it is necessary that for some other index $i'\le\nu$, the sequence $(l_{i',\nu}^+(t_n))_{n\ge1}$ has a subsequence that tends to $-\infty$. Write $(t_{m_n})_{n\ge1}$ for the corresponding points. After some elementary computations, observe that
    \begin{equation*}
        \frac{l_{i',\nu}^+(t_{m_n})}{1-l_{i',\nu}^+(t_{m_n})}(k_{i'}^+(t_{m_n})-k_N^+)=k_N^+-\rho_{\nu}^+(t_{m_n})=\frac{l_{i,\nu}^+(t_{m_n})}{1-l_{i,\nu}^+(t_{m_n})}(k_{i}^+(t_{m_n})-k_N^+).  
    \end{equation*}
    But utilising $(\ref{k(t)})$, we see that 
    \begin{equation*}
          \frac{l_{i,\nu}^+(t_{m_n})}{1-l_{i,\nu}^+(t_{m_n})}\frac{1-l_{i',\nu}^+(t_{m_n})}{l_{i',\nu}^+(t_{m_n})}=\frac{k_{i'}^+(0)-k_N^+}{k_{i}^+(0)-k_N^+}e^{(\theta_i^+-\theta_{i'}^+)t_{m_n}}.  
    \end{equation*}
    By our assumptions, the left-hand side of the last equation converges to $1$. Nonetheless, since $i>i'$, the ordering in \eqref{eq:thetas} implies that $\theta_i^+\ge\theta_{i'}^+$, which in turn shows that the right-hand side either diverges to $+\infty$ or is constantly equal to $(k_{i'}^+(0)-k_N^+)/(k_i^+(0)-k_N^+)\ne1$. Contradiction! So, for any index $i\ge\nu+1$, all the limit numbers of $l_{i,\nu}^+(t)$, as $t\to+\infty$, are finite. The reverse procedure provides the desired outcome for the indices $i\le\nu$. The case $\nu=N-1$ is fairly simpler, since this time, all the functions $l_{j,N-1}^+$ are negative and \eqref{sums} leads directly to the finiteness of all the possible limit numbers.
    
    Fixing $\mu\in\{1,\dots,M\}$, the functions $a_{j,\mu}^+$ are clearly bounded due to \eqref{orderings of a_m}. Finally, the process for the functions $a_{j,\mu}^-$ follows similar steps as before and uses \eqref{sums for a}. We omit the details for the sake of not being redundant.
\end{proof}

At this stage, we understand that relations \eqref{k(t)}-\eqref{orderings of a_m} are independent of how the parameters $b_j^{\pm}$ and $\theta_j^{\pm}$ relate with each other. But from now on, we do take into account the way we intertwine them. For example, it is important to prescribe the $b_j^{\pm}$'s because the heights of the half-lines in $\partial\Omega_0$ depend only on the choice of the $b_j^{\pm}$'s, as we showed in Section \ref{sec:construction}. Looking at Figure \ref{fig:mu's} it is possible that several left half-lines are contained in the strip defined by the same two consecutive right half-lines. This means that even though the parameters $b_j^{\pm}$ are chosen arbitrarily, there exist some relations between them with respect to the preceding heights. In particular, we can extract indices $(\mu_{\nu})_{\nu=1}^{N-1}$ with
\begin{equation}\label{ordering of m}
    M\ge\mu_1\ge\dots\ge\mu_{N-1}\ge1
\end{equation}
so that for each $\nu\in\{1,\dots,N-1\}$, we have $\mathrm{Im}(h_t(\rho_{\mu_{\nu}-1}^-))<\mathrm{Im}(h_t(\rho_\nu^+))<\mathrm{Im}(h_t(\rho_{\mu_{\nu}}^-))$ and this implies (with the aid of \eqref{imaginary part of h in the positives}-\eqref{imaginary part of h in between}) that 
\begin{equation}
    \label{height with b}
    \sum_{j=1}^{\mu_{\nu}-1}b_j^-<\sum_{j=\nu+1}^Nb_j^+<\sum_{j=1}^{\mu_{\nu}}b_j^-.
\end{equation}

\begin{figure}[ht]
\centering
\resizebox{0.8\linewidth}{!}{%
\begin{tikzpicture}[
  x=0.02cm,
  y=-0.02cm, 
  every node/.style={font=\scriptsize, inner sep=0.5pt}
]

\definecolor{linegray}{RGB}{155,155,155}
\definecolor{framegray}{RGB}{110,110,110}

%
%
%
%

\def\Lift{8} 


\draw[linegray, line width=0.35pt] (18,19) -- (524,19);
\draw[linegray, line width=0.35pt] (16,237) -- (522,237);

\draw[linegray, line width=0.35pt] (21,84) -- (241,84);
\node at (131,{84-\Lift}) {(1)};

\draw[linegray, line width=0.35pt] (20,114) -- (203,114);
\node at ({(20+203)/2},{114-\Lift}) {(2)};

\draw[linegray, line width=0.35pt] (19,126) -- (268,126);
\node at (258,118) {(3)}; 

\draw[linegray, line width=0.35pt] (19,144) -- (156,144);
\node at ({(19+156)/2},{144-\Lift}) {(4)};

\draw[linegray, line width=0.35pt] (19,186) -- (259,186);
\node at ({(19+259)/2},{186-\Lift}) {(5)};

\draw[linegray, line width=0.35pt] (356,46) -- (524,46);
\node at ({(356+524)/2},{46-\Lift}) {(6)};

\draw[linegray, line width=0.35pt] (295,61) -- (524,61);
\node at (312,50) {$ (\mu_1) $};

\draw[linegray, line width=0.35pt] (316,93) -- (524,93);
\node at ({(316+524)/2},{93-\Lift}) {$ (\mu_1-1)=(\mu_2)=(\mu_3)=(\mu_4) $};

\draw[linegray, line width=0.35pt] (316,153) -- (524,153);
\node at ({(316+524)/2},{153-\Lift}) {$ (\mu_4-1) $};

\draw[linegray, line width=0.35pt] (219,161) -- (524,161);
\node at (240,153) {$ (\mu_5) $};

\draw[linegray, line width=0.35pt] (284,202) -- (524,202);
\node at ({(284+524)/2},{202-\Lift}) {$ (\mu_5-1)=(1) $};

\end{tikzpicture}%
}
\caption{The indices $\mu_\nu$ allow to classify the right half-lines so as to distinguish those left half-lines that lie on the same strip. For example, the left half-lines $(2),(3),(4)$ lie on one common strip and so $\mu_2=\mu_3=\mu_4$.}
\label{fig:mu's}
\end{figure}

During the proof of Lemma \ref{finiteness}, we deduced a very useful formula that relates the ratios in Definition \ref{ratios +-}. Indeed, for every $\nu\in\{1,\dots,N-1\}$, we have that for any $j=1,\dots,N-1$ and $i=1,\dots,M$,
\begin{equation}\label{connections of l's}
    \frac{l_{j,\nu}^+(t)}{1-l_{j,\nu}^+(t)}(k_j^+(t)-k_N^+)=k_N^+-\rho_{\nu}^+(t)=\frac{l_{i,\nu}^-(t)}{1-l_{i,\nu}^-(t)}(k_i^-(t)-k_N^+),
\end{equation}
for all $t\ge0$. Identically, for every $\mu\in\{1,\dots,M\}$, we have that for any $j=1,\dots,N-1$ and $i=1,\dots,M$,
\begin{equation}\label{connections of a's}
    \frac{a_{j,\mu}^+(t)}{1-a_{j,\mu}^+(t)}(k_j^+(t)-k_N^+)=k_N^+-\rho_{\mu}^-(t)=\frac{a_{i,\mu}^-(t)}{1-a_{i,\mu}^-(t)}(k_i^-(t)-k_N^+),
\end{equation}
for all $t\ge0$.

\subsection{Choice of exponents}As it turns out, our assumption in \eqref{eq:thetas} implies that for a fixed $\nu\in\{1,\dots,N-1\}$, only for the indices $j$ that correspond to equal $\theta^{\pm}_j$'s, can the functions $l_{j,\nu}^{\pm}$ have a limiting number different than $0$ or $1$. The question is which. This depends on the exponents $\theta_j^{\pm}$ and the height of the half-lines in the boundary of $\Omega_0$. For the rest of this work, we will make a further assumption correlating the exponents $\theta_j^{\pm}$ with the heights of the half-lines, that will provide the desired answer. Naturally, a different extra assumption could yield a different answer. The correct choice for the exponents is crucial for our model to work.
\par 

Before explaining the choice of the exponents, we fix a notation for the relevant ends of the image domain. Let
\[
\Omega_0 \vcentcolon= h_0(\mathbb H)
\]
be the domain corresponding to the initial configuration $\mathbf{k}(0)$. Working as in Example \ref{ex:prime ends}, for each $\nu\in\{1,\dots,N\}$, the boundary point $k_\nu^+$ determines a distinguished {\it left} prime end of $\Omega_0$, and for each $\mu\in\{1,\dots,M\}$, the boundary point $k_\mu^-$ determines a distinguished {\it right} prime end of $\Omega_0$. We denote these prime ends by
\[
\infty_\nu^+ \qquad\text{and}\qquad \infty_\mu^-,
\]
respectively. Thus, $\infty_\nu^+$ and $\infty_\mu^-$ are not points of $\mathbb C$, but symbols for the corresponding prime ends of $\Omega_0$. Geometrically, $\infty_\nu^+$ is the horizontal end at $-\infty$ of the strip adjacent to $k_{\nu}^+$, while $\infty_\mu^-$ is the horizontal end at $+\infty$ of the strip adjacent to $k_\mu^-$. 

We now describe the horizontal levels which will be used to organize these
prime ends. For the left prime ends we define
\[
S_1^+
=
\{w\in\mathbb C:\pi(\sum_{j=1}^Nb_j^+-\sum_{j=1}^Mb_j^-)>\operatorname{Im}w>\operatorname{Im}h(\rho_1^+)\},
\]
\[
S_\nu^+
=
\{w\in\mathbb C:
\operatorname{Im}h(\rho_\nu^+)<\operatorname{Im}w
<
\operatorname{Im}h(\rho_{\nu-1}^+)\},
\qquad 2\leq \nu\leq N-1,
\]
and
\[
S_N^+
=
\{w\in\mathbb C:-\pi \sum_{j=1}^Mb_j^-<\operatorname{Im}w<\operatorname{Im}h(\rho_{N-1}^+)\}.
\]

Similarly, for the right prime ends we define
\[
S_1^-
=
\{w\in\mathbb C:-\pi\sum_{j=1}^Mb_j^-<\operatorname{Im}w<\operatorname{Im}h(\rho_1^-)\},
\]
and
\[
S_\mu^-
=
\{w\in\mathbb C:
\operatorname{Im}h(\rho_{\mu-1}^-)
<
\operatorname{Im}w
<
\operatorname{Im}h(\rho_\mu^-)\},
\qquad 2\leq \mu\leq M.
\]

By the definition of the heights in \eqref{height with b}, for each $\nu\in\{1,\dots,N-1\}$ there exists an (not necessarily unique) index $\mu_\nu\in\{1,\dots,M\}$ such that
\[
h(\rho_\nu^+)\in S_{\mu_\nu}^-.
\]

The exponents are then chosen so that their ordering reflects the vertical ordering of the prime ends. More precisely, if the strip $S_\nu^+$ is contained in the strip $S_\mu^-$, then the corresponding horizontal ends occur at the same "height", and we set
\[
\theta_\nu^+=\theta_\mu^-.
\]
If, on the other hand, $h(\rho_\nu^+)\in S_{\mu_\nu}^-$ but $S_\nu^+\not\subset S_{\mu_\nu}^-$, then the corresponding ends are vertically separated, and we impose
\[
\theta_\nu^+<\theta_{\mu_\nu}^-.
\]
Similarly, whenever $S_\mu^-\subset S_\nu^+$, we set
\[
\theta_\mu^-=\theta_\nu^+.
\]
Thus the ordering of the exponents is read off from the ordering of the prime ends $\infty_j^\pm$ from bottom to top in a zig-zag manner as we see in Figure \ref{fig:thetas}: the exponents increase as we move upward, and the inequalities are strict unless a $+$-strip is contained in a $-$-strip.

 A careful inspection of Figure \ref{fig:thetas} and the above configuration implies that \eqref{eq:thetas} still holds.

\begin{figure}[ht]
\centering
\resizebox{0.98\linewidth}{!}{%
\begin{tikzpicture}[
    x=1cm,y=1cm,
    every node/.style={font=\normalsize},
    >=Latex
]

\definecolor{linegray}{RGB}{120,120,120}
\definecolor{arrowgray}{RGB}{185,185,185}

\tikzset{
    halfline/.style={draw=linegray, line width=0.45pt},
    boundary/.style={draw=linegray, line width=0.55pt},
    comp/.style={
        draw=arrowgray,
        line width=0.55pt,
        -{Latex[length=4mm,width=2.4mm]}
    },
    compboth/.style={
        draw=arrowgray,
        line width=0.55pt,
        {Latex[length=4mm,width=2.4mm]}-{Latex[length=4mm,width=2.4mm]}
    },
    lab/.style={
        inner sep=1.5pt,
        outer sep=0pt
    }
}

\def\yTop{7.10}
\def\yBot{0.55}

\def\yLj{5.85}
\def\yLvmone{4.95}
\def\yLv{3.65}
\def\yLvpone{2.00}

\def\yRtop{6.15}
\def\yRmunumone{4.35}
\def\yRmu{3.25}
\def\yRmunu{2.15}
\def\yRbot{1.05}

\pgfmathsetmacro{\yLabLj}{(\yTop+\yLj)/2}
\pgfmathsetmacro{\yLabLvmone}{(\yLj+\yLvmone)/2}
\pgfmathsetmacro{\yLabLv}{(\yLv+\yLvpone)/2}
\pgfmathsetmacro{\yLabLvpone}{(\yLvpone+\yBot)/2}

\pgfmathsetmacro{\yLabRtop}{(\yRtop+\yRmunumone)/2}
\pgfmathsetmacro{\yLabRmu}{(\yRmu+\yRmunu)/2}
\pgfmathsetmacro{\yLabRmunu}{(\yRmunu+\yRbot)/2}

\pgfmathsetmacro{\yDotsRight}{(\yRmunu+\yRbot)/2}

\draw[boundary] (0.00,\yTop) -- (13.95,\yTop);
\draw[boundary] (0.00,\yBot) -- (13.95,\yBot);

\draw[halfline] (0.00,\yLj)      -- (6.55,\yLj);
\draw[halfline] (0.00,\yLvmone)  -- (4.95,\yLvmone);
\draw[halfline] (0.00,\yLv)      -- (6.7,\yLv);
\draw[halfline] (0.00,\yLvpone)  -- (5.85,\yLvpone);

\draw[halfline] (7.55,\yRtop)       -- (13.95,\yRtop);
\draw[halfline] (8.20,\yRmunumone)  -- (13.95,\yRmunumone);
\draw[halfline] (7.65,\yRmu)        -- (13.95,\yRmu);
\draw[halfline] (9,\yRmunu)      -- (13.95,\yRmunu);
\draw[halfline] (8.5,\yRmunu+0.31)      -- (13.95,\yRmunu+0.31);
\draw[halfline] (8.45,\yRbot)       -- (13.95,\yRbot);

\node[anchor=west] at (0.18,\yLabLvmone) {$\infty_j^{+}$};
\node[anchor=west] at (0.18,\yLabLv+1.5) {$\infty_{\nu-1}^{+}$};
\node[anchor=west] at (0.18,\yLabLv) {$\infty_{\nu}^{+}$};
\node[anchor=west] at (0.18,\yLabLvpone) {$\infty_{\nu+1}^{+}$};

\node[anchor=east] at (13.55,\yLabRmunu+3.8) {$\infty_{\mu_{\nu-1}-1}^{-}$};
\node[anchor=east] at (13.55,{\yLabRtop-1.35}) {$\infty_{\mu_{\nu-1}^{ }}^{-}$};
\node[anchor=east] at (13.55,\yLabRmu) {$\infty_{\mu}^{-}$};
\node[anchor=east] at (13.55,\yLabRmunu) {$\infty_{\mu_\nu}^{-}$};

 \node[anchor=east] at (10.05,\yDotsRight+1.3) {\Large$\vdots$};

 \node[anchor=east] at (3,\yDotsRight+3) {\Large$\vdots$};


\draw[compboth] (6.70,5.15) -- (10.75,5.15)
    node[midway, above=0.18cm, lab]
    {$\theta_j^{+}=\theta_{\mu_{\nu-1}-1}^{-}$};

\draw[compboth] (1.60,2.75) -- (4.80,2.75)
    node[midway, above=0.18cm, lab]
    {$\theta_\nu^{+}=\theta_{\mu}^{-}$};

\draw[comp] (5.5,2.90) -- (10,3.75)
    node[pos=0.55, above=0.05cm, sloped, lab]
    {$\theta_\nu^{+}>\theta_{\mu_{\nu-1}^{}}^{-}$};
\draw[comp] (9,1.72) -- (5.35,2.55)
    node[midway, above=0.08cm, sloped, lab]
    {$\theta_\nu^{+}<\theta_{\mu_\nu}^{-}$};

\draw[comp] (5.05,0.88) -- (9,1.5)
    node[midway, above=0.08cm, sloped, lab]
    {$\theta_{\nu+1}^{+}>\theta_{\mu_\nu}^{-}$};

\end{tikzpicture}%
}
\caption{For the ordering of the $\theta$'s we look at the ``ordering'' of the points at $\infty$, in a zig-zag manner. They increase as we go up and the inequalities are strict, unless a $+$-strip is contained in a $-$-strip.}
\label{fig:thetas}
\end{figure}

\subsection{Rates of convergence} We will, now, measure the rate of convergence of the tip points to the points at infinity they access, as $t\rightarrow+\infty$. As we mentioned several times previously, we need to to measure the 'speeds" in Definition \ref{rates of the tips} and then compare them. We first deal with the left tips and then we move on with the right ones.

\addtocontents{toc}{\protect\setcounter{tocdepth}{0}}
\subsection*{Convention} 
In what follows we will repeatedly take limit points for the ratios of Definition \ref{ratios +-}. Fix $\nu\in\{1,\dots,N-1\}$ and take, for instance, an index $j\in\{1,\dots,N-1\}$ and let $l_{j,\nu}^+$ be a limit number of $l_{j,\nu}^+(t)$, as $t\to+\infty$. Thus, there exists a sequence $(t_n)_{n\ge1}$, with $t_n\rightarrow+\infty$, so that $\lim_{n\to+\infty}l_{j,\nu}^+(t_n)=l_{j,\nu}^+$. Then, because $(l_{1.\nu}^-(t_n))_{n\ge1}$ is bounded, we may extract a convergent subsequence for some sequence $(t_{n_m})_{m\ge1}\subseteq(t_n)_{n\ge1}$. Taking a further subsequence, we extract a limit point for $(l_{2,\nu}^-(t_{n_m}))_{n\ge1}$ and we continue repeatedly until we reach the index $M$. In addition, by Lemma $\ref{finiteness}$ we  also extract limit points $l_{i,\nu}^-$ of $(l_{i,\nu}^-(t))_{t\ge0}$, so that all those limit points correspond to the same sequence $(t_n)_{n\ge1}$. 

So, when we start with a limit point of $(l_{j,\nu}^{\pm}(t))_{t\ge0}$, then we get a limit point of $(l_{i,\nu}^{\pm}(t))_{t\ge0}$ for any other index $i$, that corresponds to the same sequence $(t_n)_{n\ge1}$. To avoid describing this process every time we implement it, we just say that if $l_{j,\nu}^{\pm}$ is a limit point of $(l_{j,\nu}^{\pm}(t))_{t\ge0}$, then $l_{i,\nu}^{\pm}$ is a corresponding limit number of $(l_{i,\nu}^{\pm}(t))_{t\ge0}$. The same applies to the ratios $a_{j,\mu}^{\pm}(t)$.
\medskip

In the lemma below, we distinguish all those ratios whose limiting numbers are neither $0$ nor $1$. The proof requires the geometric configuration of the slits as seen in Figure \ref{fig:thetas}.
\begin{lemma}
    \label{non zero-one l's}
    For every $\nu\in\{1,\dots,N-1\}$
   \begin{equation*}
      0<\liminf_{t\rightarrow+\infty}{l^-_{\mu_{\nu},\nu}(t)}\le\limsup_{t\rightarrow+\infty}{l^-_{\mu_{\nu},\nu}(t)}<1.    
   \end{equation*}

\end{lemma}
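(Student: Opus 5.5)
The plan is to argue by contradiction along a sequence. I fix $\nu$ and suppose that some limit point of $l^-_{\mu_\nu,\nu}(t)$ equals $0$ or $1$ (values outside $[0,1]$ are excluded by \eqref{eq:orderings of l_n}, resp. \eqref{orderings of lN}). Following the Convention, I pass to a sequence $t_n\to+\infty$ along which every ratio $l^{\pm}_{j,\nu}(t_n)$ converges, with finite limits by Lemma \ref{finiteness}. I then let $n\to\infty$ in \eqref{sums for l}, obtaining
\begin{equation*}
\sum_{j=1}^{N-1}b_j^+l_{j,\nu}^++b_N^+=\sum_{i=1}^{M}b_i^-l_{i,\nu}^-,
\end{equation*}
and aim to contradict the strict height inequalities \eqref{height with b}.

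The first step is to classify the limits using \eqref{connections of l's} together with \eqref{k(t)}. For any two indices, \eqref{connections of l's} gives
\begin{equation*}
\frac{l_{j,\nu}(t)}{1-l_{j,\nu}(t)}\Big/\frac{l_{i,\nu}(t)}{1-l_{i,\nu}(t)}=C_{ij}\,e^{(\theta_j-\theta_i)t},
\end{equation*}
for a constant $C_{ij}\neq0$. Hence if $\theta_j>\theta_i$, then either $l_i\to0$ or $l_j\to1$. Equivalently, there is a threshold scale $\delta(t)=|\rho^+_\nu(t)-k_N^+|$. Indices $k_j(t)$ with $|k_j(t)-k_N^+|\ll\delta(t)$ (fast, large $\theta$) have $l\to1$. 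Those with $|k_j(t)-k_N^+|\gg\delta(t)$ (slow) have $l\to0$. Only one $\theta$-level can give values in $(0,1)$ or, for $+$ indices, values outside $\{0,1\}$.

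Combining this with the sign information in \eqref{eq:orderings of l_n}, the $+$ indices $j\le\nu$ have limits in $(-\infty,0]$, and those with $j\ge\nu+1$ have limits in $[1,\infty)$. The limits $l^-_{i,\nu}$ are nonincreasing in $i$ and lie in $[0,1]$.

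The second step is to locate the threshold level via the zig-zag choice of exponents (Figure \ref{fig:thetas}). Since $\rho_\nu^+\in(k_\nu^+,k_{\nu+1}^+)$, the scale $\delta(t)$ is pinned between $|k_\nu^+(t)-k_N^+|$ and $|k_{\nu+1}^+(t)-k_N^+|$, so the critical level lies between $\theta_\nu^+$ and $\theta_{\nu+1}^+$. Suppose $l^-_{\mu_\nu,\nu}=1$. Then $l^-_{i,\nu}=1$ for all $i\le\mu_\nu$. The exponent dichotomy then forces every $+$ index with $\theta$ at most $\theta^-_{\mu_\nu}$ to have limit $0$ or $1$ accordingly, in particular $l^+_{j,\nu}\to0$ for $j\le\nu$ and $l^+_{j,\nu}\to1$ for $j\ge\nu+1$. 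The limit identity then reads
\begin{equation*}
\sum_{j=\nu+1}^{N}b_j^+=\sum_{i\le\mu_\nu}b_i^-+\sum_{i>\mu_\nu}b_i^-l_{i,\nu}^-\ \ge\ \sum_{i=1}^{\mu_\nu}b_i^-,
\end{equation*}
which contradicts the right inequality in \eqref{height with b}. Symmetrically, suppose $l^-_{\mu_\nu,\nu}=0$. Then $l^-_{i,\nu}=0$ for $i\ge\mu_\nu$, and the exponent relations give $l^+_{j,\nu}\to1$ exactly for $j\ge\nu+1$ and $0$ otherwise. This yields $\sum_{j>\nu}b_j^+\le\sum_{i<\mu_\nu}b_i^-$, contradicting the left inequality.

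The main obstacle is the middle step: rigorously showing that, in each contradictory case, the $+$ ratios are forced to $0$ for $j\le\nu$ and to $1$ for $j\ge\nu+1$, with no $+$ index sitting on the critical level and contributing an intermediate value. This is where the precise zig-zag rules are used: $\theta^+_\nu<\theta^-_{\mu_\nu}$ or $\theta^+_\nu=\theta^-_\mu$ when strips are nested, and similarly for $\nu+1$. I would handle it by pairwise comparing each $+$ index with the index $\mu_\nu$ through \eqref{connections of l's}, splitting into the cases $\theta_j^+<$, $=$, $>$ $\theta^-_{\mu_\nu}$. In the equality case I would use the nesting of the strips $S_\nu^+\subset S_{\mu}^-$ to show that the constant $C_{ij}$ and the sign constraints still pin the limit to $0$ or $1$.
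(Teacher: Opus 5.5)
Your skeleton matches the paper's proof: pass to a subsequence on which all ratios converge, use \eqref{connections of l's} with \eqref{k(t)} to force certain limits, and contradict \eqref{height with b} through the limit of \eqref{sums for l}. The step you single out as the main obstacle, however, cannot be carried out by pairwise comparison. That step pins every $+$ ratio to $0$ for $j\le\nu$ and to $1$ for $j\ge\nu+1$.

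Take $l^-_{\mu_\nu,\nu}\to1$ with $\theta^+_\nu<\theta^-_{\mu_\nu}$. Comparing $\nu$ with $\mu_\nu$ gives only ``$l^+_{\nu,\nu}\to0$ or $l^-_{\mu_\nu,\nu}\to1$'', and the second alternative is your hypothesis. Every index with exponent larger than $\theta^+_\nu$ already tends to $1$, because the zig-zag rule gives $\theta^-_i\le\theta^+_\nu$ for $i>\mu_\nu$. Equal-exponent comparisons only reproduce the sign $l^+_{\nu,\nu}\le0$. So the dichotomy gives no information, and nothing in your method excludes a negative limit of $l^+_{\nu,\nu}$, i.e.\ $\delta(t)\asymp e^{-\theta^+_\nu t}$.

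Symmetrically, when $l^-_{\mu_\nu,\nu}\to0$ and $\theta^+_{\nu+1}>\theta^-_{\mu_\nu}$, nothing forces $l^+_{\nu+1,\nu}\to1$. Hence your displayed equality, and the claim ``$\to1$ exactly for $j\ge\nu+1$ and $0$ otherwise'', are not justified. Also, ``every $+$ index with $\theta$ at most $\theta^-_{\mu_\nu}$'' should read ``at least''.

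The repair is to force only one side and use the orderings \eqref{eq:orderings of l_n} on the other.
\begin{itemize}
\item If $l^-_{\mu_\nu,\nu}\to1$: the inequality $\theta^+_{\nu+1}\ge\theta^-_{\mu_\nu}$ forces $l^+_{\nu+1,\nu}\to1$, hence $l^+_{j,\nu}\to1$ for all $j\ge\nu+1$ by monotonicity. There is nothing to do if $\nu=N-1$. Using only $l^+_{j,\nu}\le0$ for $j\le\nu$, the limit identity gives $\sum_{j\ge\nu+1}b_j^+\ge\sum_{i\le\mu_\nu}b_i^-$.
\item If $l^-_{\mu_\nu,\nu}\to0$: the inequality $\theta^+_\nu\le\theta^-_{\mu_\nu}$ forces $l^+_{\nu,\nu}\to0$, hence $l^+_{j,\nu}\to0$ for $j\le\nu$. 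For $\nu=1$, use exponent $0$ for the fixed point $k_1^+$. Using only $l^+_{j,\nu}\ge1$ for $j\ge\nu+1$, you get $\sum_{j\ge\nu+1}b_j^+\le\sum_{i<\mu_\nu}b_i^-$.
\end{itemize}

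Your ``similarly for $\nu+1$'' must mean precisely $\theta^+_{\nu+1}\ge\theta^-_{\mu_\nu}$. This is the relation between $S^+_{\nu+1}$ and the strip $S^-_{\mu_\nu}$ containing $h(\rho^+_\nu)$, not a rule linking $\theta^+_{\nu+1}$ to $\theta^-_{\mu_{\nu+1}}$.

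With this change your threshold argument is the paper's one-sided use of the sign bounds. It is in fact slightly more uniform: the paper treats the nested case $\theta^+_\nu=\theta^-_{\mu_\nu}$ with $l^-_{\mu_\nu,\nu}\to1$ separately via $l^+_{\nu,\nu}\to1$, which your version does not need.
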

 \begin{proof} 
We start with the case $\nu\in\{2,\dots,N-1\}$. By the definition of $\mu_\nu$, the tip point $h(\rho_\nu^+)$ lies in the strip $S_{\mu_\nu}^-$. Therefore, the choice of the exponents implies that
\begin{equation}\label{eq:theta-order-proof}
\theta_j^- \le \theta_\nu^+ \le \theta_{\mu_\nu}^- < \theta_i^-,
\qquad i<\mu_\nu<j.
\end{equation}
If, in addition, $S_\nu^+\subset S_{\mu_\nu}^-$, then by construction
\begin{equation}\label{eq:theta-order-proof-equality}
\theta_j^-<\theta_\nu^+=\theta_{\mu_\nu}^-<\theta_i^-,
\qquad i<\mu_\nu<j.
\end{equation}

We first consider the case $\nu\in\{2,\dots,N-2\}$ and we distinguish two further subcases with regard to whether the strip $S_\nu^+$ is contained in $S_{\mu_\nu}^-$ or not.

\textit{Case I.} Assume that $S_{\nu}^+\subset S_{\mu_{\nu}}^-$ and hence \eqref{eq:theta-order-proof-equality} prevails. Applying the fact that $\theta_{\nu}^+=\theta_{\mn}^-$ in \eqref{k(t)} and using \eqref{connections of l's}, we deduce that 
     \begin{equation}\label{eq:non zero lambda 0}
     \frac{l_{\mn,\nu}^-(t)}{1-l_{\mn,\nu}^-(t)}(k_{\mu_\nu}^-(0)-k_N^+)=\frac{l_{\nu,\nu}^+(t)}{1-l_{\nu,\nu}^+(t)}(k_{\nu}^+(0)-k_N^+).    
     \end{equation}
    Suppose that $l_{\mn,\nu}^-=0$. Then, the preceding equality signifies that $l_{\nu,\nu}^+=0$ as well. But in such a case, due to the ordering in \eqref{eq:orderings of l_n}, we have $l_{M,\nu}^-=l_{M-1,\nu}^-=\dots=l_{\mu_\nu,\nu}^-=0=l_{1,\nu}^+=l_{2,\nu}^+=\dots=l_{\nu,\nu}^+$. Therefore, taking limits, relation \eqref{sums for l} becomes
    \begin{equation}\label{eq:non zero lambda 1}
        \sum_{j=\nu+1}^{N-1}b_j^+l_{j,\nu}^++b_N^+=\sum_{j=1}^{\mu_\nu-1}b_j^-l_{j,\nu}^-.
    \end{equation}
    Then, keeping \eqref{eq:orderings of l_n} in mind and using \eqref{eq:non zero lambda 1}, we have
     \begin{equation*}
      \sum_{j=\nu+1}^Nb_j^+\le\sum_{j=\nu+1}^{N-1}b_j^+l_{j,\nu}^++b_N^+=\sum_{j=1}^{\mn-1}b_j^-l_{j,\nu}^-\le\sum_{j=1}^{\mn-1}b_j^-<\sum_{j=\nu+1}^Nb_j^+,    
     \end{equation*}
    where the last inequality is derived from \eqref{height with b}. Contradiction! Since we picked an arbitrary limit number, we deduce that $\liminf_{t\rightarrow+\infty}l_{\mn,\nu}^-(t)>0$. On the other hand, if we assume that $l_{\mu_\nu,\nu}^-=1$, we understand that $l_{\nu,\nu}^+=1$ too via \eqref{eq:non zero lambda 0}. But this contradicts the fact that $l_{\nu,\nu}^+\le0$. Thus,  $\limsup_{t\rightarrow+\infty}l_{\mn,\nu}^-(t)<1$.

     \textit{Case II.} Suppose, now, that $S_{\nu}^+\nsubseteq S_{\mn}^-$. This time, \eqref{eq:theta-order-proof} becomes $\theta_i^-\le\theta_{\nu}^+<\theta_{\mu_{\nu}}^-<\theta_j^-$, for all $j<\mu_\nu<i$. If we assume that $l_{\mn,\nu}^-=0$, \eqref{connections of l's} yields 
`   \begin{equation*}
     \lim_{n\to+\infty}\frac{l_{\nu,\nu}^+(t_n)}{1-l_{\nu,\nu}^+(t_n)}=\lim_{t\to+\infty}\left(\frac{l_{\mn,\nu}^-(t_n)}{1-l_{\mn,\nu}^-(t_n)}\frac{k_{\mu_\nu}^-(0)-k_N^+}{k_{\nu}^+(0)-k_N^+}e^{(\theta_{\nu}^+-\theta_{\mn}^-)t_n}\right)=0,
     \end{equation*}
     for some sequence $\{t_n\}$ increasing to $+\infty$. Hence $l_{\nu,\nu}^+=0$, for the corresponding limit number and we reach exactly the same contradiction as in the previous case. Therefore, $\liminf_{t\rightarrow+\infty}{l_{\mn,\nu}^-(t)>0}$. If, on the other end, assume that $l_{\mn,\nu}^-=1$, we obtain
     \begin{equation*}
      \lim_{n\rightarrow+\infty}\frac{l_{\nu+1,\nu}^+(t_n)}{1-l_{\nu+1,\nu}^+(t_n)}(k_{\nu+1}^+(0)-k_N^+)=\lim_{n\rightarrow+\infty}\left(\frac{l_{\mn,\nu}^-(t_n)}{1-l_{\mn,\nu}^-(t_n)}(k_{\mu_\nu}^-(0)-k_N^+)e^{(\theta_{\nu+1}^+-\theta_{\mn}^-)t_n}\right)=-\infty,   
     \end{equation*}
     for a suitable $\{t_n\}$, because $\theta_{\nu+1}^+\ge\theta_{\mn}^-$ (see Figure \ref{fig:thetas}). As a result, $l_{\nu+1,\nu}^+=1$ for the corresponding limit number. Through \eqref{eq:orderings of l_n}, we find $l_{\mu_\nu,\nu}^-=\dots=l_{2,\nu}^-=l_{1,\nu}^-=1=l_{\nu+1,\nu}^+=\dots=l_{N-1,\nu}^+$. Consequently, after taking limits, \eqref{sums for l} turns into
     \begin{equation}\label{eq:non zero lambda 2}
         \sum_{j=2}^\nu b_j^+l_{j,\nu}^++\sum_{j=\nu+1}^{N}b_j^+=\sum_{j=1}^{\mu_\nu}b_j^-+\sum_{j=\mu_\nu+1}^M b_j^-l_{j,\nu}^-.
     \end{equation}
     Thus, combining the ordering in \eqref{eq:orderings of l_n} with \eqref{eq:non zero lambda 2}, we see that
     \begin{equation*}
     \sum_{j=\nu+1}^Nb_j^+\overset{\eqref{eq:orderings of l_n}}{\ge}\sum_{j=2}^{\nu}b_j^+l_{j,\nu}^++\sum_{j=\nu+1}^Nb_j^+=\sum_{j=1}^{\mn}b_j^-+\sum_{j=\mn+1}^Mb_j^-l_{j,\nu}^-\overset{\eqref{eq:orderings of l_n}}{\ge}\sum_{j=1}^{\mn}b_j^-\overset{\eqref{height with b}}{>}\sum_{j=\nu+1}^Nb_j^+,    
     \end{equation*}
     which is absurd. This means that every limit number of $l_{\mn,\nu}^-$ has to be less than $1$, and so we get the desired result for $\nu\in\{2,\dots,N-2\}$.

    We move on the case $\nu=N-1$. Then, the ordering in \eqref{eq:orderings of l_n} simplifies to
\begin{equation}
    \label{orderings of lN new}
      0<l_{M,N-1}^-(t)<\dots<l_{1,N-1}^-(t)<1 \quad\text{and}\quad l_{N-1,N-1}^+(t)<\dots<l_{2,N-1}^+(t)<0.
\end{equation}
Once more, we distinguish two subcases.

\textit{Case I.} Suppose that $\theta_{N-1}^+=\theta_{\mu_{N-1}}^-$, or equivalently $S_{N}^+\subset S_{\mu_{N-1}^-}$. As usual,
\begin{equation}\label{eq:non zero lambda 3}
  \frac{l^+_{N-1,N-1}(t)}{1-l_{N-1,N-1}^+(t)}(k^+_{N-1}(0)-k_N^+)=\frac{l^-_{\mu_{N-1},N-1}(t)}{1-l_{\mu_{N-1},N-1}^-(t)}(k^-_{\mu_{N-1}}(0)-k_N^+).  
\end{equation}
Assume that $l_{\mu_{N-1},N-1}^-=1$. By the preceding relation, we also get $l_{N-1,N-1}^+=1$, contradicting \eqref{orderings of lN new}. Thus, $\limsup_{t\rightarrow+\infty}{l_{\mu_{N-1},N-1}^-}(t)<1$. On the other side, if $l_{\mu_{N-1},N-1}^-=0$, \eqref{orderings of lN new} dictates that $l_{\mu_{N-1},N-1}^-=\dots=l_{M-1,N-1}^-=l_{M,N-1}^-=0$. But from \eqref{eq:non zero lambda 3} and the fact that $l_{N-1,N-1}^+$ has no infinite limit points (see Lemma \ref{finiteness}, we get $l_{N-1,N-1}^+=0$. Together with \eqref{orderings of lN new}, we comprehend that $l_{2,N-1}^+=l_{3,N-1}^+=\dots=l_{N-1,N-1}^+=0$. Applying on \eqref{sums for l} gives rise to
\begin{equation}\label{eq:non zero lambda 4}
   b_N^+=\sum_{j=1}^{\mu_{N-1}-1}b_j^-l_{j,N-1}^-. 
\end{equation}
Nevertheless, \eqref{eq:non zero lambda 4} implies
\begin{equation*}
  \sum_{j=1}^{\mu_{N-1}-1}b_j^-\overset{\eqref{orderings of lN new}}{\ge}\sum_{j=1}^{\mu_{N-1}-1}b_j^- l_{j,\mu_{N-1}}^-=b_N^+\overset{\eqref{height with b}}{>}\sum_{j=1}^{\mu_{N-1}-1}b_j^-. 
\end{equation*}
Contradiction! This means that $\liminf_{t\rightarrow+\infty}{l_{\mu_{N-1},N-1}^-}(t)>0$.

Assume, finally, that $\theta_{N-1}^+<\theta_{\mu_{N-1}^-}$ or equivalently $S_{N-1}^+\nsubseteq S_{\mu_{N-1}}^-$. We write
\begin{equation*}
\frac{l^+_{N-1,N-1}(t)}{1-l_{N-1,N-1}^+(t)}(k^+_{N-1}(0)-k_N^+)=\frac{l^-_{\mu_{N-1},N-1}(t)}{1-l_{\mu_{N-1},N-1}^-(t)}(k^-_{\mu_{N-1}}(0)-k_N^+)e^{(\theta_{N-1}^+-\theta_{\mu_{N-1}}^-)t}.    
\end{equation*}
If $l_{\mu_{N-1},N-1}^-=0$, then $l_{N-1,N-1}^+=0$ by the relation above and we reach a contradiction exactly as before since \eqref{eq:non zero lambda 4} will still hold. So, $\liminf{l_{\mu_{N-1},N-1}^-(t)}>0$. Lastly, if we assume that $l_{\mu_{N-1},N-1}^-=1$, \eqref{orderings of lN new} dictates that  we will have that $l_{1,N-1}^-=l_{2,N-1}^-=\dots=l_{\mu_{N-1},N-1}^-=1$. Ergo, \eqref{sums for l} transforms to
\begin{equation}\label{eq:non zero lambda 5}
    \sum_{j=2}^{N-1}b_j^+l_{j,N-1}^++b_N^+=\sum_{j=1}^{\mu_{N-1}}b_j^-+\sum_{j=\mu_{N-1}+1}^Mb_j^-l_{j,N-1}^-.
\end{equation}
As a consequence, working on \eqref{eq:non zero lambda 5}, we find
\begin{equation*}
    b_N^+\overset{\eqref{orderings of lN new}}{\ge}\sum_{j=2}^{N-1}b_j^+l_{j,N-1}^++b_N^+=\sum_{j=1}^{\mu_{N-1}}b_j^-+\sum_{j=\mu_{N-1}+1}^{M}b_j^-l_{j,N-1}^-\overset{\eqref{orderings of lN new}}{\ge}\sum_{j=1}^{\mu_{N-1}}b_j^- \overset{\eqref{height with b}}{>}b_N^+,
\end{equation*}
which is once again absurd. Hence, $\limsup{l_{\mu_{N-1},N-1}^-(t)}<1$
and the desired outcome holds for $\nu=N-1$.

To conclude the proof, we treat the case $\nu=1$. Observe that in this instance, \eqref{eq:orderings of l_n} may be written as 
     \begin{equation}\label{eq:ordering for l_1}
      0<l_{M,1}^-(t)<\dots<l_{1,1}^-(t)<1<l_{N-1,1}^+(t)<\dots<l_{2,1}^+(t).   
     \end{equation}
Since $\theta_2^+\ge\theta_{\mu_1}^-$ by our configuration, using \eqref{connections of l's} and assuming that $l_{\mu_1,1}^-=1$, we obtain
\begin{equation*}
 \lim_{n\rightarrow+\infty}\frac{l_{2,1}^+(t_n)}{1-l_{2,1}^+(t_n)}(k_{2}^+(0)-k_N^+)=\lim_{n\rightarrow+\infty}\left(\frac{l_{\mu_1,1}^-(t_n)}{1-l_{\mu_1,1}^-(t_n)}(k_{\mu_1}^-(0)-k_N^+)e^{(\theta_2^+-\theta_{\mu_1}^-)t_n}\right)=+\infty. 
\end{equation*}
Therefore, it is necessary that $l_{2,1}^+=1$ and by extension $l_{3,1}^+=\dots=l_{N-1,1}^+=1$ due to \eqref{eq:ordering for l_1}. Furthermore, for the same reason, $l_{\mu_1,1}^-=1$ forces $l_{1,1}^-=l_{2,1}^-=\dots=l_{\mu_1-1,1}^-=1$ as well. Then,
\begin{equation*}
 \sum_{j=2}^Nb_j^+\overset{\eqref{sums for l}}{=}\sum_{j=1}^{\mu_1}b_j^-+\sum_{j=\mu_1+1}^Mb_j^-l_{j,1}^-\overset{\eqref{eq:ordering for l_1}}{\ge}\sum_{j=1}^{\mu_1}b_j^-\overset{\eqref{height with b}}{>}\sum_{j=2}^Nb_j^+.
\end{equation*}
Contradiction! Thus, $\limsup_{t\rightarrow+\infty}{l_{\mu_1,1}^-}(t)<1$. On the other hand, if $l_{\mu_1,1}^-=0$ (and therefore $l_{\mu_1+1,1}^-=\dots=l_{M,1}^-=0$), a similar procedure leads to
\begin{equation*}
 \sum_{j=2}^Nb_j^+\overset{\eqref{eq:ordering for l_1}}{\le}\sum_{j=2}^Nb_j^+l_{j,1}^+\overset{\eqref{sums for l}}{=}\sum_{j=1}^{\mu_1-1}b_j^-l_{j,1}^-\overset{\eqref{eq:ordering for l_1}}{\le}\sum_{j=1}^{\mu_1-1}b_j^-\overset{\eqref{height with b}}{<}\sum_{j=2}^Nb_j^+, 
\end{equation*}
which is out of the question! As a result, $\liminf_{t\rightarrow+\infty}{l_{\mu_1,1}^-}(t)>0$ and the proof is complete.

 \end{proof}

Note that the same result is true for the limiting numbers of $l_{j,\nu}^+$, for any index $j$ (if they exist), such that $S_{j}^+\subset S^-_{\mu_{\nu}}$. The following proposition is a direct consequence of the preceding lemma and sheds light on the matter of how fast the tip points $h(\rho_{\nu}^+(t),t)$ diverge. Before that, we specify a piece of notation that we are going to be using frequently. Given two functions $f,g\vcentcolon[0,+\infty)\to[0,+\infty)$, we will write $f(t)\asymp g(t)$ (as $t\to+\infty$) provided there exist $t_0\ge0$ and an absolute positive constant $c\ge1$ such that $\frac{1}{c}g(t)<f(t)<cg(t)$, for all $t>t_0$. In other words, $f(t)\asymp g(t)$ if and only if both $f(t)/g(t)$ and $g(t)/f(t)$ are $O(1)$, as $t\to+\infty$.

\begin{proposition}\label{prop:rates of left tips}
For every $\nu\in\{2,\dots,N-1\}$,
        \begin{equation*}
         r(\rho_{\nu}^+)=-\sum_{j=2}^{\nu}b_j^+\theta_j^++\left(\sum_{j=1}^{\mn}b_j^--\sum_{j=\nu+1}^{N}b_j^+\right)\theta_{\mn}^-+\sum_{j=\mn+1}^Mb_j^-\theta_j^-  
        \end{equation*}
        while,
        \begin{equation*}
         r(\rho_1^+)=\left(\sum_{j=1}^{\mu_1}b_j^--\sum_{j=2}^{N}b_j^+\right)\theta_{\mu_1}^-+\sum_{j=\mu_1+1}^Mb_j^-\theta_j^->0\,.    \end{equation*}
\end{proposition}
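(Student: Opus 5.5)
The plan is to write $\mathrm{Re}\,h(\rho_\nu^+(t),t)$ as a weighted sum of logarithms of distances and to show that each distance decays exponentially with an exponent we can read off. By \eqref{h_t(N,M)}, writing $\rho=\rho_\nu^+(t)$,
\begin{equation*}
\mathrm{Re}\,h(\rho,t)=\sum_{j=1}^{N}b_j^+\log|\rho-k_j^+(t)|-\sum_{j=1}^{M}b_j^-\log|\rho-k_j^-(t)|.
\end{equation*}
So it suffices to prove that $\log|\rho-k_j^\pm(t)|=-\lambda_j^\pm t+O(1)$ for explicit exponents $\lambda_j^\pm$. Dividing by $t$ and letting $t\to+\infty$ then gives $r(\rho_\nu^+)=-\sum_j b_j^+\lambda_j^++\sum_j b_j^-\lambda_j^-$.

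\textbf{Step 1: the anchor distance.} First I would pin down $|\rho-k_N^+|$. By \eqref{connections of l's} with $i=\mu_\nu$,
\begin{equation*}
k_N^+-\rho=\frac{l_{\mu_\nu,\nu}^-(t)}{1-l_{\mu_\nu,\nu}^-(t)}\,(k_{\mu_\nu}^-(0)-k_N^+)\,e^{-\theta_{\mu_\nu}^-t}.
\end{equation*}
Lemma \ref{non zero-one l's} keeps $l_{\mu_\nu,\nu}^-(t)$ away from $0$ and $1$ for large $t$, so the prefactor is bounded above and below. Hence $|\rho-k_N^+|\asymp e^{-\theta_{\mu_\nu}^-t}$, and in particular $\rho<k_N^+$.

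\textbf{Step 2: every other distance.} Next I would compare each distance with $|\rho-k_N^+|$ and with $|k_j^\pm(t)-k_N^+|\asymp e^{-\theta_j^\pm t}$, where $\theta_1^+:=0$ since $k_1^+$ is fixed. The ordering \eqref{eq:orderings of l_n} and the exponent ordering of Figure \ref{fig:thetas} drive this, with four cases.

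\emph{Case $k_j^-$.} Here $k_j^->k_N^+>\rho$, so $|\rho-k_j^-|=|\rho-k_N^+|+|k_j^--k_N^+|\asymp e^{-\min(\theta_{\mu_\nu}^-,\theta_j^-)t}$. For $j\le\mu_\nu$ we have $\theta_j^-\ge\theta_{\mu_\nu}^-$, giving exponent $\theta_{\mu_\nu}^-$. For $j>\mu_\nu$ we have $\theta_j^-\le\theta_{\mu_\nu}^-$, giving exponent $\theta_j^-$.

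\emph{Case $k_j^+$ with $j\le\nu$.} Here $|\rho-k_j^+|\le|k_N^+-k_j^+|\asymp e^{-\theta_j^+t}$. If $\theta_j^+<\theta_{\mu_\nu}^-$, the term $k_N^+-\rho$ is negligible, so the distance is $\asymp e^{-\theta_j^+t}$. If $\theta_j^+=\theta_{\mu_\nu}^-$, Lemma \ref{finiteness} (bounded limit points of $l_{j,\nu}^+$) gives $|\rho-k_j^+|\gtrsim|\rho-k_N^+|\asymp e^{-\theta_j^+t}$.

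\emph{Case $k_j^+$ with $\nu<j<N$.} Here $k_j^+$ lies between $\rho$ and $k_N^+$, so $|\rho-k_j^+|\le|\rho-k_N^+|$. The reverse bound $\gtrsim$ again follows from boundedness of $l_{j,\nu}^+$ (Lemma \ref{finiteness}). This uses $\theta_j^+\ge\theta_{\nu+1}^+\ge\theta_{\mu_\nu}^-$, which comes from the zig-zag configuration, so the exponent is $\theta_{\mu_\nu}^-$.

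\emph{Case $j=N$.} This is Step 1, with exponent $\theta_{\mu_\nu}^-$.

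\textbf{Step 3: collecting terms.} Adding up, the coefficient of $\theta_{\mu_\nu}^-$ is $\sum_{j\le\mu_\nu}b_j^--\sum_{j>\nu}b_j^+$. The terms $j\le\nu$ contribute $-\sum_{j=2}^\nu b_j^+\theta_j^+$, the $j=1$ term vanishing because $\theta_1^+=0$. The terms $j>\mu_\nu$ contribute $\sum_{j>\mu_\nu}b_j^-\theta_j^-$. This is the stated formula, and for $\nu=1$ the first sum is empty. Positivity of $r(\rho_1^+)$ follows because $\sum_{j\le\mu_1}b_j^->\sum_{j=2}^N b_j^+$ by \eqref{height with b}, and all $\theta$'s and $b$'s are positive.

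\textbf{Main obstacle.} The delicate point is Step 2 in the boundary cases where two exponents coincide. There the crude bounds $|x\pm y|$ do not separate, and one needs lower bounds on the distances. I would handle these entirely through the boundedness of the ratios from Lemma \ref{finiteness} together with Lemma \ref{non zero-one l's}, checking carefully against Figure \ref{fig:thetas} that the required exponent inequalities ($\theta_j^+\le\theta_{\mu_\nu}^-$ for $j\le\nu$, and $\theta_j^+\ge\theta_{\mu_\nu}^-$ for $j>\nu$) really hold.
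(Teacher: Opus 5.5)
Your proposal is correct and follows the paper's proof essentially step for step: you anchor $|\rho_\nu^+(t)-k_N^+|\asymp e^{-\theta_{\mu_\nu}^-t}$ via \eqref{connections of l's} and Lemma \ref{non zero-one l's}, compare every other distance with it and with $|k_j^\pm(t)-k_N^+|$, and then collect the logarithms. Your use of Lemma \ref{finiteness} for lower bounds on $|\rho_\nu^+(t)-k_j^+(t)|$ when the exponents coincide is a worthwhile addition. The paper simply asserts $|x+y|\asymp|x|+|y|$ at that point, which needs justification because $\rho_\nu^+(t)-k_N^+$ and $k_N^+-k_j^+(t)$ have opposite signs.
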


\begin{proof}
Fix \(\nu\in\{1,\dots,N-1\}\). By Lemmata~\ref{finiteness} and
\ref{non zero-one l's}, the quantity
\[
\frac{l_{\mu_\nu,\nu}^-(t)}{1-l_{\mu_\nu,\nu}^-(t)}
\]
remains bounded above and below by positive constants for \(t\ge 0\). Hence,
combining \eqref{k(t)} with the left-hand side of
\eqref{connections of l's}, we obtain
\begin{equation}\label{rate of rho-N+}
|\rho_\nu^+(t)-k_N^+|
=
\frac{l_{\mu_\nu,\nu}^-(t)}{1-l_{\mu_\nu,\nu}^-(t)}
\,|k_{\mu_\nu}^-(t)-k_N^+|
\asymp e^{-\theta_{\mu_\nu}^- t}.
\end{equation}
With \eqref{rate of rho-N+} at our disposal, we may estimate the asymptotic behavior, as $t\to+\infty$, of all the differences $\rho_\nu^+(t)-k_j^\pm(t)$. As a matter of fact, using \eqref{eq:thetas}, for \(j=1,\dots,\mu_\nu\), we have \(\theta_j^-\ge \theta_{\mu_\nu}^-\), and therefore
\begin{equation}\label{rate of rho-j-1}
|\rho_\nu^+(t)-k_j^-(t)|
=
\bigl|(\rho_\nu^+(t)-k_N^+)+(k_N^+-k_j^-(t))\bigr|
\asymp e^{-\theta_{\mu_\nu}^-t}+e^{-\theta_j^-t} 
\asymp e^{-\theta_{\mu_\nu}^- t}.
\end{equation}
Similarly, for \(j=\mu_\nu+1,\dots,M\), we obtain
\begin{equation}\label{rate of rho-j-2}
|\rho_\nu^+(t)-k_j^-(t)|
=
\bigl|(\rho_\nu^+(t)-k_N^+)+(k_N^+-k_j^-(t))\bigr|
\asymp e^{-\theta_j^- t},
\end{equation}
because for these indices $\theta_j^-<\theta_{\mu_\nu}$. We now compare the distance of \(\rho_\nu^+(t)\) with the points \(k_j^+(t)\). To start with, we treat the case \(\nu\in\{2,\dots,N-2\}\). By our choice of the exponents,
\[
\theta_{\nu+1}^+\ge \theta_{\mu_\nu}^- \ge \theta_\nu^+,
\qquad \nu\in\{2,\dots,N-2\}\,.\]
Thus, if \(j=2,\dots,\nu\), then
\(\theta_j^+\le \theta_\nu^+\le \theta_{\mu_\nu}^-\), and so
\begin{equation}\label{rate of rho-j+1}
|\rho_\nu^+(t)-k_j^+(t)|
=
\bigl|(\rho_\nu^+(t)-k_N^+)+(k_N^+-k_j^+(t))\bigr|
\asymp e^{-\theta_j^+ t}.
\end{equation}
On the other hand, if \(j=\nu+1,\dots,N-1\), then
\(\theta_j^+\ge \theta_{\nu+1}^+\ge \theta_{\mu_\nu}^-\), and hence
\begin{equation}\label{rate of rho-j+2}
|\rho_\nu^+(t)-k_j^+(t)|
=
\bigl|(\rho_\nu^+(t)-k_N^+)+(k_N^+-k_j^+(t))\bigr|
\asymp e^{-\theta_{\mu_\nu}^- t}.
\end{equation}
All that remains is to consider the extremal cases \(\nu=1\) and \(\nu=N-1\). In these cases, our assumptions give
\[
\theta_2^+\ge \theta_{\mu_1}^-,
\qquad
\theta_{\mu_{N-1}}^-\ge \theta_{N-1}^+.
\]
Consequently, if \(\nu=1\), for every \(j=2,\dots,N-1\) we have \(\theta_j^+\ge \theta_2^+\ge \theta_{\mu_1}^-\), and hence
\begin{equation}\label{rate of rho-1-j+}
|\rho_1^+(t)-k_j^+(t)|
=
\bigl|(\rho_1^+(t)-k_N^+)+(k_N^+-k_j^+(t))\bigr|
\asymp e^{-\theta_{\mu_1}^- t}.
\end{equation}
Similarly, in the case \(\nu=N-1\), \(\theta_{\mu_{N-1}}^- \ge \theta_j^+\) for every \(j=2,\dots,N-1\), and therefore
\begin{equation}\label{rate of rho-N-1-j+}
|\rho_{N-1}^+(t)-k_j^+(t)|
=
\bigl|(\rho_{N-1}^+(t)-k_N^+)+(k_N^+-k_j^+(t))\bigr|
\asymp e^{-\theta_j^+ t}.
\end{equation}
Collecting all the estimates above and returning to \eqref{h_t(N,M)}, for \(\nu=2,\dots,N-1\) we find
\begin{align*}
\mathrm{Re}\, h(\rho_\nu^+(t),t)
&=
\sum_{j=1}^N b_j^+\log|\rho_\nu^+(t)-k_j^+(t)|
-
\sum_{j=1}^M b_j^-\log|\rho_\nu^+(t)-k_j^-(t)| \\
&=
b_1^+\log|\rho_\nu^+(t)-k_1^+|
+\sum_{j=2}^{\nu} b_j^+(-\theta_j^+ t)
+\sum_{j=\nu+1}^{N} b_j^+(-\theta_{\mu_\nu}^- t)\\
&\quad
-\sum_{j=1}^{\mu_\nu} b_j^-(-\theta_{\mu_\nu}^- t)
-\sum_{j=\mu_\nu+1}^{M} b_j^-(-\theta_j^- t) +O(1),
\end{align*}
as \(t\to+\infty\). Since \(\rho_\nu^+(t)\to k_N^+\) and $k_1^+\ne k_N^+$, the first term is bounded. Dividing by \(t>0\)
and letting \(t\to+\infty\), we obtain the desired conclusion. 

For \(\nu=1\), using \eqref{rate of rho-N+}, \eqref{rate of rho-j-1},
\eqref{rate of rho-j-2}, and \eqref{rate of rho-1-j+}, we obtain
\begin{align*}
\mathrm{Re}\, h(\rho_1^+(t),t)
&=
\sum_{j=1}^N b_j^+\log|\rho_1^+(t)-k_j^+(t)|
-
\sum_{j=1}^M b_j^-\log|\rho_1^+(t)-k_j^-(t)| \\
&=
b_1^+\log|\rho_1^+(t)-k_1^+|
+\sum_{j=2}^{N} b_j^+(-\theta_{\mu_1}^- t) \\
&\quad
-\sum_{j=1}^{\mu_1} b_j^-(-\theta_{\mu_1}^- t)
-\sum_{j=\mu_1+1}^{M} b_j^-(-\theta_j^- t)+ O(1),
\end{align*}
as \(t\to+\infty\). As before, the first term is bounded. Dividing by \(t>0\) and letting \(t\to+\infty\), we conclude that
\[
r(\rho_1^+)
=
\left(\sum_{j=1}^{\mu_1} b_j^-
-
\sum_{j=2}^{N} b_j^+\right)\theta_{\mu_1}^-
+
\sum_{j=\mu_1+1}^{M} b_j^-\,\theta_j^- .
\]
This proves the desired formula in the case \(\nu=1\). By \eqref{height with b}, we also deduce that the parenthesis in the latter equality is positive and thus $r(\rho_1^+)>0$.
\end{proof}

We now turn our attention to the right tip points as we attempt to figure out analogous properties. Arguing as above, we will see that the relative heights of the right half-lines affect the rates of the right-handed tip points.
For convenience matters, before stating the results, we describe the
geometric terminology that will be used. The indices
\(\mu_\nu\), \(\nu=1,\dots,N-1\), divide the family of right slits into
three types, according to their position relative to the left tip points.
We call a right slit \textit{upper} if its index satisfies
\[
    \mu\in\{\mu_1,\dots,M-1\};
\]
these are the right slits whose tips lie above the level determined by
\(h(\rho_1^+)\). We call a right slit \textit{middle} if, for some
\(\nu\in\{2,\dots,N-2\}\),
\[
    \mu\in\{\mu_\nu,\dots,\mu_{\nu-1}-1\};
\]
geometrically, these are the right slits lying between the levels
determined by two consecutive left tip points \(h(\rho_\nu^+)\) and
\(h(\rho_{\nu-1}^+)\). Finally, we call a right slit \textit{lower} if
\[
    \mu\in\{1,\dots,\mu_{N-1}-1\};
\]
these are the right slits lying below the level determined by
\(h(\rho_{N-1}^+)\). See Figure \ref{fig: relative heights}.

\begin{figure}[ht]
\centering
\resizebox{0.8\linewidth}{!}{%
\begin{tikzpicture}[
  x=0.02cm,
  y=-0.02cm,
  every node/.style={font=\scriptsize, inner sep=0.6pt}
]

\definecolor{framegray}{RGB}{95,95,95}
\definecolor{rayblack}{RGB}{35,35,35}      
\definecolor{timeviolet}{RGB}{210,180,210} 
\definecolor{timeblue}{RGB}{180,190,235}   

%
%
%

\draw[rayblack, line width=0.5pt] (22,14) -- (520,14);
\draw[rayblack, line width=0.5pt] (22,250) -- (520,250);


\draw[timeviolet, line width=0.7pt] (340,72) -- (60,72);
\draw[timeviolet, line width=0.7pt] (300,40)-- (60,40);
\draw[rayblack,   line width=0.6pt] (340,72) -- (520,72);
\draw[rayblack,   line width=0.6pt] (300,40) -- (520,40);

\draw[rayblack, line width=0.6pt] (22,94) -- (245,94);
\draw[timeblue, line width=0.7pt] (245,94) -- (480,94);

\draw[timeviolet, line width=0.7pt] (65,118) -- (392,118);
\draw[rayblack,   line width=0.6pt] (392,118) -- (520,118);

\draw[rayblack, line width=0.6pt] (150,140) -- (520,140);
\draw[timeviolet, line width=0.7pt] (55,140)--(150,140);



\draw[timeviolet, line width=0.7pt] (46,178) -- (250,178);
\draw[rayblack,   line width=0.6pt] (250,178) -- (520,178);

\draw[rayblack, line width=0.6pt] (22,198) -- (160,198);
\draw[timeblue, line width=0.7pt] (160,198) -- (455,198);

\draw[rayblack, line width=0.6pt] (345,216) -- (520,216);
\draw[timeviolet, line width=0.7pt] (345,216)--(50,216);
\draw[rayblack, line width=0.6pt] (345,240) -- (520,240);
\draw[timeviolet, line width=0.7pt] (345,240)--(60,240);


\node at (460,50) {$\vdots$};
\node at (468,156) {$\vdots$};
\node at (468,222) {$\vdots$};

\node at (468,125) {$\vdots$};
\node at (40,140) {$\vdots$};

\node[anchor=west] at (12,220) {$\infty_{N}^{+}$};
\node[anchor=west] at (12,60) {$\infty_{1}^{+}$};

\node[anchor=west] at (480,93) {$\infty_{\mu_{1}}^{-}$};
\node[anchor=west] at (460,197) {$\infty_{\mu_{N-1}}^{-}$};

\node at (275,30)  {$h(\rho_{M}^{-})$};
\node at (340,58)  {$h(\rho_{\mu_{1}}^{-})$};
\node at (255,80)  {$h(\rho_{1}^{+})$};
\node at (400,106) {$h(\rho_{\mu_{1}-1}^{-})$};
\node at (275,168) {$h(\rho_{\mu_{N-1}}^{-})$};
\node at (300,206.1) {$h(\rho_{\mu_{N-1}-1}^{-})$};
\node at (325,230) {$h(\rho_{1}^{-})$};
\node at (165,188) {$h(\rho_{N-1}^{+})$};

\end{tikzpicture}%
}
\caption{The upper slits access $\infty_1^+$ and the lower slits access $\infty_N^+$. The rest are denoted as middle slits.}
\label{fig: relative heights}
\end{figure}

\newpage
 \begin{lemma}
   \label{non zero-one a's}
    For $\mu\in\{1,\dots,M-1\}$, the following hold:
    \begin{enumerate}
    \item[\textup{(1)}] (Upper slits) If $\mu\in\{\mu_1,\dots,M-1\}$, then $\limsup_{t\to+\infty}{a_{\mu+1,\mu}^-(t)}<0$.
    \item[\textup{(2)}](Middle slits) If there exists $\nu\in\{2,\dots,N-2\}$ so that $\mu\in\{\mu_{\nu},\dots,\mu_{\nu-1}-1\}$, then $0<\liminf_{t\to+\infty}{a_{\nu,\mu}^+(t)}\le\limsup_{t\to+\infty}{a_{\nu,\mu}^+(t)}<1$.
    \item[\textup{(3)}](Lower slits) If $\mu\in\{1,\dots,\mu_{N-1}-1\}$, then $\liminf_{t\to+\infty}{a_{\mu,\mu}^-(t)}>1$.
    \end{enumerate}
 \end{lemma}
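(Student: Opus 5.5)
I follow the strategy of Lemma \ref{non zero-one l's}, now applied to the ratios $a_{j,\mu}^\pm$. I argue by contradiction on limit numbers taken along a common sequence $(t_n)$, as in the Convention. The three tools are the identity \eqref{sums for a}, the orderings \eqref{orderings of a_m}, and the connection formula \eqref{connections of a's}. The last one, combined with \eqref{k(t)}, gives
\begin{equation*}
\frac{a_{j,\mu}^\pm(t)}{1-a_{j,\mu}^\pm(t)}(k_j^\pm(0)-k_N^+)e^{-\theta_j^\pm t}=\frac{a_{i,\mu}^\pm(t)}{1-a_{i,\mu}^\pm(t)}(k_i^\pm(0)-k_N^+)e^{-\theta_i^\pm t}.
\end{equation*}
This transfers a degenerate limit ($0$, $1$ or $\pm\infty$) from one ratio to another whenever the exponents compare in the right direction. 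The comparisons are read off from Figure \ref{fig:thetas} and the zig-zag convention. Boundedness of all limit numbers is Lemma \ref{finiteness}.

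\textbf{Upper slits.} Here $\mu_1\le\mu\le M-1$, and by \eqref{orderings of a_m} we have $a_{\mu+1,\mu}^-(t)<0$. Suppose some limit number satisfies $a_{\mu+1,\mu}^-=0$. Then $a_{\mu+1,\mu}^-=\dots=a_{M,\mu}^-=0$. I also want to force $a_{j,\mu}^+=1$ for $j\ge2$ and $a_{1,\mu}^-=\dots=a_{\mu,\mu}^-=1$. The exponent comparison $\theta_{\mu+1}^-\le\theta_{\mu_1}^-\le\theta_2^+$ sends the vanishing quantity $\frac{a}{1-a}e^{-\theta t}$ into the others. For the ratios $a_{j,\mu}^-$ with $j\le\mu$, which exceed $1$, the point is the following: if $\frac{a}{1-a}$ times a faster decaying exponential must match a bounded quantity, then $\frac{a}{1-a}\to\pm\infty$, so $a\to1$.

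Granting these limits, \eqref{sums for a} becomes
\begin{equation*}
b_1^+a_{1,\mu}^++\sum_{j=2}^Nb_j^+=\sum_{j=1}^{\mu}b_j^-.
\end{equation*}
Since $\mu\ge\mu_1$, \eqref{height with b} with $\nu=1$ gives $\sum_{j=1}^{\mu}b_j^->\sum_{j=2}^Nb_j^+$. So we need $b_1^+a_{1,\mu}^+$ to equal a positive gap, with $0\le a_{1,\mu}^+<1$. Hence I must also use that $\rho_\mu^-(t)\to k_N^+$ while $k_1^+$ is fixed, so $a_{1,\mu}^+\to0$. This yields the contradiction
\begin{equation*}
\sum_{j=2}^Nb_j^+=\sum_{j=1}^\mu b_j^- .
\end{equation*}

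\textbf{Lower slits.} Here $\mu<\mu_{N-1}$. Assume some limit number satisfies $a_{\mu,\mu}^-=1$. Then every ratio $a^-_{j,\mu}$ with $j\le\mu$ tends to $1$. The exponents satisfy $\theta_\mu^-\ge\theta_{\mu_{N-1}}^-\ge\theta_j^+$, so the connection formula forces $a_{j,\mu}^+\to0$ for $2\le j\le N-1$, and also $a_{j,\mu}^-\to0$ for $j>\mu$, and $a_{1,\mu}^+\to0$ as before. Then \eqref{sums for a} reduces to
\begin{equation*}
b_N^+=\sum_{j=1}^{\mu}b_j^- .
\end{equation*}
This contradicts \eqref{height with b} for $\nu=N-1$, which gives $\sum_{j=1}^{\mu}b_j^-\le\sum_{j=1}^{\mu_{N-1}-1}b_j^-<b_N^+$.

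\textbf{Middle slits.} Here $\mu_\nu\le\mu\le\mu_{\nu-1}-1$. I imitate Cases I and II of Lemma \ref{non zero-one l's}, splitting according to whether $\theta_\nu^+=\theta_\mu^-$ or the inequality is strict.
\begin{itemize}
\item If $a_{\nu,\mu}^+=0$, ordering forces $a_{1,\mu}^+=\dots=a_{\nu,\mu}^+=0$. The exponent relations push the $a^-$ ratios with $j\le\mu$ to $1$ and the rest to $0$. Then \eqref{sums for a} gives
\begin{equation*}
\sum_{j=\nu+1}^N b_j^+\ \ge\ \sum_{j=1}^{\mu}b_j^- .
\end{equation*}
This contradicts $\mu\ge\mu_\nu$ together with \eqref{height with b}.
\item If $a_{\nu,\mu}^+=1$, the ratios $a^+_{j,\mu}$ for $j\ge\nu$ equal $1$. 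Then \eqref{sums for a} gives
\begin{equation*}
\sum_{j=\nu}^N b_j^+\ \le\ \sum_{j=1}^{\mu}b_j^- ,
\end{equation*}
contradicting $\mu<\mu_{\nu-1}$, i.e. $\sum_{j=1}^{\mu}b_j^-<\sum_{j=\nu}^Nb_j^+$.
\end{itemize}

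\textbf{Main obstacle.} The delicate step is the bookkeeping of exponent comparisons that determines which ratios are forced to $0$ or $1$. Strictness matters exactly at ties; there I would use the equality version of the connection formula, where the exponentials cancel, as in \eqref{eq:non zero lambda 0}.
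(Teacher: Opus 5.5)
Your overall scheme (contradiction on limit numbers via \eqref{sums for a}, \eqref{orderings of a_m} and \eqref{height with b}) is the paper's, and the final inequalities you aim for are the right ones. The gap is that the device you use to force intermediate limits runs in the wrong direction.

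By \eqref{connections of a's} and \eqref{k(t)}, each quotient $a^\pm_{j,\mu}/(1-a^\pm_{j,\mu})$ equals a fixed nonzero constant times $d(t)e^{\theta^\pm_j t}$, where $d(t)=\rho^-_\mu(t)-k_N^+$. This has two consequences:
\begin{itemize}
\item A limit number $0$ (of $a^+_{j,\mu}$, or of $a^-_{j,\mu}$ with $j>\mu$) means $d(t_n)e^{\theta^\pm_j t_n}\to0$. This propagates only to ratios with smaller or equal exponent.
\item A limit number $1$ means $d(t_n)e^{\theta^\pm_j t_n}\to\infty$. This propagates only to ratios with larger or equal exponent.
\end{itemize}

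In the upper case, $a^-_{\mu+1,\mu}\to0$ therefore cannot force $a^+_{j,\mu}\to1$, nor $a^-_{j,\mu}\to1$ for $j\le\mu$. These ratios carry larger exponents, so you are left with the indeterminate form $0\cdot e^{(\theta_j-\theta^-_{\mu+1})t_n}$. Your heuristic about matching ``a bounded quantity'' presupposes that the source quotient stays away from $0$, which is exactly what is being negated.

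The same reversal occurs in two other places. In the lower case, $a^-_{\mu,\mu}\to1$ says nothing about $a^+_{j,\mu}$, or about $a^-_{j,\mu}$ for $j>\mu$, since their exponents are smaller. In your first middle bullet, pushing $a^-_{j,\mu}$, $j\le\mu$, to $1$ has the same problem. So the identities $\sum_{j=2}^Nb_j^+=\sum_{j=1}^{\mu}b_j^-$ and $b_N^+=\sum_{j=1}^{\mu}b_j^-$ are not justified as written.

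Most of these limits are not needed. Replace them by the one-sided bounds in \eqref{orderings of a_m}: $0<a^+_{j,\mu}<1$, $a^-_{j,\mu}>1$ for $j\le\mu$, and $a^-_{j,\mu}<0$ for $j>\mu$, together with your correct observation $a^+_{1,\mu}\to0$.
\begin{itemize}
\item \textbf{Upper case.} Ordering alone turns $a^-_{\mu+1,\mu}\to0$ into $a^-_{j,\mu}\to0$ for all $j>\mu$. Then \eqref{sums for a} gives $\sum_{j=2}^Nb_j^+\ge\sum_{j=1}^{\mu}b_j^-$, which is the paper's argument.
\item \textbf{Lower case.} Ordering gives $a^-_{j,\mu}\to1$ for $j\le\mu$. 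Positivity of the $a^+$'s and negativity of $a^-_{j,\mu}$, $j>\mu$, then give $b_N^+\le\sum_{j=1}^{\mu}b_j^-$. The paper also derives $a^+_{j,\mu}\to0$, but from the standing bound $a^-_{\mu+1,\mu}(t)<0$ and $\theta^+_{N-1}<\theta^-_{\mu+1}$, not from the contradiction hypothesis; positivity already suffices.
\item \textbf{Middle case, $a^+_{\nu,\mu}\to0$.} Your ``the rest to $0$'' is a legitimate transfer, since $\theta^-_j\le\theta^+_\nu$ for $j>\mu$. Combined with $a^-_{j,\mu}>1$ for $j\le\mu$, it yields your inequality.
\end{itemize}

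The genuine omission is the middle case with $a^+_{\nu,\mu}\to1$. There you need the correct-direction transfer $a^-_{\mu,\mu}\to1$, hence $a^-_{j,\mu}\to1$ for all $j\le\mu$. This uses $\theta^-_\mu\ge\theta^+_\nu$, with equality if $\mu>\mu_\nu$ and strict inequality if $\mu=\mu_\nu$ (the paper's Cases I and II). Without it, the right side of \eqref{sums for a} is bounded only from below by $\sum_{j\le\mu}b_j^-$, so $\sum_{j=\nu}^Nb_j^+\le\sum_{j=1}^{\mu}b_j^-$ does not follow.
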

 \begin{proof}
(1) We commence with the upper right half-lines, namely those with indices $\mu\in\{\mu_1,\dots,M-1\}$ (whenever such indices exist). Recall that we disregard the case $\mu=M$. Note that by a consecutive use of \eqref{sums for a} and \eqref{orderings of a_m},
\begin{equation}\label{eq:upper slits 1}
  \sum_{j=2}^Nb_j^+\ge\sum_{j=2}^{N-1}b_j^+\alpha_{j,\mu}^++b_N^+=\sum_{j=1}^{\mu}b_j^-\alpha_{j,\mu}^-+\sum_{j=\mu+1}^Mb_j^-\alpha_{j,\mu}^-\ge\sum_{j=1}^{\mu}b_j^-+\sum_{j=\mu+1}^Mb_j^-\alpha_{j,\mu}^-.  
\end{equation}
Assuming that $\alpha_{\mu+1,\mu}^-=0$, \eqref{orderings of a_m} implies $\alpha_{\mu+2,\mu}^-=\dots=\alpha_{M-1,\mu}^-=\alpha_{M,\mu}^-=0$. Going back to \eqref{eq:upper slits 1} and taking into account that $\mu\ge\mu_1$, we see that
\begin{equation*}
    \sum_{j=2}^Nb_j^+\ge\sum_{j=1}^{\mu}b_j^-\ge \sum_{j=1}^{\mu_1}b_j^-
\end{equation*}
which cannot hold due to \eqref{height with b}. Thus, it is necessary that $\limsup{\alpha_{\mu+1,\mu}^-(t)}<0$. 

(2) Let us continue with the middle right half-lines. This means that we consider $\mu\in\{1,\dots,M-1\}$ such that $h(\rho_{\mu}^-)\in S_{\nu}^+$ for some $\nu\in\{2,\dots,N-2\}$. Thus $\mu\in\{\mu_{\nu},\dots,\mu_{\nu-1}-1\}$. 

\textit{Case I.} Suppose, first, that $\mu>\mu_{\nu}$. In this case, our configuration forces $S_\mu^-\subset S_\nu^+$ and $\theta_{\mu_\nu}^->\theta_\mu^-=\theta_\nu^+>\theta_{\mu_\nu-1}^-$. By \eqref{connections of a's}, we see that for each limit number $\alpha_{\nu,\mu}^+$, with corresponding number $a_{\mu,\mu}^-$,
     \begin{equation}\label{eq:middle slits 1}
     \frac{a_{\nu,\mu}^+}{1-a_{\nu,\mu}^+}(k_\nu^+(0)-k_N^+)=\frac{a_{\mu,\mu}^-}{1-a_{\mu,\mu}^-}(k_{\mu}^-(0)-k_N^+).  
     \end{equation}
Thus, assuming $\alpha_{\nu,\mu}^+=0$, we also get $\alpha_{\mu,\mu}^-=0$. But $\alpha_{\mu,\mu}^-\ge1$ by \eqref{orderings of a_m} and we immediately get $\liminf_{t\to+\infty}a_{\nu,\mu}^+(t)>0$. On the other hand, assuming $a_{\nu,\mu}^+=1$, \eqref{eq:middle slits 1} provides $\alpha_{\mu,\mu}^-=1$. But then, via \eqref{orderings of a_m}, we find $\alpha_{\nu+1,\mu}^+=\dots=a_{N-1,\mu}^+=1=a_{\mu-1,\mu}^-=\dots=a_{1,\mu}^-$. Combining everything and taking limits, \eqref{sums for a} becomes
    \begin{equation}\label{eq:middle slits 2}
        \sum_{j=2}^{\nu-1}b_j^+\alpha_{j,\mu}^+ +\sum_{j=\nu}^Nb_j^+=\sum_{j=1}^{\mu}b_j^- +\sum_{j=\mu+1}^Mb_j^-\alpha_{j,\mu}^-.
    \end{equation}
Through quick algebraic considerations, we infer that
    \begin{equation*}
     \sum_{j=\nu}^Nb_j^+\overset{\eqref{orderings of a_m}}{\le}\sum_{j=2}^{\nu-1}b_j^+a_{j,\mu}^++\sum_{j=\nu}^Nb_j^+\overset{\eqref{eq:middle slits 2}}{=}\sum_{j=1}^{\mu}b_j^-+\sum_{j=\mu+1}^Mb_j^-a_{j,\mu}^-\overset{\eqref{orderings of a_m}}{\le}\sum_{j=1}^{\mu}b_j^-\overset{\eqref{orderings of a_m}}{\le}\sum_{j=1}^{\mu_{\nu-1}}b_j^-\overset{\eqref{height with b}}{<}\sum_{j=\nu}^Nb_j^+.
    \end{equation*}
  We reached a contradiction, and thus $\limsup_{t\to+\infty}a_{\nu,\mu}^+(t)<1$ which proves the desired inequality.

\textit{Case II.} This time suppose $\mu=\mn$. Our configuration shows that $S_\mu^-=S_{\mu_\nu}^-\nsubseteq S_\nu^+$ and for the exponents we have $\theta_{\nu+1}^+\ge\theta_\mu^-=\theta_{\mu_\nu}^->\theta_\nu^+\ge\theta_{\mu_\nu+1}^-$. As usual, first assume that $\alpha_{\nu,\mn}^+=0$. In such a circumstance, \eqref{connections of a's} dictates 
      \begin{equation*}
       \lim_{n\to+\infty}\frac{\alpha_{\mn+1,\mn}^-(t_n)}{1-\alpha_{\mn+1,\mn}^-(t_n)}=\lim_{n\to+\infty}\left(\frac{\alpha_{\nu,\mn}^+(t_n)}{1-\alpha_{\nu,\mn}^+(t_n)}\frac{k_{\nu}^-(0)-k_N^+}{k_{\mn+1}^-(0)-k_N^+}e^{(\theta_{\mn+1}^--\theta_{\nu}^+)t_n}\right)=0,
     \end{equation*}
for a suitable sequence $\{t_n\}$ increasing to $+\infty$. So, necessarily, $\alpha_{\mn+1,\mn}^-=0$ as well. Returning to \eqref{orderings of a_m}, we find $\alpha_{\nu-1,\mu_\nu}^+=\dots=\alpha_{2,\mu_\nu}^+=0=\alpha_{\mu_\nu+2,\mu_\nu}^-=\dots=\alpha_{M,\mu_\nu}^-$. Consequently, \eqref{sums for a} turns into
\begin{equation}\label{eq:middle slits 3}
    \sum_{j=\nu+1}^{N-1}b_j^+\alpha_{j,\mu_\nu}^+ +b_N^+=\sum_{j=1}^{\mu_\nu}b_j^-\alpha_{j,\mu_\nu}^-,
\end{equation}
after taking limits. Therefore, through the usual steps,
\begin{equation*}
 \sum_{j=\nu+1}^Nb_j^+\overset{\eqref{orderings of a_m}}{\ge}\sum_{j=\nu+1}^{N-1}b_j^+\alpha_{j,\mu_\nu}^++b_N^+\overset{\eqref{eq:middle slits 3}}{=}\sum_{j=1}^{\mn}b_j^-\alpha_{j,\mu_\nu}^-\overset{\eqref{orderings of a_m}}{\ge}\sum_{j=1}^{\mu_\nu}b_j^-\overset{(\ref{height with b})}{>}\sum_{j=\nu+1}^Nb_j^+   
\end{equation*}
which is absurd. Ergo  $\liminf_{t\to+\infty}\alpha_{\nu,\mn}^+(t)>0$. On the opposing end, assume that $a_{\nu,\mu_\nu}^+=1$, for some limit number corresponding to a sequence $\{t_n\}$. Then, by \eqref{connections of a's},
 \begin{equation*}
       \lim_{n\to+\infty}\frac{\alpha_{\mn,\mn}^-(t_n)}{1-\alpha_{\mn,\mn}^-(t_n)}=\lim_{n\to+\infty}\left(\frac{\alpha_{\nu,\mn}^+(t_n)}{1-\alpha_{\nu,\mn}^+(t_n)}\frac{k_{\nu}^+(0)-k_N^+}{k_{\mn}^-(0)-k_N^+}e^{(\theta_{\mn}^--\theta_{\nu}^+)t_n}\right)=+\infty.
    \end{equation*}
Since every limit number of $a_{\mu_\nu,\mu_\nu}^-(t)$ is finite by Lemma \ref{finiteness}, we get $\alpha_{\mu_\nu,\mu_\nu}^-=1$. Then, \eqref{orderings of a_m} yields $a_{1,\mu_\nu}^-=\dots=a_{\mu_\nu-1,\mu_\nu}^-=1=\alpha_{\nu+1,\mu_\nu}^+=\dots=\alpha_{N-1,\mu_\nu}^+$ as well. As a result, \eqref{sums for a} is transformed into
\begin{equation}\label{eq:middle slits 4}
    \sum_{j=2}^{\nu-1}b_j^+\alpha_{j,\mu_\nu}^+ +\sum_{j=\nu}^{N}b_j^+=\sum_{j=1}^{\mu_\nu}b_j^- +\sum_{j=\mu_\nu+1}^{M}b_j^-\alpha_{j,\mu_\nu}^-,
\end{equation}
after taking limits. Then, in similar to before fashion,
\begin{equation*}
    \sum_{j=\nu}^Nb_j^+ \overset{\eqref{orderings of a_m}}{\le}\sum_{j=2}^{\nu-1}b_j^+\alpha_{j,\mu_\nu}^+ +\sum_{j=\nu}^{N}b_j^+\overset{\eqref{eq:middle slits 4}}{=}\sum_{j=1}^{\mu_\nu}b_j^- +\sum_{j=\mu_\nu+1}^{M}b_j^-\alpha_{j,\mu_\nu}^- \overset{\eqref{orderings of a_m}}{\le}\sum_{j=1}^{\mu_\nu}b_j^- \le \sum_{j=1}^{\mu_{\nu-1}-1}b_j^-\overset{\eqref{height with b}}{<}\sum_{j=\nu}^Nb_j^+,
\end{equation*}
where the second to last inequality is a byproduct of our hypothesis that we are treating a middle slit. Contradiction and whence $\limsup_{t\to+\infty}\alpha_{\nu,\mn}^+<1$. So, we have the desired outcome for the middle slits.

(3) Finally, for the lower right half-lines (whenever they exist), we deal with the indices $\mu\in\{1,\dots,\mu_{N-1}-1\}$. For any such index, we can see that $\theta_{\mu}^-=\theta_{\mu_{N-1}}^->\theta_{N-1}^+$. Then, taking into account that $\alpha_{\mu+1,\mu}^-(t)<0$, for all $t\ge0$,
\begin{equation*}
 \lim_{t\to+\infty}\left(\frac{\alpha_{N-1,\mu}^+(t)}{1-\alpha_{N-1,\mu}^+(t)}(k_{N-1}^+(0)-k_N^+)\right)=\lim_{t\to+\infty}\left(\frac{\alpha_{\mu+1,\mu}^-(t)}{1-\alpha_{\mu+1,\mu}^-(t)}(k_{\mu+1}^-(0)-k_N^+)e^{(\theta_{N-1}^+-\theta_{\mu+1}^-)t}\right)=0   
\end{equation*}
and hence $\alpha_{N-1,\mu}^+=0$. Therefore, \eqref{orderings of a_m} provides $\alpha_{1,\mu}^+=\alpha_{2,N-1}^+=\dots=\alpha_{N-2,\mu}^+=0$. Aiming towards a contradiction, assume that $\alpha_{\mu,\mu}^-=1$. Then, $\alpha_{1,\mu}^-=\alpha_{2,\mu}^-=\dots=\alpha_{\mu-1,\mu}^-=1$. Combining all the aforementioned information, taking limits in \eqref{sums for a} leads to
\begin{equation}\label{eq:middle slits 5}
    b_N^+=\sum_{j=1}^{\mu}b_j^-+\sum_{j=\mu+1}^Mb_j^-\alpha_{j,\mu}^-\overset{\eqref{orderings of a_m}}{\le}\sum_{j=1}^\mu b_j^-\le \sum_{j=1}^{\mu_{N-1}-1}b_j^-\overset{\eqref{height with b}}{<}b_N^+,
\end{equation}
where the second to last inequality is due to the choice of $\mu$. So, we have reached a contradiction, and it is necessary that $\liminf_{t\to+\infty}\alpha_{\mu,\mu}^-(t)>1$. This concludes the proof.
 \end{proof}

 With the aid of the preceding lemma, we are ready to find the rates of the right tip points. The following proof is similar to the one of Proposition \ref{prop:rates of left tips}, so we shall only provide a brief sketch.
\begin{proposition}\label{prop:rates of right tips}
    For $\mu\in\{1,\dots,M-1\}$, the following hold:
    \begin{enumerate}
        \item[\textup{(1)}] (Upper slits) If $\mu\in\{\mu_1,\dots,M-1\}$, then
        \begin{equation*}
          r(\rho_{\mu}^-)=\left(-\sum_{j=2}^{N}b_j^++\sum_{j=1}^{\mu+1}b_j^-\right)\theta_{\mu+1}^-+\sum_{j=\mu+2}^Mb_j^-\theta_j^-.
        \end{equation*}
        \item[\textup{(2)}] (Middle slits) If there exists $\nu\in\{2,\dots,N-2\}$ so that $\mu\in\{\mu_\nu,\dots,\mu_{\nu-1}-1\}$, then
         \begin{equation*}
        r(\rho_{\mu}^-)=-\sum_{j=2}^{\nu}b_j^+\theta_j^++\left(-\sum_{j=\nu+1}^{N}b_j^++\sum_{j=1}^{\mu}b_j^-\right)\theta_{\nu}^++\sum_{j=\mu+1}^Mb_j^-\theta_j^-.    
        \end{equation*}
        \item[\textup{(3)}] (Lower slits) If $\mu\in\{1,\dots,\mu_{N-1}-1\}$, then
        \begin{equation*}
        r(\rho_{\mu}^-)=-\sum_{j=2}^{N-1}b_j^+\theta_j^++\left(\sum_{j=1}^{\mu}b_j^--b_N^+\right)\theta_{\mu}^-+\sum_{j=\mu+1}^Mb_j^-\theta_j^-.    
        \end{equation*}
    \end{enumerate}
\end{proposition}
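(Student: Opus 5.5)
The plan is to mirror the proof of Proposition \ref{prop:rates of left tips}. The goal is to pin down, for each type of right slit, the exponential rate at which $\rho_\mu^-(t)-k_N^+$ decays. We then estimate every difference $|\rho_\mu^-(t)-k_j^\pm(t)|$ by a single exponential and substitute these estimates into
\[
\mathrm{Re}\,h(\rho_\mu^-(t),t)=\sum_{j=1}^N b_j^+\log|\rho_\mu^-(t)-k_j^+(t)|-\sum_{j=1}^M b_j^-\log|\rho_\mu^-(t)-k_j^-(t)|.
\]
The anchor in each case is the ratio supplied by Lemma \ref{non zero-one a's}, combined with \eqref{connections of a's}. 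Wherever that lemma guarantees that a ratio $a$ has limit points bounded away from $0$ and $1$ (or away from $1$ and from $\pm\infty$, using Lemma \ref{finiteness}), the factor $a/(1-a)$ is bounded above and below in absolute value. Consequently $|\rho_\mu^-(t)-k_N^+|\asymp|k_i(t)-k_N^+|$ for the corresponding index $i$.

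\textbf{Anchoring step.} In case (1), Lemma \ref{non zero-one a's}(1) gives $\limsup a_{\mu+1,\mu}^-<0$. Since $a_{\mu+1,\mu}^-<0$, the factor $a/(1-a)$ lies in a compact subset of $(-1,0)$, and therefore $|\rho_\mu^--k_N^+|\asymp e^{-\theta_{\mu+1}^-t}$. In case (2), Lemma \ref{non zero-one a's}(2) gives $|\rho_\mu^--k_N^+|\asymp e^{-\theta_\nu^+t}$. In case (3), Lemma \ref{non zero-one a's}(3) gives $\liminf a_{\mu,\mu}^->1$. Together with finiteness, this yields $|\rho_\mu^--k_N^+|\asymp e^{-\theta_\mu^-t}$.

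\textbf{Distance estimates.} Call the anchor exponent $\theta_*$. For each $j$ we write $\rho_\mu^--k_j^\pm=(\rho_\mu^--k_N^+)+(k_N^+-k_j^\pm)$. By the orderings \eqref{eq:thetas} and the zig-zag choice of exponents (Figure \ref{fig:thetas}), this difference is $\asymp e^{-\min(\theta_*,\theta_j^\pm)t}$. The sign structure in \eqref{orderings of a_m} prevents cancellation when the two terms have the same order: for $j$ with $\theta_j=\theta_*$ this is exactly the nonvanishing of $1-a$, which follows from the lemma. Concretely, we expect:
\begin{itemize}
\item in case (1), every $k_j^+$ with $j\ge2$ and every $k_j^-$ with $j\le\mu+1$ is at distance $\asymp e^{-\theta_{\mu+1}^-t}$, while $k_j^-$ for $j\ge\mu+2$ is at distance $\asymp e^{-\theta_j^-t}$;
\item in case (2), $k_j^+$ with $2\le j\le\nu$ is at distance $e^{-\theta_j^+t}$, $k_j^+$ with $j>\nu$ and $k_j^-$ with $j\le\mu$ are at distance $e^{-\theta_\nu^+t}$, and $k_j^-$ with $j>\mu$ is at distance $e^{-\theta_j^-t}$;
\item in case (3), $k_j^+$ with $2\le j\le N-1$ is at distance $e^{-\theta_j^+t}$, $k_j^-$ with $j\le\mu$ is at distance $e^{-\theta_\mu^-t}$, and $k_j^-$ with $j>\mu$ is at distance $e^{-\theta_j^-t}$.
\end{itemize}
The term $b_1^+\log|\rho_\mu^--k_1^+|$ stays bounded because $\rho_\mu^-\to k_N^+\neq k_1^+$. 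In each case the $k_N^+$ term contributes $b_N^+\log|\rho_\mu^--k_N^+|$ at the anchor exponent. Dividing by $t$ and collecting terms then gives the three stated formulas.

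\textbf{Main obstacle.} The delicate part is verifying the exponent comparisons $\theta_j\gtrless\theta_*$ for every $j$ in each case from the zig-zag rule. This is especially subtle in the middle case, where the subcases $\mu=\mu_\nu$ (with $\theta_{\mu_\nu}^->\theta_\nu^+$) and $\mu>\mu_\nu$ (with $\theta_\mu^-=\theta_\nu^+$) must both produce the same grouping. Part of this check is confirming that, for each $k_j^-$ with $j\le\mu$, either $\theta_j^-\ge\theta_\nu^+$, or equality holds with a non-degenerate ratio. We will handle these comparisons by reading off the inequalities of the proof of Lemma \ref{non zero-one a's}. Where two exponents coincide, we invoke the remark after Lemma \ref{non zero-one l's}, in its $a$-version, to rule out cancellation.
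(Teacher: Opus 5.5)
Your proposal is correct and is essentially the paper's proof. It anchors $|\rho_\mu^-(t)-k_N^+|$ through Lemma \ref{non zero-one a's} and \eqref{connections of a's}, compares each $|\rho_\mu^-(t)-k_j^\pm(t)|$ with a single exponential using the ordering of the exponents (your three groupings coincide with the paper's), and collects terms in $\mathrm{Re}\,h(\rho_\mu^-(t),t)$.

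The cancellation issue you flag, which the paper passes over silently, needs no appeal to the unproved remark after Lemma \ref{non zero-one l's}. For $k_j^+$ the two summands have the same sign. For $k_j^-$, the ordering $k_j^-\le k_\mu^-<\rho_\mu^-<k_{\mu+1}^-\le k_i^-$ (with $j\le\mu<i$) and the boundedness of $a_{\mu,\mu}^-$ and $a_{\mu+1,\mu}^-$ from Lemma \ref{finiteness} already give $|\rho_\mu^--k_j^-|\asymp|\rho_\mu^--k_N^+|$ and $|\rho_\mu^--k_N^+|\lesssim|\rho_\mu^--k_i^-|\le k_i^--k_N^+$.
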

\begin{proof}
(1) To start with, we fix some $\mu\in\{\mu_1,\dots,M-1\}$. By Lemma \ref{finiteness}, we have that $\liminf_{t\to+\infty}\alpha_{\mu+1,\mu}^-(t)>-\infty$, while by Lemma \ref{non zero-one a's}, we have that $\limsup_{t\to+\infty}a_{\mu+1,\mu}^-(t)<0$. Hence, the fraction $\frac{\alpha_{\mu+1,\mu}^-(t)}{1-\alpha_{\mu+1,\mu}^-(t)}$, $t\ge0$, is compactly contained in $(-1,0)$. Then, through \eqref{k(t)} and \eqref{connections of a's},
\begin{equation*}
 |\rho_{\mu}^-(t)-k_N^+|=\left|\frac{a_{\mu+1,\mu}^-(t)}{1-a_{\mu+1,\mu}^-(t)}\right|(k_\mu^-(t)-k_N^+)\asymp e^{-\theta_{\mu+1}^-t}.
\end{equation*}
As a result,
\begin{equation*}
 |\rho_{\mu}^-(t)-k_j^-(t)|=|\rho_{\mu}^-(t)-k_N^++k_N^+-k_j^-(t)|\asymp 
e^{-\theta_{\mu+1}^-t}+e^{-\theta_j^-t}\asymp e^{-\theta_{\mu+1}^-t}\,,   
\end{equation*}
for every $j\le\mu+1$, since $\theta_j^-\ge\theta_{\mu+1}^-$ for these indices. Otherwise, an identical process shows that $|\rho_{\mu}^-(t)-k_j^-(t)|\asymp e^{-\theta_{j}^-t}$. In similar fashion, for $j\in\{2,\dots,N-1\}$, we have that $\theta_j^+\ge\theta_2^+\>>\theta_{\mu_1}^-\ge\theta_{\mu+1}^-$ due to our configuration. Consequently, $|\rho_{\mu}^-(t)-k_j^+(t)|\asymp e^{-\theta_{\mu+1}^-t}$, for each $j\in\{2,\dots,N-1\}$. Tracing back our steps to \eqref{h_t(N,M)}, we deduce 
\begin{equation*}
 \mathrm{Re}(h(\rho_{\mu}^-(t),t))=b_1^+\log|\rho_{\mu}^-(t)-k_1^+|+\sum_{j=2}^Nb_j^+(-\theta_{\mu+1}^-t)-\sum_{j=1}^{\mu+1}b_j^-(-\theta_{\mu+1}^-t)-\sum_{j=\mu+2}^{M}b_j^+(-\theta_{j}^-t)+O(1),  
\end{equation*}
as $t\rightarrow+\infty$. Keeping in mind that the first term of the right-hand side converges to $b_1^+\log(k_N^+-k_1^+)$, dividing by $t>0$ and taking limits as $t\to+\infty$, the desired outcome follows. 

(2) We proceed with the second statement and the indices $\mu\in\{\mu_{\nu},\dots,\mu_{\nu-1}-1\}$, for some $\nu\in\{2,\dots,N-1\}$. Noting that $a_{\nu,\mu}^+\in(0,1)$ for every limit number by Lemma \ref{non zero-one a's}, then the quantity $\frac{\alpha_{\nu,\mu^+(t)}}{1-\alpha_{\nu,\mu}^+(t)}$, $t\ge0$, is compactly contained in $(0,+\infty)$. As such,
\begin{equation*}
 |\rho_{\mu}^-(t)-k_N^+|=\frac{a_{\nu,\mu}^+(t)}{1-a_{\nu,\mu}^+(t)}|k_\nu^+(t)-k_N^+|\asymp e^{-\theta_{\nu}^+t}.
\end{equation*}
Next, for $j\in\{2,\dots,\nu\}$, we have that $\theta_j^+\le\theta_{\nu}^+$ and therefore 
\begin{equation*}
 |\rho_{\mu}^-(t)-k_j^+(t)|=|\rho_{\mu}^-(t)-k_N^++k_N^+-k_j^+(t)|\asymp e^{-\theta_{\nu}^+t}+e^{-\theta_j^+t}\asymp e^{-\theta_{j}^+t}.   
\end{equation*}
 Analogously, whenever $j\in\{\nu+1,\dots,N-1\}$, our ordering dictates $\theta_j^+\ge\theta_\nu^+$ which will in turn produce $|\rho_\mu^-(t)-k_j^+(t)|\asymp e^{-\theta_{\nu}^+t}$. In addition, for $j\in\{1,\dots,\mu\}$, our configuration shows that $\theta_j^-\ge\theta_{\mu}^-\ge\theta_{\mu_{\nu-1}-1}^-\ge\theta_{\nu}^+$, and so
 \begin{equation*}
   |\rho_{\mu}^-(t)-k_j^-(t)|=|\rho_{\mu}^-(t)-k_N^++k_N^+-k_j^-(t)|\asymp e^{-\theta_{\nu}^+t}+e^{-\theta_j^-t}\asymp e^{-\theta_{\nu}^+t}.
 \end{equation*}
On the other hand, for $j\in\{\mu+1,\dots,M\}$, it is $\theta_j^-\le\theta_{\mu+1}^-\le\theta_{\mu_{\nu}+1}^-\le\theta_\nu^+$ which, as before, signifies $|\rho_{\mu}^-(t)-k_j^-(t)|\asymp e^{-\theta_{j}^-t}$. In view of all the above differences,
\begin{equation*}
 \mathrm{Re}(h(\rho_{\mu}^-(t),t))=\sum_{j=2}^{\nu}b_j^+(-\theta_{j}^+t)+\sum_{j=\nu+1}^{N}b_j^+(-\theta_{\nu}^+t)-\sum_{j=1}^{\mu}b_j^-(-\theta_{\nu}^+t)-\sum_{j=\mu+1}^{M}b_j^+(-\theta_{j}^-t)+O(1),  
\end{equation*}
as $t\rightarrow+\infty$, and the second formula follows.

(3) The exact same procedure is valid for the third part. By the preceding lemma we get that, for $\mu\in\{1,\dots,\mu_{N-1}-1\}$, the ratio $\frac{\alpha_{\mu,\mu}^-(t)}{1-\alpha_{\mu,\mu}^-(t)}$, $t\ge0$, is compactly contained in $(-\infty,-1)$. Applying on \eqref{connections of a's}, we infer that
\begin{equation*}
    |\rho_\mu^-(t)-k_N^+|=\left|\frac{\alpha_{\mu,\mu}^-(t)}{1-\alpha_{\mu,\mu}^-(t)}\right|(k_\mu^-(t)-k_N^+)\asymp e^{-\theta_\mu^- t}.
\end{equation*}
Nevertheless, for $j\in\{2,\dots,N-1\}$, our configuration yields $\theta_j^+\le\theta_{N-1}^+<\theta_{\mu_{N-1}-1}^-\le\theta_\mu^-$, and so $|\rho_{\mu}^-(t)-k_j^+(t)|\asymp e^{-\theta_{j}^+t}$. Moreover, for $j\in\{1,\dots,\mu\}$, we have $\theta_j^-\ge\theta_{\mu}^-$, and thus $|\rho_{\mu}^-(t)-k_j^-(t)|\asymp e^{-\theta_{\mu}^-t}$ through the usual procedure. Finally, for $j\in\{\mu+1,\dots,M\}$, the ordering brings forth $\theta_j^-\le\theta_\mu^-$ which implies $|\rho_\mu^-(t)-k_j^-(t)|\asymp e^{-\theta_j^-t}$ for these indices. Taking everything into consideration, the result follows.
\end{proof}

\addtocontents{toc}{\protect\setcounter{tocdepth}{2}}
\subsection{Comparison of the rates}Recall that we want to find some parameter $\gamma$, so that \eqref{limits of the tips+} and \eqref{limits of the tips-} are satisfied. For this to hold, we request that $\gamma<r(\rho_{\nu}^+)$, for every $\nu=1,\dots,N-1$, and $\gamma>r(\rho_{\mu}^-)$, for every $\mu=1,\dots,M$. 
It is then evident that if 
\begin{equation}
    \label{condition on thetas}
    \min_{\nu}r(\rho_{\nu}^+)>\max_{\mu}r(\rho_{\mu}^-),
\end{equation}
picking any $\gamma$ in between, directly forces \eqref{limits of the tips+} and \eqref{limits of the tips-} to hold. For this reason, our next endeavor is to show that there is a choice of parameters $\theta_j^\pm$ so that \eqref{condition on thetas} holds. Note that as of yet, we have only made remarks about the ordering of the exponents $\theta_j^\pm$ and the interplay between them. We have never dealt with their actual values. Later in the sequel, we will see how their magnitude eventually influences the existence of the desired $\gamma$.

Since we need to determine the minimum and maximum rates of the left and right tip point respectively, we start our investigation by comparing their rates, using Propositions \ref{prop:rates of left tips} and \ref{prop:rates of right tips}. Inspecting Proposition \ref{prop:rates of left tips} more closely, it is possible to see that the rates of the tip points that lie in the same strip are equally given. \CB To see this, we first gather all those strips that indeed contain left half-lines. Towards this aim, we pick all the indices $\nu_1,\dots,\nu_{\lambda}$ so that $S_{\nu_{\omega}}^-$ contains left half-lines as follows.

In terms of the geometry of $
\Omega_0$, we proceed as follows. Due to Proposition \ref{prop:convergence of the roots}, the highest of all the half-lines belonging to $\partial\Omega_0$ is the half-line with tip point $h(\rho_M^-)$. As a consequence, the strip $S_1^+$ contains at least one right slit. Hence, we write $\nu_1=1$. If $\mu_1=1$, then we write $\lambda=1$, and the process is terminated. Otherwise, if $\mu_1-1\ge1$, we pick $\nu_2>\nu_1$ to be the largest index for which $h(\rho_{\nu_2-1}^+)\in S_{\mu_1}^-$. Equivalently, $\nu_2$ is the smallest index, for which the strip $S_{\mu_{\nu_2}}^-$ contains left half-lines, after $S_{\mu_1}^-$. If $\nu_2=N$, this means that all right half-lines are contained in $S_{\mu_1}^-$, hence $\lambda=1$. We continue the process inductively, to find indices 
    \begin{equation}\label{new indices}
        1=\nu_1<\nu_2<\dots<\nu_{\lambda}
    \end{equation}
   so that the only strip to contains left half-lines are the strips $S_{\mu_{\nu_{\omega}}}^-$. We stop the process, until $\nu_{\lambda+1}=N$, thus until we cover all left-hand tip points. See Figure \ref{fig: definition of nu blocks}. 
   
\begin{figure}[ht]
\centering
\resizebox{0.92\linewidth}{!}{%
\begin{tikzpicture}[
    x=1cm,y=1cm,
    every node/.style={font=\normalsize},
    >=Latex
]

\definecolor{linegray}{RGB}{120,120,120}
\definecolor{bandgray}{RGB}{238,238,238}

\tikzset{
  halfline/.style={draw=linegray, line width=1.25pt},
  brace/.style={
    decorate,
    decoration={brace, amplitude=5pt},
    draw=linegray,
    line width=1.25pt
  }
}

\def\xL{0}
\def\xLm{6.00}
\def\xR{13.80}
\def\xRs{8.20}

\def\yTop{8.00}

\def\yRoneTop{7.10}
\def\yLoneA{6.15}
\def\yLoneB{5.20}
\def\yRoneBot{4.35}

\def\yGap{3.30}

\def\yRtwoTop{2.35}
\def\yLtwoA{1.55}
\def\yLtwoB{0.70}
\def\yRtwoBot{-0.05}

\def\yBot{-0.90}

\draw[halfline, opacity=.35] (\xL,\yTop) -- (\xR,\yTop);
\draw[halfline, opacity=.35] (\xL,\yBot) -- (\xR,\yBot);

\fill[bandgray] (0.00,\yRoneTop) rectangle (\xR,\yRoneBot);

\draw[halfline] (\xRs,\yRoneTop) -- (\xR,\yRoneTop);
\node at (10.55,7.35) {$ (\mu_{\nu_\omega}) $};

\draw[halfline] (\xRs-1,\yRoneBot) -- (\xR,\yRoneBot);
\node at (10.65,4.65) {$ (\mu_{\nu_\omega}-1) $};

\draw[halfline] (\xL,\yLoneA) -- (5.55,\yLoneA);
\node at (2.70,6.42) {$ (\nu_\omega) $};

\draw[halfline] (\xL,\yLoneB) -- (6.20,\yLoneB);
\node at (3.10,5.47) {$ (\nu_{\omega+1}-1) $};

\node at (1.15,5.68) {\Large$\vdots$};

\draw[brace] (-0.05,\yLoneA+0.08) -- (-0.05,\yLoneB-0.08)
node[midway,left=1pt,align=right]
{\small $\nu_\omega,\ldots,\nu_{\omega+1}-1$};

\node at (5.20,4.78) {$S^-_{\mu_{\nu_\omega}}$};


\node at (1.10,\yGap) {\Large$\vdots$};
\node at (10.30,\yGap) {\Large$\vdots$};

\node at (7.00,\yRoneTop+0.5) {\Large$\vdots$};
\node at (7.00,\yRtwoBot-0.5) {\Large$\vdots$};

\fill[bandgray] (0,\yRtwoTop) rectangle (\xR,\yRtwoBot);

\draw[halfline] (\xRs-1.5,\yRtwoTop) -- (\xR,\yRtwoTop);
\node at (10.85,2.62) {$ (\mu_{\nu_{\omega+1}}) $};

\draw[halfline] (\xRs+0.5,\yRtwoBot) -- (\xR,\yRtwoBot);
\node at (11.00,0.25) {$ (\mu_{\nu_{\omega+1}}-1) $};

\draw[halfline] (\xL,\yLtwoA) -- (5.35,\yLtwoA);
\node at (2.75,1.83) {$ (\nu_{\omega+1}) $};

\draw[halfline] (\xL,\yLtwoB) -- (6.00,\yLtwoB);
\node at (3.10,0.98) {$ (\nu_{\omega+2}-1) $};

\node at (1.15,1.12) {\Large$\vdots$};

\draw[brace] (-0.05,\yLtwoA+0.08) -- (-0.05,\yLtwoB-0.08)
node[midway,left=1pt,align=right]
{\small $\nu_{\omega+1},\ldots,\nu_{\omega+2}-1$};

\node at (5.35,0.45) {$S^-_{\mu_{\nu_{\omega+1}}}$};

\end{tikzpicture}%
}
\caption{The indices \(\nu_1<\cdots<\nu_\lambda\) mark the maximal
consecutive blocks on which the values \(\mu_\nu\) are the same.
For every \(\nu\in\{\nu_\omega,\dots,\nu_{\omega+1}-1\}\), the left
half-lines corresponding to $\nu$ lie in the same right strip
\(S^-_{\mu_{\nu_\omega}}\). In this way, we cover all strips $S_{\mu}^-$ of $\Omega$ that contain left half-lines. Every other strip $S_j^-$ does not contain left half-lines, hence $\infty_{j}^-$ is not accessed by any left tip point.}
\label{fig: definition of nu blocks}
\end{figure}

   For the sake of completeness, we also describe this procedure in algebraic steps. Since
\[
M\geq \mu_1\geq \mu_2\geq\dots \geq \mu_{N-1}\geq 1,
\]
the
equal values of $(\mu_{\nu})_{\nu=1}^{N-1}$ appear in consecutive groups. In other words, whenever a
certain value of \(\mu_\nu\) occurs more than once, all its occurrences
form one uninterrupted block of indices.

We record these blocks as follows. We denote by
\[
1=\nu_1<\nu_2<\cdots<\nu_\lambda\le N-1
\]
the indices at which the value of the sequence \((\mu_\nu)\) changes.
More precisely, \(\nu_1=1\), and once \(\nu_\omega\) has been chosen, we
define
\[
\nu_{\omega+1}
=
\min\bigl\{\nu>\nu_\omega:\mu_\nu<\mu_{\nu_\omega}\bigr\},
\]
provided that this set is nonempty. If it is empty, the construction
stops. Finally, we set \(\nu_{\lambda+1}=N\).

By construction, for every \(\omega=1,\dots,\lambda\), the sequence
\((\mu_\nu)\) is constant on the interval
\[
\nu_\omega\le \nu\le \nu_{\omega+1}-1.
\]
That is,
\[
\mu_{\nu_\omega}
=
\mu_{\nu_\omega+1}
=
\cdots
=
\mu_{\nu_{\omega+1}-1}.
\]
Moreover, these intervals are maximal with this property: immediately
after the end of such an interval, if another interval follows, the value
of \(\mu_\nu\) strictly decreases. Thus
\[
\mu_{\nu_1}>\mu_{\nu_2}>\cdots>\mu_{\nu_\lambda},
\]
and the numbers \(\mu_{\nu_1},\dots,\mu_{\nu_\lambda}\) are precisely the
distinct values assumed by the sequence \((\mu_\nu)_{\nu=1}^{N-1}\).
   
Having explained how the number $\lambda$ is computed and how the critical indices $\nu_\omega$, $\omega\in\{1,\dots,\lambda\}$, are chosen, we proceed to the following lemmata which are aftermaths of Propositions \ref{prop:rates of left tips} and \ref{prop:rates of right tips}.

    \begin{lemma} \label{min rates}
    The following hold:
       \begin{enumerate}
           \item[\textup{(1)}] For each $\omega=1,\dots,\lambda$, 
            \begin{equation*}
             r(\rho_{\nu_{\omega}}^+)=r(\rho_{j}^+),\quad\text{for all} \quad j\in\{\nu_{\omega},\dots,\nu_{\omega+1}-1\}.   
            \end{equation*}
            
            \item[\textup{(2)}] If the relations
            \begin{equation*}
                \frac{\displaystyle b_{\nu_{\omega+1}}^+-\sum_{j=\mu_{\nu_{\omega+1}+1}}^{\mu_{\nu_{\omega}-1}}b_j^-}{\displaystyle \sum_{j=1}^{\mu_{\nu_{\omega+1}}}b_j^--\sum_{j=\nu_{\omega+1}+1}^Nb_j^+}\le\frac{\theta_{\mu_{\nu_{\omega+1}}}^--\theta_{\mu_{\nu_{\omega}}}^-}{\theta_{\nu_{\omega+1}}^+-\theta_{\mu_{\nu_{\omega}}}^-}
            \end{equation*}
          hold for every $\omega=1,\dots,\lambda-1$, then and only then 
          \begin{equation*}
           r(\rho_{\nu_1}^+)\le r(\rho_{\nu_2}^+)\le\dots\le r(\rho_{\nu_{\lambda}}^+). 
          \end{equation*}
       \end{enumerate}     
    \end{lemma}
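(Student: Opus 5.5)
Both parts reduce to comparing the explicit formulas of Proposition~\ref{prop:rates of left tips} for neighbouring indices. The dependence on $\nu$ in $r(\rho_\nu^+)$ enters only through the sum $\sum_{j=2}^{\nu}b_j^+\theta_j^+$, the tail $\sum_{j=\nu+1}^N b_j^+$, and the index $\mu_\nu$. The case $\nu=1$ fits the general formula with an empty first sum, so a single formula can be used throughout.

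For (1), it suffices to show $r(\rho_{\nu+1}^+)=r(\rho_\nu^+)$ whenever $\nu,\nu+1$ lie in the same block $\{\nu_\omega,\dots,\nu_{\omega+1}-1\}$. On such a block $\mu_\nu=\mu_{\nu+1}=:\mu$, so the terms $\sum_{j=1}^{\mu}b_j^-\theta_\mu^-$ and $\sum_{j=\mu+1}^M b_j^-\theta_j^-$ cancel in the difference. What remains is
\[
r(\rho_{\nu+1}^+)-r(\rho_\nu^+)=b_{\nu+1}^+\bigl(\theta_\mu^- -\theta_{\nu+1}^+\bigr).
\]
Both tips $h(\rho_\nu^+)$ and $h(\rho_{\nu+1}^+)$ lie in $S_\mu^-$. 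The strip $S_{\nu+1}^+$ is bounded by exactly these two levels, so $S_{\nu+1}^+\subset S_\mu^-$. By the choice of exponents this gives $\theta_{\nu+1}^+=\theta_\mu^-$, and the difference vanishes. Induction along the block gives (1).

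For (2), by (1) I may compare $r(\rho_{\nu_\omega}^+)=r(\rho_{\nu'}^+)$ with $r(\rho_{\nu''}^+)$, where $\nu'=\nu_{\omega+1}-1$ and $\nu''=\nu_{\omega+1}$. Write $\mu=\mu_{\nu_\omega}$ and $\mu'=\mu_{\nu''}<\mu$.

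The key geometric input concerns the right strips $S_j^-$ with $\mu'<j<\mu$. These contain no left tips, by the maximality of the blocks in the construction of the $\nu_\omega$. They lie between $h(\rho_{\nu'}^+)$ and $h(\rho_{\nu''}^+)$, hence $S_j^-\subset S_{\nu''}^+$, and therefore $\theta_j^-=\theta_{\nu''}^+$. Substituting into the two formulas and collecting the coefficients of $\theta_{\nu''}^+$, $\theta_{\mu'}^-$ and $\theta_\mu^-$, I expect
\[
r(\rho_{\nu''}^+)-r(\rho_{\nu'}^+)=A\bigl(\theta_{\mu'}^- -\theta_\mu^-\bigr)-B\bigl(\theta_{\nu''}^+-\theta_\mu^-\bigr),
\]
with
\[
A=\sum_{j=1}^{\mu'}b_j^- -\sum_{j=\nu''+1}^N b_j^+ ,\qquad B=b_{\nu''}^+-\sum_{j=\mu'+1}^{\mu-1}b_j^-.
\]
The main bookkeeping step is the coefficient of $\theta_\mu^-$. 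It equals $-\bigl(\sum_{1}^{\mu}b_j^- -\sum_{\nu''}^N b_j^+\bigr)+b_\mu^-$, which I must check reduces to $-A+B$.

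Finally, $A>0$ by \eqref{height with b} applied to $\nu''$. Moreover $\theta_{\nu''}^+>\theta_\mu^-$ strictly: $S_{\nu''}^+\not\subset S_\mu^-$, and the zig-zag ordering of Figure~\ref{fig:thetas} gives $\theta_{\nu+1}^+>\theta_{\mu_\nu}^-$ in this situation. Dividing by $A(\theta_{\nu''}^+-\theta_\mu^-)>0$ then shows the difference is $\ge0$ if and only if the stated inequality holds, with the sum in the numerator read as running over $\mu'+1,\dots,\mu-1$. Requiring this for every $\omega=1,\dots,\lambda-1$ gives the monotone chain, and conversely each link of the chain forces the corresponding inequality.

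The main obstacle is justifying rigorously the two exponent identifications, $\theta_j^-=\theta_{\nu''}^+$ for the intermediate right strips and the strictness $\theta_{\nu''}^+>\theta_\mu^-$, from the configuration rules. Boundary cases ($\nu''=N-1$, or no intermediate right strips) need a separate check.
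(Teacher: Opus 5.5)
Your proposal is correct and follows the paper's proof. Both parts come from subtracting the formulas of Proposition~\ref{prop:rates of left tips}, using the exponent identifications $\theta_j^+=\theta_{\mu_{\nu_\omega}}^-$ inside a block and $\theta_j^-=\theta_{\nu_{\omega+1}}^+$ for the intermediate right strips, and then dividing by $A(\theta_{\nu_{\omega+1}}^+-\theta_{\mu_{\nu_\omega}}^-)>0$. The only cosmetic difference is in (2): you first use (1) to reduce to the adjacent indices $\nu_{\omega+1}-1$ and $\nu_{\omega+1}$, whereas the paper subtracts $r(\rho_{\nu_\omega}^+)$ and $r(\rho_{\nu_{\omega+1}}^+)$ directly and applies the in-block identification at that step.
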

    \begin{proof}
        \textup{(1)} Fix $\omega\in\{1,\dots\lambda\}$. By Proposition \ref{prop:rates of left tips}, for every $n=\nu_{\omega},\dots,\nu_{\omega+1}-2$,
      \begin{align*}
          r(\rho_{n+1}^+)-r(\rho_{n}^+)&=-\sum_{j=2}^{n+1}b_j^+\theta_j^++\left(\sum_{j=1}^{\mu_{\nu_{\omega}}}b_j^--\sum_{j=n+2}^{N}b_j^+\right)\theta_{\mu_{\nu_{\omega}}}^-+\sum_{j=\mu_{\nu_{\omega}}+1}^Mb_j^-\theta_j^-\\
          &+\sum_{j=2}^{n}b_j^+\theta_j^+-\left(\sum_{j=1}^{\mu_{\nu_{\omega}}}b_j^--\sum_{j=n+1}^{N}b_j^+\right)\theta_{\mu_{\nu_{\omega}}}^--\sum_{j=\mu_{\nu_{\omega}}+1}^Mb_j^-\theta_j^-\\
          &=-b_{n+1}^+\theta_{n+1}^++b_{n+1}^+\theta_{\mu_{\nu_\omega}}^-,
      \end{align*}
where the first the first equality takes into account the fact that $\mu_n=\mu_{n+1}=\mu_{\nu_\omega}$ due to our construction. Nevertheless, the ordering of the exponents $\theta_j^\pm$ signifies that $\theta_{n+1}^+=\theta_{\mu_{\nu_\omega}}^-$ since $n+1\in\{\nu_\omega+1,\dots,\nu_{\omega+1}-1\}$; see Figures \ref{fig: definition of nu blocks}, \ref{fig:thetas}. Going back to the previous equalities, we obtain $r(\rho_{n+1}^+)=r(\rho_n^+)$ and the desired result follows.

\textup{(2)} Let $\omega\in\{1,\dots,\lambda-1\}$. We will compare the rates $r(\rho_{\nu_{\omega}}^+)$ and $r(\rho_{\nu_{\omega+1}}^+)$. By Proposition \ref{prop:rates of left tips}, we have
\begin{align}\label{eq:nu omega 1}
 \notag   &r(\rho_{\nu_{\omega}}^+)-r(\rho_{\nu_{\omega+1}}^+)=-\sum_{j=2}^{\nu_{\omega}}b_j^+\theta_j^++\sum_{j=2}^{\nu_{\omega+1}}b_j^+\theta_j^++\sum_{j=\mu_{\nu_{\omega}}+1}^Mb_j^-\theta_j^--\sum_{j=\mu_{\nu_{\omega+1}}+1}^Mb_j^-\theta_j^-\\
 \notag   &+\left(\sum_{j=1}^{\mu_{\nu_{\omega}}}b_j^--\sum_{j=\nu_{\omega}+1}^Nb_j^+\right)\theta_{\mu_{\nu_{\omega}}}^--\left(\sum_{j=1}^{\mu_{\nu_{\omega+1}}}b_j^--\sum_{j=\nu_{\omega+1}+1}^Nb_j^+\right)\theta_{\mu_{\nu_{\omega+1}}}^-\\
 \notag   &=\sum_{j=\nu_{\omega}+1}^{\nu_{\omega+1}}b_j^+(\theta_j^+-\theta_{\mu_{\nu_{\omega}}}^-)-\sum_{j=\mu_{\nu_{\omega+1}+1}}^{\mu_{\nu_{\omega}-1}}b_j^-(\theta_j^--\theta_{\mu_{\nu_{\omega}}}^-)\\
    &+\sum_{j=1}^{\mu_{\nu_{\omega+1}}}b_j^-(\theta_{\mu_{\nu_{\omega}}}^--\theta_{\mu_{\nu_{\omega+1}}}^-)-\sum_{j=\nu_{\omega+1}+1}^{N}b_j^+(\theta_{\mu_{\nu_{\omega}}}^--\theta_{\mu_{\nu_{\omega+1}}}^-)
\end{align}
where we have used the inequalities $\nu_\omega<\nu_{\omega+1}$ and $\mu_{\nu_\omega}>\mu_{\nu_{\omega+1}}$. However, considering the geometric configurations with regard to the exponents $\theta_j^\pm$ and the heights of the slits, we have $\theta_j^+=\theta_{\mu_{\nu_{\omega}}}^-$, for $j=\nu_{\omega}+1,\dots,\nu_{\omega+1}-1$, and $\theta_j^-=\theta_{\nu_{\omega+1}}^+$, for $j=\mu_{\nu_{\omega+1}}+1,\dots,\mu_{\nu_{\omega}}-1$. As a result, returning to \eqref{eq:nu omega 1}, all but one terms of the first sum vanish and
\begin{align*}
    r(\rho_{\nu_{\omega}}^+)-r(\rho_{\nu_{\omega+1}}^+)&=b_{{\nu_{\omega+1}}}^+(\theta_{\nu_{\omega+1}}^+-\theta_{\mu_{\nu_{\omega}}}^-)-\sum_{j=\mu_{\nu_{\omega+1}}+1}^{\mu_{\nu_{\omega}-1}}b_j^-(\theta_{\nu_{\omega+1}}^+-\theta_{\mu_{\nu_{\omega}}}^-)\\
    &\quad +\sum_{j=1}^{\mu_{\nu_{\omega+1}}}b_j^-(\theta_{\mu_{\nu_{\omega}}}^--\theta_{\mu_{\nu_{\omega+1}}}^-)-\sum_{j=\nu_{\omega+1}+1}^Nb_j^+(\theta_{\mu_{\nu_{\omega}}}^--\theta_{\mu_{\nu_{\omega+1}}}^-),
\end{align*}
which means that $ r(\rho_{\nu_{\omega}}^+)-r(\rho_{\nu_{\omega+1}}^+)\le0$ if and only if 
\begin{equation}\label{eq:nu omega 2}
\left( b_{\nu_{\omega+1}}^+-\sum_{j=\mu_{\nu_{\omega+1}}+1}^{\mu_{\nu_{\omega}-1}}b_j^-\right)(\theta_{\nu_{\omega+1}}^+-\theta_{\mu_{\nu_{\omega}}}^-)+\left(\sum_{j=1}^{\mu_{\nu_{\omega+1}}}b_j^--\sum_{j=\nu_{\omega+1}+1}^Nb_j^+\right)(\theta_{\mu_{\nu_{\omega}}}^--\theta_{\mu_{\nu_{\omega+1}}}^-)\le0.    
\end{equation}
By \eqref{height with b}, we know that $\sum_{j=1}^{\mu_{\nu_\omega+1}}b_j^->\sum_{j=\nu_{\omega+1}+1}^Nb_j^+$, while our configuration implies $\theta_{\nu_\omega+1}^+>\theta_{\mu_{\nu_{\omega}}}^-$. As a consequence, \eqref{eq:nu omega 2} readily leads to the desired result.
\end{proof}
Next, we proceed with a similar study of how the rates of the right tip points are compared to each other. Briefly, we will see that any right tip points lying in the same strip $S_\nu^+$, $\nu\in\{1,\dots,N\}$, actually share the same rate. For the proof, we follow steps analogous to the previous one.
\begin{lemma}\label{max rates}
The following hold:
\begin{enumerate}
    \item[\textup{(1a)}] Let $\nu\in\{1,\dots,N-1\}$. Then, $r(\rho_j^-)=r(\rho_{\mn}^-)$ for any possible index $j$ such that $h(\rho_j^-)\in S_{\nu}^+$ (excluding the case $j=M$ when $\nu=1$).

    \item[\textup{(1b)}] Let $\nu=N$. Then, $r(\rho_j^-)=r(\rho_{\mu_{N-1}-1}^-)$ for any possible index $j$ such that $h(\rho_j^-)\in S_{\nu}^+$.
    \item[\textup{(2a)}] It is always true that $r(\rho_1^+)>r(\rho_{\mu_1}^-)$.
    \item[\textup{(2b)}] For each $\nu\in\{2,\dots,N-1\}$, $r(\rho_1^+)>r(\rho_{\mn}^-)$ if and only if
    \begin{equation*}
        \sum_{j=2}^{\nu-1}b_j^+(\theta_j^+-\theta_{\mu_1}^-)+\sum_{j=\mn+1}^{\mu_1}b_j^-(\theta_{\mu_1}^--\theta_j^-)+\left(\sum_{j=1}^{\mn}b_j^--\sum_{j=\nu}^{N}b_j^+\right)(\theta_{\mu_1}^--\theta_{\nu}^+)>0.
    \end{equation*}
    \item[\textup{(2c)}] In addition, $r(\rho_1^+)>r(\rho_{\mu_{N-1}-1}^-)$ if and only if
        \begin{equation*}
      \sum_{j=2}^{N-1}b_j^+(\theta_j^+-\theta_{\mu_1}^-)+\sum_{j=\mu_{N-1}}^{\mu_1}b_j^-(\theta_{\mu_1}^--\theta_j^-)+\left(\sum_{j=1}^{\mu_{N-1}-1}b_j^--b_N^+\right)(\theta_{\mu_1}^--\theta_{\mu_{N-1}-1}^-)>0.
    \end{equation*}
\end{enumerate}
\end{lemma}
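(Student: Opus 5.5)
The whole lemma follows from the explicit formulas of Proposition \ref{prop:rates of right tips} together with the rules linking the exponents to the heights of the slits (Figure \ref{fig:thetas}). No new limiting argument is needed. Parts (1a) and (1b) are telescoping comparisons of consecutive rates inside one $+$-strip. Parts (2a)–(2c) are direct subtractions from $r(\rho_1^+)$ as given in Proposition \ref{prop:rates of left tips}.

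For (1a), fix $\nu\in\{1,\dots,N-1\}$. The indices $j$ with $h(\rho_j^-)\in S_\nu^+$ form a consecutive block $\mu_\nu\le j\le \mu_{\nu-1}-1$ (read $\mu_0-1$ as $M-1$ for $\nu=1$). For $\nu\ge2$ these are middle slits, and the formula in Proposition \ref{prop:rates of right tips}(2) applies. Subtracting the formulas for $j$ and $j+1$, all the $b^+$ terms cancel. What remains is
\[
r(\rho_{j+1}^-)-r(\rho_j^-)=b_{j+1}^-\theta_\nu^+-b_{j+1}^-\theta_{j+1}^-.
\]
For $j+1>\mu_\nu$ we have $S_{j+1}^-\subset S_\nu^+$, so the convention gives $\theta_{j+1}^-=\theta_\nu^+$ and the difference vanishes. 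Telescoping then yields $r(\rho_j^-)=r(\rho_{\mu_\nu}^-)$.

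For $\nu=1$ the slits are upper, and I use Proposition \ref{prop:rates of right tips}(1). The same subtraction gives
\[
r(\rho_{j+1}^-)-r(\rho_j^-)=\Bigl(\sum_{i=1}^{j+1}b_i^--\sum_{i=2}^N b_i^+\Bigr)(\theta_{j+2}^--\theta_{j+1}^-).
\]
This vanishes because the strips $S^-_{j+1},S^-_{j+2}\subset S_1^+$ (for $j+2\le M$) share the exponent $\theta_1^+$. I expect this upper-slit bookkeeping to be the main place where care is needed, including the exclusion of $j=M$.

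Part (1b) is identical, using formula (3) for the lower slits. Consecutive differences equal
\[
\Bigl(\sum_{i\le j}b_i^--b_N^+\Bigr)(\theta_j^--\theta_{j+1}^-),
\]
and all lower-slit exponents are equal to $\theta^-_{\mu_{N-1}}$ by the configuration, so the differences vanish.

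For (2a), subtract formula (1) at $\mu=\mu_1$ from $r(\rho_1^+)$. Using $\theta_j^-=\theta^-_{\mu_1+1}$ where the configuration dictates, this should reduce to
\[
b^-_{\mu_1+1}(\theta^-_{\mu_1+1}-\theta^-_{\mu_1})\cdot(\text{sign-fixed factor})+\Bigl(\sum_{j=1}^{\mu_1}b_j^--\sum_{j=2}^Nb_j^+\Bigr)(\theta^-_{\mu_1}-\theta^-_{\mu_1+1}).
\]
The coefficient $\sum_{j=1}^{\mu_1}b_j^--\sum_{j=2}^Nb_j^+$ is positive by \eqref{height with b}, and $\theta^-_{\mu_1}>\theta^-_{\mu_1+1}$ strictly, since $S_1^+\not\subset S^-_{\mu_1}$. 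Hence the difference is positive. I expect this sign check, namely that the residual terms combine into a positive multiple of a strict gap, to be the main obstacle.

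For (2b), subtract formula (2) with $\mu=\mu_\nu$ from $r(\rho_1^+)$. Then I regroup the $b^+$ sums over $2\le j\le \nu-1$, the term $j=\nu$, and the block $j\ge\nu+1$, and the $b^-$ sums over $j\le\mu_\nu$ and $\mu_\nu<j\le\mu_1$. This algebra gives exactly the displayed expression, and the equivalence is then an identity.

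Part (2c) is the same computation with formula (3) at $\mu=\mu_{N-1}-1$. In both cases the work is purely algebraic rearrangement, and I would present only the regrouping step.
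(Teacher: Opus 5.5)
Your overall route is the paper's: subtract the explicit rates of Propositions~\ref{prop:rates of left tips} and~\ref{prop:rates of right tips}, then use the exponent configuration. Your computations for (1a), (1b), (2b) and (2c) agree with the paper's. Part (2a), however, is not established by what you wrote, for two reasons.

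First, you subtract formula (1) of Proposition~\ref{prop:rates of right tips} at $\mu=\mu_1$. That formula is only stated for $\mu\le M-1$; at $\mu=M$ it would involve a nonexistent $\theta^-_{M+1}$. The case $\mu_1=M$ genuinely occurs, e.g.\ whenever $M=1$, or whenever $h(\rho_1^+)\in S_M^-$. This case needs its own argument, and the paper opens (2a) with it. By Proposition~\ref{prop:convergence of the roots}, $\rho_M^-(t)$ tends to a limit strictly larger than $k_N^+$, so $h(\rho_M^-(t),t)$ stays bounded and $r(\rho_M^-)=0$. On the other hand, $r(\rho_1^+)>0$ by Proposition~\ref{prop:rates of left tips}.

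Second, for $\mu_1<M$ your guessed reduction carries a term $b^-_{\mu_1+1}(\theta^-_{\mu_1+1}-\theta^-_{\mu_1})\cdot(\text{factor})$. Its sign is opposite to that of the main term, so if it were present, ``hence the difference is positive'' would not follow. In fact there is no such term, and no equalities among exponents are needed. In the exact subtraction the tails $\sum_{j\ge\mu_1+2}b_j^-\theta_j^-$ cancel. The summand $b^-_{\mu_1+1}\theta^-_{\mu_1+1}$ of $\sum_{j=\mu_1+1}^{M}b_j^-\theta_j^-$ in $r(\rho_1^+)$ cancels the one produced by splitting $b^-_{\mu_1+1}$ off $\sum_{j=1}^{\mu_1+1}b_j^-$ in $r(\rho^-_{\mu_1})$. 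What remains is exactly
\[
r(\rho_1^+)-r(\rho^-_{\mu_1})=\Bigl(\sum_{j=1}^{\mu_1}b_j^--\sum_{j=2}^{N}b_j^+\Bigr)\bigl(\theta^-_{\mu_1}-\theta^-_{\mu_1+1}\bigr).
\]
This is positive by \eqref{height with b} with $\nu=1$ and the strict inequality $\theta^-_{\mu_1}>\theta^-_{\mu_1+1}$. Your reason for strictness, $S_1^+\not\subset S^-_{\mu_1}$, is the right one. So there is no real obstacle here, but the sign must be read off from this identity rather than from a guessed form.

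Two smaller inaccuracies:
\begin{itemize}
\item In (1b), the configuration gives $\theta_1^-=\dots=\theta^-_{\mu_{N-1}-1}$ because these strips lie in $S_N^+$. It does not make them equal to $\theta^-_{\mu_{N-1}}$. When lower slits exist, $S_N^+\not\subset S^-_{\mu_{N-1}}$, so the common value is strictly larger. This freedom is exactly what the last step of Corollary~\ref{existance of thetas} uses; the same slip appears in the proof of Lemma~\ref{non zero-one a's}(3). Since (1b) only needs the mutual equality, your conclusion there stands.
\item In (1a) with $\nu=1$, there is no exponent $\theta_1^+$, since $k_1^+$ is frozen. What is actually used is $\theta^-_{\mu_1+1}=\dots=\theta^-_{M}$, which holds because $S^-_{\mu_1+1},\dots,S^-_M\subset S_1^+$.
\end{itemize}
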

\begin{proof}
  (1a) We start with the upper slits, that is with the indices $\mu$ such that the tip points $h(\rho_\mu^-)$ lie in $S_{1}^+$. Our configuration shows that such indices necessarily exist and are exactly $\{\mu_1,\dots,M\}$. As usual, we disregard the index $M$ since the tip point $h(\rho_M^-)$ converges to an interior point of $\Omega_0$, see Proposition \ref{prop:convergence of the roots}. By Proposition \ref{prop:rates of right tips}, for each index $\mu\in\{\mu_1+1,\dots,M-1\}$ we have
    \begin{align*}
        r(\rho_{\mu}^-)-r(\rho_{\mu-1}^-)&=\left(-\sum_{j=2}^{N}b_j^++\sum_{j=1}^{\mu+1}b_j^-\right)\theta_{\mu+1}^-+\sum_{j=\mu+2}^Mb_j^-\theta_j^-\\
        &\hspace{1cm}-\left(-\sum_{j=2}^{N}b_j^++\sum_{j=1}^{\mu}b_j^-\right)\theta_{\mu}^--\sum_{j=\mu+1}^Mb_j^-\theta_j^-\\
        &=b_{\mu+1}^-\theta_{\mu+1}^-+\sum_{j=2}^Nb_j^+(\theta_{\mu}^--\theta_{\mu+1}^-)+\sum_{j=1}^{\mu}b_j^-(\theta_{\mu+1}^--\theta_{\mu}^-)-b_{\mu+1}^-\theta_{\mu+1}^-\\
        &=\left(\sum_{j=2}^Nb_j^+-\sum_{j=1}^{\mu}b_j^-\right)(\theta_{\mu}^--\theta_{\mu+1}^-)=0
    \end{align*}
    where the last equality follows from the fact that $\theta_{\mu_1+1}^-=\dots=\theta_{M-1}^-=\theta_M^-$ which is due to our construction, because $S_{\mu_1+1}^-\cup\dots\cup S_{M-1}^-\cup S_M^-\subset S_1^+$. We continue with the middle slits, namely the tip points lying in some $S_{\nu}^+$, for $\nu\in\{2,\dots,N-1\}$ and hence with the indices $\{\mu_{\nu},\dots,\mu_{\nu-1}-1\}$, assuming such indices exist  or equivalently assuming that $h(\rho_{\mu_{\nu}+1}^-)\in S_\nu^+$. Let $\mu\in\{\mu_{\nu}+1,\dots,\mu_{\nu-1}-1\}$. Then, Proposition \ref{prop:rates of right tips} yields
    \begin{align*}
        r(\rho_{\mu}^-)-r(\rho_{\mu-1}^-)&=-\sum_{j=2}^{\nu}b_j^+\theta_j^++\left(-\sum_{j=\nu+1}^{N}b_j^++\sum_{j=1}^{\mu}b_j^-\right)\theta_{\nu}^++\sum_{j=\mu+1}^Mb_j^-\theta_j^-\\
        &\hspace{1cm}+\sum_{j=2}^{\nu}b_j^+\theta_j^+-\left(-\sum_{j=\nu+1}^{N}b_j^++\sum_{j=1}^{\mu-1}b_j^-\right)\theta_{\nu}^+-\sum_{j=\mu}^Mb_j^-\theta_j^-\\
        &=b_{\mu}^-\theta_{\nu}^+-b_{\mu}^-\theta_{\mu}^-.
    \end{align*}
    Nevertheless, $S_\mu^-\subset S_{\nu}^+$, for all $\mu=\mu_{\nu}+1,\dots,\mu_{\nu-1}-1$, and thus $\theta_\mu^-=\theta_\nu^+$ for any such $\mu$. As a result, $r(\rho_{\mu}^-)=r(\rho_{\mu-1}^-)$. 
    
    (1b) We move on to the lower slits, or equivalently those whose tip points lie in $S_N^+$. By our configuration, these slits, whenever they exist, correspond to the indices $\{1,\dots,\mu_{N-1}-1\}$. Through Proposition \ref{prop:rates of right tips}, for each $\mu\in\{2,\dots,\mu_{N-1}-1\}$, we obtain
    \begin{align*}
         r(\rho_{\mu}^-)-r(\rho_{\mu-1}^-)&=-\sum_{j=2}^{N-1}b_j^+\theta_j^++\left(\sum_{j=1}^{\mu}b_j^--b_N^+\right)\theta_{\mu}^-+\sum_{j=\mu+1}^Mb_j^-\theta_j^-\\
         &+\sum_{j=2}^{N-1}b_j^+\theta_j^+-\left(\sum_{j=1}^{\mu-1}b_j^--b_N^+\right)\theta_{\mu-1}^--\sum_{j=\mu}^Mb_j^-\theta_j^-\\
         &=\left(\sum_{j=1}^{\mu-1}b_j^--b_N^+\right)(\theta_{\mu}^--\theta_{\mu-1}^-)=0,
    \end{align*}
    with the last equality following via similar arguments as above.

(2a) If $\mu_1=M$, then the calculation is trivial, since $r(\rho_M^-)=0<r(\rho_1^+)$ by Propositions \ref{prop:convergence of the roots} and \ref{prop:rates of left tips}. Otherwise, recalling that the index $\mu_1$ corresponds to an upper slit and using Propositions \ref{prop:rates of left tips} and \ref{prop:rates of right tips}, we infer that
\begin{align*}
    r(\rho_1^+)-r(\rho_{\mu_1}^-&)=\left(\sum_{j=1}^{\mu_1}b_j^--\sum_{j=2}^{N}b_j^+\right)\theta_{\mu_1}^-+\sum_{j=\mu_1+1}^Mb_j^-\theta_j^-\\
    &-\left(-\sum_{j=2}^{N}b_j^++\sum_{j=1}^{\mu_1+1}b_j^-\right)\theta_{\mu_1+1}^--\sum_{j=\mu_1+2}^Mb_j^-\theta_j^-\\
    &=\left(-\sum_{j=2}^{N}b_j^++\sum_{j=1}^{\mu_1}b_j^-\right)(\theta_{\mu_1}^--\theta_{\mu_1+1}^-)>0,
\end{align*}
where the inequality is due to \eqref{height with b} and the fact that $\mu_1+1>\mu_1$ implies $\theta_{\mu_1}^-\ge\theta_{\mu_1+1}^-$ with our configuration denying the possibility of equality. 

(2b) On top of the previous case, for $\nu\in\{2,\dots,N-1\}$, using the same propositions and keeping in mind that $\mu_1\ge\mu_\nu$, we deduce
\begin{align*}
    r(\rho_1^+)-r(\rho_{\mn}^-)&=\left(\sum_{j=1}^{\mu_1}b_j^--\sum_{j=2}^{N}b_j^+\right)\theta_{\mu_1}^-+\sum_{j=\mu_1+1}^Mb_j^-\theta_j^-\\
    &+\sum_{j=2}^{\nu}b_j^+\theta_j^+-\left(-\sum_{j=\nu+1}^{N}b_j^++\sum_{j=1}^{\mu_{\nu}}b_j^-\right)\theta_{\nu}^+-\sum_{j=\mu_{\nu}+1}^Mb_j^-\theta_j^-\\
    &=\sum_{j=2}^{\nu}b_j^+(\theta_j^+-\theta_{\mu_1}^-)+\sum_{j=\mn+1}^{\mu_1}b_j^-(\theta_{\mu_1}^--\theta_j^-)\\
    &\hspace{1cm}+\left(\sum_{j=1}^{\mn}b_j^--\sum_{j=\nu+1}^{N}b_j^+\right)(\theta_{\mu_1}^--\theta_{\nu}^+)\\
    &=\sum_{j=2}^{\nu-1}b_j^+(\theta_j^+-\theta_{\mu_1}^-)+\sum_{j=\mn+1}^{\mu_1}b_j^-(\theta_{\mu_1}^--\theta_j^-)\\
    &\hspace{1cm}+\left(\sum_{j=1}^{\mn}b_j^--\sum_{j=\nu}^{N}b_j^+\right)(\theta_{\mu_1}^--\theta_{\nu}^+),
\end{align*}
where in the last equality, we just moved one term from the first sum to the last sum. Thus, the desired outcome is proved.

(2c) To conclude the proof, we work with the highest lower slit, assuming it exists. In this scenario, Propositions \ref{prop:rates of left tips} and \ref{prop:rates of right tips} lead to
\begin{align*}
      r(\rho_{1}^+)-r(\rho_{\mu_{N-1}-1}^-)=&\left(\sum_{j=1}^{\mu_1}b_j^--\sum_{j=2}^{N}b_j^+\right)\theta_{\mu_1}^-+\sum_{j=\mu_1+1}^Mb_j^-\theta_j^-\\
&+\sum_{j=2}^{N-1}b_j^+\theta_j^+-\left(\sum_{j=1}^{\mu_{N-1}-1}b_j^--b_N^+\right)\theta_{\mu_{N-1}-1}^--\sum_{j=\mu_{N-1}}^Mb_j^-\theta_j^-\\
&=\sum_{j=\mu_{N-1}}^{\mu_1}b_j^-(\theta_{\mu_1}^--\theta_j^-)+\sum_{j=2}^{N-1}b_j^+(\theta_j^+-\theta_{\mu_1}^-)\\
&\hspace{1cm}+\left(\sum_{j=1}^{\mu_{N-1}-1}b_j^--b_N^+\right)(\theta_{\mu_1}^--\theta_{\mu_{N-1}-1}^-),
\end{align*}
which is the desired result.    
\end{proof}

The two lemmata above actually dictate a choice for the exponents in such a way that $r(\rho_1^+)=\min_{\nu\in\{1,\dots,N-1\}} r(\rho_{\nu}^+)$ and another choice so that $r(\rho_1^+)>r(\rho_{\mu}^-)$, for every $\mu\in\{1,\dots,M-1\}$. Below, we show that these two conditions can be taken simultaneously.
\begin{corollary}\label{existance of thetas}
    There exists a choice of exponents $\theta_j^{\pm}$, so that 
    \begin{equation*}
      \min_{\nu\in\{1.\dots,N-1\}}r(\rho_{\nu}^+)>\max_{\mu\in\{1,\dots,M-1\}}r(\rho_{\mu}^-).   
    \end{equation*}
   
\end{corollary}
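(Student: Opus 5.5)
Our plan is to reduce the corollary to two families of inequalities among the exponents, using Lemmata \ref{min rates} and \ref{max rates}, and then to satisfy all of them simultaneously. By Lemma \ref{min rates}(1) the left rates are constant on each block $\{\nu_\omega,\dots,\nu_{\omega+1}-1\}$. By Lemma \ref{min rates}(2), the chain $r(\rho_{\nu_1}^+)\le\dots\le r(\rho_{\nu_\lambda}^+)$ holds as soon as, for each $\omega$, the ratio $(\theta_{\mu_{\nu_{\omega+1}}}^--\theta_{\mu_{\nu_\omega}}^-)/(\theta_{\nu_{\omega+1}}^+-\theta_{\mu_{\nu_\omega}}^-)$ exceeds an explicit constant $C_\omega$ that depends only on the $b_j^\pm$. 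Under this chain, $\min_\nu r(\rho_\nu^+)=r(\rho_1^+)$. By Lemma \ref{max rates}(1a),(1b), each right rate equals one of $r(\rho_{\mu_1}^-)$, $r(\rho_{\mu_\nu}^-)$ with $2\le\nu\le N-1$, or $r(\rho_{\mu_{N-1}-1}^-)$. It therefore suffices to verify (2a), which always holds, together with the inequalities in (2b) and (2c).

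We next choose the exponents, respecting the ordering and equality pattern fixed in the subsection on the choice of exponents. Walking the zig-zag of Figure \ref{fig:thetas} from top to bottom, the distinct exponent values form a strictly decreasing list $\Theta_1>\Theta_2>\dots>\Theta_K>0$, one value per equivalence class of prime ends at the same height. The top value is $\theta_{\mu_1+1}^-=\dots=\theta_M^-$, and $\theta_{\mu_1}^-$ lies strictly below $\theta_2^+$, and so on. We then take the gaps to be geometrically separated: $\Theta_{i}-\Theta_{i+1}=\varepsilon^{i}$ for a small $\varepsilon\in(0,1)$, with $\Theta_K>0$ arbitrary.

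With this choice we check the conditions. In Lemma \ref{min rates}(2), the numerator of the $\theta$-ratio is $\theta_{\mu_{\nu_{\omega+1}}}^--\theta_{\mu_{\nu_\omega}}^-$, which is negative and spans several gaps. The denominator $\theta_{\nu_{\omega+1}}^+-\theta_{\mu_{\nu_\omega}}^-$ spans fewer, lower gaps. We must confirm the sign conventions against \eqref{eq:nu omega 2}: the required inequality is $(\text{coefficient})\cdot(\theta_{\nu_{\omega+1}}^+-\theta_{\mu_{\nu_\omega}}^-)+(\text{positive})\cdot(\theta_{\mu_{\nu_\omega}}^--\theta_{\mu_{\nu_{\omega+1}}}^-)\le0$. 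Hence we need the positive-weighted difference to dominate with the correct sign, and the geometric scaling lets one gap beat any bounded multiple of a smaller one. In (2b) and (2c), every term is a positive weight times a difference of the form $\theta_j^+-\theta_{\mu_1}^-$, $\theta_{\mu_1}^--\theta_j^-$ or $\theta_{\mu_1}^--\theta_\nu^+$. By the configuration (Figure \ref{fig:thetas}) the first is $\ge0$, and the others have fixed signs determined by heights, with coefficients of definite sign given by \eqref{height with b}. The leading order in $\varepsilon$ is the largest gap present, so we choose $\varepsilon$ small enough that the dominant-order term, whose coefficient is strictly positive by \eqref{height with b}, outweighs all others.

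The main obstacle is this sign bookkeeping. In the expressions of Lemma \ref{max rates}(2b),(2c), and in \eqref{eq:nu omega 2}, we must identify which difference is the leading one (the largest gap) and verify that its coefficient is strictly positive using \eqref{height with b}. If the top-gap scaling gives the wrong sign in some configuration, we would instead reverse the scaling, with the gaps growing downward ($\varepsilon^{-i}$), or choose the gaps inductively block by block: first fix the upper blocks, then take each lower gap small enough to satisfy the finitely many strict inequalities involving it. Because there are finitely many strict inequalities and each is continuous in the $\theta$'s, the inductive choice produces the required exponents. The corollary then follows by picking $\gamma$ strictly between $r(\rho_1^+)$ and $\max_\mu r(\rho_\mu^-)$.
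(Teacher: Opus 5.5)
Your reduction matches the paper's: make $r(\rho_1^+)$ the minimum via the chain of Lemma \ref{min rates}(2), then beat the right rates with Lemma \ref{max rates}(2a)--(2c). The gap is in the choice of exponents. That choice is the whole content of the corollary, and you leave it as ``the main obstacle''. First, your orientation is reversed. By \eqref{eq:thetas} together with $\theta_\nu^+\le\theta_{\mu_\nu}^-\le\theta_{\nu+1}^+$, the exponents increase as one goes \emph{down} the zig-zag. So the top class $\theta_{\mu_1+1}^-=\dots=\theta_M^-$ is the smallest value, and the bottom class $\theta_1^-=\dots=\theta_{\mu_{N-1}-1}^-$ is the largest. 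What counts are the displayed inequalities, not the verbal gloss next to Figure \ref{fig:thetas}. A list that decreases downward breaks the ordering of the $k_j^\pm(t)$, and Propositions \ref{prop:rates of left tips} and \ref{prop:rates of right tips} depend on that ordering.

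Second, and decisively, gaps that shrink downward ($\varepsilon,\varepsilon^2,\dots$ from the top) break the chain even once the orientation is corrected. Set
\begin{itemize}
\item $e_\omega=\theta^+_{\nu_{\omega+1}}-\theta^-_{\mu_{\nu_\omega}}$ (upper gap) and $d_{\omega+1}=\theta^-_{\mu_{\nu_{\omega+1}}}-\theta^+_{\nu_{\omega+1}}$ (lower gap);
\item $q_\omega=\sum_{j=\nu_{\omega+1}}^N b_j^+-\sum_{j=1}^{\mu_{\nu_\omega}-1}b_j^-$ and $p_{\omega+1}=\sum_{j=1}^{\mu_{\nu_{\omega+1}}}b_j^--\sum_{j=\nu_{\omega+1}+1}^N b_j^+$, both positive by \eqref{height with b}.
\end{itemize}
The first bracket in \eqref{eq:nu omega 2} equals $q_\omega+p_{\omega+1}$, so \eqref{eq:nu omega 2} reduces to $q_\omega e_\omega\le p_{\omega+1}d_{\omega+1}$. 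In words, each lower gap must be at least a fixed multiple of the gap above it. With $d_{\omega+1}/e_\omega=\varepsilon$ this fails for every $\omega$. The left rates then decrease from block to block, $r(\rho_1^+)$ is not the minimum, and your reduction collapses. Your inductive fallback (``take each lower gap small enough'') fails for the same reason.

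What works is the opposite profile, with gaps growing downward. Either use your $\varepsilon^{-i}$ variant, or follow the paper's induction: choose $\theta^+_{\nu_2}$, then $\theta^-_{\mu_{\nu_2}}$, \dots, and finally $\theta^-_{\mu_{N-1}-1}$, each large enough. Even then, (2b) and (2c) need the bookkeeping you deferred, because their terms are not all of one sign:
\begin{itemize}
\item the last bracket is negative;
\item each $b_j^-(\theta^-_{\mu_1}-\theta^-_j)\le 0$;
\item some of these $\theta^-_j$ equal $\theta^+_\nu$.
\end{itemize}
So you must collect the coefficient of the lowest gap. It is $\sum_{j=\nu_\omega}^N b_j^+-\sum_{j=1}^{\mu_{\nu_{\omega-1}}-1}b_j^-$ in (2b) for $\nu=\nu_\omega$, and $b_N^+-\sum_{j=1}^{\mu_{N-1}-1}b_j^-$ in (2c). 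Both are positive by \eqref{height with b}.

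Correctly oriented shrinking gaps do still give the corollary's inequality. But that requires comparing each left rate with each right rate directly, which is not the argument you set up.
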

\begin{proof}
    First of all, recalling the notation in \eqref{new indices}, we have $\nu_1=1$. By Lemma \ref{max rates} (2a), we know that $r(\rho_1^+)>r(\rho_{\mu_1^-})=r(\rho_{\mu_{\nu_1}}^-)$. Thus, by part (1a) of the same result, we also get $r(\rho_1^+)>r(\rho_\mu^-)$, for all indices $\mu\in\{\mu_1,\dots,M-1\}$. The following procedure stops until $\nu_\lambda=N$. Then, we compare only $r(\rho_1^+)$ with $r(\rho_{\mu_{N-1}-1}^-)$, as the final step of the proof shows. Next, by  Lemma \ref{max rates} (2b), we have $r(\rho_1^+)>r(\rho_{\mu_{\nu_2}}^-)$ if and only if
    \begin{equation}\label{eq:existence of gamma 1}
         \sum_{j=2}^{\nu_2-1}b_j^+(\theta_j^+-\theta_{\mu_1}^-)+\sum_{j=\mu_{\nu_2}+1}^{\mu_1}b_j^-(\theta_{\mu_1}^--\theta_j^-)+\left(\sum_{j=1}^{\mu_{\nu_2}}b_j^--\sum_{j=\nu_2}^{N}b_j^+\right)(\theta_{\mu_1}^--\theta_{\nu_2}^+)>0.   
    \end{equation}
    To ensure the positivity of the last quantity, we may choose a large enough $\theta_{\nu_2}^+$ so that the preceding inequality holds. Note that in the above expression, $\theta_{\nu_2}^+$ is the largest exponent due to our configuration and 
    \begin{align*}
        \sum_{j=(\nu_2-1)+1}^Nb_j^+\overset{\eqref{height with b}}{>}\sum_{j=1}^{\mu_{\nu_2-1}-1}b_j^-=\sum_{j=1}^{\mu_{\nu_1}-1}b_j^-=\sum_{j=1}^{\mu_{\nu_2}}b_j^-
    \end{align*}
      since $\mu_{\nu_1}-1=\mu_{\nu_2}$.
     So, enlarging $\theta_{\nu_2}^+$ actually enlarges the whole expression, without messing with the rest of the exponents. Due to the equalities in Lemma \ref{max rates} (1a), the same exponent provides $r(\rho_1^+)>r(\rho_\mu^-)$, for all indices $\mu\in\{\mu_{\nu_2},\dots,\mu_1-1\}$. Next, by Lemma \ref{min rates} (2), we have that $r(\rho_1^+)\le r(\rho_{\nu_2}^+)$ if and only if
            \begin{equation}\label{eq:existence of gamma 2}
                \frac{b_{\nu_{2}}^+-\sum_{j=\mu_{\nu_{2}+1}}^{\mu_{\nu_{1}-1}}b_j^-}{\sum_{j=1}^{\mu_{\nu_{2}}}b_j^--\sum_{j=\nu_{2}+1}^Nb_j^+}\le\frac{\theta_{\mu_{\nu_{2}}}^--\theta_{\mu_{\nu_{1}}}^-}{\theta_{\nu_{2}}^+-\theta_{\mu_{\nu_{1}}}^-}.
            \end{equation}
    Geometrically, we see that $\theta_{\mu_{\nu_2}}^-$ is the largest exponent in the preceding relation. This means that we can pick $\theta_{\mu_{\nu_2}}^-$ large enough to render \eqref{eq:existence of gamma 2} true, without affecting $\theta_{\nu_2}^+$ though. In this way, relations \eqref{eq:existence of gamma 1} and \eqref{eq:existence of gamma 2} hold simulatenously. In addition, due to the equalities in Lemma \ref{min rates} (1), we have that $r(\rho_1^+)\le r(\rho_{\nu}^+)$, for all $\nu\in\{\nu_2,\dots,\nu_3-1\}$. In the next step, we know that $r(\rho_1^+)>r(\rho_{\mu_{\nu_3}}^-)$ if and only if
    \begin{equation}\label{eq:existence of gamma 3}
        -\sum_{j=2}^{\nu_{3}-1}b_j^+(\theta_j^+-\theta_{\mu_1}^-)+\sum_{j=\mu_{\nu_3}+1}^{\mu_1}b_j^-(\theta_{\mu_1}^--\theta_j^-)+\left(\sum_{j=1}^{\mu_{\nu_3}}b_j^--\sum_{j=\nu_3}^{N}b_j^+\right)(\theta_{\mu_1}^--\theta_{\nu_3}^+)>0.
    \end{equation}
    As before, \eqref{height with b} dictates
    \begin{equation*}
        \left(\sum_{j=1}^{\mu_{\nu_3}}b_j^--\sum_{j=\nu_3}^{N}b_j^+\right)<0.
    \end{equation*}
    So, picking a large enough exponent $\theta_{\nu_3}^+$ ensures the positivity in \eqref{eq:existence of gamma 3}. Again, such a choice is allowed since $\theta_{\nu_3}^+$ is the dominant exponent in \eqref{eq:existence of gamma 3} in view of our configuration. In addition, $\theta_{\nu_3}^+$ is strictly larger than $\theta_{\mu_{\nu_2}}^-$, and thus, our choice of $\theta_{\nu_3}^+$ certifies that $r(\rho_1^+)>r(\rho_{\mu_{\nu_3}}^-)$ while also allowing \eqref{eq:existence of gamma 1} and \eqref{eq:existence of gamma 2} to still hold. Of course, Lemma \ref{max rates} (1a) shows that $r(\rho_1^+)>r(\rho_\mu^-)$, for all indices $\mu\in\{\mu_{\nu_3},\dots,\mu_{\nu_2}-1\}$. So, up to this point, we have made an appropriate choice for all exponents $\theta_{j}^{\pm}\le\theta_{\nu_3}^+$. Then, Lemma \ref{min rates} (1) shows that $r(\rho_{\nu_2}^+)\le r(\rho_{\nu_3}^+)$ if and only if 
            \begin{equation}\label{eq:existence of gamma 4}
                \frac{b_{\nu_{3}}^+-\sum_{j=\mu_{\nu_{3}+1}}^{\mu_{\nu_{2}-1}}b_j^-}{\sum_{j=1}^{\mu_{\nu_{3}}}b_j^--\sum_{j=\nu_{3}+1}^Nb_j^+}\le\frac{\theta_{\mu_{\nu_{3}}}^--\theta_{\mu_{\nu_{2}}}^-}{\theta_{\nu_{3}}^+-\theta_{\mu_{\nu_{2}}}^-}.
            \end{equation}
    Following an analogous to before reasoning, choosing a large enough $\theta_{\mu_{\nu_3}}^-$ does not affect any other exponent and relations \eqref{eq:existence of gamma 1}, \eqref{eq:existence of gamma 2}, \eqref{eq:existence of gamma 3}, \eqref{eq:existence of gamma 4}, hold concurrently. Clearly, we may continue the process inductively, so that at each step we suitably choose all exponents $\theta_j^{\pm}\le\theta_{\nu_{\omega}}^+$ and then $\theta_j^{\pm}\le\theta_{\mu_{\nu_{\omega}}}^-$, as we alternate from step to step between Lemmata \ref{min rates} and \ref{max rates}, up to the exponents $\theta_{N-1}^+$ and $\theta_{\mu_{N-1}}^-$. Observe that $\mu_{N-1}=\mu_{\nu_{\lambda}}$ by \eqref{new indices}. So, the current choice of exponents already provides
    \begin{equation}\label{eq:existence of gamma 5}
        r(\rho_1^+)\le r(\rho_{\nu_2}^+) \le \dots \le r(\rho_{\nu_\lambda}^+).
    \end{equation}
    Keeping Lemma \ref{min rates} (1) in mind, we obtain $r(\rho_1^+)=\min_{\nu\in\{1,\dots,N-1\}}r(\rho_\nu^+)$. In addition, the current choice provides $r(\rho_1^+)>r(\rho_\mu^-)$, for all $\mu\in\{\mu_{N-1},\dots,M-1\}$ as we have proved. 
    \par All that remains is the final step which is to compare the rate $r(\rho_1^+)$ with the rates of the lower right slits. Via Lemma \ref{max rates} (2c), $r(\rho_1^+)>r(\rho_{\mu_{N-1}-1}^-)$ if and only if 
    \begin{equation}\label{eq:existence of gamma 6}
      \sum_{j=2}^{N-1}b_j^+(\theta_j^+-\theta_{\mu_1}^-)+\sum_{j=\mu_{N-1}}^{\mu_1}b_j^-(\theta_{\mu_1}^--\theta_j^-)+\left(\sum_{j=1}^{\mu_{N-1}-1}b_j^--b_N^+\right)(\theta_{\mu_1}^--\theta_{\mu_{N-1}-1}^-)>0.
    \end{equation}
    As usual, \eqref{height with b} shows that 
    \begin{equation*}
        \left(\sum_{j=1}^{\mu_{N-1}-1}b_j^--b_N^+\right)<0,
    \end{equation*}
    and thus taking a sufficienlty large $\theta_{\mu_{N-1}-1}^-$ ensures that \eqref{eq:existence of gamma 6} is satisfied, without influencing all the preceding equalities, since by construction, $\theta_{\mu_{N-1}-1}^-$ is actually the largest of all the exponents $\theta_j^\pm$, up to equality. Finally, combining \eqref{eq:existence of gamma 6} with Lemma \ref{max rates} (1b) yields $r(\rho_1^+)>r(\rho_\mu^-)$, for all $\mu\in\{1,\dots,\mu_{N-1}-1\}$. All in all, we obtain $r(\rho_1^+)>r(\rho_\mu^-)$, for all $\mu\in\{1,\dots,M-1\}$ and therefore
    \begin{equation}\label{eq:existence of gamma 7}
        r(\rho_1^+)> \max_{\mu\in\{1,\dots,M-1\}}r(\rho_\mu^-).
    \end{equation}
    Considering \eqref{eq:existence of gamma 5} and \eqref{eq:existence of gamma 7}, we get the desired outcome and the proof is complete.
\end{proof}

\begin{figure}[ht]
\centering
\resizebox{0.92\linewidth}{!}{%
\begin{tikzpicture}[
    x=1cm,y=1cm,
    every node/.style={font=\normalsize},
    >=Latex
]

\definecolor{linegray}{RGB}{120,120,120}
\definecolor{bandgray}{RGB}{238,238,238}

\tikzset{
  halfline/.style={draw=linegray, line width=1.25pt},
  brace/.style={
    decorate,
    decoration={brace, amplitude=5pt},
    draw=linegray,
    line width=1.25pt
  }
}

\def\xL{0}
\def\xLm{6.00}
\def\xR{13.80}
\def\xRs{8.20}

\def\yTop{8.00}

\def\yRoneTop{7.10}
\def\yLoneA{6.15}
\def\yLoneB{5.20}
\def\yRoneBot{4.35}

\def\yGap{3.30}

\def\yRtwoTop{2.35}
\def\yLtwoA{1.55}
\def\yLtwoB{0.70}
\def\yRtwoBot{-0.05}

\def\yBot{-0.90}

\draw[halfline, opacity=.35] (\xL,\yTop) -- (\xR,\yTop);
\draw[halfline, opacity=.35] (\xL,\yBot) -- (\xR,\yBot);

\fill[bandgray] (0.00,\yRoneTop) rectangle (\xR,\yRoneBot);

\draw[halfline] (\xRs,\yRoneTop) -- (\xR,\yRoneTop);
\node at (10.55,7.35) {$ (\mu_{\nu_\omega}) $};

\draw[halfline] (\xRs-1,\yRoneBot) -- (\xR,\yRoneBot);
\node at (10.65,4.65) {$ (\mu_{\nu_\omega}-1) $};

\draw[halfline] (\xL,\yLoneA) -- (5.55,\yLoneA);
\node at (2.70,6.42) {$ (\nu_\omega) $};

\draw[halfline] (\xL,\yLoneB) -- (6.20,\yLoneB);
\node at (3.10,5.47) {$ (\nu_{\omega+1}-1) $};

\node at (1.15,5.68) {\Large$\vdots$};

\node at (8.15,5.78) {$S^-_{\mu_{\nu_\omega}}\sim\theta_{\mu_{\nu_{\omega}}}^-$};


\node at (1.10,\yGap) {\Large$\vdots$};
\node at (10.30,\yGap) {\Large$\vdots$};

\node at (7.00,\yRoneTop+0.5) {\Large$\vdots$};
\node at (7.00,\yRtwoBot-0.5) {\Large$\vdots$};

\fill[bandgray] (0,\yRtwoTop) rectangle (\xR,\yRtwoBot);

\draw[halfline] (\xRs-1.5,\yRtwoTop) -- (\xR,\yRtwoTop);
\node at (10.85,2.62) {$ (\mu_{\nu_{\omega+1}}) $};

\draw[halfline] (\xRs+0.5,\yRtwoBot) -- (\xR,\yRtwoBot);
\node at (11.00,0.25) {$ (\mu_{\nu_{\omega+1}}-1) $};

\draw[halfline] (\xL,\yLtwoA) -- (5.35,\yLtwoA);
\node at (2.75,1.83) {$ (\nu_{\omega+1}) $};

\draw[halfline] (\xL,\yLtwoB) -- (6.00,\yLtwoB);
\node at (3.10,0.98) {$ (\nu_{\omega+2}-1) $};

\node at (1.15,1.12) {\Large$\vdots$};

\node at (3.35,3.5) {$S^+_{\nu_{\omega+1}}\sim\theta_{\nu_{\omega+1}}^+$};

\node at (8.35,1.05) {$S^-_{\mu_{\nu_{\omega+1}}}\sim\theta_{\mu_{\nu_{\omega+1}}}^-$};

\draw[->, line width=0.45pt, color=linegray] (5,3) --(7,1.5) ;
\draw[->, line width=0.45pt, color=linegray] (7,4.75) -- (5,3.85);

\end{tikzpicture}%
}
\caption{We divide the strip $\Omega_0$ into the consecutive strips $S_{\nu_{\omega}}^{+}$ and $S_{\mu_{\nu_{\omega}}}^-$ and we inductively choose the exponents $\theta_{\nu_{\omega}}^+$ and $\theta_{\mu_{\nu_{\omega}}}^-$ until we cover all these strips.}
\label{fig: strips}
\end{figure}

We investigate the idea of the above proof geometrically through Figure \ref{fig: strips}. We cover $\Omega_0$ with the (overlapping) strips $S_{\nu_{\omega}}^+$ and $S_{\mu_{\nu_{\omega}}}^-$, for all $\omega=1,\dots,\lambda.$ By the configuration of the exponents, we intuitively relate each such strip with $\theta_{\nu_{\omega}}^+$ and $\theta_{\mu_{{\nu_{\omega}}}}^-$, respectively. In Figure \ref{fig: strips} we demonstrate this via the notation $\sim$. Also, $\theta_{\nu_{\omega}}^+=\theta_j^-$ for all $j$, so that $S_j^-\subset S_{\nu_{\omega}}^+$ and similarly, $\theta_{\mu_{\nu_{\omega}}}^-=\theta_j^+$, for every $j$, such that $S_j^+\subset S_{\mu_{\nu_{\omega}}}^-$. This means that if we pick $\theta_{\nu_{\omega}}^+$ large enough, then this choice does not affect the the exponents $\theta_j^{\pm}$ that are smaller than $\theta_{\nu_{\omega}}^+$. We argue similarly with $\theta_{\mu_{\nu_{\omega}}}^-$.

\subsection{Monotonicity of the tip points}Evidently, there can be more than one choices of exponents so that Lemma \ref{existance of thetas} is true. Starting with such a choice, there exists a parameter $\gamma\in(\max_{\mu\in\{1,\dots,M-1\}}r(\rho_\mu^-),\min_{\nu\in\{1,\dots,N-1\}}r(\rho_\nu^+))$. Given that $\min_{\nu\in\{1,\dots,N-1\}}r(\rho_\nu^+)=r(\rho_1^+)>0$ as evidenced by Proposition \ref{prop:rates of left tips}, this parameter $\gamma$ can be chosen positive. Then, taking the definition of the rates into account, we see that relations \eqref{limits of the tips+} and \eqref{limits of the tips-} hold at once. In fact, we shall see that the asymptotic behavior dictated by these two relations is monotonous with respect to time. 

\begin{lemma}\label{monotonicity}
Let $\gamma$ be as above. Then the following hold:
\begin{enumerate}
    \item[\textup{(1)}] For each \(\nu\in\{1,\dots,N-1\}\), the function
    \[
        t\mapsto \operatorname{Re} h(\rho_\nu^+(t),t)-\gamma t
    \]
    is strictly increasing on \([0,+\infty)\), and $\lim_{t\to+\infty}(\mathrm{Re}h(\rho_\nu^+(t),t)-\gamma t)=+\infty$.
    \item[\textup{(2)}] For each \(\mu\in\{1,\dots,M\}\), the function
    \[
        t\mapsto \operatorname{Re} h(\rho_\mu^-(t),t)-\gamma t
    \]
    is strictly decreasing on \([0,+\infty)\), and $\lim_{t\to+\infty}(\mathrm{Re}(h(\rho_\mu^-(t),t)-\gamma t)=-\infty$.
\end{enumerate}
\end{lemma}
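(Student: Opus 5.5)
The plan is to differentiate the tip trajectories directly and compare the resulting derivative with $\gamma$ for every $t\ge0$, not only asymptotically. Fix $\nu\in\{1,\dots,N-1\}$ and set $T_\nu(t)=\mathrm{Re}\,h(\rho_\nu^+(t),t)$. By Theorem \ref{continuity of the roots} and \eqref{k(t)}, $\rho_\nu^+$ is $C^1$ in $t$. Since $h'(\rho_\nu^+(t),t)=0$, the chain rule kills the $(\rho_\nu^+)'$ term, and
\begin{equation*}
T_\nu'(t)=\mathrm{Re}\,\frac{\partial h}{\partial t}(\rho_\nu^+(t),t)=-\sum_{j}b_j^+\frac{(k_j^+)'(t)}{\rho_\nu^+(t)-k_j^+(t)}+\sum_{j}b_j^-\frac{(k_j^-)'(t)}{\rho_\nu^+(t)-k_j^-(t)} .
\end{equation*}
From \eqref{k(t)} we have $(k_j^\pm)'(t)=-\theta_j^\pm\,(k_j^\pm(t)-k_N^+)$, and $k_1^+,k_N^+$ are constant. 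Rewriting $(k_N^+-k_j^\pm(t))/(\rho_\nu^+(t)-k_j^\pm(t))=1-l_{j,\nu}^\pm(t)$ (Definition \ref{ratios +-}) gives the closed formula
\begin{equation*}
T_\nu'(t)=-\sum_{j=2}^{N-1}b_j^+\theta_j^+\bigl(1-l_{j,\nu}^+(t)\bigr)+\sum_{j=1}^{M}b_j^-\theta_j^-\bigl(1-l_{j,\nu}^-(t)\bigr).
\end{equation*}
The analogous formula holds for $T_\mu(t)=\mathrm{Re}\,h(\rho_\mu^-(t),t)$ with the ratios $a_{j,\mu}^\pm$ in place of $l_{j,\nu}^\pm$. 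The statement then reduces to proving $T_\nu'(t)>\gamma$ and $T_\mu'(t)<\gamma$ for all $t\ge0$.

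The second step identifies the limits of these derivatives. Consider the limiting values of the ratios obtained in Lemma \ref{non zero-one l's} and in the proof of Proposition \ref{prop:rates of left tips}: $l_{j,\nu}^\pm\to0$ or $1$ off the critical indices, and stays in $(0,1)$ at $\mu_\nu$, with the exponent ordering making the critical contributions combine through \eqref{sums for l}. Substituting these into the formula above should show $\lim_{t\to+\infty}T_\nu'(t)=r(\rho_\nu^+)$. This is consistent with Definition \ref{rates of the tips} by L'H\^opital/Ces\`aro. The same computation should give $\lim T_\mu'(t)=r(\rho_\mu^-)$ for $\mu\le M-1$, and $\lim T_M'(t)=0$ because $\rho_M^-$ converges away from $k_N^+$.

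The third step, which I expect to be the main obstacle, upgrades these limits to bounds valid for every $t\ge0$. The idea is to show that $T_\nu'$ is nonincreasing and $T_\mu'$ is nondecreasing in $t$. Then
\begin{equation*}
T_\nu'(t)\ge r(\rho_\nu^+)\ge r(\rho_1^+)>\gamma \quad\text{and}\quad T_\mu'(t)\le r(\rho_\mu^-)<\gamma
\end{equation*}
(respectively $T_M'(t)\le 0<\gamma$) for all $t$. Strict monotonicity of $t\mapsto T_\nu(t)-\gamma t$ and $t\mapsto T_\mu(t)-\gamma t$ then follows at once. To get the monotonicity of $T_\nu'$, I would differentiate the formula once more and use the implicit relation \eqref{sums for l} together with \eqref{connections of l's}. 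Differentiating \eqref{connections of l's} expresses each $(l_{j,\nu}^\pm)'$ through $(\rho_\nu^+)'$ and the $\theta$'s, and Proposition \ref{prop:roots with respect to kj} with its Corollary fixes the sign of $(\rho_\nu^+)'$. The orderings \eqref{eq:orderings of l_n} and \eqref{orderings of a_m} are the tools I would use to fix the sign of each resulting term.

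If a uniform sign cannot be extracted this way, the fallback is to bound each ratio for all $t$ rather than only in the limit. The ordering of the $\theta$'s makes $t\mapsto|k_j^\pm(t)-k_N^+|/|k_i^\pm(t)-k_N^+|$ monotone, and \eqref{connections of l's} transfers this monotonicity to the $l$'s. Having then checked that every term of $T_\nu'(t)$ sits on the correct side of its limit, one obtains $T_\nu'(t)\ge r(\rho_\nu^+)$ for all $t\ge0$, and symmetrically $T_\mu'(t)\le r(\rho_\mu^-)$. Finally, the limits $\pm\infty$ follow from \eqref{limits of the tips+} and \eqref{limits of the tips-}, since $\gamma$ lies strictly between $\max_\mu r(\rho_\mu^-)$ and $\min_\nu r(\rho_\nu^+)$ by Corollary \ref{existance of thetas} and $r(\rho_M^-)=0<\gamma$.
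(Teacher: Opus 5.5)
Your Step 1 is correct and matches the paper: the chain rule kills the $(\rho_\nu^+)'$ term, and your closed formula for $T_\nu'(t)$ in terms of the ratios $l_{j,\nu}^\pm(t)$ is right. So everything reduces to showing $T_\nu'(t)>\gamma$ for every $t\ge0$.

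\textbf{The gap is Step 3.} Your proposed mechanism for this pointwise bound, monotonicity of $T_\nu'$, is not established, and nothing in your plan controls the sign of $T_\nu''$. The individual ratios $l_{j,\nu}^\pm(t)$ need not be monotone in $t$. Differentiating \eqref{connections of l's} only says that quotients such as $\frac{l_{j,\nu}^\pm/(1-l_{j,\nu}^\pm)}{l_{i,\nu}^\pm/(1-l_{i,\nu}^\pm)}$ have constant logarithmic derivative (a $\theta$-difference); it gives no sign for each $(l_{j,\nu}^\pm)'$.

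Your fallback is false as stated, because several summands of $T_\nu'(t)$ lie on the wrong side of their limits:
\begin{itemize}
\item If $2\le j\le\nu$ and $\theta_j^+<\theta_{\mu_\nu}^-$, then $l_{j,\nu}^+(t)<0$ and $l_{j,\nu}^+(t)\to0$. So $-b_j^+\theta_j^+\bigl(1-l_{j,\nu}^+(t)\bigr)$ is strictly below its limit $-b_j^+\theta_j^+$.
\item Likewise $b_j^-\theta_j^-\bigl(1-l_{j,\nu}^-(t)\bigr)<b_j^-\theta_j^-$ for $j>\mu_\nu$ with $\theta_j^-<\theta_{\mu_\nu}^-$.
\end{itemize}
These deficits are compensated only through the constraint \eqref{sums for l}, which a term-by-term comparison never uses.

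\textbf{The missing idea.} The identity \eqref{sums for l} holds for every $t$, not just in the limit. So you can subtract $\theta_{\mu_\nu}^-$ times it, i.e.\ split each exponent as $\theta_j^\pm=\theta_{\mu_\nu}^-+(\theta_j^\pm-\theta_{\mu_\nu}^-)$. Combined with the explicit formula for $r(\rho_\nu^+)$ from Proposition \ref{prop:rates of left tips}, this gives the exact identity
\begin{align*}
T_\nu'(t)-r(\rho_\nu^+)&=-\theta_{\mu_\nu}^-b_1^+l_{1,\nu}^+(t)+\sum_{j=2}^{\nu}(\theta_j^+-\theta_{\mu_\nu}^-)b_j^+l_{j,\nu}^+(t)+\sum_{j=\nu+1}^{N-1}(\theta_j^+-\theta_{\mu_\nu}^-)b_j^+\bigl(l_{j,\nu}^+(t)-1\bigr)\\
&\quad-\sum_{j=\mu_\nu+1}^{M}(\theta_j^--\theta_{\mu_\nu}^-)b_j^-l_{j,\nu}^-(t)+\sum_{j=1}^{\mu_\nu}(\theta_j^--\theta_{\mu_\nu}^-)b_j^-\bigl(1-l_{j,\nu}^-(t)\bigr).
\end{align*}
Every term here is $\ge0$ by the static orderings \eqref{eq:orderings of l_n} and the ordering of the exponents, and the first term is $>0$ because $l_{1,\nu}^+(t)<0$. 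Hence $T_\nu'(t)>r(\rho_\nu^+)>\gamma$ for all $t\ge0$. This needs no limits of the ratios, no second derivatives, and none of your Step 2.

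The right tips are handled the same way, using \eqref{sums for a} and \eqref{orderings of a_m}. The reference exponent is $\theta_{\mu+1}^-$, $\theta_\nu^+$ or $\theta_\mu^-$ for upper, middle and lower slits respectively, and the result is $T_\mu'(t)<r(\rho_\mu^-)<\gamma$.

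The case $\mu=M$ is immediate from signs and needs no monotonicity argument. Since $\rho_M^-(t)$ exceeds every $k_j^\pm(t)$, while $(k_j^+)'>0$ and $(k_j^-)'<0$, each summand of $\partial_t h(\rho_M^-(t),t)$ is negative. Hence $T_M'(t)<0<\gamma$.
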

\begin{proof}
We begin with a computation that will be used several times. Let
\(\xi(t)\) be any of the points \(\rho_\nu^+(t)\) or
\(\rho_\mu^-(t)\). Since \(h'(\xi(t),t)=0\), the chain rule gives
\[
\frac{\partial}{\partial t}
\left(
    \operatorname{Re}h(\xi(t),t)-\gamma t
\right)
=
\frac{\partial h}{\partial t}(\xi(t),t)-\gamma .
\]
Using \eqref{h_t(N,M)}, and recalling that \(k_1^+\) and \(k_N^+\) are
independent of \(t\), we get
\begin{equation}\label{eq:monotonicity 1}
   \frac{\partial h}{\partial t}(\xi(t),t)
=
-\sum_{j=2}^{N-1}
b_j^+
\frac{(k_j^+)'(t)}{\xi(t)-k_j^+(t)}
+
\sum_{j=1}^{M}
b_j^-
\frac{(k_j^-)'(t)}{\xi(t)-k_j^-(t)}. 
\end{equation}
However, for all possible indices $j$, by \eqref{k(t)} we have
\[
    (k_j^\pm)'(t)
    =
    -\theta_j^\pm\bigl(k_j^\pm(t)-k_N^+\bigr).
\]
Therefore
\[
\frac{(k_j^\pm)'(t)}{\xi(t)-k_j^\pm(t)}
=
-\theta_j^\pm
\left(
    -1+
    \frac{\xi(t)-k_N^+}{\xi(t)-k_j^\pm(t)}
\right).
\]
Consequently, applying on \eqref{eq:monotonicity 1}, we obtain
\begin{equation}\label{eq:monotonicity 2}
\frac{\partial}{\partial t}\left(\operatorname{Re}h(\xi(t),t)-\gamma t\right)
=
\sum_{j=2}^{N-1}\theta_j^+b_j^+\left(-1+\frac{\xi(t)-k_N^+}{\xi(t)-k_j^+(t)}\right)
-
\sum_{j=1}^{M}\theta_j^-b_j^-\left(-1+\frac{\xi(t)-k_N^+}{\xi(t)-k_j^-(t)}\right)-\gamma.
\end{equation}

(1) We proceed to the main body of the proof, commencing with the monotonicity of the left tip points. Fix \(\nu\in\{2,\dots,N-1\}\). Rewriting \eqref{eq:monotonicity 2} with \(\xi(t)=\rho_\nu^+(t)\) and recalling Definition \ref{ratios +-}, we obtain
\begin{align*}
\frac{\partial}{\partial t}
\left(
    \operatorname{Re}h(\rho_\nu^+(t),t)-\gamma t
\right)
&=
\sum_{j=2}^{N-1}
\theta_j^+b_j^+
\bigl(-1+l_{j,\nu}^+(t)\bigr)-
\sum_{j=1}^{M}
\theta_j^-b_j^-
\bigl(-1+l_{j,\nu}^-(t)\bigr)
-\gamma \\
&=
\sum_{j=2}^{N-1}
\theta_j^+b_j^+l_{j,\nu}^+(t)
-
\sum_{j=1}^{M}
\theta_j^-b_j^-l_{j,\nu}^-(t) 
-
\sum_{j=2}^{N-1}\theta_j^+b_j^+
+
\sum_{j=1}^{M}\theta_j^-b_j^-
-\gamma .
\end{align*}
Since \(\gamma<r(\rho_\nu^+)\), it follows that

\begin{equation}\label{eq:monotonicity 3}
\frac{\partial}{\partial t}\left(\operatorname{Re}h(\rho_\nu^+(t),t)-\gamma t\right)
>
\sum_{j=2}^{N-1}\theta_j^+b_j^+l_{j,\nu}^+(t)-\sum_{j=1}^{M}\theta_j^-b_j^-l_{j,\nu}^-(t)
-\sum_{j=2}^{N-1}\theta_j^+b_j^+
+
\sum_{j=1}^{M}\theta_j^-b_j^-
-r(\rho_\nu^+).
\end{equation}
But for \(\nu\in\{2,\dots,N-1\}\), Proposition~\ref{prop:rates of left tips} gives,
\[
r(\rho_\nu^+)
=
-\sum_{j=2}^{\nu}b_j^+\theta_j^+
+
\left(
    \sum_{j=1}^{\mu_\nu}b_j^-
    -
    \sum_{j=\nu+1}^{N}b_j^+
\right)\theta_{\mu_\nu}^-
+
\sum_{j=\mu_\nu+1}^{M}b_j^-\theta_j^- .
\]
Substituting this expression into \eqref{eq:monotonicity 3}, we obtain
\begin{align*}
\frac{\partial}{\partial t}
\left(
    \operatorname{Re}h(\rho_\nu^+(t),t)-\gamma t
\right)
&>
\sum_{j=2}^{N-1}
\theta_j^+b_j^+l_{j,\nu}^+(t)
-
\sum_{j=1}^{M}
\theta_j^-b_j^-l_{j,\nu}^-(t) 
-
\sum_{j=2}^{N-1}\theta_j^+b_j^+
+
\sum_{j=1}^{M}\theta_j^-b_j^- \\
&\quad
+
\sum_{j=2}^{\nu}b_j^+\theta_j^+
-
\left(
    \sum_{j=1}^{\mu_\nu}b_j^-
    -
    \sum_{j=\nu+1}^{N}b_j^+
\right)\theta_{\mu_\nu}^-
-
\sum_{j=\mu_\nu+1}^{M}b_j^-\theta_j^- .
\end{align*}
Executing simple algebraic recombinations, the preceding estimate becomes
\begin{align}\label{eq:monotonicity 4}
\frac{\partial}{\partial t}
\left(
    \operatorname{Re}h(\rho_\nu^+(t),t)-\gamma t
\right)
&>
\sum_{j=2}^{N-1}
\theta_j^+b_j^+l_{j,\nu}^+(t)
-
\sum_{j=1}^{M}
\theta_j^-b_j^-l_{j,\nu}^-(t) \notag\\
&\quad
-
\sum_{j=\nu+1}^{N-1}
(\theta_j^+-\theta_{\mu_\nu}^-)b_j^+
+
\sum_{j=1}^{\mu_\nu}
(\theta_j^--\theta_{\mu_\nu}^-)b_j^- 
+
b_N^+\theta_{\mu_\nu}^- .
\end{align}
Next, we modify the first two sums in
\eqref{eq:monotonicity 4} by rewriting the exponents $\theta_j^\pm$ in a different way. More specifically, we may write $\theta_j^\pm=\theta_{\mu_\nu}^-+(\theta_j^\pm-\theta_{\mu_\nu}^-)$. Thus, returning to \eqref{eq:monotonicity 4}, we infer that
\begin{align}\label{eq:monotonicity 5}
\notag\frac{\partial}{\partial t}
\left(
    \operatorname{Re}h(\rho_\nu^+(t),t)-\gamma t
\right)
&>
\theta_{\mu_\nu}^-
\left(
    \sum_{j=2}^{N-1}b_j^+l_{j,\nu}^+(t)
    -
    \sum_{j=1}^{M}b_j^-l_{j,\nu}^-(t)
\right)
+
\sum_{j=2}^{N-1}
(\theta_j^+-\theta_{\mu_\nu}^-)
b_j^+l_{j,\nu}^+(t)\\
&
-
\sum_{j=1}^{M}
(\theta_j^--\theta_{\mu_\nu}^-)
b_j^-l_{j,\nu}^-(t)
-
\sum_{j=\nu+1}^{N-1}
(\theta_j^+-\theta_{\mu_\nu}^-)b_j^+
+
\sum_{j=1}^{\mu_\nu}
(\theta_j^--\theta_{\mu_\nu}^-)b_j^-
+
b_N^+\theta_{\mu_\nu}^- .
\end{align}
Besides, via \eqref{sums for l}, we see that
\[
    \theta_{\mu_\nu}^-
    \left(
        \sum_{j=2}^{N-1}b_j^+l_{j,\nu}^+(t)
        -
        \sum_{j=1}^{M}b_j^-l_{j,\nu}^-(t)
    \right)=
    \theta_{\mu_\nu}^-(-b_1^+l_{1,\nu}^+(t)-b_N^+)=
    -\theta_{\mu_\nu}^-b_1^+l_{1,\nu}^+(t)
    -
    b_N^+\theta_{\mu_\nu}^-.
\]
As a result, applying on \eqref{eq:monotonicity 5} and breaking up some of the sums, we find
\begin{align}\label{eq:monotonicity 6}
\frac{\partial}{\partial t}
\left(
    \operatorname{Re}h(\rho_\nu^+(t),t)-\gamma t
\right)
&>
-\theta_{\mu_\nu}^-b_1^+l_{1,\nu}^+(t)+
\sum_{j=2}^{\nu}
(\theta_j^+-\theta_{\mu_\nu}^-)
b_j^+l_{j,\nu}^+(t) \notag\\
&\quad
+
\sum_{j=\nu+1}^{N-1}
(\theta_j^+-\theta_{\mu_\nu}^-)
b_j^+
\bigl(l_{j,\nu}^+(t)-1\bigr) 
-
\sum_{j=\mu_\nu+1}^{M}
(\theta_j^--\theta_{\mu_\nu}^-)
b_j^-
l_{j,\nu}^-(t) \notag\\
&\quad
+
\sum_{j=1}^{\mu_\nu}
(\theta_j^--\theta_{\mu_\nu}^-)
b_j^-
\bigl(1-l_{j,\nu}^-(t)\bigr).
\end{align}
We now inspect the sign of each term in \eqref{eq:monotonicity 6}. By \eqref{eq:orderings of l_n} (and
by \eqref{orderings of lN} in the special case \(\nu=N-1\)), we have
\[
    l_{1,\nu}^+(t)<0,\qquad
    l_{j,\nu}^+(t)<0 \quad (2\le j\le \nu),
\]
\[
    l_{j,\nu}^+(t)>1 \quad (j\ge\nu+1),
    \qquad
    0<l_{j,\nu}^-(t)<1 \quad (1\le j\le M).
\]
On the other hand, the ordering of the exponents gives
\[
    \theta_j^+\le \theta_\nu^+\le\theta_{\mu_\nu}^-,
    \qquad 2\le j\le\nu,
\]
\[
    \theta_j^+\ge \theta_{\nu+1}^+\ge\theta_{\mu_\nu}^-,
    \qquad j\ge\nu+1,
\]
and
\[
    \theta_j^-\ge\theta_{\mu_\nu}^- \quad (j\le\mu_\nu),
    \qquad
    \theta_j^-\le\theta_{\mu_\nu}^- \quad (j\ge\mu_\nu+1).
\]
Thus every term on the right-hand side of \eqref{eq:monotonicity 6} is non-negative, while the first one is strictly positive. Hence
\[
\frac{\partial}{\partial t}
\left(
    \operatorname{Re}h(\rho_\nu^+(t),t)-\gamma t
\right)>0,
\qquad t\ge0.
\]
This proves the desired monotonicity for
\(\nu\in\{2,\dots,N-1\}\). All that is left is to treat the endpoint case \(\nu=1\). The proof follows an almost identical procedure, so we will just provide a brief sketch for the sake of avoiding repetition. Writing \eqref{eq:monotonicity 2} again, but with \(\xi(t)=\rho_1^+(t)\), we get
\begin{equation*}
\frac{\partial}{\partial t}
\left(
    \operatorname{Re}h(\rho_1^+(t),t)-\gamma t
\right)
=
\sum_{j=2}^{N-1}
\theta_j^+b_j^+l_{j,1}^+(t)
-
\sum_{j=1}^{M}
\theta_j^-b_j^-l_{j,1}^-(t) 
-
\sum_{j=2}^{N-1}\theta_j^+b_j^+
+
\sum_{j=1}^{M}\theta_j^-b_j^-
-\gamma .
\end{equation*}
Once more, \(\gamma<r(\rho_1^+)\) and so we may use the rate in Proposition \ref{prop:rates of left tips} to get
\begin{align*}
\frac{\partial }{\partial t}
\left(
    \operatorname{Re}h(\rho_1^+(t),t)-\gamma t
\right)
&>
\sum_{j=2}^{N-1}
\theta_j^+b_j^+l_{j,1}^+(t)
-
\sum_{j=1}^{M}
\theta_j^-b_j^-l_{j,1}^-(t)
-
\sum_{j=2}^{N-1}\theta_j^+b_j^+\\
&\quad
+
\sum_{j=1}^{M}\theta_j^-b_j^-
-
\left(
    \sum_{j=1}^{\mu_1}b_j^-
    -
    \sum_{j=2}^{N}b_j^+
\right)\theta_{\mu_1}^-
-
\sum_{j=\mu_1+1}^{M}b_j^-\theta_j^- .
\end{align*}
A quick simplification leads to
\begin{align}\label{eq:monotonicity 7}
\frac{\partial}{\partial t}
\left(
    \operatorname{Re}h(\rho_1^+(t),t)-\gamma t
\right)
&>
\sum_{j=2}^{N-1}
\theta_j^+b_j^+l_{j,1}^+(t)
-
\sum_{j=1}^{M}
\theta_j^-b_j^-l_{j,1}^-(t) \notag\\
&\quad
-
\sum_{j=2}^{N-1}
(\theta_j^+-\theta_{\mu_1}^-)b_j^+
+
\sum_{j=1}^{\mu_1}
(\theta_j^--\theta_{\mu_1}^-)b_j^-+
b_N^+\theta_{\mu_1}^- .
\end{align}
Using the same trick as before, we may write $\theta_j^\pm=\theta_{\mu_1}^-+(\theta_j^\pm-\theta_{\mu_1}^-)$. Applying this on \eqref{eq:monotonicity 7} and using \eqref{sums for l} exactly as in the previous case, we have
\begin{align}\label{eq:monotonicity 8}
\frac{\partial}{\partial t}
\left(
    \operatorname{Re}h(\rho_1^+(t),t)-\gamma t
\right)
&>
-\theta_{\mu_1}^-b_1^+l_{1,1}^+(t)
+
\sum_{j=2}^{N-1}
(\theta_j^+-\theta_{\mu_1}^-)
b_j^+
\bigl(l_{j,1}^+(t)-1\bigr) \notag\\
&\quad
+
\sum_{j=\mu_1+1}^{M}
(\theta_j^--\theta_{\mu_1}^-)
b_j^-
\bigl(-l_{j,1}^-(t)\bigr) +
\sum_{j=1}^{\mu_1}
(\theta_j^--\theta_{\mu_1}^-)
b_j^-
\bigl(1-l_{j,1}^-(t)\bigr).
\end{align}
Analogous considerations on \eqref{eq:ordering for l_1} and the ordering of the exponents $\theta_j^\pm$ show every term on the right-hand side of
\eqref{eq:monotonicity 8} is non-negative, while the first one is strictly positive. Hence
\[
\frac{\partial}{\partial t}
\left(
    \operatorname{Re}h(\rho_1^+(t),t)-\gamma t
\right)>0, \quad t\ge0,
\]
and the desired monotonicity is proved.

(2) We turn our attention to the right tip points. Since the proof follows similar steps, we will omit some details of computational nature in order to avoid redundancy. We first consider an upper right slit, which by construction corresponds to an index $\mu\in\{\mu_1,\dots,M-1\}$. Using \eqref{eq:monotonicity 2} with $\rho_\mu^-(t)$ instead of $\xi(t)$ and the fact that \(\gamma>r(\rho_\mu^-)\), we get
\begin{equation*}
\frac{\partial}{\partial t}
\left(
    \operatorname{Re}h(\rho_\mu^-(t),t)-\gamma t
\right)
<
\sum_{j=2}^{N-1}
\theta_j^+b_j^+\alpha_{j,\mu}^+(t)
-
\sum_{j=1}^{M}
\theta_j^-b_j^-\alpha_{j,\mu}^-(t)
-
\sum_{j=2}^{N-1}\theta_j^+b_j^+
+
\sum_{j=1}^{M}\theta_j^-b_j^-
-
r(\rho_\mu^-).
\end{equation*}
Using the formula from Proposition~\ref{prop:rates of right tips} (1), writing $\theta_j^\pm=\theta_{\mu+1}^-+(\theta_j^\pm-\theta_{\mu+1}^-)$, and then applying \eqref{sums for a}, we obtain
\begin{align}\label{eq:monotonicity 9}
\frac{\partial}{\partial t}
\left(
    \operatorname{Re}h(\rho_\mu^-(t),t)-\gamma t
\right)
&<
\sum_{j=2}^{N-1}
(\theta_j^+-\theta_{\mu+1}^-)
b_j^+
\bigl(\alpha_{j,\mu}^+(t)-1\bigr) 
-
\theta_{\mu+1}^-b_1^+\alpha_{1,\mu}^+(t) \notag\\
&\quad
+
\sum_{j=1}^{\mu}
(\theta_j^--\theta_{\mu+1}^-)
b_j^-
\bigl(1-\alpha_{j,\mu}^-(t)\bigr)
-
\sum_{j=\mu+2}^{M}
(\theta_j^--\theta_{\mu+1}^-)
b_j^-\alpha_{j,\mu}^-(t).
\end{align}
By \eqref{orderings of a_m}, we know that
\[
    0<\alpha_{j,\mu}^+(t)<1 \quad (2\le j \le N-1), \qquad 
    \alpha_{j,\mu}^-(t)>1 \quad (j\le\mu),
    \qquad
    \alpha_{j,\mu}^-(t)<0 \quad (j\ge\mu+1).
\]
Moreover, for the upper slits, our configuration imposes
\[
    \theta_j^+\ge\theta_{\mu+1}^-,
    \quad (2\le j\le N-1), 
    \qquad
    \theta_j^-\ge\theta_{\mu+1}^- \quad (j\le\mu),
    \qquad
    \theta_j^-\le\theta_{\mu+1}^- \quad (j\ge\mu+2).
\]
Therefore, every term on the right-hand side of \eqref{eq:monotonicity 9} is non-positive, while $-\theta_{\mu+1}^-b_1^+\alpha_{1,\mu}^+(t)<0$. Ergo
\[
\frac{\partial}{\partial t}
\left(
    \operatorname{Re}h(\rho_\mu^-(t),t)-\gamma t
\right)<0, \quad t\ge0,
\]
and we get the monotonicity for every upper right slit. Next, we consider a middle right slit. Thus, for some appropriate \(\nu\), we have $\mu\in\{\mu_\nu,\dots,\mu_{\nu-1}-1\}$. Repeating the preceding computation, this time using the formula
from Proposition~\ref{prop:rates of right tips} (2), we write $\theta_j^\pm=\theta_\nu^++(\theta_j^\pm-\theta_\nu^+)$ and apply \eqref{sums for a} to obtain
\begin{align}\label{eq:monotonicity 10}
\frac{\partial}{\partial t}
\left(
    \operatorname{Re}h(\rho_\mu^-(t),t)-\gamma t
\right)
&<
\sum_{j=2}^{\nu}
(\theta_j^+-\theta_\nu^+)
b_j^+
\alpha_{j,\mu}^+(t)
+
\sum_{j=\nu+1}^{N-1}
(\theta_j^+-\theta_\nu^+)
b_j^+
\bigl(\alpha_{j,\mu}^+(t)-1\bigr)
-
\theta_\nu^+b_1^+\alpha_{1,\mu}^+(t) \notag\\
&\quad
+
\sum_{j=1}^{\mu}
(\theta_j^--\theta_\nu^+)
b_j^-
\bigl(1-\alpha_{j,\mu}^-(t)\bigr)
-
\sum_{j=\mu+1}^{M}
(\theta_j^--\theta_\nu^+)
b_j^-\alpha_{j,\mu}^-(t).
\end{align}
As in the previous case, \eqref{orderings of a_m} and the ordering of the exponents determine that every term on the right-hand side of \eqref{eq:monotonicity 10} is non-positive, while $-\theta_\nu^+b_1^+\alpha_{1,\mu}^+(t)<0$. Hence, the derivative is strictly negative for middle right slits. Finally, for a lower right slit, or equivalently for an index $\mu\in\{1,\dots,\mu_{N-1}-1\}$, successively, we use the formula from Proposition~\ref{prop:rates of right tips} (3), we write $\theta_j^\pm=\theta_\mu^-+(\theta_j^\pm-\theta_\mu^-)$ and we use \eqref{sums for a}, to deduce
\begin{align}\label{eq:monotonicity 11}
\frac{\partial}{\partial t}
\left(
    \operatorname{Re}h(\rho_\mu^-(t),t)-\gamma t
\right)
&<
\sum_{j=2}^{N-1}
(\theta_j^+-\theta_\mu^-)
b_j^+
\alpha_{j,\mu}^+(t)
-
\theta_\mu^-b_1^+\alpha_{1,\mu}^+(t) \notag\\
&\quad
+
\sum_{j=1}^{\mu}
(\theta_j^--\theta_\mu^-)
b_j^-
\bigl(1-\alpha_{j,\mu}^-(t)\bigr)
-
\sum_{j=\mu+1}^{M}
(\theta_j^--\theta_\mu^-)
b_j^-\alpha_{j,\mu}^-(t).
\end{align}
Again, the ordering of the exponents together with \eqref{orderings of a_m} show that every term on the right-hand side of \eqref{eq:monotonicity 11} is non-positive, while $-\theta_\mu^-b_1^+\alpha_{1,\mu}^+(t)<0$. Thus, the derivative is strictly negative for the lower right slits too. All that remains is to treat the extremal case \(\mu=M\). Since
\(\rho_M^-(t)>k_M^-(t)\), all the quantities $\rho_M^-(t)-k_j^+(t),\rho_M^-(t)-k_j^-(t)$, are positive, for all $t\ge0$. Moreover, by definition, $(k_j^+)'(t)>0$, whereas, $(k_j^-)'(t)<0$, for all $t\ge0$. Therefore, returning to \eqref{eq:monotonicity 1} and setting $\xi(t)=\rho_\mu^-(t)$, we find
\[
\frac{\partial h}{\partial t}(\rho_M^-(t),t)<0.
\]
Hence, by the chain rule and the fact that $\gamma>0$,
\[
\frac{\partial}{\partial t}
\left(
    \operatorname{Re}h(\rho_M^-(t),t)-\gamma t
\right)
=
\frac{\partial h}{\partial t}(\rho_M^-(t),t)-\gamma<0,
\]
and we reach the desired conclusion.

Leaving the monotonicity aside, we also need to verify the limiting behavior. 

(1) Fix $\nu\in\{1,\dots,N-1\}$. By the very definition of the rates, $\lim_{t\to+\infty}(\frac{1}{t}\mathrm{Re}(h(\rho_\nu^+(t),t))=r(\rho_\nu^+)$, while $\gamma<r(\rho_\nu^+)$ by construction. It obviously follows that
\begin{equation*}
    \lim_{t\to+\infty}(\mathrm{Re}h(\rho_\nu^+(t),t)-\gamma t)=+\infty.
\end{equation*}

(2) Fix $\mu\in\{1,\dots,M\}$. In similar fashion, we have $\lim_{t\to+\infty}(\frac{1}{t}\mathrm{Re}h(\rho_\mu^-(t),t))=r(\rho_\mu^-)$, whereas $\gamma>r(\rho_\mu^-)$. This directly implies
\begin{equation*}
     \lim_{t\to+\infty}(\mathrm{Re}h(\rho_\mu^-(t),t)-\gamma t)=-\infty,
\end{equation*}
and the proof is complete.
\end{proof}

Lemma \ref{monotonicity} is of utmost importance because it will allow us to eventually construct the Loewner chain. In particular, we must have that the chain of the Riemann maps forms a decreasing family of domains $(\Omega_t)_{t\ge0}$. Indeed, writing $\Omega_t=h(\mathbb{H},t)-\gamma t$, the preceding lemma implies that $\Omega_t\subset\Omega_s$, for all $s<t$, as seen in Figures \ref{fig: orbits on Omega 1} and \ref{fig: orbits on Omega 2}.

\begin{figure}[ht]
\centering
\resizebox{0.8\linewidth}{!}{%
\begin{tikzpicture}[
  x=0.02cm,
  y=-0.02cm,
  every node/.style={font=\scriptsize, inner sep=0.6pt}
]

\definecolor{framegray}{RGB}{95,95,95}
\definecolor{rayblack}{RGB}{35,35,35}      
\definecolor{timeviolet}{RGB}{210,180,210} 
\definecolor{timeblue}{RGB}{180,190,235}   

%
%
%

\draw[rayblack, line width=0.5pt] (22,14) -- (520,14);
\draw[rayblack, line width=0.5pt] (22,250) -- (520,250);


\draw[rayblack,   line width=0.6pt] (340,62) -- (520,62);
\draw[timeviolet, line width=0.6pt] (200,62)--(340,62);

\draw[rayblack, line width=0.6pt] (22,94) -- (245,94);
\draw[timeblue, line width=0.7pt] (245,94) -- (400,94);

\draw[timeviolet, line width=0.7pt] (215,125) -- (392,125);
\draw[rayblack,   line width=0.6pt] (392,125) -- (520,125);



\draw[timeviolet, line width=0.7pt] (156,178) -- (250,178);
\draw[rayblack,   line width=0.6pt] (250,178) -- (520,178);

\draw[rayblack, line width=0.6pt] (22,210) -- (160,210);
\draw[timeblue, line width=0.7pt] (160,210) -- (300,210);


\node at (50,46)  {$\vdots$};
\node at (460,34) {$\vdots$};

\node at (468,230) {$\vdots$};
\node at (78,230)  {$\vdots$};
\node at (468,145) {$\vdots$};

\node[anchor=west] at (12,155) {$\infty_{\nu+1}^{+}$};

\node[anchor=west] at (480,93) {$\infty_{\mu_{\nu}}^{-}$};
\node[anchor=west] at (460,197) {$\infty_{\mu_{\nu+1}}^{-}$};

\node at (392,45)  {$h(\rho_{\mu_{\nu}}^{-})$};
\node at (200,45)  {$h_t(\rho_{\mu_{\nu}}^{-}(t))-\gamma t$};
\node at (255,80)  {$h(\rho_{\nu}^{+})$};
\node at (400,80)  {$h_t(\rho_{\nu}^{+}(t))-\gamma t$};
\node at (400,140){$h(\rho_{\mu_{\nu}-1}^{-})$};
\node at (200,140){$h_t(\rho_{\mu_{\nu}-1}^{-}(t))-\gamma t$};
\node at (275,188) {$h(\rho_{\mu_{\nu+1}}^{-})$};
\node at (140,188) {$h_t(\rho_{\mu_{\nu+1}}^{-}(t))-\gamma t$};
\node at (165,230) {$h(\rho_{\nu+1}^{+})$};
\node at (350,230) {$h_t(\rho_{\nu+1}^{+}(t))-\gamma t$};
\node at (40,25) {\large$ \Omega_t$};
\def\TipDot#1#2{%
  \fill[black] (#1,#2) circle[radius=1pt];
}
\TipDot{200}{62}
\TipDot{340}{62}

\TipDot{245}{94}
\TipDot{400}{94}

\TipDot{215}{125}
\TipDot{392}{125}

\TipDot{156}{178}
\TipDot{250}{178}

\TipDot{160}{210}
\TipDot{300}{210}

\end{tikzpicture}%
}
\caption{Orbits of the tip points on $\Omega$, through $h_t-\gamma t$, with respect to time $t$. For every index $\nu$, the left half-lines corresponding to each index $\mu\in\{\mn,\dots,\mu_{\nu-1}-1\}$ access $\infty_{\nu}^+$. The tip points of $\partial\Omega_t$ are given by $h_t(\rho_j^{\pm}(t))-\gamma t$, that extend the initial tip points $h(\rho_j^{\pm})$ towards the points at infinity they access.}
\label{fig: orbits on Omega 1}
\end{figure}

\begin{figure}[ht]
\centering
\resizebox{0.8\linewidth}{!}{%
\begin{tikzpicture}[
  x=0.02cm,
  y=-0.02cm,
  every node/.style={font=\scriptsize, inner sep=0.6pt}
]

\definecolor{framegray}{RGB}{95,95,95}
\definecolor{rayblack}{RGB}{35,35,35}      
\definecolor{timeviolet}{RGB}{210,180,210} 
\definecolor{timeblue}{RGB}{180,190,235}   

%
%
%


\draw[rayblack, line width=0.5pt] (22,14) -- (520,14);
\draw[rayblack, line width=0.5pt] (22,220) -- (520,220);


\draw[timeviolet, line width=0.6pt] (58,68) -- (330,68);
\draw[rayblack, line width=0.7pt] (330,68) -- (520,68);

\draw[rayblack, line width=0.6pt] (22,93) -- (225,93);
\draw[timeblue, line width=0.7pt] (225,93) -- (500,93);

\draw[rayblack, line width=0.6pt] (22,118) -- (175,118);
\draw[timeblue, line width=0.7pt] (175,118) -- (488,118);

\draw[rayblack, line width=0.6pt] (22,147) -- (250,147);
\draw[timeblue,line width=0.7pt] (250,147)--(488,147);

\draw[rayblack, line width=0.6pt] (22,177) -- (235,177);
\draw[timeblue, line width=0.7pt] (235,177) -- (504,177);


\draw[timeviolet, line width=0.7pt] (92,196) -- (350,196);
\draw[rayblack, line width=0.6pt] (350,196) -- (520,196);

\node at (105,30) {$\vdots$};
\node at (447,34) {$\vdots$};
\node at (110,126) {$\vdots$};
\node at (478,203) {$\vdots$};

\node[anchor=west] at (16,68) {$\infty_{\nu}^{+}$};
\node[anchor=west] at (16,196) {$\infty_{\nu+\lambda+1}^{+}$};

\node[anchor=west] at (455,48) {$\infty_{\mu_\nu+1}^{-}$};
\node[anchor=west] at (463,135) {$\infty_{\mu_\nu}^{-}$};

\node at (350,55) {$h(\rho_{{\mu}_{\nu}}^-)$};
\node at (250,82) {$h(\rho_{{\nu}}^{+})$};
\node at (168,108) {$h(\rho_{{\nu}+1}^{+})$};
\node at (182,164) {$h(\rho_{{\nu}+\lambda}^{+})$};
\node at (360,206) {$h(\rho_{{\mu}_{\nu}-1}^-)$};
\tikzset{
  midarrowright/.style={
    postaction={
      decorate,
      decoration={
        markings,
        mark=at position 0.5 with {
          \arrow{Latex[length=2mm,width=1.4mm]}
        }
      }
    }
  },
  midarrowleft/.style={
    postaction={
      decorate,
      decoration={
        markings,
        mark=at position 0.5 with {
          \arrowreversed{Latex[length=2mm,width=1.4mm]}
        }
      }
    }
  }
}

\draw[timeviolet, line width=0.6pt, midarrowleft] (58,68) -- (330,68);
\draw[rayblack, line width=0.7pt] (330,68) -- (520,68);

\draw[rayblack, line width=0.6pt] (22,93) -- (225,93);
\draw[timeblue, line width=0.7pt, midarrowright] (225,93) -- (500,93);

\draw[rayblack, line width=0.6pt] (22,118) -- (175,118);
\draw[timeblue, line width=0.7pt, midarrowright] (175,118) -- (488,118);

\draw[rayblack, line width=0.6pt] (22,147) -- (250,147);
\draw[timeblue, line width=0.7pt, midarrowright] (250,147) -- (488,147);

\draw[rayblack, line width=0.6pt] (22,177) -- (235,177);
\draw[timeblue, line width=0.7pt, midarrowright] (235,177) -- (504,177);

\draw[timeviolet, line width=0.7pt, midarrowleft] (92,196) -- (350,196);
\draw[rayblack, line width=0.6pt] (350,196) -- (520,196);
\def\TipDot#1#2{%
  \fill[black] (#1,#2) circle[radius=1pt];
}

\TipDot{330}{68}

\TipDot{225}{93}

\TipDot{175}{118}

\TipDot{250}{147}

\TipDot{235}{177}

\TipDot{350}{196}

\node at (40,28) {\large${\Omega_t}$};
\end{tikzpicture}%
}
\caption{Orbits of the tip points on $\Omega$, through $h_t-\gamma t$, with respect to time $t\to +\infty$. For every index $\nu$, the right half-line corresponding $\nu$ accesses $\infty_{\mn}^-$.}
\label{fig: orbits on Omega 2}
\end{figure}

\section{Construction of the Loewner chain and the driving function}\label{sec:flow}
 We may now proceed to the essence of the present article. In this section, we are going to formally construct our Loewner chain based on the computations and considerations realized in the preceding section. 
 
 For this reason, we priorly summarize what we have worked on thus far. We start with two arbitrary numbers $N,M\in\mathbb{N}$ and a choice of positive numbers $b_1^+,\dots,b_N^+$, $b_1^-,\dots,b_M^-$ satisfying
 $\sum_{j=1}^Nb_j^+>\sum_{j=1}^Mb_j^-$. We denote by $b^+,b^-$, the sums $\sum_{j=1}^Nb_j^+,$$\sum_{j=1}^Mb_j^-$ respectively. Also, we consider real numbers
 \begin{equation*}
  k_1^+<\dots k_{N-1}^+<k_N^+<k_1^-<\dots<k_M^-    
 \end{equation*}
which pose as our initial choice. Then, we pick exponents $\theta_2^+\le\dots\le\theta_{N-1}^+$ and $\theta_1^-\ge\dots\ge\theta_M^-$ and define the functions $k_j^{\pm}(t)=k_N^++(k_j^+-k_N^+)e^{-\theta_j^{\pm}t}$ for all possible indices, while setting $k_1^+(t)\equiv k_1^+$, $k_N^+(t)\equiv k_N^+,$ so that the ordering
\begin{equation*}
 k_1^+(t)<\dots<k_N^+(t)<k_1^-(t)<\dots<k_M^-(t)   
\end{equation*}
is maintained, for all $t\ge0$. By definition, $\lim_{t\to+\infty}k_j^{\pm}(t)=k_N^+$, for all the above functions, excluding $k_1^+(t)$. With this configuration, we consider the function $h\vcentcolon\mathbb{H}\times[0,+\infty)\to\mathbb{C}$ with
 \begin{equation}
     h(z,t)=\sum_{j=1}^Nb_j^+\log(z-k_j^+(t))-\sum_{j=1}^Mb_j^-\log(z-k_j^-(t))
 \end{equation}
Denoting by $\rho_\nu^+(t)\in(k_\nu^+(t),k_{\nu+1}^+(t))$, for $\nu=1,\dots,N-1$, by $\rho_\mu^-(t)\in(k_\mu^-(t),k_{\mu+1}^-(t))$, for $\mu=1,\dots,M-1$, and by $\rho_M^-(t)>k_M^-(t)$ the roots of $h'(\cdot,t)$, we then calculate the numbers 
\begin{equation}
    r_{\nu}^+:=r(\rho_{\nu}^+)=\lim_{t\rightarrow+\infty}\frac{1}{t}\mathrm{Re}(h(\rho^+_{\nu}(t),t))\quad\text{and}\quad r_{\mu}^-:=r(\rho_{\mu}^-)=\lim_{t\rightarrow+\infty}\frac{1}{t}\mathrm{Re}(h(\rho^-_{\mu}(t),t))
\end{equation}
 for all $\nu=1,\dots,N-1$ and $\mu=1,\dots,M-1$. By Corollary \ref{existance of thetas}, it is possible to proceed to a choice on the exponents $\theta_j^{\pm}$, so that the conditions of Lemmata \ref{min rates} and \ref{max rates} can hold simultaneously. 

Then, Lemma \ref{monotonicity} implies that if we pick any number $\gamma\in(\max r_{\mu}^-,\min r_{\nu}^+)$, the family of functions $h(\cdot,t)-\gamma t$, $t\ge0$, produces a strictly decreasing family of domains $\Omega_t$. Thus, if $\Omega_t:=h(\mathbb{H},t)-\gamma t$, then $\Omega_s\subset\Omega_t$ for all $s>t$. Writing $h$ for $h(\cdot,0)$ and remembering that $h$ is univalent, we then define 
\begin{equation*}
f(z,t)\vcentcolon=h^{-1}(h(z,t)-\gamma t),   
\end{equation*}
for all $z\in\mathbb{H}$ and $t\ge0$. Setting $H_t\vcentcolon=h^{-1}(\Omega_t)$, we understand that $f(\cdot,t)$ maps the upper half-plane $\mathbb{H}$ conformally onto $H_t$ for all $t\ge0$. Specifically, if we define the line segments
 \begin{equation}\label{line segments}
      L_{\nu}^+(t)\vcentcolon=[h(\rho_{\nu}^+(0)),h(\rho_{\nu}^+(t),t)-\gamma t]\quad\text{and}\quad L_{\mu}^-(t)\vcentcolon=[h(\rho_{\mu}^-(t),t)-\gamma t,h(\rho_{\mu}^-(0))],
  \end{equation}
for all $\nu=1.\dots,N-1$, and all $\mu=1,\dots,M$, then 
 \begin{equation}\label{H_t}
  H_t=\mathbb{H}\setminus\left[\bigcup_{\nu=1}^{N-1}\Gamma_{\nu}^+(t)\cup\bigcup_{\mu=1}^M\Gamma^-_{\mu}(t)\right],
  \end{equation}
where $\Gamma_j^{\pm}(t):=h^{-1}(L_j^{\pm}(t))$, for all $t\ge0$.

By construction, for each $t\ge0$, $f(\cdot,t)$ maps infinity to infinity and has single pole there. As a consequence, the Laurent expansion of $f(\cdot,t)$ at infinity is 
\begin{equation}\label{flow}
f(z,t)=a_tz+b_t+\sum_{n=1}^{+\infty}\frac{c_n(t)}{z^n},
\end{equation}
for some $a_t>0$ and $b_t\in\mathbb{R}$ since $f(\cdot,t)$ maps the upper half-plane into the upper half-plane. Note that the hydrodynamic condition, as mentioned in the Introduction, demands that $a_t=1$ and $b_t=0$, for all $t\ge0$. This is evidently true for $t=0$, since $f(\cdot,0)=\mathrm{id}_{\mathbb{H}}$. Thus, in order to consider the Loewner chain, we need to determine the functions $a_t,b_t$ and set the Loewner chain to be $\tilde{f}(z,t)=f(\frac{z-b_t}{a_t},t)$. Note that the geometry of the flow is not changed by this normalization. We need it, however, as far as the driving function is concerned. Throughout the rest of this work, we will use the notation $f^{\circ}:=\partial_t f$ for the derivative with respect to time $t$. Differentiating \eqref{flow} with respect to $t$, we get
\begin{equation}\label{flow_t}
    f^{\circ}(z,t)=a_t^{\circ}z+b_t^{\circ}+\sum_{n=1}^{+\infty}\frac{c_n^{\circ}(t)}{z^n},
\end{equation}
for all $z\in\mathbb{H}$ and $t\ge0$. 

\begin{lemma}\label{lm:coefficients}
Given the configuration described above:
    \begin{itemize}
        \item[\textup{(1)}] $a_t=\exp(-\frac{\gamma}{b^+-b^-}t)$, for all $t\ge0$;
        \item[\textup{(2)}] $b_t=\int_0^ta_\sigma(A(\sigma)+\sum_{\nu=1}^{N-1}B_{\nu}^+(\sigma)+\sum_{\mu=1}^{M}B_{\mu}^-(\sigma))d\sigma$, for all $t\ge0$, where
    \end{itemize}
    \begin{equation*}
     A(t)=\frac{h_t^{\circ}(0)-\gamma}{h'_t(0)}\quad\text{and}\quad B_{j}^{\pm}(t)=\frac{h_t^{\circ}(\rho_j^{\pm}(t))-\gamma}{\rho_j^{\pm}(t)h''_t(\rho_j^{\pm}(t))}, \quad \text{for all }t\ge0.   
    \end{equation*}
\end{lemma}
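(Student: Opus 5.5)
The plan is to compare the asymptotic expansions at $\infty$ of both sides of the defining identity
\begin{equation*}
h(f(z,t))=h(z,t)-\gamma t, \qquad z\in\mathbb{H},
\end{equation*}
where $h=h(\cdot,0)$. For (1), I expand the principal logarithms for large $|z|$ in $\mathbb{H}$:
\begin{equation*}
h(z,t)=(b^+-b^-)\log z-\frac{C(t)}{z}+O(z^{-2}),\qquad C(t)\vcentcolon=\sum_{j=1}^Nb_j^+k_j^+(t)-\sum_{j=1}^Mb_j^-k_j^-(t).
\end{equation*}
Substituting $f(z,t)=a_tz+b_t+O(1/z)$ from \eqref{flow} into $h$ gives
\begin{equation*}
h(f(z,t))=(b^+-b^-)\bigl(\log z+\log a_t\bigr)+\frac{(b^+-b^-)b_t-C(0)}{a_tz}+O(z^{-2}).
\end{equation*}
Matching the constant terms gives $(b^+-b^-)\log a_t=-\gamma t$, which is (1); this uses $a_t>0$ and $b^+-b^-\neq 0$. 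Matching the $1/z$ terms already yields the closed form $b_t=\bigl(C(0)-a_tC(t)\bigr)/(b^+-b^-)$, which I will use only as a consistency check.

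For (2), I would instead derive an ODE for $b_t$. Differentiating the identity in $t$ and in $z$ gives $h'(f)f^\circ=h_t^\circ(z)-\gamma$ and $h'(f)f'=h_t'(z)$. Hence
\begin{equation*}
f^\circ(z,t)=f'(z,t)\,Q_t(z),\qquad Q_t(z)\vcentcolon=\frac{h_t^\circ(z)-\gamma}{h_t'(z)}.
\end{equation*}
The key observation is that $Q_t$ is a rational function. Both $h_t^\circ$ and $h_t'$ have simple poles at the points $k_j^\pm(t)$, so those singularities cancel and are removable. The only poles of $Q_t$ are therefore the $N+M-1$ simple zeros $\rho_\nu^+(t),\rho_\mu^-(t)$ of $h_t'$ identified in Section \ref{sec:construction}, with residues $(h_t^\circ(\rho)-\gamma)/h_t''(\rho)$.

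At infinity, $h_t'(z)\sim(b^+-b^-)/z$ and $h_t^\circ(z)=O(1/z)$. So $Q_t(z)=-\frac{\gamma}{b^+-b^-}z+Q_0(t)+O(1/z)$, and the partial fraction decomposition reads
\begin{equation*}
Q_t(z)=-\frac{\gamma}{b^+-b^-}z+Q_0(t)+\sum_{\rho}\frac{\mathrm{Res}_\rho Q_t}{z-\rho}.
\end{equation*}
Evaluating at $z=0$, under the implicit normalization that $0$ is none of the $k_j^\pm(t)$ or roots (needed for $A$ and $B_j^\pm$ to make sense), gives $Q_0(t)=Q_t(0)+\sum_\rho \mathrm{Res}_\rho Q_t/\rho=A(t)+\sum_\nu B_\nu^+(t)+\sum_\mu B_\mu^-(t)$.

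Next I insert the expansions into $f^\circ=f'Q_t$, using $f'(z,t)=a_t+O(z^{-2})$ and \eqref{flow_t}. Comparing coefficients of $z$ and of $z^0$ gives
\begin{equation*}
a_t^\circ=-\frac{\gamma}{b^+-b^-}a_t,\qquad b_t^\circ=a_tQ_0(t).
\end{equation*}
The first re-derives (1). The second, integrated with $b_0=0$ (since $f(\cdot,0)=\mathrm{id}$), gives (2).

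I expect the main obstacle to be justifying the termwise $t$-differentiation of the Laurent expansion \eqref{flow}, together with the $O(1/z)$ remainders being uniform in $t$. I would handle this by working on a fixed neighbourhood of $\infty$ intersected with $\mathbb{H}$, where everything is jointly smooth in $(z,t)$: the $k_j^\pm(t)$ are smooth, and the roots are $C^1$ by Theorem \ref{continuity of the roots}. The coefficients can then be extracted by Cauchy integrals over large half-circles after Schwarz reflection. Alternatively, I would simply differentiate the closed form of $b_t$ and verify that it agrees with $a_tQ_0(t)$.
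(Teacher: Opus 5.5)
Your proof is correct. For part (2), your main route is the paper's argument in different clothing. The paper sets $R_t=Q_t/z$, integrates $f^\circ/z=f'R_t$ over a large circle and applies the residue theorem. You instead read off the constant term $Q_0(t)$ of $Q_t$ at $\infty$ by evaluating its partial fraction expansion at $z=0$. The finite residues of $Q_t/z$ are exactly $Q_t(0)$ and $\mathrm{Res}_\rho Q_t/\rho$, so the two computations coincide term by term.

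The genuine difference is part (1) and the closed form for $b_t$. The paper obtains $a_t$ only through the ODE $a_t^\circ/a_t=\lim_{z\to\infty}R_t(z)$, which presupposes termwise $t$-differentiation of \eqref{flow}. Your matching of constant terms in $h(f(z,t))=h(z,t)-\gamma t$ determines $a_t$ at each fixed $t$ with no differentiation at all. Matching the $1/z$ terms gives $b_t=\bigl(C(0)-a_tC(t)\bigr)/(b^+-b^-)$, a formula the paper never records. Differentiating it gives $b_t^\circ=a_t\bigl(\gamma C(t)/(b^+-b^-)^2-C^\circ(t)/(b^+-b^-)\bigr)$. Expanding $Q_t=(h_t^\circ-\gamma)/h_t'$ at $\infty$ shows that this bracket is exactly $Q_0(t)$.

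So what you call a consistency check is already a complete proof of (2). It needs only the smoothness of the explicit $k_j^\pm(t)$, and it is the version I would present. It also disposes of the normalization issue you flag. The individual terms $A(t)$ and $B_j^\pm(t)$ are genuinely singular at any time when some root passes through $0$. This can happen, e.g.\ if $\rho_1^+(0)<0<k_N^+$, since $\rho_1^+(t)\to k_N^+$. Their sum $Q_0(t)$ and your closed form stay finite regardless.

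One minor slip, which the paper shares: $h_t^\circ$ has no pole at $k_1^+$ and $k_N^+$, since these do not move. At those points $Q_t$ simply vanishes, so it is still true that $Q_t$ has poles only at the zeros of $h_t'$.
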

\begin{proof}
    (1) We denote by $\Pi_t(z)$ the principal part of $f(z,t)$ in \eqref{flow} for the sake simplicity. By the definition of $f$, we have the $h(f(z,t))=h_t(z)-\gamma t$ and therefore, differentiating with respect to $z$ and $t$, and keeping in mind that $h$ is univalent, we get
    \begin{equation*}
     \frac{f^\circ(z,t)}{zf'(z,t)}=\frac{h'(f(z,t))f^{\circ}(z,t)}{h'(f(z,t))zf'(z,t)}=\frac{(h\circ f)^{\circ}(z,t)}{z(h\circ f)'(z,t)}=\frac{h^{\circ}_t(z)-\gamma}{zh'_t(z)}=:R_t(z), 
    \end{equation*}
    where $R_t$ is a rational function, as we see by the formula of $h_t$ in \eqref{h_t(N,M)}. Taking derivatives in \eqref{h_t(N,M)}, we see that
    \begin{equation}\label{eq:hydrodynamic 1}
      zh_t'(z)=z\left(\sum_{j=1}^{N}\frac{b_j^+}{z-k_j^+(t)}-\sum_{j=1}^{M}\frac{b_j^-}{z-k_j^-(t)}\right),
    \end{equation}
    whereas
    \begin{equation}\label{eq:hydrodynamic 2}
     h_t^\circ (z)=-\sum_{j=2}^{N-1}\frac{b_j^+ (k_j^+)^\circ(t)}{z-k_j^+(t)}+\sum_{j=1}^{M}\frac{b_j^- (k_j^-)^\circ (t)}{z-k_j^-(t)},
    \end{equation}
    where we took under consideration the facts that $k_1^+,k_N^+$ do not depend on $t$. It is easy to see through \eqref{eq:hydrodynamic 1} and \eqref{eq:hydrodynamic 2} that $\lim_{z\to\infty}(zh_t'(z))=b^+-b^-$ and $\lim_{z\to\infty}h_t^\circ(z)=0$. As a result, $R_t$ is indeed a rational function and $\lim_{z\to\infty}R_t(z)=-\frac{\gamma}{b^+-b^-}$. On the other hand, a combination of \eqref{flow} and \eqref{flow_t} provides
    \begin{equation*}
    \lim_{z\to\infty}R_t(z)=\lim_{z\to\infty}\frac{f^{\circ}(z,t)}{zf'(z,t)}=\lim_{z\to\infty}\frac{a_t^{\circ}z+b_t^{\circ}+\Pi_t^{\circ}(z)}{a_tz+z\Pi_t'(z)}=\frac{a_t^{\circ}}{a_t}.    
    \end{equation*}
    Thus, $a_t^{\circ}=-a_t\frac{\gamma}{b^+-b^-}$ and the result follows by solving the ordinary differential equation in the $t$-variable, since $a_0=1$.

(2) We now turn to $b_t$. All the preceding computations yield
    \begin{equation*}
    a_t^{\circ}+\frac{b_t^{\circ}}{z}+\frac{L_t^{\circ}(z)}{z}=\frac{f^{\circ}(z,t)}{z}=f'(z,t)R_t(z)=a_tR_t(z)+\Pi_t'(z)R_t(z).    
    \end{equation*}
Let \(r>0\) be sufficiently large so that the circle \(\partial D(0,r)\) lies in the annulus of convergence of the Laurent expansions at infinity
and encloses all finite poles of \(R_t\). Integrating the above identity over
\(\partial D(0,r)\), we get
\[
\frac{1}{2\pi i}
\int_{\partial D(0,r)}
\left(
a_t^\circ+\frac{b_t^\circ}{z}+\frac{\Pi_t^\circ(z)}{z}
\right)\,dz
=
\frac{1}{2\pi i}
\int_{\partial D(0,r)}
\left(
a_tR_t(z)+\Pi_t'(z)R_t(z)
\right)\,dz.
\]
Standard computations show that 
\begin{equation}\label{eq:hydrodynamic 3}
  b_t^\circ=a_t\frac{1}{2\pi i}\int_{\partial D(0,r)}R_t(z)\,dz.  
\end{equation}
All that remains is to compute this integral. Since
\[
R_t(z)=\frac{h_t^\circ(z)-\gamma}{z h_t'(z)},
\]
the possible finite poles of \(R_t\) are \(0\) and the zeros of \(h_t'\),
namely $\rho_\nu^+(t)$, $\nu=1,\dots,N-1$, and $\rho_\mu^-(t)$, $\mu=1,\dots,M$. The possible singularities at the points \(k_j^\pm(t)\) are removable, since
both \(h_t^\circ\) and \(h_t'\) have simple poles there. Thus \(R_t\) admits
a partial fraction decomposition of the form
\begin{equation}\label{eq:hydrodynamic partial decomposition}
  R_t(z)=C(t)+\frac{A(t)}{z}+\sum_{\nu=1}^{N-1}\frac{B_\nu^+(t)}{z-\rho_\nu^+(t)}+\sum_{\mu=1}^{M}\frac{B_\mu^-(t)}{z-\rho_\mu^-(t)}.  
\end{equation}
The coefficients \(A(t)\) and \(B_j^\pm(t)\) are the corresponding residues.
At \(z=0\), we have
\[
A(t)
=
\lim_{z\to0}(zR_t(z))
=
\frac{h_t^\circ(0)-\gamma}{h_t'(0)}.
\]
Similarly, if \(\rho=\rho_j^\pm(t)\) is a zero of \(h_t'\), then
\[
h_t'(z)=h_t''(\rho)(z-\rho)+O\bigl((z-\rho)^2\bigr),
\qquad \text{as }z\to\rho.
\]
Hence
\[
\lim_{z\to\rho}
\left[(z-\rho)
\frac{h_t^\circ(z)-\gamma}{z h_t'(z)}\right]
=
\frac{h_t^\circ(\rho)-\gamma}{\rho h_t''(\rho)}.
\]
Therefore
\[
B_j^\pm(t)
=
\frac{h_t^\circ(\rho_j^\pm(t))-\gamma}
{\rho_j^\pm(t)h_t''(\rho_j^\pm(t))}.
\]
Given the fact that $D(0,r)$ contains all finite poles of \(R_t\), the residue theorem gives
\[
\frac{1}{2\pi i}
\int_{\partial D(0,r)}R_t(z)\,dz
=
A(t)
+
\sum_{\nu=1}^{N-1}B_\nu^+(t)
+
\sum_{\mu=1}^{M}B_\mu^-(t).
\]
Consequently, returning back to \eqref{eq:hydrodynamic 3},
\[
b_t^\circ
=
a_t
\left(
A(t)
+
\sum_{\nu=1}^{N-1}B_\nu^+(t)
+
\sum_{\mu=1}^{M}B_\mu^-(t)
\right).
\]
Since \(b_0=0\), integration with respect to \(t\) yields
\[
b_t
=
\int_0^t
a_\sigma
\left(
A(\sigma)
+
\sum_{\nu=1}^{N-1}B_\nu^+(\sigma)
+
\sum_{\mu=1}^{M}B_\mu^-(\sigma)
\right)\,d\sigma .
\]
\end{proof}

To conclude our investigation, we turn our attention to the driving function of $\tilde{f}$. Through the definition of $R_t$ and \eqref{eq:hydrodynamic partial decomposition}, we have proved that 
\begin{align}\label{eq:driving function 0}
\notag\frac{f^{\circ}(z,t)}{f'(z,t)}&=zR_t(z)=A(t)+\sum_{\nu=1}^{N-1}\frac{B^+_\nu(t)z}{z-\rho_{\nu}^+(t)}+\sum_{\mu=1}^{M}\frac{B^-_\mu(t)z}{z-\rho_{\mu}^-(t)}+C(t)z\\
&=A(t)+\sum_{\nu=1}^{N-1}B^+_\nu(t)+\sum_{\mu=1}^MB^-_\mu(t)+\sum_{\nu=1}^{N-1}\frac{B^+_\nu(t)\rho_{\nu}^+(t)}{z-\rho_{\nu}^+(t)}+\sum_{\mu=1}^{M}\frac{B^-_\mu(t)\rho_{\mu}^-(t)}{z-\rho_{\mu}^-(t)}+C(t)z.
\end{align}
But in the previous proof we saw that $\lim_{z\to\infty}R_t(z)=\lim_{z\to\infty}\frac{f^\circ(z,t)}{zf'(z,t)}=\frac{a_t^\circ}{a_t}$. As a consequence, it is easy to verify that $C(t)=\frac{a_t^{\circ}}{a_t}$. Recall that our Loewner chain $(\tilde{f}(\cdot,t))_{t\ge0}$ is defined through the normalization
\begin{equation}\label{eq:driving function 1}
    \tilde{f}(z,t)=f\left(\frac{z-b_t}{a_t},t\right), \quad z\in\mathbb{H},\,t\ge0,
\end{equation}
and its driving function $Q$ is given by the formula
\begin{equation}\label{eq:driving function 2}
    Q(z,t)=-\frac{\tilde{f}^\circ(z,t)}{\tilde{f}'(z,t)}, \quad z\in\mathbb{H}, \, t\ge0.
\end{equation}
For the sake of readability, we proceed to the change of variables $y=\frac{z-b_t}{a_t}$. Then, via the previous proof,
\begin{align}\label{eq:driving function 3}
\notag    -Q(z,t)&=\frac{\tilde{f}^\circ(z,t)}{\tilde{f}'(z,t)}=\frac{f'(y,t)\left(-\frac{b_t^\circ}{a_t}-\frac{a_t^\circ}{a_t}y\right)+f^\circ(y,t)}{\frac{1}{a_t}f'(y,t)}\\
\notag    &=-b_t^\circ-a_t^\circ y+a_t\frac{f^\circ(y,t)}{f'(y,t)}\\
    &=-b_t^\circ-a_t^\circ y+a_t y R_t(y).
\end{align}
However, by Lemma \ref{lm:coefficients} (2), we infer that
\begin{equation}\label{eq:driving function 4}
    b_t^\circ=a(t)\left(A(t)+\sum_{\nu=1}^{N-1}B_\nu^+(t)+\sum_{\mu=1}^{M}B_\mu^-(t)\right).
\end{equation}
So, combining the fact that $C(t)=\frac{a_t^\circ}{a_t}$ with relations \eqref{eq:driving function 0}, \eqref{eq:driving function 3}, \eqref{eq:driving function 4}, and executing simple algebraic computations leads to
\[
Q(z,t)=-\frac{\tilde f^\circ(z,t)}{\tilde f'(z,t)}
=
-a_t\sum_{\nu=1}^{N-1}
\frac{B_\nu^+(t)\rho_\nu^+(t)}{y-\rho_\nu^+(t)}
-
a_t\sum_{\mu=1}^{M}
\frac{B_\mu^-(t)\rho_\mu^-(t)}{y-\rho_\mu^-(t)}.
\]
Since \(y=(z-b_t)/a_t\), we have
\[
y-\rho_j^\pm(t)
=
\frac{z-b_t-a_t\rho_j^\pm(t)}{a_t}.
\]
Thus
\[
\frac{\tilde f^\circ(z,t)}{\tilde f'(z,t)}
=
\sum_{\nu=1}^{N-1}
\frac{a_t^2B_\nu^+(t)\rho_\nu^+(t)}
{z-b_t-a_t\rho_\nu^+(t)}
+
\sum_{\mu=1}^{M}
\frac{a_t^2B_\mu^-(t)\rho_\mu^-(t)}
{z-b_t-a_t\rho_\mu^-(t)},
\]
and renaming $\tilde{\rho}_j^\pm(t)=b_t+a_t\rho_j^\pm(t)$, we obtain
\begin{equation*}
    Q(z,t)=-\sum_{\nu=1}^{N-1}\frac{a_t^2B_\nu^+(t)\rho_\nu^+(t)}{z-\tilde{\rho}_\nu^+(t)}-\sum_{\mu=1}^M\frac{a_t^2B_\mu^-(t)\rho_\mu^-(t)}{z-\tilde{\rho}_\mu^-(t)}.
\end{equation*}
Taking into account the formulas for the quantities $B_j^\pm$ resulting from Lemma \ref{lm:coefficients} (2), we see that the driving function of the Loewner chain we constructed is written as
\begin{equation}\label{driving function}
    Q(z,t)=-\sum_{\nu=1}^{N-1}\frac{\beta_{\nu}^+(t)}{z-\tilde{\rho}_{\nu}^+(t)}-\sum_{\mu=1}^{M}\frac{\beta_{\mu}^-(t)}{z-\tilde{\rho}_{\mu}^-(t)},
\end{equation}
where
\begin{equation}\label{h-capacity}
    \beta_{\nu}^+(t):=a_t^2\frac{h_t^{\circ}(\rho_{\nu}^{+}(t))-\gamma}{h''_t(\rho_{\nu}^{+}(t))}\quad\text{and}\quad\beta_{\mu}^-(t):=a_t^2\frac{h_t^{\circ}(\rho_{\mu}^{-}(t))-\gamma}{h''_t(\rho_{\mu}^{-}(t))}.
\end{equation}

We write down all the above in a form of a theorem below.

\begin{theorem}\label{Loenwer Chain}
    With the above configuration, we have that the solution to the PDE
    $$\frac{\partial f}{\partial t}(z,t)=-f'(z,t)Q(z,t), \quad f(z,0)=z$$
    for all $z\in\mathbb{H}$, $t\ge0$, where $Q$ is given by \eqref{driving function}, is a multi-slit Loewner chain with range $H_t$ as given by \eqref{H_t}.

    The trajectories (orbits) of the tip points are the continuous curves 
    \begin{equation*}
     \Gamma_{\nu}^+\vcentcolon[0,+\infty)\to\mathbb{H} \text{ with }\Gamma_\nu^+(t)=h^{-1}(h_t(\rho_{\nu}^+(t))-\gamma t), \quad\nu=1,\dots,N-1,   
    \end{equation*}
    and
    \begin{equation*}
     \Gamma_{\mu}^-\vcentcolon[0,+\infty)\to\mathbb{H} \text{ with } \Gamma_\mu^-(t)=h^{-1}(h_t(\rho_{\mu}^-(t))-\gamma t), \quad\mu=1,\dots,M.
    \end{equation*}
In particular, for every $\nu=1,\dots,N-1,$ the orbit $\Gamma_{\nu}^+$ emanates from $\rho_{\nu}^+=\rho_{\nu}^+(0)\in\mathbb{R}$ and converges to the boundary point $k_{\mn}^-=k_{\mn}^-(0)\in\mathbb{R}$. Also, for every $\mu=1,\dots,M$, $\Gamma_{\mu}^-$ emanates from $\rho_{\mu}^-=\rho_{\mu}^-(0)\in\mathbb{R}$ and converges to the boundary point $k_{j}^+=k_{j}^+(0)\in\mathbb{R}$ if and only if $h(\rho_{\mu}^-)\in S_j^+.$

\end{theorem}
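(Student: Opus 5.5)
The plan is to derive everything from the representation $f(z,t)=h^{-1}(h(z,t)-\gamma t)$ and the geometry of $\Omega_t=h(\mathbb{H},t)-\gamma t$. The PDE part will be read off from the computation preceding the statement.

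\emph{Step 1: geometry of $\Omega_t$.} By \eqref{imaginary part of h in the positives}--\eqref{imaginary part of h in between}, the imaginary parts of the boundary values of $h(\cdot,t)$ depend only on the $b_j^\pm$ and not on $t$. Applying the analysis of Section \ref{sec:construction} to each $\mathbf{k}(t)$, this gives the following description of $\Omega_t$:
\begin{itemize}
\item it is the fixed strip $\{-b^-\pi<\mathrm{Im}\,w<(b^+-b^-)\pi\}$ with $N-1$ left and $M$ right horizontal half-lines removed;
\item the heights of these half-lines do not depend on $t$;
\item their tip points are $h_t(\rho_\nu^+(t))-\gamma t$ and $h_t(\rho_\mu^-(t))-\gamma t$.
\end{itemize}
Lemma \ref{monotonicity} says that the real parts of the left tips strictly increase and those of the right tips strictly decrease. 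So for $s<t$ every slit of $\Omega_s$ is contained in the corresponding slit of $\Omega_t$, and therefore $\Omega_t\subset\Omega_s\subseteq\Omega_0$.

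Consequently $f(\cdot,t)$ is well defined, and it is univalent by Theorem \ref{spirallikeinH} and Proposition \ref{h for multiple points proof}. Its image is $H_t=h^{-1}(\Omega_t)$. Since $\Omega_0\setminus\Omega_t$ is exactly the union of the segments $L_\nu^+(t)$ and $L_\mu^-(t)$ in \eqref{line segments}, pulling back gives \eqref{H_t}. The arcs $\Gamma_j^\pm(t)$ are disjoint Jordan arcs: they are preimages under a homeomorphism of disjoint segments, each starting at a tip of $\partial\Omega_0$. The tip $h(\rho_j^\pm(0))$ corresponds to the single boundary point $\rho_j^\pm(0)$, since $h'$ has a simple zero there.

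\emph{Step 2: the Loewner equation.} I would use the normalization $\tilde f(z,t)=f((z-b_t)/a_t,t)$ with $a_t,b_t$ from Lemma \ref{lm:coefficients}. This precomposes with an automorphism of $\mathbb{H}$, so the range $H_t$ is unchanged and the hydrodynamic condition holds. The computation \eqref{eq:driving function 0}--\eqref{driving function} shows that $\partial_t\tilde f=-\tilde f'Q$ with $\tilde f(\cdot,0)=\mathrm{id}$. Conversely, any solution of the PDE with this initial datum is the inverse of the Loewner flow generated by $Q$, so uniqueness of solutions of the Loewner ODE identifies it with $\tilde f$.

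\emph{Step 3: orbits.} Continuity in $t$ of $\Gamma_j^\pm(t)$ follows from Theorem \ref{continuity of the roots}. At $t=0$ we get $\Gamma_j^\pm(0)=h^{-1}(h(\rho_j^\pm))=\rho_j^\pm(0)$.

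For the limits, consider a left tip $\nu$. Its image $h_t(\rho_\nu^+(t))-\gamma t$ moves on the fixed horizontal line of height $\mathrm{Im}\,h(\rho_\nu^+)$, and by Lemma \ref{monotonicity} its real part tends to $+\infty$. This line lies in $S_{\mu_\nu}^-$, so the tip converges in the Carath\'eodory topology of $\Omega_0$ to the right prime end $\infty_{\mu_\nu}^-$. Under $h$ this prime end corresponds to the boundary point $k_{\mu_\nu}^-(0)$ (see Figure \ref{fig:image of h} and Example \ref{ex:prime ends}). By Carath\'eodory's theorem, $\Gamma_\nu^+(t)\to k_{\mu_\nu}^-$.

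The right tips are handled symmetrically. The real part tends to $-\infty$ along a line in $S_j^+$, so the tip converges to $\infty_j^+$, which corresponds to $k_j^+$. The case $\mu=M$ falls under $S_1^+$ and is covered by Lemma \ref{monotonicity}(2).

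\emph{Main obstacle.} The delicate step is the rigorous Carath\'eodory argument in Step 3: showing that a horizontal half-line inside a single strip $S$ determines the prime end and that its preimage lands at the stated point. I would first check directly from the formula for $h$ that, as $x\to k_{\mu}^-$ within $\mathbb{H}$ near the real axis, $\mathrm{Re}\,h\to+\infty$ with imaginary part between the adjacent heights. I would then apply a local inverse argument: $h$ behaves like $-b_\mu^-\log(z-k_\mu^-)$ plus a bounded term, so $h^{-1}(w)\to k_\mu^-$ as $w\to\infty$ within that strip.
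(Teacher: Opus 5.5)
Your proposal is correct and follows essentially the paper's own route. In the paper this theorem is a summary of the construction: Lemma \ref{monotonicity} gives the nested domains $\Omega_t$, so $f=h^{-1}(h_t-\gamma t)$ is well defined with range \eqref{H_t}; the normalization of Lemma \ref{lm:coefficients} together with the computation leading to \eqref{driving function} gives the PDE; and the landing points come from the prime-end/Carath\'eodory argument that the paper writes out in the proof of Theorem \ref{thm:convergence angles}. Your extra remarks only make explicit details the paper leaves implicit: uniqueness of the PDE solution via the Loewner ODE, the tip corresponding to a single boundary point because $h'$ has a simple zero there, and the logarithmic behaviour of $h$ near $k_\mu^-$.
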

\begin{figure}[ht]
\centering
\tikzset{
  midarrowright/.style={
    postaction={
      decorate,
      decoration={
        markings,
        mark=at position 0.5 with {
          \arrow{Latex[length=2mm,width=1.4mm]}
        }
      }
    }
  },
  midarrowleft/.style={
    postaction={
      decorate,
      decoration={
        markings,
        mark=at position 0.5 with {
          \arrowreversed{Latex[length=2mm,width=1.4mm]}
        }
      }
    }
  }
}

\begin{subfigure}[t]{0.48\linewidth}
\centering
\resizebox{\linewidth}{!}{%
\begin{tikzpicture}[
    x=1cm,y=1.5cm,
    every node/.style={font=\small}
]

\definecolor{myblue}{RGB}{70,80,180}
\definecolor{myred}{RGB}{170,90,90}
\definecolor{mygray}{RGB}{150,150,150}

\draw[gray] (0,0) -- (12.2,0);

\coordinate (k1) at (0.4,0);
\coordinate (k2) at (5.8,0);
\coordinate (k3) at (10,0);
\coordinate (k4) at (11.0,0);

\node at (3.3,0.2) {$\cdots$};

\draw[myred, line width=0.8pt, midarrowleft]
(k1) .. controls (2.6,2.8) and (10.8,4.3) .. (k4);

\draw[myblue, line width=0.8pt, midarrowright]
(1.8,0) .. controls (1.8,2.3) and ($(k3)+(120:2.2)$) .. (k3);

\draw[myblue, line width=0.8pt, midarrowright]
(2.9,0) .. controls (2.9,1.9) and ($(k3)+(135:1.9)$) .. (k3);

\draw[myblue, line width=0.8pt, midarrowright]
(4.4,0) .. controls (4.4,1.3) and ($(k3)+(150:1.4)$) .. (k3);

\draw[myred, line width=0.8pt, midarrowleft]
(k2) .. controls (6.8,0.45) and (8.3,1.0) .. (8.6,0);

\node at (1.5,0.30) {$\rho_{\nu}^{+}$};
\node at (2.5,0.30) {$\rho_{\nu+1}^{+}$};
\node at (4.0,0.30) {$\rho_{\nu+\lambda}^{+}$};
\node at (9.1,0.2) {$\rho_{\mu_{\nu}-1}^{-}$};
\node at (11.5,0.4) {$\rho_{\mu_{\nu}}^{-}$};

\node[below] at (k1) {$k_{\nu}^{+}$};
\node[below] at (k2) {$k_{\nu+\lambda+1}^{+}$};
\node[below] at (k3) {$k_{\bar\mu_{\nu}}^{-}$};

\draw (0.4,0.08) -- (0.4,-0.08);
\draw (5.8,0.08) -- (5.8,-0.08);
\draw (11.0,0.08) -- (11.0,-0.08);

\end{tikzpicture}%
}
\caption{}
\label{fig:orbits-flow-left}
\end{subfigure}
\hfill
\begin{subfigure}[t]{0.48\linewidth}
\centering
\resizebox{\linewidth}{!}{%
\begin{tikzpicture}[
    x=1cm,
    y=1cm,
    every node/.style={font=\small}
]

\definecolor{myblue}{RGB}{70,80,180}
\definecolor{myred}{RGB}{170,90,90}
\definecolor{mygray}{RGB}{150,150,150}

\draw[gray] (0,0) -- (12.6,0);

\coordinate (k1) at (0.6,0);   
\coordinate (k2) at (2.1,0);   
\coordinate (k3) at (5.8,0);   
\coordinate (k4) at (7.8,0);   
\coordinate (k5) at (10,0);    
\coordinate (k6) at (12.1,0);  

\coordinate (rhoNu) at (1.35,0);
\coordinate (rhoInner) at (3.8,0);

\coordinate (redEndA) at (10.15,0);
\coordinate (redEndB) at (9.4,0);
\coordinate (redEndC) at (8.2,0);

\node at (8.8,-0.18) {$\cdots$};

\draw[myblue, line width=0.8pt, midarrowright]
(rhoNu) .. controls (3.35,3.1) and (12.1,3.25) .. (k6);

\draw[myred, line width=0.8pt, midarrowleft]
(k2) .. controls ($(k2)+(70:1.8)$) and (10.15,2.45) .. (10.15,0);

\draw[myred, line width=0.8pt, midarrowleft]
(k2) .. controls ($(k2)+(50:1.6)$) and (9.4,2.05) .. (9.4,0);

\draw[myred, line width=0.8pt, midarrowleft]
(k2) .. controls ($(k2)+(45:1.35)$) and (8.2,1.60) .. (8.2,0);

\draw[myblue, line width=0.8pt, midarrowright]
(rhoInner) .. controls (4.5,0.75) and (5.8,0.95) .. (k3);

\node at (1.15,0.33) {$\rho_{\nu}^{+}$};
\node at (3.25,0.25) {$\rho_{\nu+1}^{+}$};
\node at (8.65,0.48) {$\rho_{\mu_{\nu+1}}^{-}$};
\node at (10.55,0.45) {$\rho_{\mu_{\nu}-1}^{-}$};

\node[below] at (k1) {$k_{\nu}^{+}$};
\node[below] at (k2) {$k_{\nu+1}^{+}$};
\node[below] at (k3) {$k_{\mu_{\nu+1}-1}^{-}$};
\node[below] at (k4) {$k_{\mu_{\nu+1}}^{-}$};
\node[below] at (k5) {$k_{\mu_{\nu}-1}^{-}$};
\node[below] at (k6) {$k_{\mu_{\nu}}^{-}$};

\end{tikzpicture}%
}
\caption{}
\label{fig:orbits-flow-right}
\end{subfigure}

\caption{Evolution of the Loewner chain.}
\label{fig:combined-orbits}
\end{figure}

\section{Angles of convergence}\label{sec:angles}

In this section, we will work with the initial and the asymptotic behavior of the orbits of our Loewner chain, with regard to their Euclidean geometry. More specifically, we explicitly compute the angles by which the orbits converge to their respective attraction points. On top of that, we execute the corresponding computation of the angle by which each orbit emanates from its respective starting point. In short, we will discover that each orbit starts its journey orthogonally, but arrives at its attraction point non-tangentially by a distinct angle that depends solely on the choice of the positive numbers $b_j^\pm$. In order to compute these angles, we use a different method compared to all our previous work. Our technique heavily relies on careful estimates of harmonic measure and we draw upon the information analyzed in Subsection \ref{sub:harmonic measure}. 

We begin with the angles of convergence.
\begin{theorem}\label{thm:convergence angles}
    For each $\nu=1,\dots,N-1$, the curve $\Gamma_{\nu}^+(t)$ converges, as $t\to+\infty$, to $\mathbb{R}$ by angle
    \begin{equation*}
    \pi\frac{\sum\limits_{j=1}^{\mu_\nu}b_j^--\sum\limits_{j=\nu+1}^N b_j^+}{b_{\mu_\nu}^-}\in(0,\pi).
    \end{equation*}
    For each $\mu=1,\dots,M$, $\Gamma_{\mu}^-(t)$ converges, as $t\to+\infty$, to $\mathbb{R}$ by angle
     \begin{itemize}
        \item[\textup{(1)}]  $\pi\frac{\sum\limits_{j=1}^\mu b_j^--\sum\limits_{j=2}^Nb_j^+}{b_1^+}\in(0,\pi),$ if $\mu\in\{\mu_1,\dots,M\}$;
         \item[\textup{(2)}]   $\pi\frac{\sum\limits_{j=1}^{\mu}b_j^- - \sum\limits_{j=\nu_\omega+1}^N b_j^+}{b_{\nu_\omega}^+}\in(0,\pi),$ if $\mu\in\{\mu_{N-1},\dots,\mu_1-1\}$ and $h(\rho_{\mu}^-)\in S_{\nu_{\omega}}^+$;
         \item[\textup{(3)}] $\pi\frac{\sum\limits_{j=1}^{\mu}b_j^-}{b_N^+}\in(0,\pi)$, if $\mu\in\{1,\dots,\mu_{N-1}-1\}$.
     \end{itemize}
\end{theorem}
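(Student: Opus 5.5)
The plan is to move the whole computation into the model domain $\Omega_0=h(\mathbb{H})$ by conformal invariance of harmonic measure, and there to compare $\Omega_0$ with a horizontal strip near the relevant prime end. By Remark~\ref{rem:harmonic measure angles} and \eqref{eq:angles in half-plane}, a curve $\eta$ in $\mathbb{H}$ converging to $x\in\mathbb{R}$ does so by angle $\pi\bigl(1-\lim_{t\to+\infty}\omega(\eta(t),[x,+\infty),\mathbb{H})\bigr)$. We apply this with $\eta=\Gamma_\nu^+$ and $x=k_{\mu_\nu}^-$, and with $\eta=\Gamma_\mu^-$ and $x=k_j^+$, where $h(\rho_\mu^-)\in S_j^+$. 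Theorem~\ref{Loenwer Chain} supplies these landing points. Since $\Gamma(t)=h^{-1}(w(t))$, where $w(t)=h_t(\rho(t))-\gamma t$, conformal invariance gives
\[
\omega(\Gamma(t),[x,+\infty),\mathbb{H})=\omega\bigl(w(t),h([x,+\infty)),\Omega_0\bigr),
\]
with $h([x,+\infty))$ read as a set of prime ends. The point $w(t)$ moves along a horizontal line at fixed height $\mathrm{Im}\,h(\rho(0))$, and by Lemma~\ref{monotonicity} its real part tends to $+\infty$ for left tips and to $-\infty$ for right tips. So only the geometry of $\Omega_0$ near one horizontal end matters.

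\textbf{Left tips.} By \eqref{imaginary part of h in the negatives}, the strip $S_{\mu_\nu}^-$ near its right end $\infty_{\mu_\nu}^-$ is bounded below by $h((k_{\mu_\nu-1}^-,k_{\mu_\nu}^-))$ at height $-\pi\sum_{j\ge\mu_\nu}b_j^-$. It is bounded above by $h((k_{\mu_\nu}^-,k_{\mu_\nu+1}^-))$ at height $-\pi\sum_{j>\mu_\nu}b_j^-$, and this upper side belongs to $h([x,+\infty))$. The strip has width $\pi b_{\mu_\nu}^-$. If the strip model is justified, \eqref{eq:harmonic measure strip upper} gives the limit
\[
\omega\to\frac{\mathrm{Im}\,w+\pi\sum_{j\ge\mu_\nu}b_j^-}{\pi b_{\mu_\nu}^-}
=\frac{\sum_{j=\nu+1}^Nb_j^+-\sum_{j<\mu_\nu}b_j^-}{b_{\mu_\nu}^-},
\]
using $\mathrm{Im}\,w=\pi\bigl(\sum_{j>\nu}b_j^+-b^-\bigr)$ from \eqref{imaginary part of h in the positives}. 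Then $\pi(1-\omega)$ is exactly the claimed angle. The strict inequalities \eqref{height with b} place this value in $(0,\pi)$.

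\textbf{Right tips.} The same computation applies at the left end $\infty_j^+$ of $S_j^+$. That strip has width $\pi b_j^+$. Its lower side $h((k_j^+,k_{j+1}^+))$ lies in $h([x,+\infty))$, and its upper side sits at height $\pi\bigl(\sum_{i\ge j}b_i^+-b^-\bigr)$. Using \eqref{eq:harmonic measure strip lower} with $\mathrm{Im}\,w=-\pi\sum_{i>\mu}b_i^-$, we get the angle $\pi\bigl(\sum_{i\le\mu}b_i^--\sum_{i>j}b_i^+\bigr)/b_j^+$. Specializing $j=1$ gives case~(1), $j=\nu_\omega$ gives case~(2), and $j=N$ (empty sum) gives case~(3).

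\textbf{Main obstacle.} The key step is justifying that $\omega(w(t),E,\Omega_0)$ is asymptotically the strip harmonic measure. I would first fix $R$ large and let $\Omega_1=\{\mathrm{Re}\,w>R\}\cap S_{\mu_\nu}^-$ (the mirror half-strip for right tips), a half-strip contained in $\Omega_0$. Applying \eqref{eq:Markov} with $E$ replaced by the upper side $E_1$ of the strip, the correction term is an integral over the vertical crosscut $\{\mathrm{Re}\,w=R\}$. It is bounded by $\omega(w(t),\{\mathrm{Re}=R\},\Omega_1)$, which decays like $e^{-(\mathrm{Re}\,w(t)-R)/b_{\mu_\nu}^-}$ and hence tends to $0$. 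Also $\omega(w(t),E_1,\Omega_1)$ tends to the full strip value $\omega(w(t),E_1,S_{\mu_\nu}^-)$, which is constant in $t$. The remaining part $E\setminus E_1$ consists of prime ends lying in a compact set away from the cluster prime end $\infty_{\mu_\nu}^-$, so its harmonic measure tends to $0$ by Example~\ref{ex:zero harmonic measure}. Care is needed with the orientation convention of Remark~\ref{rem:harmonic measure angles}, which decides whether one takes $\omega$ or $1-\omega$. The final formulas serve as a consistency check: they land in $(0,\pi)$ precisely because of \eqref{height with b}.
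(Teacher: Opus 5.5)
Your proposal is correct and follows essentially the paper's route. You pass to $\Omega_0$ by conformal invariance, discard the parts of $h([x,+\infty))$ away from the target prime end via Example~\ref{ex:zero harmonic measure}, localize to $S_{\mu_\nu}^-$ (resp.\ $S_j^+$) with the Strong Markov Property, and read off the angle from Example~\ref{ex:strip} and Remark~\ref{rem:harmonic measure angles}; your formulas and orientations check out. The differences are minor: you localize in one step through the truncated half-strip $S_{\mu_\nu}^-\cap\{\mathrm{Re}\,w>R\}$ (which also gives an explicit exponential error bound) instead of the paper's slit strip $\Sigma$ followed by $S_{\mu_\nu}^-$, and the $t$-independent comparison quantity should be the harmonic measure of the whole upper boundary line of $S_{\mu_\nu}^-$, not of $E_1$.
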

\begin{proof}
    For the sake of convenience during the proof, we are going to need certain pieces of notation. We set $\Lambda_\nu^+\vcentcolon=\{h(\rho_\nu^+)-s\vcentcolon s\ge0\}$, $\nu=1,\dots,N-1$, and $\Lambda_\mu^-\vcentcolon=\{h(\rho_\mu^-)+s\vcentcolon s\ge0\}$, $\mu=1,\dots,M$, for the horizontal half-lines contained in the boundary of $\Omega_0$. By the shape of $\Omega_0$, each such half-line, excluding the tip point, is the impression of two sets of prime ends. For any plausible index $j$, we write $\Lambda_{j,\mathrm{upper}}^\pm$ for the prime ends with impression on $\Lambda_j^+$ and defined through crosscuts with imaginary parts larger than $\mathrm{Im}h(\rho_j^\pm)$. In similar fashion, we define the sets of prime ends $\Lambda_{j,\mathrm{lower}}^\pm$. Finally, set $\partial\Omega_0^+=\mathbb{R}+i(b^+-b^-)\pi$ and $\partial\Omega_0^-=\mathbb{R}-ib^-\pi$, for the horizontal lines which bound the smallest horizontal strip containing $\Omega_0$.

    We start with the orbits $\Gamma_\nu^+$, $\nu\in\{1,\dots,N-1\}$. Fix such a $\nu$. By our configuration, $h_t(\rho_\nu^+(t))-\gamma t\in S_{\mu_\nu}^-$, for all $t\ge0$. We start with the case when $\mu_\nu>1$, and thus
    \begin{equation}\label{eq:harmonic 1}
        \mathrm{Im}h(\rho_{\mu_\nu-1}^-)<\mathrm{Im}[h_t(\rho_\nu^+(t))-\gamma t]=\mathrm{Im}h(f(\rho_\nu^+(t),t)) <\mathrm{Im}h(\rho_{\mu_\nu}^-),
    \end{equation}
    for all $t\ge0$. We understand that $\lim_{t\to+\infty}h_t(\rho_\nu^+(t))-\gamma t=\infty$ and in particular $h_t(\rho_\nu^+(t))-\gamma t$ converges to the prime end $\infty_{\mu_\nu}^-$ in the Carath\'{e}odory topology of $\Omega_0$. Returning to the upper half-plane and keeping in mind that $h$ is extended to a homeomorphism between $\partial\mathbb{H}$ and $\Omega_0\cup\partial_C\Omega_0$, we understand that the orbit $\Gamma_\nu^+(t)$ converges to $k_{\mu_\nu}^-$, as $t\to+\infty$. Our objective is to estimate the harmonic measure $\omega(\Gamma_\nu^+(t),[k_{\mu_\nu}^-,+\infty),\mathbb{H})$, as $t\to+\infty$. In view of Remark \ref{rem:harmonic measure angles}, this estimation will yield the slope by which the orbit converges. Keeping in mind that $h$ preserves the orientation and that $h(k_{\mu_\nu}^-)=\infty_{\mu_\nu}^-$, we comprehend that $h$ maps the interval $[k_{\mu_\nu}^-,+\infty)$ injectively onto the sets of prime ends $\Lambda_{\mu,\mathrm{upper}}^-\cup\Lambda_{\mu,\mathrm{lower}}^-$, $\mu=\mu_\nu,\dots,M$, and the prime ends $\infty_{\mu}^-$, $\mu=\mu_\nu,\dots,M$. The boundary points of $\Omega_0$ corresponding to these sets of prime ends are exactly $\Lambda_\mu^-$, $\mu=\mu_\nu,\dots,M$. This correspondence also takes into account that $h$ maps $\infty$ to the prime end of $\Omega_0$ with impression $\infty$ defined by crosscuts joining $\Lambda_M^-$ and $\partial\Omega_0^+$. Therefore, utilizing the conformal invariance of the harmonic measure and its property of acting as a Borel probability measure, we obtain
    \begin{equation}\label{eq:harmonic 2}
        \omega(\Gamma_\nu^+(t),[k_{\mu_\nu}^-,+\infty],\mathbb{H})=\sum_{\mu=\mu_\nu}^M\omega(h(\Gamma_\nu^+(t)),\Lambda_\mu^-,\Omega_0)=\sum_{\mu=\mu_\nu}^M\omega(h_t(\rho_\nu^+(t))-\gamma t,\Lambda_\mu^-,\Omega_0).
    \end{equation}
    Since $h_t(\rho_\nu^+(t))-\gamma t$ converges to $\infty_{\mu_\nu}^-$ in the Carath\'{e}odory topology of $\Omega_0$, and $\infty_{\mu_\nu}^-\in\overline{\Lambda_{\mu_\nu,\mathrm{lower}}^-}$, whereas it is disjoint from the rest of the set $h([k_{\mu_\nu}^-,+\infty))$, arguing as in Example \ref{ex:zero harmonic measure}, we understand that
    \begin{equation*}
        0=\lim_{t\to+\infty}\omega(h_t(\rho_\nu^+(t))-\gamma t,\Lambda_{\mu_\nu,\mathrm{upper}}^-,\Omega_0)=\lim_{t\to+\infty}\omega(h_t(\rho_\nu^+(t))-\gamma t,\Lambda_\mu^-,\Omega_0),
    \end{equation*}
    for all $\mu=\mu_\nu+1,\dots,M$. As a consequence, applying on \eqref{eq:harmonic 2}, we deduce
    \begin{equation}\label{eq:harmonic 3}
        \lim_{t\to+\infty}\left(\omega(\Gamma_\nu^+(t),[k_{\mu_\nu}^-,+\infty),\mathbb{H})-\omega(h_t(\rho_\nu^+(t))-\gamma t,\Lambda_{\mu_\nu,\mathrm{lower}}^-,\Omega_0)\right)=0.
    \end{equation}
\begin{figure}[ht]
\centering
\begin{tikzpicture}[
    x=1cm,
    y=1cm,
    every node/.style={font=\small},
    boundary/.style={black!70, line width=0.7pt},
    auxline/.style={black!70, line width=0.7pt},
    traj/.style={red!80, line width=0.9pt, dashed},
    bluecurve/.style={blue!80, line width=1pt},
    dot/.style={circle, fill=black, inner sep=1.4pt}
]

\begin{scope}[shift={(0,0)}]

\draw[boundary] (0.1,5.6) -- (7.3,5.6);
\draw[boundary] (0.1,1.1) -- (7.3,1.1);

\node at (3.7,5.92) {$\partial\Omega_0^{+}$};
\node at (3.7,0.72) {$\partial\Omega_0^{-}$};

\draw[auxline] (3.8,2.35) -- (6.75,2.35);

\draw[traj] (2.8,3.10) -- (6.95,3.10);

\node[dot] at (2.85,3.10) {};
\node[dot] at (5.65,3.10) {};

\node[below=4pt] at (2.85,3.10) {$h(\rho_{\nu}^{+})$};
\node[below=4pt, text=red!80] at (5.65,3.10)
{$h_t(\rho_{\nu}^{+}(t))-\gamma t$};

\node[right] at (6.95,3.10) {$\infty_{\mu_\nu}^{-}$};

\draw[auxline] (4.85,4.35) -- (7.30,4.35);

\draw[
  blue,
  line width=0.8pt,
  decorate,
  decoration={brace, mirror, amplitude=5pt, raise=2pt}
]
(4.85,4.35) -- (7.30,4.35)
node[midway, below=8pt, blue]
{$\Lambda^{-}_{\mu_\nu,\mathrm{lower}}$};
\end{scope}

\begin{scope}[shift={(8.9,0)}]

\draw[boundary] (0.1,5.6) -- (7.3,5.6);
\draw[boundary] (0.1,1.1) -- (7.3,1.1);

\node at (3.7,5.92) {$\partial\Omega_0^{+}$};
\node at (3.7,0.72) {$\partial\Omega_0^{-}$};

\draw[auxline] (0.15,4.42) -- (3.30,4.42);

\draw[traj] (0.95,3.62) -- (4.55,3.62);

\node[left] at (0.95,3.62) {$\infty_{\nu}^{+}$};

\node[dot] at (1.70,3.62) {};
\node[dot] at (4.60,3.62) {};

\node[above=5pt, text=red!80] at (1.70,3.62)
{$h_t(\rho_{\mu_\nu}^{-}(t))-\gamma t$};
\node[above=5pt] at (4.60,3.62)
{$h(\rho_{\mu_\nu}^{-})$};

\draw[auxline] (0.10,2.55) -- (3.85,2.55);
\draw[
  blue,
  line width=0.8pt,
  decorate,
  decoration={brace, amplitude=5pt, raise=2pt}
]
(0.10,2.55) -- (3.85,2.55)
node[midway, above=8pt, blue]
{$\Lambda^{+}_{\nu,\mathrm{upper}}$};

\end{scope}

\end{tikzpicture}
\caption{A schematic description of the two configurations.}
\label{fig:two-configurations}
\end{figure}
    By construction, there exists an appropriate set of indices $J$ which necessarily contains $\nu$ so that the slit horizontal strip $\Sigma\vcentcolon=S_{\mu_\nu}^-\setminus\cup_{j\in J}\Lambda_j^+$ is contained inside $\Omega_0$. As a matter of fact, these two simply connected domains share the set of prime ends $\Lambda_{\mu_\nu,\mathrm{lower}}^-$ (in the case of $\Sigma$ this set of prime ends can be identified as the actual set $\Lambda_{\mu_\nu}^-$, but this does not affect the proof). Therefore, using the Strong Markov Property from \eqref{eq:Markov}
    \begin{align}\label{eq:harmonic 4}
      \notag  &\omega(h_t(\rho_\nu^+(t))-\gamma t,\Lambda_{\mu_\nu,\mathrm{lower}}^-,\Omega_0)=\\
        &\omega(h_t(\rho_\nu^+(t))-\gamma t,\Lambda_{\mu_\nu,\mathrm{lower}}^-,\Sigma)+\int_{\partial\Sigma\cap\Omega_0}\omega(\zeta,\Lambda_{\mu_\nu,\mathrm{lower}}^-,\Omega_0)\omega(h_t(\rho_\nu^+(t))-\gamma t,d\zeta,\Sigma).
    \end{align}
    Setting $I(t)$ the integral in \eqref{eq:harmonic 4}, we are interested in its asymptotic behavior, as $t\to+\infty$. Remembering that the harmonic measure is always bounded above by $1$ due to being a probability measure, we get
    \begin{equation}\label{eq:harmonic 5}
        I(t)\le\int_{\partial\Sigma\cap\Omega_0}1\omega(h_t(\rho_\nu^+(t))-\gamma t,d\zeta,\Sigma)=\omega(h_t(\rho_\nu^+(t))-\gamma t,\partial\Sigma\cap\Omega_0,\Sigma),
    \end{equation}
    for all $t\ge0$. But evidently, $\partial\Sigma\cap\Omega_0$ consists of two horizontal half-lines which stretch to the left, whereas $h_t(\rho_\nu^+(t))-\gamma t$ diverges to $\infty$ moving towards the left (even in the geometry of $\Sigma$). Arguing as previosuly, it is straightforward that $\lim_{t\to+\infty}\omega(h_t(\rho_\nu^+(t))-\gamma t,\partial\Sigma\cap\Omega_0,\Sigma)=0$, and thus returning to \eqref{eq:harmonic 5}, we understand that $\lim_{t\to+\infty}I(t)=0$. Ergo, taking limits as $t\to+\infty$ in \eqref{eq:harmonic 4}, we deduce
    \begin{equation}\label{eq:harmonic 6}
    \lim_{t\to+\infty}\left(\omega(h_t(\rho_\nu^+(t))-\gamma t,\Lambda_{\mu_\nu,\mathrm{lower}}^-,\Omega_0)-\omega(h_t(\rho_\nu^+(t))-\gamma t,\Lambda_{\mu_\nu,\mathrm{lower}}^-,\Sigma)\right)=0.
    \end{equation}
    So, in a sense, we moved on from estimating a harmonic measure on $\Omega_0$ to estimating a harmonic measure on the simple domain $\Sigma$. But we need to further simplify it. Evidently, $\Sigma\subset S_{\mu_\nu}^-$ and once again $\Lambda_{\mu_\nu,\mathrm{lower}}^-\subset \partial_C\Sigma\cap\partial_C S_{\mu_\nu}^-$ (this inclusion can be stated without prime ends, exactly as previously). By a second application of the Strong Markov Property,
    \begin{align*}
       & \omega(h_t(\rho_\nu^+(t))-\gamma t,\Lambda_{\mu_\nu,\mathrm{lower}}^-,S_{\mu_\nu}^-)=\\
       &\omega(h_t(\rho_\nu^+(t))-\gamma t,\Lambda_{\mu_\nu,\mathrm{lower}}^-,\Sigma)+\int_{\partial\Sigma\cap S_{\mu_\nu}^-}\omega(\zeta,\Lambda_{\mu_\nu,\mathrm{lower}}^-,S_{\mu_\nu}^-)\omega(h_t(\rho_\nu^+(t))-\gamma t,d\zeta,\Sigma),
    \end{align*}
    for all $t\ge0$. Given that $\partial\Sigma\cap S_{\mu_\nu}^-=\cup_{j\in J}\Lambda_j^+$, a similar procedure yields that the above integral also converges to $0$, as $t\to+\infty$. Thus, we get
    \begin{equation}\label{eq:harmonic 7}
    \lim_{t\to+\infty}\left(\omega(h_t(\rho_\nu^+(t))-\gamma t,\Lambda_{\mu_\nu,\mathrm{lower}}^-,S_{\mu_\nu}^-)-\omega(h_t(\rho_\nu^+(t))-\gamma t,\Lambda_{\mu_\nu,\mathrm{lower}}^-,\Sigma)\right)=0.
    \end{equation}
    Combining \eqref{eq:harmonic 3}, \eqref{eq:harmonic 6} and \eqref{eq:harmonic 7}, we obtain
    \begin{equation*}
    \lim_{t\to+\infty}\left(\omega(\Gamma_\nu^+(t),[k_{\mu_\nu}^-,+\infty),\mathbb{H})-\omega(h_t(\rho_\nu^+(t))-\gamma t,\Lambda_{\mu_\nu,\mathrm{lower}}^-,S_{\mu_\nu}^-)\right)=0.
    \end{equation*}
    Set $\Lambda$ the upper horizontal line bounding $S_{\mu_\nu}^-$. Clearly $\Lambda_{\mu_\nu}^-\subsetneq\Lambda$. But in the Carath\'{e}odory topology of $S_{\mu_\nu}^-$ there is no difference between the sets $\Lambda_{\mu_\nu}^-$ and $\Lambda_{\mu_\nu,\mathrm{lower}}^-$ since $\Lambda_{\mu_\nu,\mathrm{upper}}^-\cap\partial_C S_{\mu_\nu}^-=\emptyset$. In addition, since $h_t(\rho_\nu^+(t))-\gamma t$ converges to a right prime end, the harmonic measure $\omega(h_t(\rho_\nu^+(t))-\gamma t,\Lambda\setminus\Lambda_{\mu_\nu}^-,S_{\mu_\nu}^-)=\omega(h_t(\rho_\nu^+(t))-\gamma t,\Lambda\setminus \Lambda_{\mu_\nu,\mathrm{lower}}^-,S_{\mu_\nu}^-)$ converges to $0$, as $t\to+\infty$. Taking advantage of the nature of the harmonic measure as a Borel probability measure, we get
    \begin{equation*}
        \lim_{t\to+\infty}\left(\omega(\Gamma_\nu^+(t),[k_{\mu_\nu}^-,+\infty),\mathbb{H})-\omega(h_t(\rho_\nu^+(t))-\gamma t,\Lambda,S_{\mu_\nu}^-)\right)=0.
    \end{equation*}
    Then, taking relation \eqref{eq:harmonic 1} into account and applying \eqref{eq:harmonic measure strip upper} from Example \ref{ex:strip} shows that
    \begin{equation*}
        \lim_{t\to+\infty}\omega(\Gamma_\nu^+(t),[k_{\mu_\nu}^-,+\infty),\mathbb{H})=\frac{\mathrm{Im}h(\rho_\nu^+)-\mathrm{Im}h(\rho_{\mu_\nu-1}^-)}{\mathrm{Imh(\rho_{\mu_\nu}^-)-\mathrm{Im}h(\rho_{\mu_\nu-1}^-)}}.
    \end{equation*}
    However, from \eqref{height with b}, we transform the above relation to
    \begin{align}\label{eq:harmonic 8}
    \notag \lim_{t\to+\infty}\omega(\Gamma_\nu^+(t),[k_{\mu_\nu}^-,+\infty),\mathbb{H})&=\frac{\pi\left(\sum_{j=\nu+1}^Nb_j^+-\sum_{j=1}^Mb_j^-+\sum_{j=\mu_\nu}^Mb_j^-\right)}{\pi\left(-\sum_{j=\mu_\nu+1}^Mb_j^-+\sum_{j=\mu_\nu}^Mb_j^-\right)}\\
    &=\frac{\sum_{j=\nu+1}^Nb_j^+-\sum_{j=1}^{\mu_\nu-1}b_j^-}{b_{\mu_\nu}^-}\in(0,1).
    \end{align}
    Finally, as we explained in Remark \ref{rem:harmonic measure angles}, the slope by which $\Gamma_\nu^+$ converges to $k_{\mu_\nu}^-$ can be discovered through the cluster set of $\pi(1-\omega(\Gamma_\nu^+(t),[k_{\mu_\nu}^-,+\infty),\mathbb{H}))$, as $t\to+\infty$. Since by \eqref{eq:harmonic 8} the limit actually exists, we infer that $\Gamma_\nu^+(t)$ converges, as $t\to+\infty$, to $k_{\mu_\nu}^-$ by an angle equal to  
    \begin{equation*}
        \pi\frac{\sum\limits_{j=1}^{\mu_\nu}b_j^--\sum\limits_{j=\nu+1}^N b_j^+}{b_{\mu_\nu}^-}\in(0,\pi).
    \end{equation*}

    To conclude our work on the orbit emanating from the point $\rho_\nu^+$, we need to treat the case $\mu_\nu=1$. In this case the proof follows almost identically with the only difference being that \eqref{eq:harmonic 1} is replaced by
    \begin{equation}\label{eq:harmonic 9}
        -\pi b^-<\mathrm{Im}[h_t(\rho_\nu^+(t))-\gamma t]=\mathrm{Im}h(f(\rho_\nu^+(t),t)) <\mathrm{Im}h(\rho_{1}^-),
    \end{equation}
    due to the fact that the lower boundary of the corresponding strip $S_{\mu_\nu}^-=S_1^-$ is the horizontal line $\partial\Omega_0^-$. Then, constructing again the slit strip $\Sigma$, using the Strong Markov Property, and taking limits as $t\to+\infty$, we find
    \begin{equation*}
        \lim_{t\to+\infty}\omega(\Gamma_\nu^+(t),[k_{\mu_\nu}^-,+\infty),\mathbb{H})=\frac{\mathrm{Im}h(\rho_\nu^+)+\pi b^-}{\mathrm{Im}h(\rho_1^-)+\pi b^-}=\frac{\pi\left(\sum_{j=\nu+1}^Nb_j^+-\sum_{j=1}^Mb_j^-+b^-\right)}{-\pi\sum_{j=2}^Mb_j^-+\pi b^-}=\frac{\sum_{j=\nu+1}^Nb_j^+}{b_1^-},
    \end{equation*}
    because of \eqref{eq:harmonic 9} and \eqref{height with b}. Arguing as in the general case, we comprehend that $\Gamma_\nu^+(t)$ converges, as $t\to+\infty$, to $k_{\mu_\nu}^-$ by an angle equal to
    \begin{equation*}
        \pi\frac{b_1^--\sum\limits_{j=\nu+1}^Nb_j^+}{b_1^-}\in(0,\pi).
    \end{equation*}

    We now move on to the orbits $\Gamma_\mu^-$, $\mu\in\{1,\dots,M\}$. Fix such a $\mu$. The calculations are very similar to the ones before, so we are going to omit certain details. We commence with the case where the index $\mu$ corresponds to a middle right half-line. Our configuration dictates that there exists some $\omega\in\{2,\dots,\lambda\}$ such that
    \begin{equation}\label{eq:harmonic 10}
        \mathrm{Im}h(\rho_{\nu_\omega}^+)<\mathrm{Im}[h_t(\rho_\mu^-(t))-\gamma t]=\mathrm{Im}h(f(\rho_\mu^-(t),t))<\mathrm{Im}h(\rho_{\nu_\omega-1}^+).
    \end{equation}
    In such a situation, $h_t(\rho_\mu^-(t))-\gamma t$ converges to the prime end $\infty_{\nu_\omega}^+$ in the Carath\'{e}ododry topology of $\Omega_0$. Accordingly, the orbit $\Gamma_\mu^-$ converges to $k_{\nu_\omega}^+$. Hence, we want to estimate the harmonic measure $\omega(\Gamma_\mu^-(t),[k_{\nu_\omega}^+,+\infty),\mathbb{H})$. But this time, inside $\Omega_0$, the quantity $h_t(\rho_\mu^-(t))-\gamma t$ converges to $\infty_{\nu_\omega}^+$ by travelling towards the left. This reverses all the orientations compared to the first part of the proof, even though the methodology remains the same. More specifically, the extension of $h$ into a homeomorphism between $\overline{\mathbb{H}}$ and $\Omega_0\cup\partial_C\Omega_0$ maps the interval $[k_{\nu_\omega}^+,+\infty)$ injectively onto the half-lines $\Lambda_\nu^+$, $\nu\in\{\nu_\omega,\dots,N-1\}$, and the half-lines $\Lambda_i^-$, $i\in\{1,\dots,M\}$ (or rather the two sets of prime ends with impression on each of the aforementioned half-lines), along with the prime ends $\infty_\nu^+$, $\nu\in\{\nu_\omega,\dots,N\}$, and $\infty_i^-$, $i\in\{1,\dots,M\}$. But out of all the boundary components above, the only one that has $\infty_{\nu_\omega}^+$ in its closure is the set of prime ends $\Lambda_{\nu_\omega,\mathrm{upper}}^+$. As a consequence,
    \begin{equation}\label{eq:harmonic 11}
        \lim_{t\to+\infty}\left(\omega(\Gamma_\mu^-(t),[k_{\nu_\omega}^+,+\infty),\mathbb{H})-\omega(h_t(\rho_\mu^-(t))-\gamma t,\Lambda_{\nu_\omega,\mathrm{upper}}^+,\Omega_0)\right)=0.
    \end{equation}
    Via our configuration, $h_t(\rho_\mu^-(t))-\gamma t\in S_{\nu_\omega}^+$, for all $t\ge0$, and there exists a suitable set of indices $J$ containing $\mu$ so that the slit horizontal strip $T\vcentcolon= S_{\nu_\omega}^+\setminus\cup_{j\in J}\Lambda_j^-$ is a subset of $\Omega$. But $\Lambda_{\nu_\omega,\mathrm{upper}}^+$ is contained both in $\partial_C\Omega_0$ and in $\partial_CT$. Using the Strong Markov Property and demonstrating that the respective integral from \eqref{eq:Markov} converges to $0$, we arrive at
    \begin{equation}\label{eq:harmonic 12}
        \lim_{t\to+\infty}\left(\omega(h_t(\rho_\mu^-(t))-\gamma t,\Lambda_{\nu_\omega,\mathrm{upper}}^+,\Omega_0)-\omega(h_t(\rho_\mu^-(t))-\gamma t,\Lambda_{\nu_\omega,\mathrm{upper}}^+,T)\right)=0.
    \end{equation}
    Executing a second application of the Strong Markov Property, this time with the simply connected domains $T$ and $S_{\nu_\omega}^+$, we find
    \begin{equation}\label{eq:harmonic 13}
        \lim_{t\to+\infty}\left(\omega(h_t(\rho_\mu^-(t))-\gamma t,\Lambda_{\nu_\omega,\mathrm{upper}}^+,S_{\nu_\omega}^+)-\omega(h_t(\rho_\mu^-(t))-\gamma t,\Lambda_{\nu_\omega,\mathrm{upper}}^+,T)\right)=0.
    \end{equation}
    Combining relations \eqref{eq:harmonic 11}, \eqref{eq:harmonic 12} and \eqref{eq:harmonic 13} yields
    \begin{equation*}
        \lim_{t\to+\infty}\left(\omega(\Gamma_\mu^-(t),[k_{\nu_\omega}^+,+\infty),\mathbb{H})-\omega(h_t(\rho_\mu^-(t))-\gamma t,\Lambda_{\nu_\omega,\mathrm{upper}}^+,S_{\nu_\omega}^+)\right)=0.
    \end{equation*}
    But thinking analogously to the first part of the proof and setting $\Lambda$ the lower horizontal line bounding $S_{\nu_\omega}^+$, we understand that $\omega(h_t(\rho_\mu^-(t))-\gamma t,\Lambda\setminus \Lambda_{\nu_\omega}^+,S_{\nu_\omega}^+)=\omega(h_t(\rho_\mu^-(t))-\gamma t,\Lambda\setminus\Lambda_{\nu_\omega,\mathrm{upper}}^+,S_{\nu_\omega}^+)$ converges to $0$, as $t\to+\infty$. Therefore,
    \begin{equation*}
        \lim_{t\to+\infty}\left(\omega(\Gamma_\mu^-(t),[k_{\nu_\omega}^+,+\infty),\mathbb{H})-\omega(h_t(\rho_\mu^-(t))-\gamma t,\Lambda,S_{\nu_\omega}^+)\right)=0.
    \end{equation*}
    However, we may once more explicitly compute the latter harmonic measure through \eqref{eq:harmonic 10} and relation \eqref{eq:harmonic measure strip lower} from Example \ref{ex:strip} in order to deduce
    \begin{align*}
        \lim_{t\to+\infty}\omega(\Gamma_\mu^-(t),[k_{\nu_\omega}^+,+\infty),\mathbb{H})&=\frac{\mathrm{Im}h(\rho_{\nu_\omega-1}^+)-\mathrm{Im}h(\rho_\mu^-)}{\mathrm{Im}h(\rho_{\nu_\omega-1}^+)-\mathrm{Im}h(\rho_{\nu_\omega}^+)}\\
        &=\frac{\pi\left(\sum_{j=\nu_\omega}^Nb_j^+-\sum_{j=1}^Mb_j^-\right)+\pi\sum_{j=\mu+1}^Mb_j^-}{\pi\left(\sum_{j=\nu_\omega}^Nb_j^+-\sum_{j=1}^Mb_j^-\right)-\pi\left(\sum_{j=\nu_\omega+1}^Nb_j^+-\sum_{j=1}^Mb_j^-\right)}\\
        &=\frac{\sum_{j=\nu_\omega}^Nb_j^+-\sum_{j=1}^\mu b_j^-}{b_{\nu_\omega}^+}\in(0,1),
    \end{align*}
    where the second equality is derived from \eqref{height with b}. In view of Remark \ref{rem:harmonic measure angles}, we discover that $\Gamma_\mu^-$ converges to $k_{\nu_\omega}^+$ by an angle equal to
    \begin{equation*}
        \pi\frac{\sum\limits_{j=1}^{\mu}b_j^- - \sum\limits_{j=\nu_\omega+1}^N b_j^+}{b_{\nu_\omega}^+}\in(0,\pi).
    \end{equation*}

    We proceed to the upper slits and the case when $\mu\in\{\mu_1,\dots,M\}$. In this case, we have $\omega=1$, the orbit $\Gamma_\mu$ converges to $\rho_{\nu_1}^+=\rho_1^+$, and we work with the strip $S_{\nu_1}^+=S_1^+$. This time, the upper boundary of $S_1^+$ is exactly $\partial\Omega_0^+$ and \eqref{eq:harmonic 10} turns into
    \begin{equation}\label{eq:harmonic 14}
        \mathrm{Im}h(\rho_1^+)<\mathrm{Im}[h_t(\rho_\mu^-(t))-\gamma t]=\mathrm{Im}h(f(\rho_\mu^-(t),t))<\pi(b^+-b^-).
    \end{equation}
    Exactly as before, \eqref{eq:harmonic 14} and \eqref{eq:harmonic measure strip lower} will eventually lead to
    \begin{equation*}
        \lim_{t\to+\infty}\omega(\Gamma_\mu^-(t),[k_1^+,+\infty),\mathbb{H})=\frac{\pi(b^+-b^-)-\mathrm{Im}h(\rho_\mu^-)}{\pi(b^+-b^-)-\mathrm{Im}h(\rho_1^+)}=\frac{\pi(b^+-b^-)+\pi\sum_{j=\mu+1}^{M}b_j^-}{\pi(b^+-b^-)-\pi(\sum_{j=2}^N b_j^+-\sum_{j=1}^Mb_j^-)}.
    \end{equation*}
    Following the usual procedure and by means of Remark \ref{rem:harmonic measure angles}, we infer that $\Gamma_\mu^-$ converges by an angle equal to
    \begin{equation*}
        \pi\frac{\sum\limits_{j=1}^\mu b_j^--\sum\limits_{j=2}^Nb_j^+}{b_1^+}\in(0,\pi).
    \end{equation*}

    To conclude the proof, we treat the lower slits, namely the case $\mu\in\{1,\dots,\mu_{N-1}-1\}$. Under this circumstance, $h_t(\rho_\mu^-(t))-\gamma t$ converges to $\infty_N^+$ in the Carath\'{e}odory topology of $\Omega_0$ and the orbit $\Gamma_\mu^-$ converges to $k_N^+$. For this reason, we need to work with the strip $S_N^+$. This time, \eqref{eq:harmonic 10} becomes
    \begin{equation}\label{eq:harmonic 15}
        -\pi b^-<\mathrm{Im}[h_t(\rho_\mu^-(t))-\gamma t]=\mathrm{Im}h(f(\rho_\mu^-(t),t))<\mathrm{Im}h(\rho_{N-1}^+).
    \end{equation}
    We realize that the only boundary component that belongs in the image $h([k_N^+,+\infty))$ and contains $\infty_N^+$ in its closure, is exactly the horizontal line $\partial\Omega_0^-$. So, this time, we need to estimate the asymptotic behavior of the quantity $\omega(h_t(\rho_\mu^-(t))-\gamma t,\partial\Omega_0^-,\Omega_0)$. Acting as above and using \eqref{eq:harmonic 15},\eqref{eq:harmonic measure strip lower}, it turns out that 
    \begin{equation*}
        \lim_{t\to+\infty}\omega(\Gamma_\mu^-(t),[k_N^+,+\infty),\mathbb{H})=\frac{\mathrm{Im}h(\rho_{N-1}^+)-\mathrm{Im}h(\rho_\mu^-)}{\mathrm{Im}h(\rho_{N-1}^+)+\pi b^-}=\frac{\pi\left(b_N^+-\sum_{j=1}^Mb_j^-\right)+\pi\sum_{j=\mu+1}^Mb_j^-}{\pi\left(b_N^+-\sum_{j=1}^Mb_j^-\right)+\pi b^-}.
    \end{equation*}
    In conclusion, based on Remark \ref{rem:harmonic measure angles}, we extrapolate the angle of convergence of $\Gamma_\mu^-$ as
    \begin{equation*}
        \pi\frac{\sum\limits_{j=1}^{\mu}b_j^-}{b_N^+}\in(0,\pi).
    \end{equation*}
    \end{proof}

    We end our study by investigating the angle by which the orbits launch from the real line. An examination of how the launching angle of the orbit is related to analytical properties of the driving function has been realized in \cite{slei1}. In this work, we follow a different approach, since we compute the anlges through harmonic measure and without dealing with the driving function.
    \begin{theorem}\label{thm:initial angles}
        Every orbit emanates from $\mathbb{R}$ orthogonally.
    \end{theorem}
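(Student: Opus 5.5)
We use the same harmonic-measure technique as in Theorem \ref{thm:convergence angles}, now at $t\to0^+$ instead of $t\to+\infty$. Fix an orbit, say $\Gamma_\nu^+$; the case of $\Gamma_\mu^-$ is symmetric. It starts at $\rho_\nu^+=\rho_\nu^+(0)\in(k_\nu^+,k_{\nu+1}^+)$, and its image under $h$ is the horizontal segment $L_\nu^+(t)$. By Remark \ref{rem:harmonic measure angles}, applied at the starting point rather than the endpoint, orthogonal emanation means that
\begin{equation*}
\lim_{t\to0^+}\omega\bigl(\Gamma_\nu^+(t),[\rho_\nu^+,+\infty),\mathbb{H}\bigr)=\tfrac12 .
\end{equation*}
Equivalently, it means $\arg(\Gamma_\nu^+(t)-\rho_\nu^+)\to\pi/2$. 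We will prove this second form directly, using the local structure of $h$ at the critical point $\rho_\nu^+$, and keep the harmonic-measure formulation only as a check.

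\textbf{Step 1 (local model).} Since $\rho_\nu^+$ is a simple zero of $h'$, with $h''(\rho_\nu^+)<0$ by the proof of Proposition \ref{prop:roots with respect to kj}, we can write near $\rho_\nu^+$
\begin{equation*}
h(z)=h(\rho_\nu^+)+\tfrac12 h''(\rho_\nu^+)(z-\rho_\nu^+)^2\bigl(1+O(z-\rho_\nu^+)\bigr).
\end{equation*}
Hence there is a local conformal coordinate $\phi(z)=(z-\rho_\nu^+)(1+O(z-\rho_\nu^+))$ with $\phi'(\rho_\nu^+)=1$, real on $\mathbb{R}$, such that
\begin{equation*}
h=h(\rho_\nu^+)+\tfrac12 h''(\rho_\nu^+)\,\phi^2 .
\end{equation*}

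\textbf{Step 2 (preimage of the slit).} The point $\Gamma_\nu^+(t)$ satisfies
\begin{equation*}
h(\Gamma_\nu^+(t))=h(\rho_\nu^+)+s(t),\qquad s(t)=\mathrm{Re}\,h(\rho_\nu^+(t),t)-\gamma t-\mathrm{Re}\,h(\rho_\nu^+)\in\mathbb{R}.
\end{equation*}
By Lemma \ref{monotonicity}, $s(t)>0$ and $s$ is increasing, and by continuity $s(t)\to0$ as $t\to0^+$. Therefore
\begin{equation*}
\phi(\Gamma_\nu^+(t))^2=\frac{2s(t)}{h''(\rho_\nu^+)}<0,
\end{equation*}
so $\phi(\Gamma_\nu^+(t))=i\sqrt{2s(t)/|h''(\rho_\nu^+)|}$ is purely imaginary, taking the branch that lies in $\mathbb{H}$. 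Inverting $\phi$ gives
\begin{equation*}
\Gamma_\nu^+(t)-\rho_\nu^+=i\sqrt{2s(t)/|h''(\rho_\nu^+)|}\,\bigl(1+o(1)\bigr),
\end{equation*}
and hence $\arg(\Gamma_\nu^+(t)-\rho_\nu^+)\to\pi/2$. For a right tip $\rho_\mu^-$ the argument is identical: $h''(\rho_\mu^-)>0$ and $s(t)<0$, so the quotient is again negative and the square root is again purely imaginary.

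\textbf{Step 3 (main obstacle).} The delicate point is justifying that the branch of $\phi^{-1}$ is the one landing in $\mathbb{H}$ and not in the lower half-plane, and that the $O(\cdot)$ errors do not spoil the angle. The branch issue is handled by univalence of $h$ on $\mathbb{H}$ together with $\phi$ being real on $\mathbb{R}$. The error terms are handled by $\phi'(\rho_\nu^+)=1$, since $\phi$ is conformal at $\rho_\nu^+$ and therefore preserves angles there; this is exactly what excludes tangential approach. As a sanity check, Step 2 can be rephrased with harmonic measure: $z\mapsto\phi(z)^2$ opens the right angle at $\rho_\nu^+$ into a straight angle, $h$ maps $[\rho_\nu^+,k_{\nu+1}^+)$ and $(k_\nu^+,\rho_\nu^+]$ onto the two sides of $\Lambda_\nu^+$ near its tip, and the tip of a slit sees both sides with equal harmonic measure in the limit. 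This yields the limit $1/2$ through \eqref{eq:angles in half-plane}.
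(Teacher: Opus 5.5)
Your proof is correct, but it takes a different route from the paper.

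The paper stays within the harmonic-measure framework of Theorem \ref{thm:convergence angles}. It transports $\omega(\Gamma_1^+(t),[\rho_1^+,+\infty),\mathbb{H})$ to $\Omega_0$ by conformal invariance. It then discards every boundary piece except $\Lambda_{1,\mathrm{lower}}^+$, the only one whose closure contains the tip $h(\rho_1^+)$. Next it compares $\Omega_0$ with the slit plane $K=\mathbb{C}\setminus\Lambda_1^+$ through the Strong Markov Property. Finally, the reflection symmetry of $K$ across the line containing the slit gives exactly $\tfrac12$ at every point of the extension ray.

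You replace all of this with the local normal form of $h$ at its simple critical point: $h-h(\rho_\nu^+)=\tfrac12h''(\rho_\nu^+)\phi^2$, with $\phi$ real on $\mathbb{R}$ and $\phi'(\rho_\nu^+)=1$. The horizontal segment extending the slit therefore pulls back to a curve tangent to the vertical at $\rho_\nu^+$, and univalence of $h$ on $\mathbb{H}$ selects the root of $\phi^2$ lying in $\mathbb{H}$.

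Your route buys several things:
\begin{itemize}
\item It is shorter and needs no prime-end bookkeeping.
\item It handles left and right tips, including $\rho_M^-$, uniformly through the signs of $h''$ and $s(t)$.
\item It yields more than the angle: $\Gamma_\nu^+(t)-\rho_\nu^+\sim i\sqrt{2s(t)/|h''(\rho_\nu^+)|}$. Since the proof of Lemma \ref{monotonicity} gives $s'(0)\neq 0$, the tip actually leaves $\rho_\nu^+$ at rate $\asymp\sqrt t$.
\end{itemize}

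Your argument has one tacit ingredient worth stating: the analytic continuation of $h$ to a full neighborhood of $\rho_\nu^+$. This is available because $h'$ is rational and $\mathrm{Im}\,h$ is constant on $(k_\nu^+,k_{\nu+1}^+)$, so $h-h(\rho_\nu^+)$ is real there. It is also what makes $\phi$ real on $\mathbb{R}$.

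The paper's argument needs no such local analytic structure. It only uses that the image of the orbit lies on the symmetry axis of the slit, which is why it runs parallel to the paper's treatment of the attraction points, where the target is a prime end at infinity.

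Both proofs use Lemma \ref{monotonicity} in the same way: to know that $h(\Gamma(t))$ lies on the ray extending the slit rather than on the slit itself.
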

    \begin{proof}
        We will only execute the proof for the orbit $\Gamma_1^+$. The proof for the rest of the orbits follows identical arguments and we omit it for the sake of not being repetitive. Since we care about the initial behavior of the orbit and thinking in similar manner to the previous proof, we will estimate the harmonic measure $\omega(\Gamma_1^+(t),[\rho_1^+,+\infty),\mathbb{H})$, as $t\to 0^+$. Maintaining the notation from the proof of Theorem \ref{thm:convergence angles}, we know that the extension of $h$ as a homeomorphism between $\mathbb{H}\cup\partial_C\mathbb{H}$ and $\Omega_0\cup\partial_C\Omega_0$ maps the interval $[\rho_1^+,+\infty)$ injectively onto the set 
        \begin{equation*}
            \Lambda_{1,\mathrm{lower}}^+\cup\bigcup\limits_{\nu=2}^{N-1}\Lambda_\nu^+\cup\partial\Omega_0^-\cup\bigcup\limits_{\mu=1}^{M}\Lambda_\mu^-
        \end{equation*}
        along with the left prime ends $\infty_\nu^+$, $\nu\in\{2,\dots,N\}$, and the right prime ends $\infty_\mu^-$, $\mu\in\{1,\dots,M\}$. But since the tip point $h(\rho_1^+)$ belongs only to the closure of $\Lambda_{1,\mathrm{lower}}^+$ and to none of the aforementioned sets, we obtain
        \begin{align}\label{eq:initial angles 1}
        \notag  0 &=  \lim_{t\to0}\left(\omega(\Gamma_1^+(t),[\rho_1^+,+\infty),\mathbb{H})-\omega(h(\Gamma_1^+(t)),h([\rho_1^+,+\infty)),\Omega_0)\right)\\
          &=\lim_{t\to0}\left(\omega(\Gamma_1^+(t),[\rho_1^+,+\infty),\mathbb{H})-\omega(h_t(\rho_1^+(t))-\gamma t,\Lambda_{1,\mathrm{lower}}^+,\Omega_0)\right),
        \end{align}
        where the first equality is derived from the conformal invariance of the harmonic measure. Set $K\vcentcolon=\mathbb{C}\setminus \Lambda_1^+$ the ``Koebe-like'' domain resulting by removing the slit $\Lambda_1^+$. Clearly $\Omega_0\subset K$ and $\Lambda_{1,\mathrm{lower}}^+\subset\partial_C\Omega_0\cap\partial_C K$. As a result, by the Strong Markov Property,
        \begin{align}\label{eq:initial angles 2}
         \notag   \omega(h_t(\rho_1^+(t))-\gamma t,\Lambda_{1,\mathrm{lower}}^+,K)&=\omega(h_t(\rho_1^+(t))-\gamma t,\Lambda_{1,\mathrm{lower}}^+,\Omega_0)+\\
            &\int_{\partial\Omega_0\setminus\Lambda_1^+}\omega(\zeta,\Lambda_{1,\mathrm{lower}}^+,K)\omega(h_t(\rho_1^+(t))-\gamma t,d\zeta,\Omega_0),
        \end{align}
        for all $t\ge0$, due to the fact that $\partial\Omega_0\cap K=\Lambda_1^+$. Following the usual procedure, denoting by $J(t)$ the integral in the preceding relation, we have
        \begin{equation*}
            J(t)\le \int_{\partial\Omega\setminus\Lambda_1^+}1\cdot\omega(h_t(\rho_1^+(t))-\gamma,d\zeta,\Omega_0)=\omega(h_t(\rho_1^+(t))-\gamma t,\partial\Omega_0\setminus\Lambda_1^+,\Omega_0),
        \end{equation*}
        for all $t\ge0$. Taking limits as $t\to0^+$, the convergence of $h_t(\rho_1^+(t))-\gamma_t$ towards $h(\rho_1^+)$ implies that $\lim_{t\to0}J(t)=0$. Applying on \eqref{eq:initial angles 2} and combining with \eqref{eq:initial angles 1}, we see that
        \begin{equation}\label{eq:initial angles 3}
            \lim_{t\to0}\left(\omega(\Gamma_1^+(t),[\rho_1^+,+\infty),\mathbb{H})-\omega(h_t(\rho_1^+(t))-\gamma t,\Lambda_{1,\mathrm{lower}}^+,K)\right)=0.
        \end{equation}
        However, due to the symmetry of $K$ with respect $\Lambda_1^+$ and the half-line $\{h(\rho_1^+)+s\vcentcolon s\ge0\}$, we understand that 
        \begin{equation*}
            \omega(h_t(\rho_1^+(t))-\gamma t,\Lambda_{1,\mathrm{lower}}^+,K)=\omega(h_t(\rho_1^+(t))-\gamma t,\Lambda_{1,\mathrm{upper}}^+,K)=\frac{1}{2},
        \end{equation*}
        for all $t\ge0$; see also Remark \ref{rem:Brownian}. Therefore, returning to \eqref{eq:initial angles 3}, we infer that
        \begin{equation*}
            \lim_{t\to0}\omega(\Gamma_1^+(t),[\rho_1^+,+\infty),\mathbb{H})=\frac{1}{2}
        \end{equation*}
        and in view of Remark \ref{rem:harmonic measure angles}, the orbit $\Gamma_1^+$ launches from $\rho_1^+$ at angle $\frac{\pi}{2}$, that is orthogonally. As we already mentioned, for the remaining orbits we work analogously to extract the orthogonal launch.
    \end{proof}

    \addtocontents{toc}{\protect\setcounter{tocdepth}{1}}
    \section{Concluding remarks}

    We conclude our work by pointing out a few useful remarks and observations. First of all, we point out the chronological order of the choices of the parameters. It is clear that we deal with some sets of parameters, some of them are arbitrarily chosen while others arise during the construction. Namely, these sets of parameters are the real points $k_j^{\pm}$, the positive numbers $b_j^{\pm}$, the exponents $\theta_j^{\pm}$, the roots $\rho_{j}^{\pm}$ which determine the driving function of the Loewner chain and finally the parameter $\gamma$.

    \subsection*{The attraction points} 
    To build the close-to-convex function $h$ in \eqref{h for multiple points} we make a choice of real numbers 
    \begin{equation}
        \label{real choice k}
        k_1^+<\dots<k_{N-1}^+<k_N^+<k_1^-<\dots<k_M^-.
    \end{equation}
    in increasing order. Through $h$, these points are mapped to the points at infinity as we saw in Figure \ref{fig:image of h}. In addition to these, we opt for a choice of the positive numbers 
    \begin{equation}
        \label{choice of betas}
        b_1^+,\dots,b_{N}^+,b_1^-,\dots,b_M^->0
    \end{equation}
    which determine the height of the half-lines. Because the tip points are pushed to infinity as we saw in Figures \ref{fig: orbits on Omega 1} and \ref{fig: orbits on Omega 2}, we then understand that the choices \eqref{real choice k} and \eqref{choice of betas} combined together, give all the attraction points of the chains. As a result, it is important to note that in our construction we can arbitrarily choose the number and the position of the attraction points of the chain! 

    \subsection*{Geometric interpretation of the exponents} 
    The most important part of this work is how we choose the exponents $\theta_j^{\pm}$. Having chosen the attraction points above, we next have to determine the correct variation for the parameters $k_j^{\pm}$, so that they converge to $k_N^+$. Namely, we define the functions $k_j^{\pm}(t)=k_j^{\pm}+(k_j^{\pm}-k_N^+)e^{-\theta_j^{\pm}t}$, for an appropriate choice of exponents $\theta_j^{\pm}$, with
    
\begin{equation}\label{eq:thetas2}
   \theta_2^+\le\dots\le\theta_{N-1}^+\quad \text{and} \quad \theta_1^-\ge\dots\ge\theta_M^-.
\end{equation}
Now, the preceding ordering is only important for maintaining the ordering \eqref{orderings of k} for the points $k_j^{\pm}(t)$. Besides \ref{eq:thetas2}, we next have to pose conditions on the exponents, so that Corollary \ref{existance of thetas} is true. Recall that this result allows for our aim, that is the construction of a family of decreasing domains $(\Omega_t)_{t\ge0}$. 

However, it is not clear what this choice should be. For example, it may seems natural to try  

\begin{equation}\label{eq:thetas3}
   \theta_{N-1}^+>\dots>\theta_{2}^+>\theta_1^->\dots>\theta_M^- \quad\text{or}\quad\theta_1^->\dots>\theta_M^->\theta_{N-1}^+>\dots>\theta_{2}^+.
\end{equation}
If we consider such a choice, then by repeating the procedure of Section 4, we will deduce that for every tuple of exponents satisfying the above ordering, Corollary \ref{existance of thetas} fails! Thus, it is always true that 
\begin{equation*}
      \min_{\nu\in\{1.\dots,N-1\}}r(\rho_{\nu}^+)\le\max_{\mu\in\{1,\dots,M-1\}}r(\rho_{\mu}^-).   
    \end{equation*}
This blocks the existence of the parameter $\gamma$, hence the limits \eqref{limits of the tips+} and \eqref{limits of the tips-} cannot hold, so we cannot make the construction. 

The fact that the "correct" choice is to consider strict inequalities in \eqref{eq:thetas2} unless a strip $S_{\nu}^+$ lies in some $S_{j}^-$ and a strip $S_{\mu}^-$ lies in some strip $S_j^+$ is not clear neither is it intuitive, nevertheless, it works. This is really exciting and it means that there is a hidden geometric mechanism behind this choice!

\subsection*{The driving function}
Lastly we comment on the driving function of the Loewner chain, which is basically determined by the continuous functions $\rho_j^{\pm}(t)$. Recall that these are the roots of $h'(\cdot,t)$ and also the preimages of the tip points. Of course, it is well known from the general theory that the poles of the driving function are the preimages of the tip points. However, we see that the Loewner chain (before normalization) is written in the conjugation formula $f(z,t)=h^{-1}(h(z,t)-\gamma t)$, where $(h(\cdot,t)_{t\ge0})$ is a family of close-to-convex functions. Therefore, as we mentioned in the Introduction, we obtain a similar form to the conjugation formula for semigroups, which use the convex in the right direction functions, a special case of close-to-convex function.

As in Semigroups, the conjugation formula, $h^{-1}_0(h(\cdot,t)-\gamma t)$ linearizes the orbits of the tip points, as it maps them onto half-lines. For this reason, a natural question is what form should the driving function have, so as to produce a Loewner chain of the aforementioned type, which would lead to Loewner families that slightly generalize Semigroups.

\subsection*{Acknowledgments} The authors would like to express their gratitude to D. Betsakos, P. Galanopoulos, A. Siskakis and A. Sola for their suggestions and carefull consideration of this article.

\end{document}